\newcommand{\A}{\mathbb{A}}
\newcommand{\C}{\mathbb{C}}
\newcommand{\DD}{\mathbb{D}}
\newcommand{\F}{\mathbb{F}}
\newcommand{\G}{\mathbb{G}}
\newcommand{\Q}{\mathbb{Q}}
\newcommand{\R}{\mathbb{R}}
\newcommand{\Z}{\mathbb{Z}}
\newcommand{\N}{\mathbb{N}}
\renewcommand{\P}{\mathbb{P}}
\newcommand{\fS}{\mathfrak{S}}
\newcommand{\tF}{\widetilde{F}}
\newcommand{\tK}{\widetilde{K}}
\newcommand{\tX}{\widetilde{X}}
\newcommand{\cA}{\mathcal{A}}
\newcommand{\cB}{\mathcal{B}}
\newcommand{\cC}{\mathcal{C}}
\newcommand{\cF}{\mathcal{F}}
\newcommand{\cH}{\mathcal{H}}
\newcommand{\cL}{\mathcal{L}}
\newcommand{\cM}{\mathcal{M}}
\newcommand{\cN}{\mathcal{N}}
\newcommand{\cO}{\mathcal{O}}
\newcommand{\cP}{\mathcal{P}}
\newcommand{\cU}{\mathcal{U}}
\newcommand{\cV}{\mathcal{V}}
\newcommand{\cX}{\mathcal{X}}
\newcommand{\cY}{\mathcal{Y}}
\newcommand{\cZ}{\mathcal{Z}}
\newcommand{\hcA}{\widehat{\mathcal{A}}}
\newcommand{\hcB}{\widehat{\mathcal{B}}}
\newcommand{\hcX}{\widehat{\mathcal{X}}}
\newcommand{\hA}{\widehat A}
\newcommand{\hf}{\widehat f}
\newcommand{\ha}{\widehat a}
\newcommand{\Op}[1]{\operatorname{#1}}
\newcommand{\lau}[1]{(\!(#1)\!)}
\renewcommand{\a}{\alpha}
\renewcommand{\b}{\beta}
\renewcommand{\d}{\delta}
\newcommand{\e}{\varepsilon}
\newcommand{\g}{\gamma}
\newcommand{\la}{\lambda}
\newcommand{\om}{\omega}
\newcommand{\p}{\psi}
\newcommand{\Ga}{\Gamma}
\newcommand{\D}{\Delta}
\newcommand{\eg}{{\rm e.g.\ }}
\newcommand{\ie}{{\rm i.e.\ }}
\newcommand{\an}{\mathrm{an}}
\newcommand{\diag}{\mathrm{diag}}
\newcommand{\op}{\mathrm{op}}
\newcommand{\n}{\|\cdot\|}
\DeclareMathOperator{\Spec}{Spec}
\DeclareMathOperator{\supp}{Supp}
\DeclareMathOperator{\vol}{vol}
\DeclareMathOperator{\codim}{codim}
\DeclareMathOperator{\Pic}{Pic}
\DeclareMathOperator{\env}{P}
\DeclareMathOperator{\en}{E}
\DeclareMathOperator{\Hom}{Hom}
\DeclareMathOperator{\qq}{Q}
\DeclareMathOperator{\PSH}{PSH}
\DeclareMathOperator{\spec}{Spec}
\DeclareMathOperator{\red}{red}
\DeclareMathOperator{\PL}{PL}
\DeclareMathOperator{\dist}{d}
\DeclareMathOperator{\FS}{FS}
\DeclareMathOperator{\GL}{GL}
\DeclareMathOperator{\Ker}{Ker}
\DeclareMathOperator{\Vect}{Vect}
\DeclareMathOperator{\cyc}{cyc}
\DeclareMathOperator{\cz}{C^0}
\DeclareMathOperator{\Hnot}{H^0}
\DeclareMathOperator{\hnot}{h^0}
\DeclareMathOperator{\Rea}{Re}
\DeclareMathOperator{\Ima}{Im}
\DeclareMathOperator{\Spf}{Spf}
\DeclareMathOperator{\rk}{rk}
\DeclareMathOperator{\Tr}{Tr}
\DeclareMathOperator{\Gr}{Gr}
\DeclareMathOperator{\cont}{cont}
\DeclareMathOperator{\gr}{gr}
\newcommand{\Res}{\Op{Res}}
\DeclareMathOperator{\dGI}{\dist_{\infty}}
\DeclareMathOperator{\trop}{Trop}
\newcommand{\latt}{\mathrm{latt}}
\newcommand{\redu}{\mathrm{red}}
\renewcommand{\div}{\mathrm{div}}
\newcommand{\be}{{\bf e}}
\newcommand{\bs}{{\bf s}}
\numberwithin{equation}{section}       
\newtheorem{prop} {Proposition} [section]
\newtheorem{thm}[prop] {Theorem}
\newtheorem{defi}[prop] {Definition}
\newtheorem{lem}[prop] {Lemma}
\newtheorem{cor}[prop]{Corollary}
\newtheorem{conj}[prop]{Conjecture}
\newtheorem{prop-def}[prop]{Proposition-Definition}
\newtheorem*{thmA}{Theorem A}
\newtheorem*{thmC}{Theorem C}
\newtheorem*{thmD}{Theorem D}
\newtheorem*{corB}{Corollary B}
\newtheorem{exam}[prop]{Example}
\newtheorem{rmk}[prop]{Remark}
\theoremstyle{remark}
\title[Norms, determinant of cohomology and Fekete points]{Spaces of norms, determinant of cohomology and Fekete points in non-Archimedean geometry}
\date{\today}
\author{S{\'e}bastien Boucksom and Dennis Eriksson}
\address{CNRS-CMLS\\
  \'Ecole Polytechnique\\
  F-91128 Palaiseau Cedex\\
  France}
\email{sebastien.boucksom@polytechnique.edu}
\address{Chalmers University of Technology and University of Gothenburg \\
SE-412 96 \\ 
Gothenburg, Sweden}
\email{dener@chalmers.se}
\begin{document}

\begin{abstract} Let $L$ be an ample line bundle on a (geometrically reduced) projective variety $X$ over any complete valued field. Our main result describes the leading asymptotics of the determinant of cohomology of large powers of $L$, with respect to the supnorm of a continuous metric on the Berkovich analytification of $L$. As a consequence, we establish in this setting the existence of transfinite diameters and equidistribution of Fekete points, following a strategy going back to Berman, Witt Nystr\"om and the first author for complex manifolds. In the non-Archimedean case, our approach relies on a version of the Knudsen--Mumford expansion for the determinant of cohomology on models over the (possibly non-Noetherian) valuation ring, as a replacement for the asymptotic expansion of Bergman kernels in the complex case, and on the reduced fiber theorem, as a replacement for the Bernstein--Markov inequalities. Along the way, a systematic study of spaces of norms and the associated Fubini--Study type metrics is undertaken. 

\end{abstract}

\maketitle

\setcounter{tocdepth}{1}
\tableofcontents

%
%
\section*{Introduction}
%
%
Fekete points and transfinite diameter are classical notions in logarithmic potential theory in the plane. For each $m\in\N$, the \emph{$m$-diameter} $\d_m(K)$ of a compact subset $K\subset\C$ is defined as the supremum of the geometric mean distance between $m+1$ points in $K$, maximizers being called \emph{Fekete configurations}. The $m$-diameter of $K$ admits a limit $\d_\infty(K)$ as $m\to\infty$, called the \emph{transfinite diameter} of $K$, which turns out to coincide with the logarithmic capacity of $K$. Further, Fekete configurations become asymptotically unique in the limit, in the sense that they equidistribute to a certain canonical probability measure on $K$, called its \emph{equilibrium measure}. 

In several complex variables, a similar picture was only rather recently obtained. The first steps were taken by Leja in the 1950's, introducing a notion of $m$-diameter $\d_m(K)$ for a compact subset $K\subset\C^n$, defined in terms of the supremum of certain Vandermonde-type determinants. The existence of the transfinite diameter $\d_\infty(K)=\lim_{m\to\infty}\d_m(K)$ was established by Zaharjuta~\cite{Zah}, and the next key step came with Rumely's observation in~\cite{Rum} that the general results in arithmetic intersection theory developed in~\cite{CLR} yield in particular an exact formula for $\d_\infty(K)$ in terms of pluripotential theory, generalizing the classical Robin formula for the logarithmic capacity in the plane, and involving plurisubharmonic envelopes and mixed Monge--Amp\`ere operators in the sense of Bedford--Taylor. This triggered joint work of the first author with Berman and Witt Nystr\"om~\cite{BB,BBW}, which built on Bergman kernel asymptotics to establish a general version of Rumely's formula in the setting of complex projective manifolds, and combined it with a variational argument to prove the equidistribution of Fekete configurations in this context. 

\smallskip

The main purpose of the present paper is to study versions of these results in non-Archimedean (Berkovich) geometry. While many results hold over an arbitrary non-Archimedean complete valued field $K$, the full picture relies on more refined non-Archimedean pluripotential theory as developed in~\cite{siminag,nama,BG+,trivval}, and hence requires $K$ to be trivially or discretely valued and of residue characteristic $0$.
%
%
\subsection*{Asymptotics of relative volumes}
The Bouche--Catlin--Tian--Zelditch asymptotic expansion of Bergman kernels~\cite{Bouche,Cat,Tian,Zel} is a fundamental result in complex geometry, which describes the asymptotic behavior of the $L^2$-norms associated to large tensor powers of a positive Hermitian line bundle. As noticed in~\cite{BB}, it can be reformulated as an asymptotic expansion for the logarithmic volume ratio of such $L^2$-norms, as follows. 

First, define the \emph{relative volume} of any two norms $\n,\n'$ on an $N$-dimensional complex vector space $V$ as 
\begin{equation}\label{equ:relvol}
\vol(\n,\n'):=\log\left(\frac{\det\n'}{\det\n}\right),
\end{equation}
where $\det\n$, $\det\n'$ denote the induced norms on the determinant line $\det V=\bigwedge^N V$. In terms of the unit balls $B,B'\subset V$ of the two norms, 
$$
\vol(\n,\n')=\frac 12\log\left(\frac{\vol B}{\vol B'}\right)+O(N\log N), 
$$
where the error term $O(N\log N)$ vanishes when the two norms are Hermitian. 

Let next $X$ be an $n$-dimensional complex projective manifold. Every smooth, positively curved metric $\phi$ on an (ample) line bundle $L$ over $X$ induces, for each $m\in\N$, an $L^2$-norm $\n_{L^2(m\phi)}$ on the space of global sections $\Hnot(mL)=\Hnot(X,L^{\otimes m})$. The asymptotic expansion of Bergman kernels mentioned above turns out to be equivalent to the existence of a full asymptotic expansion
$$
\vol\left(\n_{L^2(m\phi)},\n_{L^2(m\p)}\right)=m^{n+1} a_{n+1}+m^n a_n+\ldots+O(m^{-\infty})
$$
for any two such metrics $\phi,\p$. Up to a multiplicative constant, the leading order coefficient $a_{n+1}$ can further be identified with a fundamental functional in K\"ahler geometry, the \emph{relative Monge-Amp\`ere energy}\footnote{Note that the present normalization, which is more convenient for the purpose of this paper, is not uniform accross the literature.}
\begin{equation}\label{equ:Ecomp}
\en(\phi,\p):=\frac{1}{n+1}\sum_{j=0}^n\int_X(\phi-\p)(dd^c\phi)^j\wedge(dd^c\p)^{n-j}.
\end{equation}
We use additive notation for metrics on line bundles, so that $\phi-\p$ is a function on $X$, and $dd^c\phi$, $dd^c\p$ denote the curvature $(1,1)$-forms of $\phi,\p$. 
%

\bigskip

Consider now an arbitrary complete valued field $K$, \ie a field $K$ complete with respect to an absolute value. In the Archimedean case, $K=\R$ or $\C$, by the Gelfand--Mazur theorem. In the non-Archimedean case, the main examples are $K=\Q_p$, $\C_p$, or fields of formal Laurent series and their completed algebraic closure, but the trivially valued case is also important for the study of K-stability (see~\cite{nakstab}). Let $X$ be a geometrically reduced projective scheme over $K$, $L$ be a line bundle on $X$, and $\phi,\p$ be continuous metrics on (the Berkovich analytification of) $L$. Building on a result of Chen and Maclean based on Okounkov bodies~\cite{CMac}, we show the existence of the \emph{relative volume of $\phi,\p$} 
\begin{equation}\label{equ:relvol2}
\vol(L,\phi,\p):=\lim_{m\to\infty}\frac{n!}{m^{n+1}}\vol\left(\n_{m\phi},\n_{m\p}\right)\in\R,
\end{equation}
cf.~Theorem~\ref{thm:relvolmetr}. The main result of the present paper is as follows. 

\begin{thmA} Let $X$ be a geometrically reduced projective scheme over any complete valued field $K$, and $\phi,\p$ be continuous psh\footnote{A shorthand for plurisubharmonic.} metrics on an ample line bundle $L$ over $X$. Then 
$$
\vol(L,\phi,\p)=\en(\phi,\p).
$$
\end{thmA} 
In the Archimedean case, it follows from results of Demailly that a continuous metric on $L$ is psh iff it is a uniform limit of Fubini--Study metrics (see Theorem~\ref{thm:dem}). In the non-Archimedean case, we use this as a definition, and prove compatibility with the more common notion of semipositive metric  in this context, due to S.W.~Zhang and involving nef models, when $K$ is nontrivially valued. We also show that a continuous metric $\phi$ is psh iff it becomes psh after ground field extension. The relative Monge--Amp\`ere energy of continuous psh metrics can still be defined by~\eqref{equ:Ecomp}, the latter being understood in the sense of Chamber-Loir and Ducros~\cite{CLD} in the non-Archimedean sense.

\smallskip

In the Archimedean case, Theorem A reduces to~\cite{BB} after passing to a resolution of singularities. In the discretely valued case, it was established in~\cite[Theorem A]{BG+}. The main contribution of the present paper is thus to establish Theorem A for a densely valued, non-Archimedean ground field $K$. 

\bigskip

The \emph{psh envelope} $\env(\phi)$ of a continuous metric $\phi$ on $L$ is defined as the pointwise supremum of the family of all continuous psh metrics $\p$ on $L$ such that $\p\le\phi$. We say that \emph{continuity of envelopes} holds for $(X,L)$ if $\env(\phi)$ is continuous (and hence psh) for each continuous metric $\phi$ on $L$. 

\begin{corB} Assume that continuity of envelopes holds for $(X,L)$. For any two continuous metrics $\phi,\p$ on $L$, we then have
$$
\vol(L,\phi,\p)=\en(\env(\phi),\env(\p)).
$$
\end{corB}
In the Archimedean case, classical results in pluripotential imply that continuity of envelopes holds whenever $X$ is normal, and Corollary B is indeed also a consequence of~\cite{BB} in that case. 

In the non-Archimedean case, we similarly expect continuity of envelopes to holds as soon as $X$ is normal.  As of this writing, continuity of envelopes (and hence Corollary B) has been established when $X$ is smooth, and one of the following is satisfied: 
\begin{itemize}
\item $X$ is a curve, as a consequence of A.~Thuillier's work~\cite{Thu} (see~\cite{GJKM});
\item $K$ is discretely or trivially valued, of residue characteristic $0$~\cite{siminag,trivval}, building on
multiplier ideals and the Nadel vanishing theorem;
\item $K$ is discretely valued of characteristic $p$, $(X,L)$ is defined over a function field of
transcendence degree $d$, and resolution of singularities is assumed in dimension $n+d$~\cite{GJKM}, replacing multiplier ideals with test ideals.
\end{itemize}
%
%
\subsection*{Sketch of the proof} 
As already mentioned, Theorem A basically follows from~\cite{BB} in the Archimedean case, and we henceforth assume that $K$ is non-Archimedean. The main tools involved in the proof can be summarized as follows. 

\subsubsection*{The reduced fiber theorem} The proof of Theorem A is fairly easily reduced to the case where each metric is induced by an ample model $(\cX,\cL)$ of $(X,L)$, \ie an ample line bundle $\cL$ extending $L$ to a projective model $\cX$ of $X$ over the valuation ring $K^\circ$. Besides the supnorm $\n_{m\phi}$ defined by the model metric $\phi=\phi_\cL$, the space of sections $\Hnot(mL)$ is then also equipped with the lattice norm $\n_{\Hnot(m\cL)}$ induced by the $K^\circ$-module $\Hnot(m\cL)=\Hnot(\cX,\cL^{\otimes m})$. This lattice norm, which is to some extent the analogue of the $L^2$-norm in the present non-Archimedean context, coincides with the supnorm when $\cX$ has a reduced special fiber, but not in general. Using the Bosch--L\"utkebohmert--Raynaud reduced fiber theorem~\cite{BLR}, we prove however that the distortion between $\n_{m\phi}$ and $\n_{\Hnot(m\cL)}$ remains bounded as $m\to\infty$, which enables us to replace the supnorms with the lattice norms in proving Theorem A.  

\subsubsection*{Knudsen--Mumford expansion} Our main tool is then the Knudsen--Mumford expansion of the determinant of cohomology~\cite{KM}, which plays the role of the asymptotic expansion of Bergman kernels in the complex case, and provides for $m\gg 1$ a polynomial expansion (in additive notation for $\Q$-line bundles over $\spec K^\circ$)
\begin{equation}\label{knudsenmumford}
\det \Hnot(m\cL)=\frac{m^{n+1}}{(n+1)!}\langle\cL^{n+1}\rangle+\dots
\end{equation}
with leading order term the Deligne pairing $\langle\cL^{n+1}\rangle$. Since models over $K^\circ$ are non-Noetherian in the densely valued case, some care is however required to apply this result, and the relevant explanations are provided in Appendix A, based on F.~Ducrot's approach~\cite{Duc}. 

\subsubsection*{Metrics on Deligne pairings} The Knudsen--Mumford expansion yields an expression of the relative volume of the metrics as a difference of model metrics on the Deligne pairing $\langle L^{n+1}\rangle$. The final ingredient in the proof of Theorem A consists in relating model metrics on Deligne pairings with mixed Monge--Amp\`ere integrals, which is accomplished via the Poincar\'e--Lelong formula of~\cite{CLD} and a careful monotone regularization argument. 

%
%
\subsection*{Transfinite diameter and Fekete points}
Following the strategy developed in~\cite{BB,BBW}, we establish the existence of transfinite diameters, and combine it with a differentiability result proved in~\cite{nama,BG+,trivval} under appropriate assumptions on the ground field $K$ to infer equidistribution of Fekete points. 

Let as above $X$ be a geometrically reduced projective scheme over a complete valued field $K$. Let $L$ be a line bundle on $X$, and set $N:=\dim\Hnot(X,L)$. The data of a basis $\bs=(s_i)$ of $\Hnot(X,L)$ determines a generator $s_1\wedge\dots\wedge s_N$ of $\det\Hnot(X,L)$, as well as a section 
$$
\det\bs\in\Hnot\left(X^N,L^{\boxtimes N}\right), 
$$
expressed as the Vandermonde determinant
$$
(\det\bs)(x_1,\ldots,x_{N})=\det\left(s_i(x_j)\right).
$$
Every continuous metric $\phi$ on $L$ induces a continuous metric $\phi^{\boxtimes N}$ on $L^{\boxtimes N}$, and a \emph{Fekete configuration} for $\phi$ is a point $P\in (X^N)^\an$ that computes the supnorm $\|\det\bs\|_{\phi^{\boxtimes N}}$. The choice of a norm $\n$ on $\Hnot(X,L)$ induces a norm $\det\n$ on $\det\Hnot(X,L)$, and the \emph{diameter} of $\phi$ with respect to $\n$ is defined as the normalized supnorm
$$
\d(\phi,\n):=\frac{\|\det\bs\|_{\phi^{\boxtimes N}}}{\det\|s_1\wedge\ldots\wedge s_{N}\|},
$$
which is independent of the choice of basis $\bs=(s_i)$. 

\begin{thmC} Let $L$ be any line bundle on a geometrically reduced projective scheme $X$ over a complete valued field $K$. For any two continuous metrics $\phi,\p$ on $L$, the \emph{transfinite diameter}
$$
\d_{\infty}(\phi,\p):=\lim_{m\to\infty}\d(m\phi,\n_{m\p})^{n!/m^{n+1}}
$$
exists in $\R_{>0}$, and satisfies $\log\d_{\infty}(\phi,\p)=\vol(L,\p,\phi)$.
\end{thmC}
This is inferred from the existence of the limit~\eqref{equ:relvol2} defining the relative volume $\vol(L,\phi,\p)=-\vol(L,\p,\phi)$ via an estimate of the operator norms of the embeddings $\det \Hnot(mL)\hookrightarrow \Hnot\left((mL)^{\boxtimes N_m}\right)$ with respect to the norms induced by $\phi$ on both sides, which is again ultimately deduced from the reduced fiber theorem. 

From now on we assume that $L$ is ample. To state our last main result, we shall say that \emph{differentiability holds} at a continuous metric $\phi$ on $L$ if: 
\begin{itemize}
\item[(i)] the psh envelope $\env(\phi)$ is continuous (hence psh);
\item[(ii)] for all $f\in \cz(X^\an)$ we have 
$$
\frac{d}{dt}\bigg|_{t=0}\vol(L,\phi+t f,\phi)=\int_{X^\an} f\,\left(dd^c\env(\phi)\right)^n.
$$
\end{itemize}
 
Differentiability is known to hold at all continuous metrics when $X$ is smooth and one of the following conditions is satisfied: 
\begin{itemize}
\item $K$ is Archimedean~\cite{BB}; 
\item $K$ is non-Archimedean, trivially or discretely valued, of residue characteristic zero~\cite{siminag,BG+,trivval};
\item $K$ is discretely valued of characteristic $p$, $(X,L)$ is defined over a function field of
transcendence degree $d$, and resolution of singularities is assumed in dimension $n+d$~\cite{GJKM,BG+}.
\end{itemize}
In a forthcoming paper~\cite{BGM}, it will be shown that continuity of envelopes implies differentiability at all continuous metrics. 

Importing the variational argument of~\cite{BBW}, itself based on an idea of~\cite{SUZ}, we prove:  

\begin{thmD} Let $X$ be a geometrically reduced, projective scheme over a complete valued field $K$. Let $\phi$ be a continuous metric on an ample line bundle $L$ over $X$, of volume $V=(L^n)$, and assume that differentiability holds at $\phi$. For each $m\gg 1$, pick a Fekete configuration $P_m\in(X^{N_m})^\an$ for $m\phi$. Then $P_m$ equidistributes to the probability measure 
$$
\mu_\phi:=V^{-1}(dd^c\env(\phi))^n. 
$$
as $m\to\infty$. 
\end{thmD} 
\subsection*{Organization of the paper}
The paper is organized as follows: 
\begin{itemize}
    \item Section 1 contains background material on norms on finite dimensional vector spaces over a complete valued field. We present the results in the Archimedean and non-Archimedean cases as uniformly as possible.
    \item In Section 2 we discuss determinants of norms and their relation to relative spectra.  
    \item Section 3, which stands somewhat apart from the rest of paper, applies the previous result to construct metrics on spaces of norms, following~\cite{Ger}. 
    \item Sections 4 and 5 contain background material on Berkovich spaces and metrics on line bundles. We recall the standard constructions of model metrics, and compare them to Fubini--Study metrics. We discuss the relation between the reduced fiber theorem and finiteness of integral closure. 
    \item Section 6 establishes our first key tool, to wit boundedness of the distortion between the supnorms and lattice norms induced by a model. As a consequence, we obtain a precise description of the behavior of supnorms under ground field extension. 
    \item In Section 7 we study limits of Fubini--Study metrics, compare them to Zhang's definition of semipositive metrics, and discuss the related notion of semipositive envelope. 
    \item In Section 8 we review the Bedford--Taylor/Chambert--Loir--Ducros mixed Monge--Amp\`ere operators, and relate them to Deligne pairings.    
    \item Section 9 contains the proof of Theorem A and Corollary B. 
    \item Section 10 shows the existence of transfinite diameters and equidistribution of Fekete points, \ie Theorem C and Theorem D. We also show how the results can be applied in the case of toric varieties. 
    \item In the Appendix we explain how F.~Ducrot's approach to the Knudsen--Mumford expansion for the determinant of cohomology~\cite{Duc} and the related notion of Deligne pairings can be extended from the usual Noetherian case to arbitrary schemes.
    \end{itemize}
\subsection*{Acknowledgement}
We are grateful for many helpful discussions and remarks from many colleagues. In particular, we thank Robert Berman, Dario Cordero-Erausquin, Antoine Ducros, Gerard Freixas, Gilles Godefroy, Mattias Jonsson, Klaus K\"unnemann,  Marco Maculan, Florent Martin, David Rydh. We are especially grateful to Walter Gubler for pointing out a number of inaccuracies in a first version of this paper, and for his help with several arguments. The first author was partially supported by the ANR project GRACK (ANR-15-CE40-0003). The second author wants to thank the Max-Planck Institut in Bonn, where part of this work was effectuated, for their excellent working conditions and hospitality. 
\newpage

%
%

\part{Spaces of norms and determinants}

\section{Spaces of norms}\label{sec:normsdet}
The goal of this section is to review some basic material on finite dimensional normed vector spaces, treating in parallel the Archimedean and non-Archimedean cases (including the trivially valued case). All the results are more or less well-known, but our proof of density of diagonalizable norms among all norms in the non-Archimedean case (Theorem~\ref{thm:diag}) appears to be new.   
%
%
\subsection{Complete valued fields} 
Here and throughout the article, $K$ denotes a \emph{complete valued field}, \ie a field endowed with a (possibly trivial) absolute value $|\cdot|: K \to \R_{\ge 0}$, with respect to which it becomes a complete metric space. The value group $|K^\times|$ is a subgroup of $\R_{>0}$, and is thus either discrete or dense. We sometimes use the \emph{additive value group} $\Ga_K:=\log|K^\times|\subset\R$. 

Recall that $K$ is \emph{Archimedean} if, for each nonzero $ a\in K$, there exists $n\in\Z$ with $|n a|>1$. This implies that $K$ has characteristic $0$, and hence contains $\Q$, and that the restriction of $|\cdot|$ to $\Q$ is equivalent to the standard absolute $|\cdot|_\infty$, by Ostrowski's theorem. As a result, $K$ is a complete field extension of $\R$, and hence $K=\R$ or $\C$ (up to normalization of the absolute value), by the Gelfand--Mazur theorem. 

Otherwise, $K$ is \emph{non-Archimedean}, and this holds if and only if $|\cdot|$ satisfies the ultrametric inequality $|a+b|\le\max\{|a|,|b|\}$ for all $a,b\in K$. We then have a corresponding real-valued valuation $v_K:=-\log|\cdot|$ on $K$, whose valuation ring $K^\circ$ is thus the closed unit ball of $K$, with maximal ideal $K^{\circ\circ}$ the open unit ball and residue field $\tK:=K^\circ/K^{\circ\circ}$. Since the value group is a subgroup of $\R$, the valuation ring $K^\circ$ is of Krull dimension at most $1$, and it is Noetherian if and only if $|K^\times|$ is discrete. In that case, $K^{\circ\circ}$ is a principal ideal; a generator $\pi_K$ of $K^{\circ\circ}$ is called a uniformizing parameter, and is unique up to multiplication by a unit. 

If on the other hand $K$ is algebraically closed, then $\tK$ is algebraically closed as well, and $|K^\times|$ is divisible. In particular, $K$ is then either trivially valued or densely valued. The completion of an algebraic closure of any non-Archimedean field $K$ is denoted by $\C_K$. It is the smallest complete algebraically closed extension of $K$. 

A field $K$ is \emph{local} if its unit ball $K^\circ$ is compact. This holds if and only if $K$ is either Archimedean, or non-Archimedean with finite residue field $\tK$. In the latter case, $K$ is trivially or discretely valued, and in fact either a finite trivially valued field, or isomorphic to a finite extension of $\Q_p$ or $\F_p{\lau t}$ (up to normalization of the absolute value). 

An \emph{immediate extension} of a non-Archimedean field $K$ is a complete field extension $F/K$ with the same value group and residue field as $K$. The field $K$ is \emph{maximally complete} if it admits no nontrivial immediate extension. By~\cite{Kap}, $K$ is maximally complete iff it is spherically complete, which means that any decreasing sequence of closed balls has non-empty intersection. A discretely valued field is maximally complete, and every algebraically closed field admits an immediate maximally complete extension, unique up to isomorphism~\cite[Theorem 5]{Kap}. 

\begin{exam} Let $k$ be an algebraically closed field of characteristic $0$, and endow the field $K=k{\lau t}$ of formal Laurent series with the $t$-adic valuation. An algebraic closure of $K$ is given by the field of Puiseux series 
$$
k\lau{ t^{1/\infty}}=\bigcup_{n\ge 1} k\lau{t^{1/n}},
$$
whose completion $\C_K$ is realized as the field of formal series $f=\sum_{r\in\Q} a_r t^r$, $a_r\in k$,  with support $\supp f=\{r\in\Q\mid a_r\ne 0\}$ containing only finitely many elements with a given upper bound. The immediate maximally complete extension of $\C_K$ is given by the Malcev--Neumann field $k\lau {t^\Q}$ of power series $f=\sum_{r\in\Q} a_r t^r$ with well-ordered support. Note that $f=\sum_{n\ge 1} t^{-1/n}$ is in $k\lau{t^\Q}\setminus\C_K$, so that $\C_K$ is not maximally complete. 
\end{exam}

\begin{exam} Similarly, the completion $\C_p$ of an algebraic closure of $\Q_p$ is also not maximally complete. Denote by $A$ the Witt ring of $\overline{\mathbb{F}}_p$, \ie the valuation ring of the completion of the maximal unramified extension of $\Q_p$. The immediate maximally complete extension of $\C_p$ is obtained as the quotient of the Malcev--Neumann ring $A\lau {t^\Q}$ by the ideal of formal power series $f=\sum_{r\in\Q} a_r t^r$ such that $\sum_{n\in\Z} a_{r+n}p^n=0$ in $\Z_p$ for all $r\in\Q$, cf.~\cite[\S 4]{Poo}. 
\end{exam}
%
%
\subsection{The space of norms}\label{sec:norms}
Let $V$ be a fixed finite dimensional $K$-vector space, and set $N:=\dim V$. 

\begin{defi}\label{defi:seminorm} A \emph{seminorm} on $V$ is a function $\n:V\to\R_+$ such that
\begin{itemize}
\item[(i)] $\| a v\|=|a|\|v\|$ for all $ a\in K$, $v\in V$; 
\item[(ii)] $\|v+w\|\le\|v\|+\|w\|$ (resp.~$\|v+w\|\le\max\{\|v\|,\|w\|\}$) for all $v,w\in V$ if $K$ is Archimedean (resp.~non-Archimedean). 
\end{itemize}
We say that $\n$ is \emph{pure} if it takes values in $|K|\subset\R_{\ge 0}$. A \emph{norm} is a seminorm $\n$ such that $\|v\|=0\Longleftrightarrow v=0$. We denote by $\cN(V)$ the set of all norms on $V$.
\end{defi}

The group $\GL(V)$ acts on $\cN(V)$ by composition. 

\begin{exam} If $K$ is Archimedean, mapping a norm $\n$ to its closed unit ball $B$ (centered at $0$) sets up a one-to-one correspondence between $\cN(V)$ and the set of all convex bodies of $V$ that are centrally symmetric when $K=\R$, and $S^1$-invariant when $K=\C$. The inverse map is obtained by setting
$$
\|v\|=\inf\left\{r\ge 0\mid v\in rB\right\}.
$$
\end{exam}

\begin{exam}\label{exam:filtr} If $K$ is trivially valued, the closed balls $B_r$, of radius $r$, of a norm on $V$ form an increasing filtration of $V$ by linear subspaces, which is exhaustive ($B_r=V$ for $r\gg 1$), separating ($B_r=\{0\}$ for $r\ll 1$), and right-continuous ($B_r=\bigcap_{r'>r} B_{r'}$). Conversely, any such filtration defines a norm by setting
$$
\|v\|=\inf\{r\ge 0\mid v\in B_r\}.
$$
In other words, the data of a norm with respect to the trivial absolute value is equivalent to that of an increasing flag $\{0\}=V_0\subset V_1\subset\dots\subset V_n=V$ of linear subspaces, together with a increasing sequence $0=r_0<r_1<\dots<r_n$. 
\end{exam}

Equivalence of norms over $\R$ and $\C$ is usually established as a consequence of the compactness of the unit cube. Crucially, equivalence of norms still holds over any complete valued field. 

\begin{prop}\label{prop:equiv} Any two norms $\n$, $\n'$ on $V$ are equivalent, \ie there exists $C>0$ such that $C^{-1}\n\le\n'\le C\n$. 
\end{prop}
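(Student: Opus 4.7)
The plan is to prove the equivalence by induction on $N=\dim V$, using completeness of $K$ in place of the compactness argument that works over $\R$ or $\C$. First I fix a basis $e_1,\dots,e_N$ of $V$ and introduce the reference norm
\[
\left\|\sum_i\a_i e_i\right\|_\infty:=\max_i|\a_i|.
\]
It suffices to show that an arbitrary norm $\n$ on $V$ is equivalent to $\n_\infty$. The upper bound $\|v\|\le C\|v\|_\infty$ with $C=N\max_i\|e_i\|$ (or $\max_i\|e_i\|$ in the ultrametric case) is immediate from the triangle inequality. The point of the proposition is therefore the lower bound.

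For this, I argue by induction on $N$. The case $N=1$ is trivial since all norms on a one-dimensional $K$-vector space are scalar multiples of one another. Assume the statement in dimension $<N$, and suppose for contradiction that no $c>0$ satisfies $\|v\|\ge c\|v\|_\infty$; then there exists a sequence $(v_k)$ in $V$ with $\|v_k\|_\infty=1$ and $\|v_k\|\to 0$. Writing $v_k=\sum_i\a_i^{(k)}e_i$, the pigeonhole principle lets me pass to a subsequence and reorder the basis so that $|\a_1^{(k)}|=1$ for every $k$. Dividing by $\a_1^{(k)}$ and keeping the notation $v_k$, I may further assume $\a_1^{(k)}=1$ and write $v_k=e_1+w_k$ with $w_k\in W:=\mathrm{span}(e_2,\dots,e_N)$.

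Since $\|v_k\|\to 0$, the differences $w_k-w_j=v_k-v_j$ satisfy $\|w_k-w_j\|\to 0$ as $k,j\to\infty$. Applying the induction hypothesis to the restriction of $\n$ to $W$, which by assumption is equivalent to $\n_\infty|_W$, I deduce that $(w_k)$ is Cauchy with respect to $\n_\infty$ on $W$; equivalently, the coordinate sequences $(\a_i^{(k)})_k$ are Cauchy in $K$ for each $i\ge 2$. By completeness of $K$, these converge to limits $\a_i\in K$, and setting $w:=\sum_{i\ge 2}\a_i e_i\in W$ the easy upper bound on $W$ gives $\|w_k-w\|\to 0$. Combined with $\|v_k\|=\|e_1+w_k\|\to 0$, this forces $\|e_1+w\|=0$, hence $e_1=-w\in W$, contradicting the choice of basis.

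The main obstacle is conceptual rather than technical: over a general complete valued field the unit ball of $\n_\infty$ need not be compact, so the usual finite-dimensional argument over $\R$ or $\C$ based on compactness of the unit sphere is unavailable. The plan above replaces that compactness by the combination of (i) a pigeonhole step to fix the coordinate realizing the maximum and (ii) completeness of $K$ used on the hyperplane $W$, where the inductive hypothesis turns Cauchyness of $(w_k)$ for $\n$ into coordinate-wise Cauchyness in $K$.
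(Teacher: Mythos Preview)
Your proof is correct and follows essentially the same inductive approach as the paper: both use the induction hypothesis to make the hyperplane $W$ complete for $\n$, and then exploit this to separate $e_1$ from $W$. The only difference is organizational—the paper argues directly that $c_i:=\inf_\a\|e_i+\sum_{j\ne i}\a_j e_j\|>0$ (since $W$ is closed) and reads off the lower bound, whereas you unpack the case $c_1=0$ into a contradiction via a Cauchy sequence.
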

\begin{proof} For the convenience of the reader, we repeat the simple standard argument, in order to show that it applies to the trivially valued case as well. Note first that the result implies that $V$ is complete with respect to any norm $\n$. Indeed, after choosing a basis $(e_i)$ of $V$, $\n$ will be equivalent to the $\ell^\infty$-norm $\n_\infty$ associated to $(e_i)$, which is complete since it is isometrically isomorphic to $K^N$. 
We argue by induction on $N=\dim V$, the desired result being trivial for $N=1$. We are going to show that any given norm $\n$ on $V$ is equivalent to $\n_\infty$. For each subspace $W\ne V$, the restriction of $\n$ to $W$ is complete, by induction. As a result, $W$ is closed with respect to $\n$, and hence $\inf_{w\in W}\|v+w\|>0$ for each $v\in V\setminus W$. In particular, 
$$
c_i:=\inf_{ a\in K^N}\left\|e_i+\sum_{j\ne i} a_j e_j\right\|>0
$$
for all $i$. For each $ a\in K^N$ and each $i$ with $ a_i\ne 0$, we get 
$$
\left\|\sum_j a_j e_j\right\|=| a_i|\left\|e_i+\sum_{j\ne i}\frac{ a_j}{ a_i} e_j\right\|\ge c_i| a_i|, 
$$
and hence $\left\|\sum_j a_j e_j\right\|\ge(\min_j c_j)\max_j| a_j|$. By the triangle inequality, we also have $\left\|\sum_j a_j e_i\right\|\le N\max_j| a_j|\|e_i\|$, which proves that $\n$ and $\n_\infty$ are indeed equivalent.  
\end{proof}
As noticed during the proof, each linear subspace $W\subset V$ is closed with respect to any norm $\n$, and $\n$ thus induces a quotient norm $\n_{V/W}$ on $V/W$, defined as usual by 
$$
\|\bar v\|_{V/W}:=\inf_{w\in W}\|v+w\|
$$
for each $v\in V$ with image $\bar v\in V/W$. 

By Proposition~\ref{prop:equiv}, we can endow $\cN(V)$ with the \emph{Goldman--Iwahori} metric $\dGI$ (named after~\cite{GI}), defined by 
\begin{equation}\label{equ:dist}
\dGI\left(\n,\n'\right):=\sup_{v\in V\setminus\{0\}}\left|\log\|v\|-\log\|v\|'\right|.
\end{equation}
The exponential of $\dGI\left(\n,\n'\right)$ is thus the distortion betwen the two norms, \ie the smallest constant $C\ge 1$ such that
$$
C^{-1}\n\le\n'\le C\n.
$$ 
The action of $\GL(V)$ on $\cN(V)$ preserves $\dGI$. 

\begin{exam}\label{exam:norm1D} Assume $\dim V=1$, and pick a nonzero $v\in V$. Then $\n\mapsto\log\|v\|$ defines an isometry $(\cN(V),\dGI)\simeq\R$, and the action of $\GL(V)$ on $\cN(V)$ is equivalent to the action of the additive value group $\Ga_K=\log|K^\times|$ on $\R$ by translation. 
\end{exam}

The basic topological properties of $\cN(V)$ are as follows. 

\begin{prop}\label{prop:topnorm} The metric space $(\cN(V),\dGI)$ is complete. If $K$ is local, \ie its unit ball $K^\circ$ is compact, then any closed bounded subset of $\cN(V)$ is compact. 
\end{prop}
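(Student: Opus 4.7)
The plan is to handle the three assertions in order, with Arzel\`a--Ascoli as the main tool for the third.

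\emph{Completeness.} Given a Cauchy sequence $(\n_m)$ in $(\cN(V),\dGI)$, the real sequence $\log\|v\|_m$ is Cauchy uniformly in $v\in V\setminus\{0\}$, so I define $\|v\|:=\lim_m\|v\|_m$ and verify that it is a norm: homogeneity and the triangle inequality pass to the pointwise limit, and the Cauchy property produces $C>0$ with $\dGI(\n_m,\n_1)\le C$ for all $m$, hence $\|v\|\ge e^{-C}\|v\|_1>0$ for $v\ne 0$. The uniformity in $v$ of $\log\|v\|_m\to\log\|v\|$ is exactly $\dGI$-convergence $\n_m\to\n$. The ultrametric inequality $\|v+w\|_m\le\max\{\|v\|_m,\|w\|_m\}$ also passes to the pointwise limit, so $\cN^\ultr(V)$ is closed.

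\emph{Local case.} I fix a basis of $V$ identifying $(V,\n_0)\simeq K^N$ with $\n_0$ the sup-norm, and set $S:=\{v\in V\mid\|v\|_0=1\}$; since $K$ is local, $(K^\circ)^N$ is compact and $S$ is a closed subset of it, hence compact. By homogeneity,
\begin{equation*}
\dGI(\n,\n')=\sup_{v\in S}\bigl|\log\|v\|-\log\|v\|'\bigr|,
\end{equation*}
so the restriction map $\n\mapsto\bigl(v\mapsto\log\|v\|\bigr)$ embeds $(\cN(V),\dGI)$ isometrically into $(C(S),\sup)$. A closed bounded $\cF\subset\cN(V)$ lies in some $\dGI$-ball around $\n_0$ of radius $C$; for every $\n\in\cF$ the function $v\mapsto\log\|v\|$ on $S$ has absolute value at most $C$, and for $v,v'\in S$ the estimates $\|v-v'\|\le e^C\|v-v'\|_0$ and $\|v\|,\|v'\|\ge e^{-C}$ yield
\begin{equation*}
\bigl|\log\|v\|-\log\|v'\|\bigr|\le\frac{\|v-v'\|}{\min\{\|v\|,\|v'\|\}}\le e^{2C}\|v-v'\|_0,
\end{equation*}
so the family is equicontinuous on $S$. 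Arzel\`a--Ascoli then gives relative compactness in $C(S)$; the completeness step shows that the image of $\cN(V)$ in $C(S)$ is complete, hence closed, and closedness of $\cF$ guarantees that a subsequential limit of any sequence in $\cF$ lies in $\cF$.

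The delicate ingredient I expect is the uniform Lipschitz bound on $S$, for which both the upper and the lower norm comparisons with $\n_0$ are essential; everything else is formal.
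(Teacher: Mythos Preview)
Your proof is correct and follows essentially the same strategy as the paper: completeness via pointwise limits of $\log\|v\|$, and compactness via Arzel\`a--Ascoli on a compact set built from the unit ball of $K^N$. The only (cosmetic) difference is that you work on the unit sphere $S=\{\|v\|_0=1\}$ with the functions $v\mapsto\log\|v\|$, which gives an honest isometric embedding of $(\cN(V),\dGI)$ into $(C(S),\sup)$ and makes the passage from Arzel\`a--Ascoli compactness back to $\dGI$-compactness completely transparent; the paper instead restricts the norms themselves to $(K^\circ)^N$ and leaves that last step implicit.
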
 

\begin{proof} If $(\n_n)$ is a Cauchy sequence in $\cN(V)$, then $\log\|v\|_n$ is a Cauchy sequence for each nonzero $v\in V$. We easily conclude that $\n_n$ converges in $\cN(V)$, which proves the first assertion. Assume now that $K$ is local. 
After choosing a basis, we may assume that $V=K^N$, which we equip with the $\ell^\infty$-norm $\n_\infty$. By the triangle inequality, each norm $\n$ with $\dGI(\n,\n_\infty)\le C$ restricts to a $C$-Lipschitz continuous function on the compact set $(K^\circ)^N$. By the Arzel\`a--Ascoli theorem, the closed balls of $\cN(V)$ (and hence any bounded closed subset) are thus compact. 
\end{proof}

\begin{rmk} Conversely, if $\dim V>1$, one can show that $\cN(V)$ is locally compact only if $K^\circ$ is compact.
\end{rmk}
%

\subsection{Diagonalizable norms}\label{sec:diag}
In order to treat in parallel the Archimedean and non-Archimedean cases, we will use the following terminology. 

\begin{defi}\label{defi:diag} A norm $\n$ on $V$ is \emph{diagonalizable} if there exists a basis $(e_i)$ such that we have for all $ a\in K^N$:
\begin{itemize}
\item[(i)] $\|\sum_i  a_i e_i\|^2=\sum_i\| a_i e_i\|^2$ (Archimedean case); 
\item[(ii)] $\|\sum_i a_i e_i\|=\max_i\| a_i e_i\|$ (non-Archimedean case).
\end{itemize}
The basis $(e_i)$ is then said to be \emph{orthogonal} for $\n$, and it is \emph{orthonormal} if it further satisfies $\|e_i\|=1$. We denote by 
$$
\cN^\diag(V)\subset\cN(V)
$$ 
the set of diagonalizable norms. 
\end{defi}
Note that a norm admits an orthonormal basis iff it is diagonalizable and pure (cf.~Definition~\ref{defi:seminorm}). In the Archimedean case, all norms are pure, and a norm is diagonalizable if and only if it derives from a Euclidian/Hermitian scalar product. In the non-Archimedean case, $\cN^\diag(V)$ is a dense subset of $\cN(V)$ (see~\S\ref{sec:dense} below), and $\cN^\diag(V)=\cN(V)$ for several important classes of fields $K$. 

\begin{exam}\label{exam:diagtriv} If $K$ is trivially valued, any norm $\n$ is diagonalizable. Indeed, $\n$ determines a flag of subspaces (cf.~Example~\ref{exam:filtr}), and a basis $(e_i)$ of $V$ is orthogonal for $\n$ if and only if $(e_{\sigma(i)})$ is compatible with the flag, for some permutation $\sigma\in\fS_N$. 
\end{exam}
More generally, we have: 

\begin{lem} Assume that $K$ is non-Archimedean. The following properties are equivalent: 
\begin{itemize}
\item[(i)] $K$ is maximally complete;
\item[(ii)] every norm on a finite dimensional $K$-vector space is diagonalizable.
\end{itemize}
\end{lem}
\begin{proof} (i)$\Longrightarrow$(ii) is proved in~\cite[2.4.2/3]{BGR}. Assume conversely that $K$ is not maximally complete, and pick a nontrivial immediate extension $F/K$. For any choice of $a\in F\setminus K$, we claim that the restriction $\n$ of the absolute value of $F$ to $K+a K\simeq K^2$ is not diagonalizable. Assume the contrary. Since $\n$ takes values in $|F|=|K|$, it then admits an orthonormal basis $(e_1,e_2)$. Using $\widetilde F=\tK$, we find a unit $u\in K^\circ$ such that $e_1-u e_2\in L^{\circ\circ}$, \ie $\|e_1-u e_2\|<1$, contradicting the orthonormality of $(e_1,e_2)$. 
\end{proof}

In the Archimedean case, diagonalizable norms are of course preserved by restriction and quotient. This is also true in the non-Archimedean case (cf.~\cite[2.4.1/5]{BGR}): 

\begin{lem}\label{lem:quotientdiag} Let $0\to V'\to V\to V''\to 0$ be an exact sequence of finite dimensional $K$-vector spaces. If $\n$ is a diagonalizable norm on $V$, then the induced norms on $V'$ and $V''$ are also diagonalizable.
\end{lem}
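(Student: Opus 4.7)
I would split into two cases. In the Archimedean case, the norm $\n$ derives from a Hermitian scalar product on $V$; this restricts to a Hermitian scalar product on $V'$ (making the restricted norm diagonalizable), and transports to $V''$ via the canonical isometric isomorphism $V'' \simeq (V')^\perp$, yielding diagonalizability of the quotient norm.

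In the non-Archimedean case, I would argue by induction on $N = \dim V$. The extremal cases $V' = \{0\}$ and $V' = V$ are trivial, so assume $0 \subsetneq V' \subsetneq V$ and fix an orthogonal basis $(e_1, \dots, e_N)$ of $V$ for $\n$. Pick any nonzero $w \in V'$ and write $w = \sum_i c_i e_i$; orthogonality yields $\|w\| = \max_i \|c_i e_i\|$, and we choose an index $j$ realizing this maximum. The key step is the substitution lemma: replacing $e_j$ by $w$ produces another orthogonal basis of $V$ for $\n$. This is verified by the identity
$$
\a w + \sum_{i \neq j} \b_i e_i = \a c_j e_j + \sum_{i \neq j} (\a c_i + \b_i) e_i,
$$
applying the orthogonality of $(e_i)$, and using $\|c_i e_i\| \le \|c_j e_j\|$ together with the ultrametric inequality to show that the resulting norm equals $\max(\|\a w\|, \max_{i \neq j} \|\b_i e_i\|)$.

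Set $W := \mathrm{span}(e_i)_{i \neq j}$, so $V = Kw \oplus W$ with $\|aw + u\| = \max(\|aw\|, \|u\|)$ for $a \in K$, $u \in W$, and let $V'' := V' \cap W$. Any $v \in V'$ decomposes uniquely as $v = aw + u$ with $u \in W$, and $u = v - aw \in V'$ shows $u \in V''$; hence $V' = Kw \oplus V''$. By the inductive hypothesis applied to $V'' \subset W$, the restriction of $\n$ to $V''$ admits an orthogonal basis $(f_1, \dots, f_{d'-1})$, and the quotient $W/V''$ is diagonalizable. Concatenating $w$ with $(f_i)$ produces an orthogonal basis of $V'$: orthogonality of $w$ against each $f_i \in W$ is inherited from the decomposition $V = Kw \oplus W$. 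Moreover, that same orthogonal decomposition identifies $V/V'$ with $W/V''$ isometrically, since any representative of a class in $V/V'$ can be chosen in $W$ and the infimum over $V'$ defining the quotient norm is realized by killing the $Kw$-component; diagonalizability therefore descends to $V/V'$.

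The main obstacle is the substitution lemma, which is the sole point where the ultrametric hypothesis is essential: it has no Archimedean analogue, and its verification rests on repeated use of the strict equality $\|x+y\|=\max(\|x\|,\|y\|)$ when $\|x\|\ne\|y\|$.
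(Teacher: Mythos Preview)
Your proof is correct. The paper itself gives no argument here, merely citing \cite[2.4.1/5]{BGR}; your inductive substitution argument is essentially the standard one found in that reference, where the key point is indeed that in an orthogonal basis $(e_i)$ one may replace any $e_j$ achieving $\|w\|=\max_i\|c_ie_i\|$ by $w$ itself and remain orthogonal. The verification you sketch goes through exactly as you indicate.

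One minor point: you reuse the symbol $V''$ for $V'\cap W$, clashing with the $V''$ in the exact sequence; this is harmless mathematically but worth renaming (say $V'_0$) for clarity. Also, your final remark that the substitution lemma ``has no Archimedean analogue'' is slightly overstated: the Archimedean analogue is precisely orthogonal projection onto $Kw$, which you already used implicitly in the first paragraph.
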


The following codiagonalization result is crucial for what follows. It will be proved in \S\ref{sec:duality}, after the basic facts on duality have been discussed. 

\begin{prop}\label{prop:codiag} For any two diagonalizable norms $\n,\n'\in\cN^\diag(V)$, there exists a basis $(e_i)$ of $V$ that is orthogonal for both $\n$ and $\n'$. 
\end{prop}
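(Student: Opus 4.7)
The argument naturally splits along the two cases of diagonalizability.

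In the Archimedean case, diagonalizable norms coincide with those coming from positive-definite Euclidean or Hermitian inner products, so write $\n = \sqrt{H}$ and $\n' = \sqrt{H'}$. I plan to apply the classical simultaneous diagonalization of a pair of such forms: define $A \in \operatorname{End}(V)$ by the identity $H(Av,w) = H'(v,w)$ for all $v,w \in V$. Then $A$ is $H$-self-adjoint and positive definite, so the spectral theorem provides an $H$-orthogonal eigenbasis $(e_i)$ with eigenvalues $\lambda_i > 0$. For $i \ne j$ one immediately computes
\[ H'(e_i, e_j) = H(Ae_i, e_j) = \lambda_i H(e_i, e_j) = 0, \]
so $(e_i)$ is simultaneously orthogonal for both forms, and thus for both norms.

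In the non-Archimedean case, I will argue by induction on $N = \dim V$, with $N = 1$ being trivial. The inductive step reduces to producing a common ``orthogonal splitting'': a nonzero $e \in V$ together with a hyperplane $W \subset V$ such that $V = Ke \oplus W$ and
\[ \|ae + w\| = \max(|a|\|e\|, \|w\|), \quad \|ae + w\|' = \max(|a|\|e\|', \|w\|') \]
for every $a \in K$ and $w \in W$. Granting this, Lemma~\ref{lem:quotientdiag} ensures that the restrictions of $\n$ and $\n'$ to $W$ remain diagonalizable, and the induction hypothesis produces a common orthogonal basis of $W$ which, augmented by $e$, solves the problem on $V$. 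To construct the splitting, the plan is to apply a Cartan-type factorization to the change-of-basis matrix $M$ expressing an $\n'$-orthogonal basis $(g_j)$ in terms of an $\n$-orthogonal basis $(f_i)$: write $M = A D B$ with $D$ diagonal and $A, B$ preserving $\n, \n'$ orthogonally. The diagonal directions then furnish a common orthogonal basis directly, and $(e, W)$ may be taken as any one such direction together with the span of the remaining ones.

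The main obstacle is executing this Cartan decomposition in full generality, when $K$ is neither discretely valued nor maximally complete and the norms need not be pure; this precludes a direct appeal to the elementary divisor theorem for modules over a PID. The diagonalizability hypothesis is what makes the argument go through, as it forces the weighted quantities $|M_{ij}| \|f_i\|/\|g_j\|'$ to attain their supremum, yielding an ``extremal direction'' that plays the role of a singular-value axis and allows the splitting to be extracted by an explicit combinatorial choice. Duality, developed in the preceding subsections, intervenes through the equivalence between orthogonal splittings of $V$ for a norm and orthogonal splittings of $V^\ast$ for the dual norm, which lets one pass freely between primal and dual pictures when identifying this extremal vector and verifying that its orthogonal hyperplane is simultaneously compatible with both $\n$ and $\n'$.
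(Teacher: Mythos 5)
Your proposal is correct and follows essentially the same route as the paper: the Archimedean case is the standard simultaneous diagonalization the paper simply quotes as well known, and in the non-Archimedean case your induction together with the extremal pair $(i_0,j_0)$ maximizing $|M_{ij}|\,\|f_i\|/\|g_j\|'$ — i.e.\ the point where $\sup_{w\neq 0}\|w\|/\|w\|'$ is attained, which is exactly Lemma~\ref{lem:supdiag} — and the dual basis covector $f_{i_0}^\vee$ is precisely the paper's extension of the Goldman--Iwahori argument. The Cartan-factorization framing is only motivational (and, as you note, unavailable in general); what remains to be written out is the short check, using Lemma~\ref{lem:dualdiag} and the maximality of $(i_0,j_0)$, that $V=K g_{j_0}\oplus\Ker f_{i_0}^\vee$ is an orthogonal decomposition for both $\n$ and $\n'$, which is exactly how the paper concludes.
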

This can be used to give a simple description of the restriction of $\dGI$ to $\cN^\diag(V)$. 
\begin{lem}\label{lem:distdiag} If $\n,\n'\in\cN^\diag(V)$ are codiagonalized in a basis $(e_i)$, then 
$$
\dGI(\n,\n')=\max_i\left|\log\frac{\|e_i\|}{\|e_i\|'}\right|.
$$
If $K$ is non-Archimedean, we have more generally
$$
\dGI(\n,\n')=\log\max\left\{\max_i\frac{\|e_i\|'}{\|e_i\|},\max_i\frac{\|e'_i\|}{\|e'_i\|'}\right\}
$$
whenever $(e_i)$ and $(e'_i)$ are orthogonal bases for $\n$ and $\n'$, respectively. 
\end{lem}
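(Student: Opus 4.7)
My plan is to reduce everything to term-by-term comparisons on the orthogonal basis (or bases). I would first establish the common-basis identity for both signatures of the field, and then upgrade the non-Archimedean argument to allow two a priori different orthogonal bases.

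For the common-basis identity, set $\lambda_i := \|e_i\|/\|e_i\|'$ and $C := \max_i |\log \lambda_i|$. Testing the definition of $\dGI$ on $v = e_i$ immediately gives $\dGI(\n,\n') \ge C$. For the matching upper bound $\dGI(\n,\n') \le C$, I would expand an arbitrary $v = \sum_i \alpha_i e_i$. In the non-Archimedean case, simultaneous orthogonality yields $\|v\| = \max_i |\alpha_i| \|e_i\|$ and $\|v\|' = \max_i |\alpha_i| \|e_i\|'$, so the pointwise estimates $e^{-C} \|e_i\|' \le \|e_i\| \le e^C \|e_i\|'$ promote term by term to $e^{-C} \|v\|' \le \|v\| \le e^C \|v\|'$. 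In the Archimedean case, $\|v\|^2/\|v\|'^2 = \sum_i t_i \lambda_i^2$ with $t_i := |\alpha_i|^2 \|e_i\|'^2/\|v\|'^2$ a probability vector, so the ratio is a convex combination of the $\lambda_i^2$ and therefore sandwiched between $\min_i \lambda_i^2$ and $\max_i \lambda_i^2$.

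For the more general non-Archimedean identity, fix orthogonal bases $(e_i)$ for $\n$ and $(e'_i)$ for $\n'$, and let $M$ denote the quantity inside the logarithm. Evaluating $\dGI$ at each $e_i$ and each $e'_j$ and using $|\log x| \ge \log x$ when $x \ge 1$ gives the lower bound $\dGI(\n,\n') \ge \log M$. For the reverse inequality I would expand an arbitrary $v = \sum_i \alpha_i e_i$, invoke orthogonality of $(e_i)$ for $\n$ to write $\|v\| = \max_i |\alpha_i| \|e_i\|$, and apply the ultrametric inequality to $\n'$: this yields $\|v\|' \le \max_i |\alpha_i| \|e_i\|' \le M \|v\|$. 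The symmetric bound $\|v\| \le M \|v\|'$ is obtained from the analogous expansion in $(e'_i)$, and combining gives $\dGI(\n,\n') \le \log M$.

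The only conceptual point worth flagging, rather than a genuine obstacle, is that this last step really uses ultrametricity: without it, $\|\sum_i \alpha_i e_i\|'$ cannot be estimated by $\max_i |\alpha_i| \|e_i\|'$ unless $(e_i)$ also happens to be orthogonal for $\n'$, which is exactly the content of Proposition~\ref{prop:codiag}. This explains why the second formula is available only in the non-Archimedean case; everything else is bookkeeping with the definitions.
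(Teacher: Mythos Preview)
Your proof is correct and matches the paper's approach essentially line for line. The paper isolates the non-Archimedean computation $\sup_{v\ne 0}\|v\|'/\|v\|=\max_i\|e_i\|'/\|e_i\|$ as a separate lemma (Lemma~\ref{lem:supdiag}) and then invokes it, whereas you carry out that same expansion-and-ultrametric-estimate inline; your Archimedean convex-combination phrasing is a cosmetic variant of the paper's direct inequality $\|v\|'^2\le e^{2m}\|v\|^2$.
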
 

\begin{proof} Assume first that $K$ is Archimedean. We trivially have 
$$
\dGI(\n,\n')=\sup_{v\in V\setminus\{0\}}\left|\log\frac{\|v\|}{\|v\|'}\right|\ge m:=\max_i\left|\log\frac{\|e_i\|}{\|e_i\|'}\right|
$$
Consider conversely $v=\sum_i a_i e_i$ with $ a\in K^N$. Then 
$$
\|v\|'^2=\sum_i| a_i|^2\|e_i\|'^2\le e^{2m}\sum_i| a_i|\|e_i\|^2=e^{2m}\|v\|^2. 
$$
By symmetry, this shows that $e^{-m}\n\le\n'\le e^m\n$, \ie $\dGI(\n,\n')\le m$. In the non-Archimedean case the result follows from Lemma~\ref{lem:supdiag} below. 
\end{proof}

\begin{lem}\label{lem:supdiag} Assume that $K$ is non-Archimedean. Let $\n$ be a diagonalizable norm with orthogonal basis $(e_i)$, and $\n'$ be any (ultrametric) seminorm on $V$. Then 
$$
\sup_{v\in V\setminus\{0\}}\frac{\|v\|'}{\|v\|}=\max_i\frac{\|e_i\|'}{\|e_i\|}.
$$
\end{lem}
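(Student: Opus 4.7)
My plan is to verify the two inequalities separately. The inequality $\sup_v \|v\|'/\|v\| \ge \max_i \|e_i\|'/\|e_i\|$ is immediate by testing the supremum against the basis vectors $v = e_i$ themselves (which are nonzero since $\n$ is a norm, so $\|e_i\| > 0$). The content of the lemma is therefore in the reverse inequality.

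For the reverse inequality, set $M := \max_i \|e_i\|'/\|e_i\|$ and let $v = \sum_i \alpha_i e_i$ be any nonzero element of $V$. The plan is to combine two ingredients: first, the orthogonality of $(e_i)$ for $\n$, which gives the \emph{equality}
\[
\|v\| = \max_i |\alpha_i|\,\|e_i\|
\]
by Definition~\ref{defi:diag}(ii); second, the ultrametric inequality for $\n'$, which gives only the \emph{one-sided bound}
\[
\|v\|' \le \max_i |\alpha_i|\,\|e_i\|'.
\]
Substituting $\|e_i\|' \le M\|e_i\|$ into the second bound and comparing to the first yields $\|v\|' \le M\|v\|$, which is the desired inequality.

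This asymmetry between equality on the $\n$-side and inequality on the $\n'$-side is the whole point: orthogonality is a strong structural hypothesis on the reference norm, while $\n'$ need only satisfy the ultrametric axiom. I do not foresee any genuine obstacle; the only mild subtlety is checking that the argument still makes sense when $\n'$ is a seminorm (possibly with $\|e_i\|' = 0$ for some $i$), but this causes no issue since the factors $\|e_i\|'/\|e_i\|$ and $\|e_i\|'$ remain well-defined and non-negative throughout. Once this direct computation is written out, the lemma follows, and as noted in the preceding proof of Lemma~\ref{lem:distdiag}, applying this result both to the pair $(\n,\n')$ and to the pair $(\n',\n)$ will recover the symmetric formula for $\dGI(\n,\n')$ in the non-Archimedean case.
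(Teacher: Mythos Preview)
Your proof is correct and follows essentially the same argument as the paper: both establish the trivial inequality $\ge$ by evaluating at the basis vectors, and then prove $\le$ via the chain $\|v\|'\le\max_i|\alpha_i|\|e_i\|'\le M\max_i|\alpha_i|\|e_i\|=M\|v\|$, using the ultrametric inequality for $\n'$ and orthogonality for $\n$.
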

\begin{proof} As above, we trivially have $\sup_{v\in V\setminus\{0\}}\|v\|'/\|v\|\ge m:=\max_i\|e_i\|'/\|e_i\|$, and $v=\sum_i a_i e_i$ satisfies $\|v\|'\le\max_i| a_i|\|e_i\|'\le m\max_i| a_i|\|e_i\|=m\|v\|$. 
\end{proof}
We now discuss in more detail the structure of the set $\cN^\diag(V)$ of diagonalizable norms. To each basis $\be=(e_i)$ of $V$ is associated an injective map
$$
\iota_\be:\R^N\hookrightarrow\cN^\diag(V),
$$
which takes $\la\in\R^N$ to the unique norm $\n_{\be,\la}$ that is diagonalized in $(e_i)$ and such that $\|e_i\|_{\bf e,\la}=e^{-\la_i}$. The image 
$$
\A_\be:=\iota_\be(\R^N)\subset\cN^\diag(V)
$$
is thus the set of norms that are diagonalized in the given basis $\be$, and is called an \emph{apartment} (or \emph{flat}) of $\cN^\diag(V)$. By definition, $\cN^\diag(V)=\bigcup_\be\A_\be$, and the Goldman--Iwahori metric $\dGI$ can then be characterized as follows. 

\begin{prop}\label{prop:GIisom} The restriction of $\dGI$ to $\cN^\diag(V)$ is the unique metric such that each $\iota_\be:\R^N\hookrightarrow\cN^\diag(V)$ becomes an isometric embedding with respect to the $\ell^\infty$-norm on $\R^N$. 
\end{prop}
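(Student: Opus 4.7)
The plan is to establish the proposition in two steps: first check that $\dGI$ itself makes each $\iota_\be$ an isometric embedding (\emph{existence}), and then derive uniqueness from the fact that any two diagonalizable norms lie in a common apartment.

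For the existence part, I would apply Lemma~\ref{lem:distdiag} directly. Given $\lambda,\mu\in\R^N$, the norms $\n_{\be,\lambda}$ and $\n_{\be,\mu}$ are by construction both diagonalized in the basis $\be=(e_i)$, with $\|e_i\|_{\be,\lambda}=e^{-\lambda_i}$ and $\|e_i\|_{\be,\mu}=e^{-\mu_i}$. Hence
$$
\dGI\bigl(\iota_\be(\lambda),\iota_\be(\mu)\bigr)=\max_i\left|\log\frac{e^{-\lambda_i}}{e^{-\mu_i}}\right|=\max_i|\lambda_i-\mu_i|=\|\lambda-\mu\|_\infty,
$$
both in the Archimedean and non-Archimedean cases. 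This shows that $\iota_\be$ is an isometric embedding for $\dGI$.

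For uniqueness, suppose $d$ is any metric on $\cN^\diag(V)$ making every $\iota_\be$ an isometric embedding with respect to $\|\cdot\|_\infty$. Given two diagonalizable norms $\n,\n'\in\cN^\diag(V)$, I would invoke the codiagonalization result (Proposition~\ref{prop:codiag}) to produce a basis $\be$ that is orthogonal for both, so that $\n=\iota_\be(\lambda)$ and $\n'=\iota_\be(\mu)$ for suitable $\lambda,\mu\in\R^N$. The assumed isometry property of $\iota_\be$ then forces
$$
d(\n,\n')=\|\lambda-\mu\|_\infty=\dGI(\n,\n'),
$$
the second equality being the existence part already proved. Since $\n,\n'$ were arbitrary, $d=\dGI$ on $\cN^\diag(V)$, which concludes the proof.

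There is no real obstacle here once the two preceding results are in hand: the whole content of the statement is packaged into Lemma~\ref{lem:distdiag} (which turns the computation of $\dGI$ on an apartment into an $\ell^\infty$-maximum) and Proposition~\ref{prop:codiag} (which guarantees that apartments cover pairs of points). The only subtlety worth flagging is that well-definedness of $\|\lambda-\mu\|_\infty$ in the uniqueness argument is not entirely tautological, as different codiagonalizing bases a priori yield different $\lambda,\mu$; but this is automatic from the existence part, since $\dGI(\n,\n')$ equals that quantity for \emph{any} such choice, so the value is intrinsic.
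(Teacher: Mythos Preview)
Your proof is correct and follows exactly the same approach as the paper: the isometric embedding property comes from Lemma~\ref{lem:distdiag}, and uniqueness from codiagonalization (Proposition~\ref{prop:codiag}). The paper's own proof is just a one-sentence version of what you wrote.
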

\begin{proof} Each $\iota_\be$ is an isometric embedding by Lemma~\ref{lem:distdiag}, and uniqueness follows from the fact that any two points of $\cN^\diag(V)$ belong to the image of some $\iota_\be$, by codiagonalization (Proposition~\ref{prop:codiag}).
\end{proof}

This picture will be generalized to any symmetric norm on $\R^N$ (and in particular to the $\ell^2$-norm) in Section~\ref{sec:building}, leading to the description of $\cN^\diag(V)$ as a Riemannian symmetric space/Euclidian building. In the present setting, the general construction of retractions onto an apartment in building theory specializes as follows (compare~\cite{Ger}). 

\begin{defi}\label{defi:retraction} Let $\be=(e_i)$ be a basis of $V$, with apartment $\A_\be=\iota_\be(\R^N)\subset\cN^\diag(V)$. The \emph{Gram--Schmidt projection} $\rho_\be:\cN(V)\to\A_\be$ is defined by sending a norm $\n$ to the unique norm $\n_\be$ that is diagonalized in $\be$ and such that
$$
\|e_i\|_\be=\inf_{ a\in K^N}\left\|e_i+\sum_{j<i} a_j e_j\right\|
$$
for $i=1,\ldots,N$. 
\end{defi}
Setting $W_i:=\Vect(e_1,\ldots,e_i)$ defines a complete flag $W_\bullet$ in $V$, and $\n$ induces a subquotient norm on each graded piece $W_i/W_{i-1}$, and hence a diagonalizable norm on the graded object $\Gr V=\bigoplus_{1\le i\le N} W_i/W_{i-1}$. The norm $\n_\be$ can then be described as the corresponding norm on $V$ under the isomorphism $V\simeq\Gr V$ defined by $(e_i)$. It is straightforward to see that $\rho_\be:\cN(V)\to\A_\be$ is a retraction, \ie restricts to the identity on $\A_\be$. 

The chosen terminology comes from the Archimedean case, where the Gram--Schmidt orthogonalization process associates to a Euclidian/Hermitian norm $\n$ and a basis $(e_i)$ the orthogonal basis $(e'_i)$ obtained by projection of each $e_i$ orthogonal to $W_{i-1}$, which satisfies $\|e_i\|_\be=\|e'_i\|$. 

%
%
\subsection{Approximation by diagonalizable norms}\label{sec:dense}
The goal of this section is to study the closure in $\cN(V)$ of the set $\cN^\diag(V)$ of diagonalizable norms. 

\begin{thm}\label{thm:diag} The space of diagonalizable norms $\cN^\diag(V)$ satisfies the following properties. 
\begin{itemize}
\item[(i)] If $K$ is Archimedean, then $\cN^\diag(V)$ is closed in $\cN(V)$, and each norm $\n\in\cN(V)$ is at distance at most $\tfrac 12\log N$ of $\cN^\diag(V)$. 
\item[(ii)] If $K$ is non-Archimedean, then $\cN^\diag(V)$ is dense in $\cN(V)$.
\end{itemize}
\end{thm} 

Closedness in (i) follows from the fact that Euclidian/Hermitian norms are characterized by the pararallelogram law 
$$
\|u+v\|^2+\|u-v\|^2=2\|u\|^2+2\|v\|^2,  
$$ 
and the second half of (i) can be deduced from the John ellipsoid theorem; one can also use the simpler Auerbach lemma, whose proof will be basically repeated below. Density in (ii) is equivalent to the existence of $\a$-cartesian bases in the sense of~\cite[2.6.1/3]{BGR}, which will be recovered below by imitating the Auerbach argument.\\

Denote by $V^\vee$ the dual of $V$, and by $\det V^\vee=\bigwedge^N V^\vee$ its determinant line. Viewing an element $\om\in\det V^\vee$ as a multilinear form on $V$, we define its \emph{operator norm} as
$$
\|\om\|_\op:=\sup_{(v_1,\ldots,v_N)\in (V\setminus\{0\})^N}\frac{|\om(v_1,\ldots,v_N)|}{\|v_1\|\cdot\ldots\cdot\|v_N\|}. 
$$
This supremum is indeed finite by equivalence of norms. 

\begin{lem}\label{lem:Auer} Let $\n$ be a norm on $V$, and pick a nonzero $\om\in\det V^\vee$. For each basis $(e_i)$ of $V$ and all $ a\in K^N$, we then have 
$$
\max_i\| a_i e_i\|\le\left(\frac{\|\om\|_\op\|e_1\|\cdot\ldots\cdot\|e_N\|}{|\om(e_1,\ldots,e_N)|}\right)\|\sum_i a_i e_i\|. 
$$
\end{lem}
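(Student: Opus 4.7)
The plan is to exploit the multilinear alternating nature of $\om\in\det V^\vee$ to isolate each coefficient $\a_k$ individually. Set $v:=\sum_i\a_i e_i$. For a fixed index $k\in\{1,\dots,N\}$, I will substitute $v$ into the $k$-th slot of $\om$ and consider the quantity
\[
\om(e_1,\dots,e_{k-1},v,e_{k+1},\dots,e_N).
\]
Expanding $v=\sum_i\a_i e_i$ by multilinearity and using that $\om$ is alternating, every term with $i\ne k$ contains a repeated entry $e_i$ and therefore vanishes. Only the $i=k$ term survives, giving
\[
\om(e_1,\dots,e_{k-1},v,e_{k+1},\dots,e_N)=\a_k\,\om(e_1,\dots,e_N).
\]

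Next I will estimate the left-hand side using the definition of the operator norm $\|\om\|_\op$: replacing the $k$-th argument $e_k$ by $v$ in the general bound
\[
|\om(v_1,\dots,v_N)|\le\|\om\|_\op\,\|v_1\|\cdots\|v_N\|
\]
yields
\[
|\a_k|\,|\om(e_1,\dots,e_N)|\le\|\om\|_\op\,\|v\|\prod_{i\ne k}\|e_i\|.
\]
Multiplying both sides by $\|e_k\|$ and rearranging produces
\[
\|\a_k e_k\|=|\a_k|\|e_k\|\le\frac{\|\om\|_\op\,\|e_1\|\cdots\|e_N\|}{|\om(e_1,\dots,e_N)|}\,\|v\|,
\]
which is exactly the claimed inequality for the fixed index $k$. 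Taking the maximum over $k=1,\dots,N$ on the left-hand side (the right-hand side being independent of $k$) concludes the proof.

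There is essentially no obstacle here: the argument is a direct computation, valid uniformly for Archimedean and non-Archimedean $K$, since neither step uses the ultrametric inequality or any completeness hypothesis beyond what is needed to make $\|\om\|_\op$ finite (ensured by Proposition~\ref{prop:equiv}). The denominator $|\om(e_1,\dots,e_N)|$ is nonzero because $(e_i)$ is a basis and $\om$ is nonzero, so the inequality is meaningful. This lemma will presumably be used immediately afterwards to establish assertion (i) of Theorem~\ref{thm:diag}: by choosing $(e_i)$ so as to maximize $|\om(e_1,\dots,e_N)|/(\|e_1\|\cdots\|e_N\|)$ (an Auerbach-type extremal basis), the prefactor becomes $\le 1$ in the suitable normalization, turning the estimate into a quasi-orthogonality statement that places the diagonalized norm $\iota_\be(0)$ at distance at most $\log N$ from $\n$.
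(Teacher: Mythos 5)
Your proof is correct and is essentially the paper's own argument: the paper phrases the key identity via the dual basis formula $\langle e_k^\vee,v\rangle=\om(e_1,\dots,e_{k-1},v,e_{k+1},\dots,e_N)/\om(e_1,\dots,e_N)$, which is exactly your observation that only the $\a_k$-term survives by alternation, followed by the same operator-norm estimate. No substantive difference.
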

\begin{proof} The dual basis $(e_i^\vee)$ satisfies 
$$
\langle e_i^\vee,v\rangle=\frac{\om(e_1,\ldots,e_{i-1},v,e_{i+1},\ldots,e_N)}{\om(e_1,\ldots,e_N)}, 
$$
and hence
$$
\max_i|\langle e_i^\vee,v\rangle|\|e_i\|\le\left(\frac{\|\om\|_\op\|e_1\|\dots\|e_N\|}{|\om(e_1,\ldots,e_N)|}\right)\|v\|,
$$
which is equivalent to the desired result. 
\end{proof}

\begin{proof}[Proof of Theorem~\ref{thm:diag}] Assume first that $K$ is Archimedean. As noted above, closedness in (i) follows from the characterization of diagonalizable norms in terms of the parallelogram law. Let $\n$ be any norm on $V$, and fix a nonzero determinant $\om\in\det V^\vee$. By compactness, we may choose a basis $(e_i)$ of $V$ with $\|e_i\|=1$ and $\|\om\|_\op=|\om(e_1,\ldots,e_N)|$. For each $p\in[1,\infty]$, denote by $\n_p$ the $\ell^p$-norm in the basis $(e_i)$. Lemma~\ref{lem:Auer} and the triangle inequality yield $\n_\infty\le\n\le\n_1$. Since $N^{-1/2}\n_1\le\n_2\le N^{1/2}\n_\infty$, it follows that $N^{-1/2}\n_2\le\n\le N^{1/2}\n_2$, and hence $\dGI(\n,\n_2)\le\tfrac12\log N$, which proves (i) since $\n_2\in\cN^\diag(V)$. 

Assume now that $K$ is non-Archimedean, and pick any norm $\n\in\cN(V)$. For any $\e>0$, there exists a basis $(e_i)$ such that 
$$
\|\om\|_\op\le(1+\e)\frac{|\om(e_1,\ldots,e_N)|}{\|e_1\|\dots\|e_N\|}, 
$$
and Lemma~\ref{lem:Auer} yields 
\begin{equation}\label{equ:Auer}
\|\sum_i a_i e_i\|\le\max_i\| a_i e_i\|\le(1+\e)\|\sum_i a_i e_i\|
\end{equation}
for all $ a\in K^N$. Denoting by $\n'$ the norm diagonalized in $(e_i)$ and such that $\|e_i\|'=\|e_i\|$, we infer  $\n\le\n'\le(1+\e)\n$, hence $\dGI\left(\n,\n'\right)\le\log(1+\e)$, and $\n$ is thus in the closure of $\cN^\diag(V)$. 
\end{proof}
%
%
\subsection{Duality}\label{sec:duality}
To each norm $\|\cdot\|$ on $V$ is associated a dual norm $\|\cdot\|^\vee$ on the dual vector space $V^\vee$, defined by the usual formula 
$$
\|\mu\|^\vee=\sup_{v\in V\setminus\{0\}}\frac{|\langle\mu,v\rangle|}{\|v\|}.
$$
Again, the supremum is finite by equivalence of norms. 

\begin{thm}\label{thm:dual} The duality map $\cN(V)\to\cN(V^\vee)$ is an involutive isometry with respect to the Goldman--Iwahori distances. 
\end{thm}

\begin{lem}\label{lem:dualdiag} If $\n\in\cN^\diag(V)$ is diagonalizable, then so is $\n^\vee$. Further, if $(e_i)$ is an orthogonal basis for $\n$, then the dual basis $(e_i^\vee)$ is orthogonal for $\n^\vee$, and $\|e_i^{\vee}\|^\vee=\|e_i\|^{-1}$. 
\end{lem}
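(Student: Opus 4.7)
The plan is to compute $\|\mu\|^\vee$ explicitly for an arbitrary element $\mu\in V^\vee$ written in the dual basis, and verify that it has the expected diagonal form. The assertion $\|e_i^\vee\|^\vee=\|e_i\|^{-1}$ will then follow by specialization to $\mu=e_i^\vee$, so the whole content reduces to a one-line inequality plus identification of the case of equality, handled separately in the Archimedean and non-Archimedean settings.

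Concretely, I would fix an orthogonal basis $(e_i)$ of $(V,\n)$ and, for an arbitrary $v=\sum_i\a_i e_i\in V$ and $\mu=\sum_i\mu_i e_i^\vee\in V^\vee$, pair them to obtain $\langle\mu,v\rangle=\sum_i\a_i\mu_i$. In the Archimedean case, the Cauchy--Schwarz inequality applied to the tuples $(\a_i\|e_i\|)_i$ and $(\mu_i\|e_i\|^{-1})_i$ gives
$$
\bigl|{\textstyle\sum_i}\a_i\mu_i\bigr|^2\le\Bigl({\textstyle\sum_i}|\a_i|^2\|e_i\|^2\Bigr)\Bigl({\textstyle\sum_i}|\mu_i|^2\|e_i\|^{-2}\Bigr)=\|v\|^2{\textstyle\sum_i}|\mu_i|^2\|e_i\|^{-2},
$$
with equality achieved (over $\R$ or $\C$) by choosing $\a_i$ proportional to $\overline{\mu_i}\|e_i\|^{-2}$. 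Taking the supremum over $v$ yields $(\|\mu\|^\vee)^2=\sum_i|\mu_i|^2\|e_i\|^{-2}$, which is exactly the expansion of $\n^\vee$ diagonalized in $(e_i^\vee)$ with $\|e_i^\vee\|^\vee=\|e_i\|^{-1}$.

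In the non-Archimedean case, the ultrametric inequality together with Definition~\ref{defi:diag}(ii) gives
$$
\bigl|{\textstyle\sum_i}\a_i\mu_i\bigr|\le\max_i|\a_i||\mu_i|\le\bigl(\max_i|\a_i|\|e_i\|\bigr)\cdot\bigl(\max_i|\mu_i|\|e_i\|^{-1}\bigr)=\|v\|\cdot\max_i|\mu_i|\|e_i\|^{-1},
$$
and hence $\|\mu\|^\vee\le\max_i|\mu_i|\|e_i\|^{-1}$. The reverse inequality is immediate by evaluating at $v=e_i$, which gives $\|\mu\|^\vee\ge|\mu_i|\|e_i\|^{-1}$ for each $i$. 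Thus $\|\mu\|^\vee=\max_i|\mu_i|\|e_i\|^{-1}$, which precisely expresses that $(e_i^\vee)$ is an orthogonal basis for $\n^\vee$ in the sense of Definition~\ref{defi:diag}(ii), with $\|e_i^\vee\|^\vee=\|e_i\|^{-1}$.

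I do not anticipate a genuine obstacle: the whole statement is a direct computation from the definitions. The only minor point worth noting is that in the Archimedean case one must exhibit an explicit maximizer for Cauchy--Schwarz (to rule out a strict supremum), but this is entirely standard; and in the non-Archimedean case the upper and lower bounds match because the basis vectors $e_i$ themselves already attain the relevant suprema.
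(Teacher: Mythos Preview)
Your proof is correct and follows essentially the same approach as the paper: the paper dismisses the Archimedean case as ``well-known'' (which amounts to your Cauchy--Schwarz argument) and reduces the non-Archimedean case to Lemma~\ref{lem:supdiag}, whose proof is precisely the inequality chain you wrote out. In effect you have unpacked both references into the underlying elementary computations.
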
 
\begin{proof} In the Archimedean case, $\n$ is Euclidian/Hermitian, and the result is well-known. In the non-Archimedean case, the result is a simple consequence of Lemma~\ref{lem:supdiag}. 
\end{proof}

\begin{proof}[Proof of Theorem~\ref{thm:dual}] It is straightforward to see that $\n\mapsto\n^\vee$ is $1$-Lipschitz, it is enough to show that $(\n^\vee)^\vee=\n$ for all $\n\in\cN(V)$. In the Archimedean case, this is a consequence of the Hahn--Banach theorem. In the non-Archimedean case, it follows from Lemma~\ref{lem:dualdiag} and the density of diagonalizable norms in $\cN(V)$ (Theorem~\ref{thm:diag}). 
\end{proof}

\begin{lem}\label{lem:dualsub} If $W\subset V$ is a linear subspace, the canonical embedding $(V/W)^\vee\hookrightarrow V^\vee$ identifies the dual of the quotient norm $\n_{V/W}$ with the restriction of $\n^\vee$. 
\end{lem}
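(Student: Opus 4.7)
The plan is to verify the equality of norms directly from the defining suprema, using the fact that the canonical embedding $(V/W)^\vee \hookrightarrow V^\vee$ sends a functional $\bar\mu$ on $V/W$ to $\mu := \bar\mu \circ \pi$, where $\pi : V \to V/W$ is the quotient map. In particular, $\mu$ vanishes on $W$, and $\langle \mu, v\rangle = \langle \bar\mu, \bar v\rangle$ for every $v \in V$ with image $\bar v$.

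For the inequality $\|\bar\mu\|_{V/W}^\vee \le \|\mu\|^\vee$, I would fix $v \in V$ with $\bar v \ne 0$ and, using the $W$-invariance of $\langle\mu,\cdot\rangle$, write for every $w \in W$
\[
\frac{|\langle \bar\mu, \bar v\rangle|}{\|v+w\|} \;=\; \frac{|\langle \mu, v+w\rangle|}{\|v+w\|} \;\le\; \|\mu\|^\vee .
\]
Taking the supremum over $w \in W$ replaces $\|v+w\|$ by $\inf_{w\in W}\|v+w\| = \|\bar v\|_{V/W}$, and then taking the supremum over $\bar v \ne 0$ gives the claimed bound.

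For the reverse inequality $\|\mu\|^\vee \le \|\bar\mu\|_{V/W}^\vee$, I would observe that the contribution of vectors $v \in W$ to the supremum defining $\|\mu\|^\vee$ is zero, since $\langle\mu,v\rangle = 0$ on $W$. For $v \notin W$, the bound $\|\bar v\|_{V/W} \le \|v\|$ (obtained by taking $w = 0$ in the infimum) yields
\[
\frac{|\langle \mu, v\rangle|}{\|v\|} \;=\; \frac{|\langle \bar\mu, \bar v\rangle|}{\|v\|} \;\le\; \frac{|\langle \bar\mu, \bar v\rangle|}{\|\bar v\|_{V/W}} \;\le\; \|\bar\mu\|_{V/W}^\vee,
\]
and taking the supremum concludes the argument.

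There is no real obstacle here: the statement is a formal consequence of the two defining suprema and the fact that $\mu$ kills $W$. The only mild subtlety is that the inf in the definition of the quotient norm turns into a sup in the dual computation, but this is handled automatically by the invariance of $\langle\mu,\cdot\rangle$ along $W$ used in the first step.
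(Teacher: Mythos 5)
Your proof is correct and follows essentially the same route as the paper's: both directions are obtained by comparing the two defining suprema, using $\|\bar v\|_{V/W}\le\|v\|$ together with $\langle\mu,\cdot\rangle|_W=0$ for one inequality, and the $W$-invariance $\langle\mu,v\rangle=\langle\mu,v+w\rangle$ followed by taking the supremum over $w\in W$ (turning the infimum in the quotient norm into the denominator) for the other. No gaps.
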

\begin{proof} The image of $(V/W)^\vee$ in $V^\vee$ is the space $W^\perp$ of linear forms $\mu\in V^\vee$ that vanish on $W$. Denoting by $\bar v\in V/W$ the image of $v\in V$ and by $\tilde\mu\in V^\vee$ the image of $\mu\in (V/W)^\vee$, we have by definition
$$
\|\tilde\mu\|^\vee=\sup_{v\in V\setminus\{0\}}\frac{|\langle\tilde\mu,v\rangle|}{\|v\|}
$$
and
$$
\|\mu\|_{V/W}^\vee=\sup_{v\in V- W}\frac{|\langle\tilde\mu,v\rangle|}{\|\bar v\|_{V/W}}. 
$$

Since $\|\bar v\|_{V/W}=\inf_{w\in W}\|v+w\|\le\|v\|$ and $\langle\tilde\mu,v\rangle=0$ for $v\in W$, we trivially have $\|\tilde\mu\|^\vee\le\|\mu\|_{V/W}^\vee$. Conversely, we have for each $v\in V- W$ and $w\in W$ 
$$
\frac{|\langle\tilde\mu,v\rangle|}{\|v+w\|}=\frac{|\langle\tilde\mu,v+w\rangle|}{\|v+w\|}\le\|\tilde\mu\|^\vee, 
$$
hence 
$$
\frac{|\langle\tilde\mu,v\rangle|}{\|\bar v\|_{V/W}}=\sup_{w\in W}\frac{|\langle\tilde \mu,v\rangle|}{\|v+w\|}\le\|\tilde\mu\|^\vee, $$
and taking the supremum over $v$ yields the inequality in the other direction $\|\mu\|_{V/W}^\vee \le \|\tilde \mu\|^\vee$ and we conclude.
\end{proof}

We are now in a position to prove the codiagonalization result promised in Proposition~\ref{prop:codiag}. 

\begin{proof}[Proof of Proposition~\ref{prop:codiag}] That any two diagonalizable norms $\n,\n'\in\cN^\diag(V)$ are codiagonalizable is a standard fact in the Archimedean case, and we henceforth assume that $K$ is non-Archimedean. Our argument extends the classical one of~\cite{GI}, which treats the case of a local field, following a suggestion of Marco Maculan, whom we warmly thank. Recall that a direct sum decomposition $V=V_1\oplus\dots\oplus V_r$ is \emph{orthogonal for $\n$} if 
$$
\left\|\sum_i v_i\right\|=\max_i\|v_i\|
$$
for all $v_i\in V_i$. Given $v\in V$ and a linear form $\mu\in V^\vee$ with $\langle\mu,v\rangle\ne 0$, it is straightforward to check that the decomposition $V=Kv\oplus\Ker\mu$ is orthogonal for $\n$ if and only if 
$$
\frac{|\langle\mu,w\rangle|}{|\langle\mu,v\rangle|}\le\frac{\|w\|}{\|v\|}
$$
for all $w\in V$. Arguing by induction on $\dim V$, we will thus be done if we prove the existence of $v\in V$ and $\mu\in V^\vee$ with $\langle \mu,v\rangle\ne 0$ such that
\begin{equation}\label{equ:diagsup}
\frac{|\langle\mu,w\rangle|}{|\langle\mu,v\rangle|}\le\frac{\|w\|}{\|v\|}\le\frac{\|w\|'}{\|v\|'}
\end{equation}
for all $w\in V$, since $V=Kv\oplus\ker\mu$ will then be orthogonal for both $\n$ and $\n'$. Let $(e'_i)$ be an orthogonal basis for $\n'$. By Lemma~\ref{lem:supdiag}, we have  
$$
\sup_{w\in W\setminus\{0\}}\frac{\|w\|}{\|w\|'}=\frac{\|e'_i\|}{\|e'_i\|'}
$$
for some $i$, and $v:=e'_i$ therefore satisfies $\|w\|/\|v\|\le\|w\|'/\|v\|'$ for all $w\in V$. Let now $(e_j)$ be an orthogonal basis for $\n$. Since the dual basis $(e_j^\vee)$ is orthogonal for the dual norm $\n^\vee$, we similarly get 
$$
\|v\|=\sup_{\mu\in V^\vee\setminus\{0\}}\frac{|\langle\mu,v\rangle|}{\|\mu\|^\vee}=\frac{|\langle e_j^\vee,v\rangle|}{\|e_j^\vee\|^\vee}
$$
for some $j$. It follows that $\mu:=e_j^\vee$ satisfies 
$$
|\langle\mu,v\rangle|=\|\mu\|^\vee\|v\|\ge\frac{|\langle\mu,w\rangle|}{\|w\|}\|v\|
$$
for all $w\in V\setminus\{0\}$, and (\ref{equ:diagsup}) follows. 
\end{proof}

%
%
\subsection{Ground field extension}\label{sec:ground}
Let $V$ be a finite dimensional $K$-vector space $V$, $F/K$ be a complete field extension, and set $V_F:=V\otimes_K F$. When $K$ is Archimedean, the only nontrivial case is $K=\R$, $F=\C$, by the Gelfand--Mazur theorem. 

\begin{defi}\label{defi:ground} The \emph{ground field extension} of a norm $\n$ on $V$ is the norm $\n_F$ on $V_F$ defined by
\begin{itemize}
\item $\|w\|_F:=\inf\sum_i |b_i|\|v_i\|$ if if $K$ is Archimedean; 
\item $\|w\|_F:=\inf\max_i |b_i|\|v_i\|$ if $K$ is non-Archimedean; 
\end{itemize}
where the infimum ranges in both cases over all decompositions $w=\sum_i b_i v_i$ with $b_i\in F$ and $v_i\in V$.
\end{defi}

\begin{prop}\label{prop:ground} Let $F/K$ be a complete field extension. 
\begin{itemize}
\item[(i)] For any norm $\n$ on $V$, the restriction of $\n_F$ to $V$ coincides with $\n$, and $\n_F$ is the maximal norm on $V_F$ with this property. 
\item[(ii)] The map $\cN(V)\to\cN(V_F)$ $\n\mapsto\n_F$ is an isometric embedding with respect to the Goldman--Iwahori distances.
\item[(iii)] If $K=\R$ and $F=\C$, then $\n_\C$ is conjugation invariant, and any conjugation invariant norm $\n'$ on $V_\C$ that coincides with $\n$ on $V$ satisfies 
$$
\tfrac 12\n_\C\le\n'\le\n_\C.
$$
\item[(iv)] Assume that $K$ is non-Archimedean, and let $\n$ be a diagonalizable norm with orthogonal basis $(e_i)$. Then $\n_F$ is also a diagonalizable norm with orthogonal basis $(e_i)$, viewed as a basis of $V_F$, and $\|e_i\|_F=\|e_i\|$. 
\item[(v)] Assume further that $\tilde F=\tilde K$ and 
\begin{equation}\label{equ:normindep}
\left\{\|e_i\|/\|e_i\|\mid 1\le i,j\le N\right\}\cap|F^\times|=\{1\}.
\end{equation}
Then $\n_F$ is the only norm that extends $\n$ to $V_F$. 
\end{itemize}
\end{prop}
Note that (v) is direct generalization of~\cite[Lemma 1.12]{CM}. 

\begin{proof} (i) is immediate, and implies for any two norms $\n,\n'$ on $V$
$$
\dGI(\n_F,\n'_F)\ge\sup_{v\in V\setminus\{0\}}|\log\|v\|-\log\|v\|'|=\dGI(\n,\n').
$$
The converse inequality $\dGI(\n_F,\n'_F)\le\dGI(\n,\n')$ is straightforward from the definition, hence (ii). 

To prove (iii), pick any conjugation invariant norm $\n'$ on $V_\C$ that coincides with $\n$ on $V$. For each $w\in V_\C$ we then have $\|w\|'=\|\bar w\|'$, hence $\|\Rea w\|=\|\Rea w\|'=\|(w+\bar w)/2\|'\le\|w\|'$, $\|\Ima w\|=\|\Ima w\|'\le\|w\|'$. Since $w=\Rea w+i\Ima w$, we infer
$$
\|w\|_\C\le\|\Rea w\|+\|\Ima w\|\le 2\|w\|'.
$$
To see (iv), pick $w\in V_F$, and write $w=\sum_j b_j e_j$ with $b_j\in F$. By definition, $\|w\|_F\le\max_j|b_j|\|e_j\|$. Conversely, pick any decomposition $v=\sum_i b_i v_i$ with $b_i\in F$ and $v_i\in V$, and write $v_i=\sum_j a_{ij} e_j$ with $a_{ij}\in K$. Then $v=\sum_j c_j e_j$ with $c_j=\sum_i b_i b_{ij}$, and we need to show that
$$
\max_j |c_j|\|e_j\|\le\max_i |b_i|\|v_i\|.
$$
This follows indeed from $\|v_i\|=\max_j |a_{ij}|\|e_j\|$ and $|c_j|\le\max_i|b_i||a_{ij}|$. 

Now assume that $\tilde F=\tilde K$ and~\eqref{equ:normindep} hold, and pick an extension $\n'$ of $\n$ to $V_F$. Given a nonzero tuple $(b_i)$ in $F$, we need to show that 
$$
\|\sum_i b_i e_i\|'=c:=\max_i|b_i|\|e_i\|.
$$
After reindexing, we may assume that 
$$
c=|b_1|\|e_1\|=\dots=|b_r|\|e_r\|>|b_{r+1}|\|e_{r+1}\|\ge\dots
$$
For $1\le i\le r$, we have $\|e_i\|/\|e_1\|=|b_1 b_i^{-1}|\in |F^\times|$, and~\eqref{equ:normindep} thus implies $\|e_i\|=\|e_1\|$, and hence also $|b_i b_1^{-1}|=1$. Since $\tK=\tF$, we can pick a unit $u_i\in K^\circ$ such that $|b_ib_1^{-1}-u_i|<1$. 
Set $v_1:=b_1\sum_{i\le r} u_i e_i$, $v_2=\sum_{i\le r}(b_i-b_1 u_i)e_i$, and $v_3:=\sum_{i>r} b_i e_i$, so that $v_1+v_2+v_3=\sum_i b_i e_i$. 
Then 
$$
\|v_1\|'=|b_1|\|\sum_{i\le r} u_i e_i\|'=|b_1|\|\sum_{i\le r} u_i e_i\|=|b_1|\max_{i\le r}|u_i|\|e_i\|=c,
$$
$$
\|v_2\|'<|b_1|\max_{i\le r}\|e_i\|=c,
$$
and 
$$
\|v_3\|'\le\max_{i>r}|b_i|\|e_i\|<|b_1|\|e_1\|=c, 
$$
and we infer as desired $\|\sum_i b_i e_i\|'=\|v_1+v_2+v_3\|'=c$. 
\end{proof}

As in the proof of~\cite[Theorem 4.1]{CM}, we infer: 

\begin{lem}\label{lem:CM} Assume that $K$ is trivially valued, and let $(V_i,\n_i)_{i\in I}$ be an at most countable family of norms on finite dimensional $K$-vector spaces. We may then find a complete extension $F/K$ with $F$ nontrivially valued such that for each $ i\in I$, $(\n_i)_F$ is the only norm on $(V_i)_F$ that coincides with $\n_i$ on $V_i$. 
\end{lem}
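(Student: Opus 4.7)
The plan is to exploit the fact that, because $K$ is trivially valued, every $\n_\a$ is automatically diagonalizable on $V_\a$: an ultrametric norm over a trivially valued field is determined by a flag (cf.\ Example~\ref{exam:filtr}), and any basis adapted to this flag is orthogonal. Fix such an orthogonal basis $(e_i^{(\a)})_i$ of $V_\a$ for $\n_\a$ and set $r_i^{(\a)} := \|e_i^{(\a)}\|_\a$. By Lemma~\ref{lem:diagground}, this same basis is orthogonal for $\n_{\a,K'}$ with the same norms for any complete extension $K'/K$, so for $v' = \sum_i \a'_i\,e_i^{(\a)} \in V_{\a,K'}$ one has $\|v'\|_{\a,K'} = \max_i |\a'_i|\,r_i^{(\a)}$. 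If $\n'$ is any ultrametric norm on $V_{\a,K'}$ restricting to $\n_\a$, then $\|e_i^{(\a)}\|' = r_i^{(\a)}$, and the ultrametric triangle inequality gives the a priori upper bound $\|v'\|' \le \max_i |\a'_i|\,r_i^{(\a)} = \|v'\|_{\a,K'}$. The matching lower bound comes from the sharp form of the ultrametric inequality: whenever the nonzero summands $\a'_i\,e_i^{(\a)}$ have pairwise distinct $\n'$-norms, a standard induction on the number of summands forces $\|\sum_i \a'_i\,e_i^{(\a)}\|' = \max_i |\a'_i|\,r_i^{(\a)}$. My goal is therefore to engineer $K'$ so that this distinctness is automatic for every $\a$ and every $v'$.

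To construct $K'$, let $R := \{r_i^{(\a)}/r_j^{(\a)} : \a\in A,\ i\ne j\} \subset \R_{>0}\setminus\{1\}$; this is a countable set since $A$ is at most countable and each $V_\a$ is finite-dimensional. Choose any $c>1$ such that $\log c$ lies outside the countable subset $\{(\log s)/n : s\in R,\ n\in\Z\setminus\{0\}\}\subset\R$, which is possible because $\R$ is uncountable. Take $K' := K\lau t$ with the $t$-adic absolute value normalized by $|t| = c^{-1}$; this is a nontrivially valued complete extension of $K$ with value group $|K'^*| = c^{\Z}$. By the choice of $c$ we have $c^n \notin R$ for every $n\in\Z\setminus\{0\}$, i.e., $|K'^*| \cap R = \emptyset$.

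The verification is then short. Fix $\a\in A$ and let $\n'$ be an ultrametric norm on $V_{\a,K'}$ restricting to $\n_\a$. If some $v' = \sum_i \a'_i\,e_i^{(\a)}$ had two nonzero coefficients $\a'_i,\a'_j$ ($i\ne j$) with $|\a'_i|\,r_i^{(\a)} = |\a'_j|\,r_j^{(\a)}$, then $|\a'_j/\a'_i| = r_i^{(\a)}/r_j^{(\a)}$ would lie simultaneously in $|K'^*|$ and in $R$, contradicting $|K'^*|\cap R = \emptyset$. Hence the nonzero terms $\a'_i\,e_i^{(\a)}$ have pairwise distinct $\n'$-norms, and the sharp ultrametric equality yields $\|v'\|' = \max_i |\a'_i|\,r_i^{(\a)} = \|v'\|_{\a,K'}$. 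Combined with the upper bound, this forces $\n' = \n_{\a,K'}$.

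The main subtlety is conceptual rather than technical: one must recognize that the freedom in the choice of $K'$ can and must be used to rule out all ``accidental'' coincidences $|\a'_i|\,r_i^{(\a)} = |\a'_j|\,r_j^{(\a)}$, since precisely such coincidences leave room for non-standard ultrametric extensions (in the spirit of the non-maximally complete situation of $\C_p$ alluded to earlier in the text). A purely cardinality-based choice of value group suffices; once $|K'^*|$ has been forced to avoid the countable set of forbidden ratios, the strong ultrametric triangle inequality does all the remaining work.
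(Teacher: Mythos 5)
There is a genuine gap, and it sits exactly where your argument does all its work. Your set $R=\{r_i^{(\a)}/r_j^{(\a)}: i\ne j\}$ is \emph{not} contained in $\R_{>0}\setminus\{1\}$ in general: nothing prevents two orthogonal basis vectors from having equal norms, and this is in fact the typical situation in this paper (the trivial norm, and more generally any pure or lattice-type norm, takes a single value $1$ on all nonzero vectors of an orthonormal basis). When $r_i=r_j$, the coincidence $|\a'_i|r_i=|\a'_j|r_j$ occurs already with $\a'_i=\a'_j=1$, no choice of the value group $|K'^*|$ can rule it out, and your pairwise-distinctness argument only yields the easy inequality $\n'\le\n_{\a,K'}$; the crucial lower bound is not established. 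Nor can the gap be dismissed as technical: the uniqueness statement genuinely fails for some nontrivially valued complete extensions. For instance, for the trivial norm on $K^2$ and $K'=K(u)\lau{t}$, the lattice $\Lambda=K'^\circ\, t^{-1}(1,u)+K'^\circ(0,1)$ strictly contains $K'^{\circ 2}$, yet its lattice norm still restricts to the trivial norm on $K^2$ (a relation $t^{-1}(v_1,v_2)\in\Lambda$ with $v_i\in K$ would force the residue of $u$ to lie in $K$). So the conclusion depends on more than the value group of $K'$; the missing ingredient is a residue-field condition, namely $\widetilde{K'}=K$, which your choice $K'=K\lau{t}$ happens to satisfy but which your proof never invokes.

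The construction can be salvaged, but only by adding an argument for ties. After your choice of $c$ (which should exclude only the ratios $\ne 1$), a coincidence $|\a'_i|r_i=|\a'_j|r_j$ with $i\ne j$ forces $r_i=r_j$ and $|\a'_i|=|\a'_j|$. Factor out a common scalar so that the tying coefficients are units of $K'^\circ$, and use $\widetilde{K'}=K$ (valid for $K\lau t$, whose valuation ring is $K[[t]]$) to write each such unit as an element of $K^*$ plus an element of $tK'^\circ$. The $K$-parts assemble into a single vector of $V_\a$, whose $\n'$-norm is known (it equals its $\n_\a$-norm, by the restriction hypothesis) and equals the expected maximum, while all remaining terms have strictly smaller $\n_{K'}$-norm, hence strictly smaller $\n'$-norm; the sharp ultrametric equality then gives the lower bound. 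Without this step the proof is incomplete precisely in the cases (repeated values) that matter most for the applications in the paper; note also that the paper itself does not prove the lemma but imports it from Chen--Moriwaki, where the role of the residue field is explicit.
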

\begin{proof} For each $i$, pick an orthogonal basis $(e_{ij})_{j\in J_i}$ for $(V_i,\n_i)$ (see Example~\ref{exam:diagtriv}), and consider the finite set 
$$
S_i:=\left\{\log\|e_{ij}\|-\log\|e_{ij'}\|\mid j,j'\in J_i\right\}.
$$
Set $F:=K\lau{t}$, pick $\a\in\R_{>0}$, and endow $F$ with the valuation $v_F$ equal to $\a$ times the $t$-adic valuation. Then $\log|F^\times|=v_F(F^\times)=\a\Z$ and $\tilde F=\tilde K$. According to Proposition~\ref{prop:ground}~(v), it will thus be enough to show that $\a$ can be chosen so that $\a\Z\cap S_i=\{0\}$ for all $i$. Now $\bigcup_i\Q S_i$ is countable, and it is then enough to choose $\a\in\R_{>0}\setminus\bigcup_i \Q S_i$. 
\end{proof}

\subsection{Lattice norms}
In this section, $K$ is non-Archimedean, with associated real-valued valuation $v_K=-\log|\cdot|$. As for any valuation ring, $K^\circ$ has the property that every finitely generated ideal is principal, which implies that a $K^\circ$-module $M$ is flat if and only if it is torsion-free. If $M$ is further finitely generated, then it is free (since $K^\circ$ is local). 

\begin{defi} A \emph{lattice}\footnote{In the densely valued case, the present notion of lattice is more restrictive than the one used in~\cite[\S 1.3.3]{CM}.} of $V$ is a finite $K^\circ$-submodule $\cV$ of $V$ such that $\cV\otimes_{K^\circ}K=V$
\end{defi}
Any lattice of $V$ is thus of the form $\cV=\sum_i K^\circ e_i$ with $(e_i)$ a basis of $V$. A lattice $\cV$ determines a \emph{lattice norm} $\n_\cV$ on $V$, by setting 
$$
\|v\|_\cV:=\inf\left\{|a|\mid a\in K,\,v\in a\cV\right\}. 
$$
\begin{lem}\label{lem:latticenorm} Let $(e_i)$ be a $K^\circ$-basis of a lattice $\cV$ of $V$. Then $(e_i)$ is an orthonormal basis of $\n_\cV$. In particular, $\cV$ coincides with the unit ball of $\n_\cV$. 
\end{lem}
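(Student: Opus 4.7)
The plan is to prove directly the explicit formula
$$
\left\|\sum_i \alpha_i e_i\right\|_\cV = \max_i |\alpha_i|
$$
for all $\alpha=(\alpha_i)\in K^N$. Once this is established, both assertions of the lemma are immediate: orthonormality of $(e_i)$ (with $\|e_i\|_\cV=1$) is the special case where exactly one coordinate is nonzero and equal to $1$; and the identification of $\cV$ with the unit ball of $\n_\cV$ is just the equivalence $v=\sum_i\alpha_i e_i\in\sum_i K^\circ e_i \Longleftrightarrow \max_i|\alpha_i|\le 1$ read off from the formula.

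For the lower bound $\|v\|_\cV\ge\max_i|\alpha_i|$, I would take an arbitrary $\alpha\in K$ with $v\in\alpha\cV$ and write the corresponding element of $\cV$ in the basis $(e_i)$ as $\sum_j\beta_j e_j$ with $\beta_j\in K^\circ$. Uniqueness of coordinates in the basis $(e_i)$ forces $\alpha_i=\alpha\beta_i$ for every $i$, hence $|\alpha_i|=|\alpha|\,|\beta_i|\le|\alpha|$. Taking the maximum over $i$ and then the infimum over admissible $\alpha$ yields the desired inequality.

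For the reverse bound, assuming $v\neq 0$, I would pick an index $i_0$ attaining $|\alpha_{i_0}|=\max_i|\alpha_i|$ and write
$$
v = \alpha_{i_0}\sum_i\frac{\alpha_i}{\alpha_{i_0}}\,e_i.
$$
Each ratio $\alpha_i/\alpha_{i_0}$ lies in $K^\circ$ by choice of $i_0$, so the sum on the right belongs to $\cV$; this exhibits $v\in\alpha_{i_0}\cV$ and gives $\|v\|_\cV\le|\alpha_{i_0}|$, completing the proof of the formula.

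There is essentially no obstacle. The only subtlety worth highlighting is that since the basis is finite the maximum is always attained, so no appeal to density or surjectivity of $|\cdot|:K^*\to\R_+^*$ is required; this is what makes the argument work uniformly in the trivially valued, discretely valued, and densely valued cases.
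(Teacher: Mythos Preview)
Your proof is correct and follows essentially the same approach as the paper: both establish the explicit formula $\|\sum_i\alpha_i e_i\|_\cV=\max_i|\alpha_i|$ by checking that $v\in\alpha\cV$ is equivalent to $|\alpha_i|\le|\alpha|$ for all $i$, and then read off orthonormality and the unit-ball identification from this. The paper just compresses your two inequalities into a single ``if and only if'' sentence.
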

\begin{proof} Pick $v\in V$, and write $v=\sum_i a_i e_i$ with $ a\in K^N$. Given $ a\in K$, we then have $v\in a\cV$ if and only if $| a_i|\le|a|$, and hence $\|v\|_\cV=\max_i| a_i|$. This means that $(e_i)$ is orthonormal for $\n_\cV$, and also implies that $\cV$ is the unit ball of $\n_\cV$. 
\end{proof}

\begin{lem}\label{lem:latticedense} Denote by $\cN^{\latt}(V)\subset\cN^\diag(V)$ the set of lattice norms. 
\begin{itemize}
\item[(i)] A norm is a lattice norm if and only if it is a pure diagonalizable norm, \ie a norm that admits an orthonormal basis. 
\item[(ii)] If $K$ is trivially valued, then $\cN^{\latt}(V)$ reduces to the trivial norm on $V$. If $K$ is discretely valued, with uniformizing parameter $\pi_K$, then $\cN^{\latt}(V)$ is discrete and closed in $\cN(V)=\cN^\diag(V)$. Further, the (closed) unit ball $B$ of any norm $\n$ is a lattice, whose associated lattice norm $\n_B$ satisfies
$$
\dGI(\n,\n_B)\le  v_K(\pi_K). 
$$ 
\item[(iii)] If $K$ is densely valued, $\cN^{\latt}(V)$ is dense in $\cN^\diag(V)$, and hence also in $\cN(V)$. Further, the unit ball $B$ of a norm $\n$ is a lattice if and only $\n$ is a lattice norm. 
\item[(iv)] Let $\n$ be the lattice norm determined by a lattice $\cV$ of $V$, and $F/K$ be a complete extension. Then the ground field extension $\n_F$ is the lattice norm determined by the lattice $\cV\otimes_{K^\circ} F^\circ$ of $V_F=V\otimes_K F$. 
\end{itemize}
\end{lem}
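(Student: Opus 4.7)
The four parts are largely independent and I would treat them in order.

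For (i), one direction is immediate from Lemma~\ref{lem:latticenorm}. For the converse, if $(e_i)$ is orthonormal for $\n$, I set $\cV:=\sum_i K^\circ e_i$ and verify from the definition of $\n_\cV$ that $\|v\|_\cV=\max_i|a_i|=\|v\|$ for $v=\sum a_ie_i$, using that $v\in\a\cV$ iff each $|a_i|\le|\a|$.

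For (ii), the key point is that two lattice norms $\n,\n'$ can be codiagonalized (Proposition~\ref{prop:codiag}) in a basis $(e_i)$ whose $\n$- and $\n'$-norms lie in $|K^*|$, being attained values of norms taking values in $|K^*|$. Lemma~\ref{lem:distdiag} then forces $\dGI(\n,\n')\in\log|K^*|=v_K(\pi_K)\Z$, which is discrete; in particular distinct lattice norms are at distance at least $v_K(\pi_K)$, so Cauchy sequences are eventually constant, yielding both discreteness and closedness. For the distortion bound in the nontrivial discrete case, I take an orthogonal basis $(e_i)$ for $\n$ (diagonalizability in the discrete case being ensured by \cite[2.4.2/3]{BGR}) and choose $b_i\in K^*$ with $|b_i|$ the maximal element of $|K|$ satisfying $|b_i|\|e_i\|\le 1$. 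The main task is to verify $B=\sum_iK^\circ(b_ie_i)$: the inclusion $\supseteq$ holds since $B$ is a $K^\circ$-module closed under ultrametric addition and contains each $b_ie_i$; for $\subseteq$, any $v=\sum a_ie_i\in B$ satisfies $|a_i|\le 1/\|e_i\|$, hence $|a_i|\le|b_i|$ by maximality (noting $|a_i|\in|K|$), so $a_i/b_i\in K^\circ$. Once $B$ is known to be a lattice, $(b_ie_i)$ is orthonormal for $\n_B$ by Lemma~\ref{lem:latticenorm}, and Lemma~\ref{lem:distdiag} applied in this codiagonalizing basis gives $\dGI(\n,\n_B)=\max_i|\log(|b_i|\|e_i\|)|\le v_K(\pi_K)$, since $|b_i|\|e_i\|\in(|\pi_K|,1]$.

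For (iii), density is obtained by the same rescaling, now exploiting that $|K^*|$ is dense in $\R_+^*$: given $\e>0$, I pick $b_i\in K^*$ with $|b_i|\|e_i\|\in(e^{-\e},1]$, and the lattice $\sum_iK^\circ(b_ie_i)$ produces a lattice norm within distance $\e$ of $\n$ by Lemma~\ref{lem:distdiag}. Density in $\cN^\ultr(V)$ then follows from Theorem~\ref{thm:diag}. For the second assertion, the direction $\Leftarrow$ is Lemma~\ref{lem:latticenorm}; for $\Rightarrow$, I observe that in the densely valued case any ultrametric norm coincides with the norm attached to its unit ball, since for $v\ne 0$ one has $\|v\|_B=\inf\{|\a|:\a\in K^*,\,|\a|\ge\|v\|\}=\|v\|$ by density of $|K^*|$. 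Thus if $B$ happens to be a lattice, $\n=\n_B$ is automatically a lattice norm.

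For (iv), if $(e_i)$ is a $K^\circ$-basis of $\cV$, then by Lemma~\ref{lem:diagground} the family $(e_i)$ is still orthonormal for $\n_{K'}$ (since $\|e_i\|_{K'}=\|e_i\|=1$), so by (i) $\n_{K'}$ is the lattice norm of $\sum_i K'^\circ e_i=\cV_{K'^\circ}$. The main subtlety throughout is the bookkeeping of the trichotomy pure vs.\ non-pure and discrete vs.\ dense, and in particular securing the strict bound $|b_i|\|e_i\|>|\pi_K|$ in (ii), which is what makes the stated distortion estimate hold.
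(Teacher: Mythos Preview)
Your proof is correct and follows essentially the same approach as the paper's. The only cosmetic difference is in part~(ii), where the paper writes the rescaling constants explicitly as $\pi_K^{m_i}$ with $m_i=\lceil\log\|e_i\|/v_K(\pi_K)\rceil$, while you characterize them via maximality of $|b_i|$; these are of course the same elements, and the distortion bound follows identically via Lemma~\ref{lem:distdiag}.
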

\begin{proof} (i) is a direct consequence of Lemma~\ref{lem:latticenorm}, and implies the first part of (ii). Assume that $K$ is discretely valued. Let $\n_\cV\ne\n_{\cV'}$ be two distinct lattice norms, and pick a joint orthogonal basis $(e_i)$ as in Proposition~\ref{prop:codiag}. We then have $\cV=\sum_i K^\circ\pi_K^{m_i} e_i$ and $\cV'=\sum_i K^\circ\pi_K^{m'_i} e_i$ for some integers $m_i,m'_i\in\Z$, and hence
$$
\dGI(\n_\cV,\n_{\cV'})=\max_i\left|\log\frac{\|e_i\|_{\cV}}{\|e_i\|_{\cV'}}\right|
$$
$$
= v_K(\pi_K)\max_i|m_i-m'_i|\ge  v_K(\pi_K). 
$$ 
This shows that $\cN^{\latt}(V)$ is discrete and closed. Next, let $\n$ be any diagonalizable norm, pick an orthogonal basis $(e_i)$ for $\n$, and write a given $v\in V$ as $v=\sum_iu_i\pi_K^{n_i} e_i$ with $n_i\in\Z$ and $u_i$ a unit. Then $\|v\|\le 1$ if and only if $|\pi_K|^{n_i}\|e_i\|\le 1$ for all $i$, and we infer $B=\sum_i K^\circ\pi_K^{m_i}e_i$ with $m_i:=\lceil\log\|e_i\|/ v_K(\pi_K)\rceil$. In particular, $B$ is a lattice with basis $(\pi_K^{m_i}e_i)$, and hence
$$
\dGI(\n,\n_B)=\max_i\left|\log\frac{\|e_i\|_B}{\|e_i\|}\right|=\max_i\left|m_i v_K(\pi_K)-\log\|e_i\|\right|\le v_K(\pi_K). 
$$
Assume finally that $K$ is densely valued. That $\cN^{\latt}(V)$ is dense in $\cN^{\diag}(V)$ is easily seen by approximating the values of a given diagonalizable norm $\n$ on an orthogonal basis $(e_i)$ by elements of the dense subset $|K^\times|$ of $\R_{>0}$. Similarly, any norm $\n$ is determined by its closed unit ball $B$, via
$$
\|v\|=\inf\left\{|a|\mid a\in K,\,v\in a B\right\}.
$$
As a result, $\n$ is a lattice norm if and only if $B$ is a lattice. Finally, (iv) is a direct consequence of Lemma~\ref{lem:latticenorm} and Proposition~\ref{prop:ground}. 
\end{proof} 

As an illustration of these considerations, we have: 
\begin{exam} Let $K$ be a densely valued non-Archimedean field, and $F/K$ be a finite extension. If the ring extension $F^\circ/K^\circ$ is finite, then $F/K$ is necessarily unramified, \ie $|F|=|K|$. Indeed, the absolute value $|\cdot|_L$ of $L$ is then a lattice norm, by Lemma~\ref{lem:latticedense}~(iii), and hence $|L|=|K|$. 
\end{exam}

%
%
\section{Determinants and relative spectra}\label{sec:det}
%
%
The goal of this section is to investigate induced norms on the determinant line, leading to the notion of \emph{relative volume} of two norms. We relate the latter to the relative spectrum via a Minkowski-type theorem.  

As before, $V$ denotes a finite dimensional vector space over a complete valued field $K$, and we set $N:=\dim V$. 
 %
%
\subsection{The determinant of a norm} \label{subsec:detnorm}
The \emph{determinant line} of $V$ is $\det V:=\bigwedge^N V$. We have a natural isomorphism  $\det(V^\vee)\simeq(\det V)^\vee$, induced by the pairing
$$
\langle v_1\wedge\ldots\wedge v_N,\mu_1\wedge\ldots\wedge\mu_N\rangle=\det\left(\langle v_i,\mu_j\rangle\right). 
$$
In particular, if $(e_i)$ is basis of $V$ with dual basis $(e_i^\vee)$, then 
$$
\left(e_1\wedge\ldots\wedge e_N\right)^\vee=e_1^\vee\wedge\ldots\wedge e_N^\vee. 
$$
\begin{defi}\label{defi:det} To each norm $\n$ on $V$, we associate a norm $\det\n$ on $\det V$ by setting for $\tau\in\det V$
$$
\det\|\tau\|=\inf_{\tau=v_1\wedge\ldots\wedge v_N}\prod_i\|v_i\|,
$$
where the infimum runs over all decompositions $\tau=v_1\wedge\ldots\wedge v_N$ with $v_i\in V$. 
\end{defi}
We abuse the notation slightly by writing $\det\|\tau\|$ for the value of $\det\|\cdot\|$ on $\tau$. By construction, $\det\n$ is the largest seminorm on $\det V$ with the submultiplicativity property
\begin{equation}\label{eq:detsubmult}
\det\|v_1\wedge\ldots\wedge v_N\|\le\prod_i\|v_i\|
\end{equation}
for all $v_1,\dots,v_N\in V$. That it is actually a norm follows from the next result, which is readily checked.  

\begin{lem}\label{lem:detdual} If we view the dual $\tau^\vee\in\det V^\vee$ of a nonzero $\tau\in\det V$ as a multilinear form on $V$, then $\left(\det\|\tau\|\right)^{-1}$ coincides with the operator norm
$$
\|\tau^\vee\|_\op:=\sup_{v_1,\ldots,v_N\in V\setminus\{0\}}\frac{|\tau^\vee(v_1,\ldots,v_N)|}{\|v_1\|\dots\|v_N\|}.
$$
\end{lem}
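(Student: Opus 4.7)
The key observation to exploit is that the duality pairing between $\det V$ and $\det V^\vee$ gives $\langle\tau^\vee,\tau\rangle=1$, so that whenever one decomposes $\tau=v_1\wedge\dots\wedge v_N$, the multilinear form $\tau^\vee$ evaluates as $\tau^\vee(v_1,\dots,v_N)=1$. From this, the plan is to establish the two inequalities separately and combine them.

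For the inequality $\|\tau^\vee\|_\op\ge(\det\|\tau\|)^{-1}$, the plan is to pick, for any $\e>0$, a decomposition $\tau=v_1\wedge\dots\wedge v_N$ with $\prod_i\|v_i\|\le(1+\e)\det\|\tau\|$ (which exists by the very definition of $\det\|\tau\|$ as an infimum). Since $\tau^\vee(v_1,\dots,v_N)=1$, the defining supremum of the operator norm is at least $\prod_i\|v_i\|^{-1}\ge((1+\e)\det\|\tau\|)^{-1}$, and letting $\e\to 0$ gives the desired bound.

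For the reverse inequality $\|\tau^\vee\|_\op\le(\det\|\tau\|)^{-1}$, the plan is to take an arbitrary $N$-tuple $(w_1,\dots,w_N)\in V^N$ and distinguish two cases. If the $w_i$ are linearly dependent, then $\tau^\vee(w_1,\dots,w_N)=0$ and the inequality is trivial. Otherwise, $w_1\wedge\dots\wedge w_N=\lambda\tau$ for a unique nonzero $\lambda\in K$, with $\tau^\vee(w_1,\dots,w_N)=\lambda$. The key step is then to rewrite $\tau=(\lambda^{-1}w_1)\wedge w_2\wedge\dots\wedge w_N$, which is an admissible decomposition in Definition~\ref{defi:det} and therefore yields $\det\|\tau\|\le|\lambda|^{-1}\prod_i\|w_i\|$ by the submultiplicativity property \eqref{eq:detsubmult}. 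Rearranging gives $|\tau^\vee(w_1,\dots,w_N)|/\prod_i\|w_i\|\le(\det\|\tau\|)^{-1}$, and taking the supremum over all $N$-tuples finishes the argument.

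There is essentially no obstacle here: the argument is a direct unwinding of the definitions, the main conceptual point being that $\tau^\vee$, regarded as a multilinear form, takes the value $1$ on any decomposition of $\tau$ into wedge factors, which turns the infimum defining $\det\|\tau\|$ into a supremum defining $\|\tau^\vee\|_\op^{-1}$.
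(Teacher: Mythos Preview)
Your proof is correct and is precisely the direct unwinding of definitions that the paper has in mind: the paper does not write out a proof at all, merely stating that the lemma ``is readily checked''. Your two-inequality argument via near-optimal decompositions and the rescaling trick $\tau=(\lambda^{-1}w_1)\wedge w_2\wedge\dots\wedge w_N$ is the natural way to carry this out.
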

From the definition, we immediately get: 
\begin{lem}\label{lem:detlip} The map $\det:\cN(V)\to\cN(\det V)$ is $N$-Lipschitz continuous with respect to $\dGI$-metrics, \ie we have 
$$
\dGI(\det\n,\det\n')\le N\dGI(\n,\n')
$$
for any two norms $\n,\n'$ on $V$.
\end{lem}

Computing the determinant of a norm is typically a hard problem in the Archimedean case. 

\begin{exam}\label{exam:detarch} Consider the usual $\ell^p$-norm $\n_p$ on $\R^N$, $p\in[1,\infty]$, and set $\tau:=e_1\wedge\ldots\wedge e_N$, with $(e_i)$ the canonical basis. Then 
\begin{itemize}
\item $\det\|\tau\|_p=1$ for $p\in[1,2]$;
\item $\det\|\tau\|_p<1$ for $p>2$. 
\end{itemize}
For $p=\infty$, determining the precise value of $\det\|\tau\|_\infty$ amounts to maximizing the determinant of a $N\times N$-matrix with entries in $\{\pm 1\}$, and is known as the \emph{Hadamard maximal determinant problem}. By~\cite{CL}, we have for instance 
$$
N^{-\frac{N}{2}}\le\det\|\tau\|_\infty\le N^{-\frac{N}{2}\left(1-\frac{\log(4/3)}{\log N}\right)}. 
$$
The lower bound is achieved if and only if there exists an $\ell^2$-orthogonal basis with entries in $\{\pm 1\}$ (which implies that $N$ is a multiple of $4$), but the exact value $\det\|\tau\|_\infty$ is unknown in the general case. 
\end{exam}

%
%
\subsection{Determinants of diagonalizable norms}\label{det:diagnorms}
As we shall see in this section, the determinant of a diagonalizable norm is very well-behaved. In the non-Archimedean case, this will extend to all norms, by density of diagonalizable norms. 

\begin{lem}\label{lem:detdiag} If $\n$ is diagonalizable, then a basis $(e_i)$ of $V$ satisfies
$$
\det\|e_1\wedge\ldots\wedge e_N\|=\prod_i\|e_i\|
$$
if and only if $(e_i)$ is orthogonal for $\n$. 
\end{lem}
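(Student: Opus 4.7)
The inequality $\det\|e_1\wedge\dots\wedge e_N\|\le\prod_i\|e_i\|$ is immediate from the definition of the determinant norm applied to the tautological decomposition $e_1\wedge\dots\wedge e_N=e_1\wedge\dots\wedge e_N$. The content of the lemma is therefore to characterize when this trivial upper bound is attained, and my plan is to prove both implications by reducing them to a Hadamard-type inequality (for the forward direction) and to an explicit "shortening" construction (for the converse).

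For the forward direction, assuming $(e_i)$ is orthogonal for $\n$, I would show $\prod_j\|v_j\|\ge\prod_i\|e_i\|$ for every decomposition $e_1\wedge\dots\wedge e_N=v_1\wedge\dots\wedge v_N$. Writing $v_j=\sum_i\a_{ij}e_i$, the wedge identity forces $\det(\a_{ij})=1$. Setting $\b_{ij}:=\a_{ij}\|e_i\|$ one then has $|\det(\b_{ij})|=\prod_i\|e_i\|$, while orthogonality of $(e_i)$ gives
\[
\|v_j\|^2=\sum_i|\b_{ij}|^2 \ \text{(Archimedean)}, \qquad \|v_j\|=\max_i|\b_{ij}|\ \text{(non-Archimedean)}.
\]
The classical Hadamard inequality $|\det(\b_{ij})|\le\prod_j\bigl(\sum_i|\b_{ij}|^2\bigr)^{1/2}$ in the Archimedean case, and the ultrametric Hadamard inequality $|\det(\b_{ij})|\le\prod_j\max_i|\b_{ij}|$ in the non-Archimedean case (obtained by applying the ultrametric inequality to the Leibniz expansion $\det(\b)=\sum_{\sigma}\sgn(\sigma)\prod_j\b_{\sigma(j),j}$), then give exactly $\prod_j\|v_j\|\ge\prod_i\|e_i\|$, as desired.

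For the converse, I would argue contrapositively: if $(e_i)$ is not orthogonal, I produce a basis $(v_j)$ with $v_1\wedge\dots\wedge v_N=e_1\wedge\dots\wedge e_N$ but $\prod_j\|v_j\|<\prod_i\|e_i\|$. In the non-Archimedean case, failure of orthogonality means there exists $v=\sum_i\a_i e_i$ with $\|v\|<\max_i|\a_i|\|e_i\|$; after relabelling so that $|\a_1|\|e_1\|$ attains the maximum (in particular $\a_1\ne 0$), the vector $v_1:=\a_1^{-1}v=e_1+\sum_{i\ge 2}(\a_i/\a_1)e_i$ satisfies $\|v_1\|<\|e_1\|$, and setting $v_i:=e_i$ for $i\ge 2$ yields a basis with $v_1\wedge\dots\wedge v_N=e_1\wedge\dots\wedge e_N$ and $\prod_j\|v_j\|<\prod_i\|e_i\|$, a contradiction. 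In the Archimedean case, non-orthogonality of $(e_i)$ means $\langle e_k,e_\ell\rangle\ne 0$ for some $k\ne\ell$; a single Gram--Schmidt step, say replacing $e_k$ by $e'_k:=e_k-\frac{\langle e_k,e_\ell\rangle}{\|e_\ell\|^2}e_\ell$, produces a vector with $\|e'_k\|<\|e_k\|$ (by Pythagoras) while leaving the top wedge unchanged, yielding the same contradiction.

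The main obstacle, modest as it is, lies in the forward direction and amounts to setting up the change of variables $\b_{ij}=\a_{ij}\|e_i\|$ so that orthogonality in the two cases translates cleanly into the hypotheses of the Archimedean and ultrametric Hadamard inequalities; once this is done, the converse is essentially routine, since non-orthogonality always provides an explicit shorter representative of $e_1\wedge\dots\wedge e_N$.
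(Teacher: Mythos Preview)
Your proof is correct and essentially matches the paper's: the forward direction is the same Hadamard/ultrametric-Hadamard argument (the paper phrases the non-Archimedean case without the change of variable $\b_{ij}=\a_{ij}\|e_i\|$, instead picking from the Leibniz expansion of $\det(\a_{ij})=1$ a permutation $\sigma$ with $\prod_i|\a_{i\sigma(i)}|\ge 1$, which is the same inequality). For the converse, the paper invokes Lemma~\ref{lem:Auer} (the Auerbach-type estimate), which immediately gives $\max_i\|\a_ie_i\|\le\|\sum_i\a_ie_i\|$ once the determinant equality holds; your direct shortening/Gram--Schmidt construction is an equally valid and slightly more self-contained alternative.
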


\begin{cor}\label{cor:detmodel} Assume that $K$ is non-Archimedean, and let $\cV$ be a lattice of $V$. Then $\det\n_\cV=\n_{\det\cV}$ is the norm determined by the lattice $\det\cV:=\bigwedge^N\cV$ of $\det V$. 
\end{cor}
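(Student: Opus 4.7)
The plan is to reduce the statement to a calculation on a single generator of the one-dimensional $K$-vector space $\det V$, using the $K^\circ$-basis supplied by the lattice $\cV$.

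First, I fix a $K^\circ$-basis $(e_i)$ of $\cV$, which exists since $K^\circ$ is local and $\cV$ is a finitely generated torsion-free $K^\circ$-module, hence free. By Lemma~\ref{lem:latticenorm}, $(e_i)$ is then an orthonormal basis for the lattice norm $\n_\cV$, i.e.\ $\|e_i\|_\cV=1$ for all $i$. In particular, $\n_\cV$ is diagonalizable with orthogonal basis $(e_i)$.

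Next, I apply Lemma~\ref{lem:detdiag} to $\n_\cV$ with this orthogonal basis to obtain
$$
\det\|e_1\wedge\dots\wedge e_N\|_\cV=\prod_i\|e_i\|_\cV=1.
$$
On the other hand, freeness of $\cV$ over $K^\circ$ with basis $(e_i)$ implies that $\det\cV=\bigwedge^N\cV$ is a free $K^\circ$-module of rank one with generator $\tau:=e_1\wedge\dots\wedge e_N$. Applying Lemma~\ref{lem:latticenorm} once more, this time to the rank-one lattice $\det\cV\subset\det V$, yields $\|\tau\|_{\det\cV}=1$.

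The two norms $\det\n_\cV$ and $\n_{\det\cV}$ on the one-dimensional space $\det V$ therefore agree on the nonzero vector $\tau$, and hence coincide, as claimed. The only genuine ingredient is the identification $\det\|e_1\wedge\dots\wedge e_N\|_\cV=\prod_i\|e_i\|_\cV$ for an orthogonal basis (Lemma~\ref{lem:detdiag}); everything else is a direct unwinding of definitions, so no serious obstacle is expected.
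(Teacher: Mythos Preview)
Your proof is correct and follows exactly the approach the paper intends: the corollary is placed immediately after Lemma~\ref{lem:detdiag} with no separate proof, signalling that it is a direct consequence of that lemma together with Lemma~\ref{lem:latticenorm}, which is precisely what you have written out.
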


\begin{proof}[Proof of Lemma~\ref{lem:detdiag}] When $K$ is Archimedean, the result is equivalent to the classical Hadamard inequality for the determinant of a matrix. Assume now that $K$ is non-Archimedean, and let $(e_i)$ be an orthogonal basis for $\n$.  We need to show that each basis $(v_i)$ such that $v_1\wedge\ldots\wedge v_N=e_1\wedge\ldots\wedge e_N$ satisfies $\prod_i\|e_i\|\le\prod_i\|v_i\|$. If we write $v_i=\sum_j a_{ij} e_j$ with $ a_{ij}\in K$, then $\det( a_{ij})=1$. Expanding out the determinant and using the ultrametric inequality, we get $\prod_i | a_{i\sigma(i)}|\ge 1$ for some permutation $\sigma$. Since $\n$ is diagonalized in $(e_i)$, we have 
$$
\|v_i\|=\max_j| a_{ij}|\|e_j\|\ge \left| a_{i\sigma(i)}\right|\left\|e_{\sigma(i)}\right\|,
$$ 
and we obtain as desired
$$
\prod_i\|v_i\|\ge\prod_i\left| a_{i\sigma(i)}\right|\left\|e_{\sigma(i)\|}\right\|=\left(\prod_i| a_{i\sigma(i)}|\right)\left(\prod_i\|e_{\sigma(i)}\right)\ge\prod_i\|e_i\|.
$$
Conversely, any basis $(e_i)$ satisfying $\det\|e_1\wedge\ldots\wedge e_N\|=\prod_i\|e_i\|$ is orthogonal for $\n$, as a direct consequence of Lemma~\ref{lem:Auer}. 
\end{proof}

\begin{lem}\label{lem:detdualdiag} If $\n$ is a diagonalizable norm on $V$, then $\det\left(\n^\vee\right)=\left(\det\n\right)^\vee$ under the canonical isomorphism $\det\left(V^\vee\right)\simeq\left(\det V\right)^\vee$. 
\end{lem}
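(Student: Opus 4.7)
The plan is to diagonalize everything and compute directly, using the two key preceding lemmas: Lemma~\ref{lem:dualdiag} (the dual basis of an orthogonal basis is orthogonal for the dual norm with inverted values) and Lemma~\ref{lem:detdiag} (the determinant norm of a diagonalizable norm is multiplicative on any orthogonal basis).

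First, I would pick an orthogonal basis $(e_i)$ for $\n$, with dual basis $(e_i^\vee)$. By Lemma~\ref{lem:dualdiag}, $\n^\vee$ is diagonalizable with $(e_i^\vee)$ orthogonal and $\|e_i^\vee\|^\vee=\|e_i\|^{-1}$. Next, under the canonical isomorphism $\det(V^\vee)\simeq(\det V)^\vee$ recalled at the beginning of \S\ref{subsec:detnorm}, the element $e_1^\vee\wedge\dots\wedge e_N^\vee$ corresponds precisely to $(e_1\wedge\dots\wedge e_N)^\vee$. Since $\det V$ is one-dimensional, it suffices to check that $\det(\n^\vee)$ and $(\det\n)^\vee$ agree on this single nonzero element.

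On the one hand, applying Lemma~\ref{lem:detdiag} to the diagonalizable norm $\n^\vee$ with orthogonal basis $(e_i^\vee)$ gives
$$
\det\|e_1^\vee\wedge\dots\wedge e_N^\vee\|^\vee=\prod_i\|e_i^\vee\|^\vee=\prod_i\|e_i\|^{-1}.
$$
On the other hand, by Lemma~\ref{lem:detdual} (which identifies $(\det\|\tau\|)^{-1}$ with the operator norm of $\tau^\vee$) applied to $\tau=e_1\wedge\dots\wedge e_N$, combined with Lemma~\ref{lem:detdiag} applied to $\n$ itself, we obtain
$$
(\det\n)^\vee\bigl((e_1\wedge\dots\wedge e_N)^\vee\bigr)=\frac{1}{\det\|e_1\wedge\dots\wedge e_N\|}=\prod_i\|e_i\|^{-1}.
$$
The two values coincide, so the two norms on the one-dimensional space $\det(V^\vee)$ are equal.

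There is essentially no obstacle here; the content is purely bookkeeping around the canonical identification $\det(V^\vee)\simeq(\det V)^\vee$ and the two preceding lemmas. The only mild subtlety is making sure one uses Lemma~\ref{lem:detdual} correctly to evaluate the dual of the determinant norm on a decomposable element, and recognizing that the pairing between $(e_i^\vee)$ and $(e_i)$ makes $e_1^\vee\wedge\dots\wedge e_N^\vee$ the dual of $e_1\wedge\dots\wedge e_N$ rather than some multiple of it.
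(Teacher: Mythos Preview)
Your proof is correct and follows essentially the same route as the paper's: pick an orthogonal basis, apply Lemma~\ref{lem:dualdiag} to the dual basis, then use Lemma~\ref{lem:detdiag} on both sides to reduce the equality to $\prod_i\|e_i\|^{-1}=\left(\prod_i\|e_i\|\right)^{-1}$. The paper is slightly terser (it does not explicitly invoke Lemma~\ref{lem:detdual}, since on a one-dimensional space the dual norm of $\tau^\vee$ is $\|\tau\|^{-1}$ straight from the definition), but the argument is the same.
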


\begin{proof} Let $(e_i)$ be an orthogonal basis for $\n$. By Lemma~\ref{lem:dualdiag}, the dual basis $(e_i^\vee)$ is orthogonal for $\n^\vee$, and $\|e_i^\vee\|^\vee=\|e_i\|^{-1}$. By Lemma~\ref{lem:detdiag}, we infer 
$$
\det\left\|e_1^\vee\wedge\ldots\wedge e_N^\vee\right\|^\vee=\prod_i\left\|e_i^\vee\right\|^\vee=\left(\prod_i\|e_i\|\right)^{-1}=\left(\det\|e_1\wedge\ldots\wedge e_N\|\right)^{-1},
$$
hence the result. 
\end{proof}

\begin{lem}\label{lem:detexact} Let $\n$ be a diagonalizable norm on $V$, and consider an exact sequence of vector spaces
$$
0\to V'\to V\to V''\to 0,
$$
with induced norms $\n',\n''$ on $V',V''$. Under the canonical isomorphism 
$$
\det V\simeq\det V'\otimes\det V'',
$$ 
we then have  
$$
\det\n=\det\n'\otimes\det\n''. 
$$ 
\end{lem}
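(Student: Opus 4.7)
The plan is to construct an $\n$-orthogonal basis of $V$ compatible with the filtration $0\subset V'\subset V$ and then apply Lemma~\ref{lem:detdiag} three times. Explicitly, I aim to exhibit an $\n$-orthogonal basis $(e_1,\ldots,e_N)$ of $V$ such that $(e_1,\ldots,e_k)$ spans $V'$, where $k=\dim V'$, and such that $\|\bar e_i\|''=\|e_i\|$ for $i>k$. Given such a basis, $(e_1,\ldots,e_k)$ is automatically $\n'$-orthogonal (since $\n'$ is the restriction of $\n$ to $V'$), while the images $(\bar e_{k+1},\ldots,\bar e_N)$ in $V''$ are easily checked to be $\n''$-orthogonal using the Hermitian or ultrametric structure and the identity $\|\bar e_i\|''=\|e_i\|$. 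Lemma~\ref{lem:detdiag} then yields
\[
\det\|e_1\wedge\cdots\wedge e_N\|=\prod_{i=1}^N\|e_i\|,\quad \det\|e_1\wedge\cdots\wedge e_k\|'=\prod_{i\le k}\|e_i\|,\quad \det\|\bar e_{k+1}\wedge\cdots\wedge\bar e_N\|''=\prod_{i>k}\|e_i\|,
\]
and the canonical isomorphism $\det V\simeq\det V'\otimes\det V''$ identifies $e_1\wedge\cdots\wedge e_N$ with $(e_1\wedge\cdots\wedge e_k)\otimes(\bar e_{k+1}\wedge\cdots\wedge\bar e_N)$, from which $\det\n=\det\n'\otimes\det\n''$ follows on this generator, and hence on the whole one-dimensional line $\det V$.

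In the Archimedean case the compatible basis is produced by standard Gram--Schmidt: take an $\n'$-orthogonal basis of $V'$ and extend it through the $\n$-orthogonal complement of $V'$ in $V$. In the non-Archimedean case I would first treat the special case of a lattice norm $\n=\n_\cV$. A direct calculation shows $\n'=\n_{\cV\cap V'}$ and $\n''=\n_{\cV''}$, where $\cV''$ denotes the image of $\cV$ in $V''$; since $K^\circ$ is a local valuation ring, the short exact sequence of finitely generated torsion-free $K^\circ$-modules $0\to\cV\cap V'\to\cV\to\cV''\to 0$ consists of free modules and splits, yielding a $K^\circ$-basis of $\cV$ consisting of a basis of $\cV\cap V'$ followed by lifts of a basis of $\cV''$. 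By Lemma~\ref{lem:latticenorm}, this is an $\n$-orthonormal basis of $V$ with the desired compatibility with $V'$ and $V''$, and the three determinants each take value $1$ on the corresponding wedge products, giving the identity at once.

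To reduce the general diagonalizable case to the lattice case over a non-Archimedean $K$, I would use density: in the densely valued case, Lemma~\ref{lem:latticedense} approximates any diagonalizable norm by lattice norms, and the $N$-Lipschitz continuity of $\det$ from Lemma~\ref{lem:detlip}, together with the evident $1$-Lipschitz continuity of the restriction and quotient maps $\n\mapsto\n'$ and $\n\mapsto\n''$, transports the identity to the limit. The trivially and discretely valued subcases are handled either directly via the flag picture of Example~\ref{exam:filtr} or after a nontrivially valued complete field extension $K'/K$ (as in Lemma~\ref{lem:CM}) restoring density, using that forming $\det$, restricting and quotienting all commute with ground field extension on diagonalizable norms by Lemma~\ref{lem:diagground} and Lemma~\ref{lem:dualsub}. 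The main obstacle I anticipate is the careful verification of the splitting of the $K^\circ$-module sequence in the densely valued (hence non-Noetherian) case, together with checking that restriction and quotient norms of approximating lattice norms genuinely converge to those of $\n$ in the Goldman--Iwahori metric; both are elementary but merit attention.
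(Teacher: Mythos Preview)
Your approach is correct in spirit but takes a different and more circuitous route than the paper. The paper's proof is a two-inequality duality argument: it first shows $\det\n\le\det\n'\otimes\det\n''$ by a direct computation valid for any norm (choosing near-optimal factorizations of $\tau'$ and $\tau''$ and multiplying), and then obtains the reverse inequality by applying the same bound to the dual exact sequence $0\to(V'')^\vee\to V^\vee\to(V')^\vee\to 0$, invoking Lemma~\ref{lem:dualsub} (the dual of a quotient norm is the restriction of the dual) and Lemma~\ref{lem:detdualdiag} ($\det(\n^\vee)=(\det\n)^\vee$ for diagonalizable norms). This is uniform over all cases and never requires producing a compatible orthogonal basis.

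Your strategy of exhibiting an $\n$-orthogonal basis adapted to $V'$ works and is arguably more transparent, but the non-Archimedean detour through lattice norms and density is unnecessary: the existence of such a basis for \emph{any} diagonalizable $\n$ is precisely what underlies Lemma~\ref{lem:quotientdiag} (the reference \cite[2.4.1/5]{BGR} constructs it by the exchange argument: pick $v\in V'$, locate an index $j$ with $\|v\|=|a_j|\|e_j\|$, replace $e_j$ by $v$, iterate). Citing that directly gives the compatible basis in one stroke over any non-Archimedean $K$, and your three applications of Lemma~\ref{lem:detdiag} then finish the proof with no case split. Two minor mis-citations in your reduction: Lemma~\ref{lem:CM} is stated only for trivially valued $K$, so it does not cover the discretely valued case as written (though a direct passage to any densely valued complete extension would), and Lemma~\ref{lem:dualsub} concerns duality, not ground field extension.
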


\begin{proof} Set $N':=\dim V'$, $N'':=\dim V''$, and denote by $\pi:V\to V''$ the given surjection. Pick nonzero $\tau'\in\det V'$, $\tau''\in\det V''$ and $\e>0$. Definition~\ref{defi:det}, we may then find $v'_1,\ldots,v'_{N'}\in V'$ and $v''_1,\ldots,v''_{N''}\in V'$ such that $\tau'=v'_1\wedge\ldots\wedge v'_{N'}$, $\tau''=\pi(v''_1)\wedge\ldots\wedge\pi(v''_{N''})$, 
$$
\prod_i\|v'_i\|\le(1+\e)\det\|\tau'\|
$$
and
$$
\prod_i\|v''_i\|\le(1+\e)\det\|\tau''\|
$$
The isomorphism $\det V'\otimes\det V''\simeq\det V$ maps $\tau'\otimes\tau''$ to 
$$
v'_1\wedge\ldots\wedge v'_{N'}\wedge v''_1\wedge\ldots\wedge v''_{N''},
$$
which satisfies 
$$
\det\|\tau'\otimes\tau''\|=\det\|v'_1\wedge\ldots\wedge v'_{N'}\wedge v''_1\wedge\ldots\wedge v''_{N''}\|
$$
$$
\le\prod_i\|v'_i\|\prod_i\|v''_j\|\le(1+\e)^2(\det\|\tau'\|')(\det\|\tau''\|''),
$$
hence $\det\n\le\det\n'\otimes\det\n''$. By Lemma~\ref{lem:dualsub}, we dually have 
$$
\det(\n^\vee)\le\det(\n''^\vee)\otimes\det(\n'^\vee).
$$
Since $\n$ is diagonalizable, so are $\n'$ and $\n''$, by Lemma~\ref{lem:quotientdiag}. By Lemma~\ref{lem:detdualdiag}, we thus have $(\det\n)^{-1}\le(\det\n'')^{-1}\otimes(\det\n')^{-1}$, hence the result. 
\end{proof}

\begin{lem}\label{lem:detground} Let $\n$ be a diagonalizable norm on $V$, and $F/K$ be a complete field extension. Then $\det(\n_F)=(\det\n)_F$. 
\end{lem}
\begin{proof} This follows from Proposition~\ref{prop:ground} together with Lemma~\ref{lem:detdiag}.
\end{proof}

\begin{cor}\label{cor:detexact} If $K$ is non-Archimedean, Lemma~\ref{lem:detdualdiag}, Lemma~\ref{lem:detexact} and Lemma~\ref{lem:detground} hold for all norms.  
\end{cor}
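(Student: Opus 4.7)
The strategy is a routine density argument: Theorem~\ref{thm:diag}(iii) gives that $\cN^\diag(V)$ is dense in $\cN^\ultr(V)$ with respect to $\dGI$, so I will approximate a given ultrametric norm by diagonalizable ones and pass to the limit, using that all operations involved ($\det$, duality, restriction, quotient) are continuous in the Goldman--Iwahori metric.

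\textbf{Setup.} Let $\n$ be an ultrametric norm on $V$, and fix a sequence $(\n_k)$ of diagonalizable norms with $\dGI(\n_k,\n)\to 0$. Set $C_k:=\exp\dGI(\n_k,\n)\to 1$, so that $C_k^{-1}\n\le\n_k\le C_k\n$.

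\textbf{Duality part (extension of Lemma~\ref{lem:detdualdiag}).} First I would note that duality $\n\mapsto\n^\vee$ is $1$-Lipschitz with respect to $\dGI$ (this is part of the discussion in \S\ref{sec:duality} and is a direct consequence of the definition of the dual norm). Combined with the $N$-Lipschitz continuity of $\det$ (Lemma~\ref{lem:detlip}), we get
$$
\det(\n_k^\vee)\xrightarrow{k\to\infty}\det(\n^\vee),\qquad (\det\n_k)^\vee\xrightarrow{k\to\infty}(\det\n)^\vee
$$
in $\cN(\det V^\vee)\simeq\cN((\det V)^\vee)$. Since Lemma~\ref{lem:detdualdiag} yields $\det(\n_k^\vee)=(\det\n_k)^\vee$ for each $k$, passing to the limit gives the same identity for $\n$.

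\textbf{Exact sequence part (extension of Lemma~\ref{lem:detexact}).} Given $0\to V'\to V\to V''\to 0$, let $\n'_k,\n''_k$ (resp.\ $\n',\n''$) denote the induced subspace/quotient norms. The inequalities $C_k^{-1}\n\le\n_k\le C_k\n$ restrict directly to $V'$ and pass through the infimum defining the quotient norm on $V''$, so
$$
\dGI(\n'_k,\n')\le\dGI(\n_k,\n),\qquad \dGI(\n''_k,\n'')\le\dGI(\n_k,\n),
$$
and in particular $\n'_k\to\n'$ and $\n''_k\to\n''$. Since $V',V''$ are lower-dimensional, it is also clear that $\n',\n''$ are ultrametric (so in particular finite-valued). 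Because $\det V',\det V''$ are one-dimensional, the tensor norm $\det\n'_k\otimes\det\n''_k$ is simply the product of the two norms on generators, and hence converges to $\det\n'\otimes\det\n''$ by Lemma~\ref{lem:detlip}. Applying Lemma~\ref{lem:detexact} to each $\n_k$ and passing to the limit under the canonical isomorphism $\det V\simeq\det V'\otimes\det V''$ yields the desired identity.

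\textbf{Main obstacle.} There is no serious obstacle; the entire argument is a continuity-and-density exercise, and the only point requiring a moment of care is the convergence $\n''_k\to\n''$ for the quotient norms. The potentially worrisome step---that the infimum defining the quotient norm might not behave well under perturbation of $\n_k$---is resolved immediately by the two-sided bound $C_k^{-1}\n\le\n_k\le C_k\n$, which survives the infimum. All other ingredients (Lipschitz continuity of $\det$, of duality, and the density theorem) are supplied by the preceding results.
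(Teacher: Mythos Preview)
Your proposal is correct and follows exactly the paper's approach: density of diagonalizable norms in $\cN^\ultr(V)$ (Theorem~\ref{thm:diag}) combined with continuity of $\det$ (Lemma~\ref{lem:detlip}). The paper's own proof is a one-liner invoking just these two ingredients, whereas you have carefully spelled out the auxiliary continuity statements (for duality, restriction, quotient, and tensor product on lines) that make the limit passage go through; these are all correct and implicit in the paper's sketch.
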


\begin{proof} By Theorem~\ref{thm:diag}, diagonalizable norms are dense in the set of all norms, and we conclude by continuity of $\det$ (Lemma~\ref{lem:detlip}). 
\end{proof}

In the Archimedean case, both Lemma~\ref{lem:detdualdiag} and Lemma~\ref{lem:detexact} fail in general for non-diagonalizable norms. 

\begin{exam} Let $\n$ be the $\ell^\infty$-norm on $\R^N$. The dual norm $\n^\vee$ is the $\ell^1$-norm, and Example~\ref{exam:detarch} thus shows that $\det(\n^\vee)\ne(\det\n)^\vee$. Also, the exact sequence 
$$
0\to V'\to V\to V''\to 0
$$ 
with $V'=Ke_1$, $V''=Ke_2$ shows that $\det\n<(\det\n')\otimes(\det\n'')$. 
\end{exam}

Finally, recall from \S\ref{sec:diag} that each basis $\be=(e_i)$ of $V$ defines an apartment $\A_\be=\iota_\be(\R^N)$ in $\cN^\diag(V)$ and a Gram--Schmidt projection $\rho_\be:\cN(V)\to\A_\be$. For later use, we show: 

\begin{lem}\label{lem:detretr} For each diagonalizable norm $\n$, we have $\det\n=\det\rho_\be(\n)$. 
\end{lem}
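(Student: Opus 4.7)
\medskip

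\noindent\textbf{Proof plan.} The strategy is to compute $\det\n$ and $\det\rho_\be(\n)$ on the same generator of the one-dimensional $K$-vector space $\det V$, namely $e_1\wedge\dots\wedge e_N$, and verify that both quantities equal $\prod_i\|e_i\|_\be$, where $\|e_i\|_\be=\inf_{\a\in K^N}\|e_i+\sum_{j<i}\a_j e_j\|$ is the value defining the Gram-Schmidt projection (Definition~\ref{defi:retraction}).

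First, introduce the complete flag $W_i:=\Vect(e_1,\dots,e_i)$, and consider the short exact sequences
$$
0\to W_{i-1}\to W_i\to W_i/W_{i-1}\to 0
$$
for $i=1,\dots,N$. The restriction of $\n$ to each $W_i$ is diagonalizable by Lemma~\ref{lem:quotientdiag}, so Lemma~\ref{lem:detexact} applies recursively, yielding under the canonical isomorphism
$$
\det V\simeq\bigotimes_{i=1}^N\det(W_i/W_{i-1})
$$
the identification $\det\n=\bigotimes_i\det\n_{W_i/W_{i-1}}$. Since each graded piece $W_i/W_{i-1}$ is one-dimensional, spanned by the image $\bar e_i$ of $e_i$, the tensor is entirely determined by its values on the $\bar e_i$, and by definition of the subquotient norm we have $\|\bar e_i\|_{W_i/W_{i-1}}=\|e_i\|_\be$. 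Evaluating at $e_1\wedge\dots\wedge e_N$ therefore gives
$$
\det\|e_1\wedge\dots\wedge e_N\|_\n=\prod_{i=1}^N\|e_i\|_\be.
$$

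On the other hand, $\rho_\be(\n)$ is by construction diagonalized in the basis $\be$ with $\|e_i\|_{\rho_\be(\n)}=\|e_i\|_\be$, so Lemma~\ref{lem:detdiag} gives directly
$$
\det\|e_1\wedge\dots\wedge e_N\|_{\rho_\be(\n)}=\prod_{i=1}^N\|e_i\|_\be.
$$
Comparing the two expressions on the generator $e_1\wedge\dots\wedge e_N$ of the one-dimensional space $\det V$ yields $\det\n=\det\rho_\be(\n)$, as required.

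There is no genuine obstacle here: the argument is a bookkeeping consequence of multiplicativity of $\det$ in exact sequences (Lemma~\ref{lem:detexact}), which is why this lemma is listed as being for later use rather than developed with a separate technique. The only point one needs to be careful about is to check that the sub- and quotient norms arising at each stage of the flag are again diagonalizable, so that Lemma~\ref{lem:detexact} is legitimately applicable; this is immediate from Lemma~\ref{lem:quotientdiag}.
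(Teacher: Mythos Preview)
Your proof is correct and follows essentially the same route as the paper's: apply Lemma~\ref{lem:detexact} along the complete flag $W_i=\Vect(e_1,\dots,e_i)$ to express $\det\n$ as the product of the subquotient norms $\|\bar e_i\|_{W_i/W_{i-1}}=\|e_i\|_\be$, and then compare with $\det\rho_\be(\n)$ via Lemma~\ref{lem:detdiag}. You are in fact slightly more explicit than the paper in noting that Lemma~\ref{lem:quotientdiag} is needed at each step to keep the restricted norms diagonalizable so that Lemma~\ref{lem:detexact} applies.
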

\begin{proof} Denote by $W_i=\Vect(e_1,\ldots,e_i)$ the complete flag defined by $\be$. By Lemma~\ref{lem:detexact}, we have 
$$
\det\n=\bigotimes_i\det\n_{W_i/W_{i-1}}, 
$$ 
under the identification $\det V\simeq\bigotimes_i \det(W_i/W_{i-1})$. By definition of $\n_\be:=\rho_\be(\n)$, we infer 
$$
\det\|e_1\wedge\ldots\wedge e_N\|=\prod_i\|e_i\|_\be=\det\|e_1\wedge\ldots\wedge e_N\|_\be,
$$
where the second equality follows from Lemma~\ref{lem:detdiag}. 
\end{proof}
%
%
\subsection{Relative volume} 
We introduce the following 'additive version' of~\cite[2.4.3]{Tem}. 

\begin{defi}\label{defi:Euler} The \emph{relative volume} of two norms $\n,\n'$ on $V$ is defined as the real number
$$
\vol(\n,\n'):=\log\left(\frac{\det\n'}{\det\n}\right).
$$
\end{defi}

\begin{prop}\label{prop:vol} The relative volume satisfies the following properties.
\begin{itemize}
\item[(i)] cocycle formula:
$$
\vol(\n,\n')=\vol(\n,\n'')+\vol(\n'',\n').
$$
\item[(ii)] homogeneity:
$$
\vol(\n,e^c\n')=\vol(\n,\n')+cN\text{ for all }c\in\R.
$$ 
\item[(iii)] monotonicity:
$$
\n'\le\n''\Longrightarrow\vol(\n,\n')\le\vol(\n,\n'').
$$ 
\item[(iv)] Lipschitz continuity:
$$
\left|\vol(\n_1,\n'_1)-\vol(\n_2,\n'_2)\right|\le N\left(\dGI(\n_1,\n_2)+\dGI(\n'_1,\n'_2)\right). 
$$
\item[(v)] if $F/K$ is a complete field extension and $\n,\n'$ are two norms on $V$ with ground field extensions $\n_F,\n'_F$ to $V_F$, then 
$$
\vol(\n_F,\n'_F)=\vol(\n,\n')
$$
in the diagonalizable or non-Archimedean case, and 
$$
\left|\vol(\n_F,\n'_F)-\vol(\n,\n')\right|\le 2N\log N
$$
in the general Archimedean case. 
\item[(vi)] if $0\to V'\to V\to V''\to 0$ is an exact sequence, the induced norms $\n_{V'},\n'_{V'}$ and $\n_{V''}$, $\n'_{V''}$ satisfy
$$
\vol(\n,\n')=\vol\left(\n_{V'},\n'_{V'}\right)+\vol\left(\n_{V''},\n'_{V''}\right)
$$
in the diagonalizable or non-Archimedean case, and 
$$
\left|\vol(\n,\n')-\vol\left(\n_{V'},\n'_{V'}\right)-\vol\left(\n_{V''},\n'_{V''}\right)\right|\le 2N\log N 
$$
in general. 
\end{itemize}
\end{prop}
\begin{proof} The first three properties are obvious, and imply the fourth one as a formal consequence. In the diagonalizable and non-Archimedean case, (v) and (vi) are consequences of Lemma~\ref{lem:detground}, Lemma~\ref{lem:detexact} and Corollary~\ref{cor:detexact}. If $K$ is Archimedean, Theorem~\ref{thm:diag} yields diagonalizable norms $\n_0,\n'_0$ with 
$$
\dGI(\n,\n_0)\le\tfrac 12\log N,\,\,\,\dGI(\n',\n'_0)\le\tfrac 12\log N.
$$
Then 
$$
\vol((\n_0)_F,(\n'_0)_F)=\vol(\n_0,\n'_0),
$$
while (iv) yields 
$$
\left|\vol(\n,\n')-\vol(\n_0,\n'_0)\right|\le N\log N
$$
and 
$$
\left|\vol(\n_F,\n'_F)-\vol((\n_0)_F,(\n'_0)_F)\right|\le N\log N,
$$ 
since ground field extension is an isometry for $\dGI$, by Proposition~\ref{prop:ground}. Thus 
$$
\left|\vol(\n_F,\n'_F)-\vol(\n,\n')\right|\le\left|\vol(\n_F,\n'_F)-\vol((\n_0)_F,(\n'_0)_F)\right|
$$
$$
+\left|\vol(\n_0,\n'_0)-\vol(\n,\n')\right|\le 2N\log N.
$$
The proof of (vi) in the general Archimedean case is similar. 
\end{proof}

As we next show, in the Archimedean case, the relative volume is equivalent to the (logarithmic) volume ratio. The non-Archimedean case will be analyzed in the next section. 

\begin{prop}\label{prop:volarch} Assume that $K$ is Archimedean, and let $\n,\n'$ be two norms on $V$, with unit balls $B,B'$. 
\begin{itemize}
\item[(i)] If $\n,\n'$ are diagonalizable, then 
$$
\vol(\n,\n')=\frac{1}{[K:\R]}\log\left(\frac{\vol(B)}{\vol(B')}\right), 
$$
where $\vol$ is any choice of Haar measure on $V$. 
\item[(ii)] In the general case, we have 
\begin{equation}\label{equ:logvol}
\left|\vol(\n,\n')-\frac{1}{[K:\R]}\log\left(\frac{\vol(B)}{\vol(B')}\right)\right|\le 2N\log N. 
\end{equation}
\end{itemize}
\end{prop}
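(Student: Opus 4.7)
The plan is to prove (i) by codiagonalizing and computing both sides in a common orthogonal basis, then to deduce (ii) by approximating arbitrary norms by diagonalizable ones and controlling the errors on both sides. For (i), I would apply Proposition~\ref{prop:codiag} to find a basis $(e_i)$ of $V$ orthogonal for both $\n$ and $\n'$. Writing $\a_i := \|e_i\|$ and $\b_i := \|e_i\|'$, Lemma~\ref{lem:detdiag} gives
$$
\vol(\n,\n') = \log \prod_i (\b_i/\a_i) = \sum_i \log(\b_i/\a_i).
$$
On the geometric side, $\n$ is Euclidean ($K=\R$) or Hermitian ($K=\C$), so its unit ball $B$ is the image of the standard round unit ball under the $K$-linear diagonal map $(x_i) \mapsto (\a_i^{-1} x_i)$ expressed in the basis $(e_i)$. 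Viewing $V$ as a real vector space of dimension $N[K:\R]$ via this basis, each scaling by $\a_i^{-1}$ on $Ke_i$ has real Jacobian $\a_i^{-[K:\R]}$, so $\vol(B) = c \prod_i \a_i^{-[K:\R]}$ with $c$ depending only on the chosen Haar measure and basis, not on $\n$. Applying the same formula to $B'$ and taking the ratio cancels $c$, giving $\vol(B)/\vol(B') = \prod_i (\b_i/\a_i)^{[K:\R]}$, and (i) then follows by taking logarithm and dividing by $[K:\R]$.

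For (ii), I would use Theorem~\ref{thm:diag}(i) to pick diagonalizable norms $\n_2, \n'_2$ with $\dGI(\n,\n_2),\,\dGI(\n',\n'_2) \le \log N$, with unit balls $B_2, B'_2$. By (i) applied to $(\n_2, \n'_2)$,
$$
\vol(\n_2,\n'_2) = \frac{1}{[K:\R]} \log(\vol B_2 / \vol B'_2).
$$
By the $N$-Lipschitz property of $\vol$ (Proposition~\ref{prop:vol}(iv)), one has $|\vol(\n,\n') - \vol(\n_2,\n'_2)| \le 2N \log N$. On the geometric side, the distortion inequalities $N^{-1}\n_2 \le \n \le N\n_2$ translate into the ball inclusions $N^{-1}B_2 \subset B \subset NB_2$, whence $|\log(\vol B/\vol B_2)| \le N[K:\R] \log N$, and likewise for $B'$. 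Combining these three bounds produces the required $O(N\log N)$ error.

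The only real subtlety is the bookkeeping of the factor $[K:\R]$ in part (i): it arises from the fact that a $K$-linear scaling by $\a_i$ on the line $Ke_i$ has real Jacobian $\a_i^{[K:\R]}$, equal to $\a_i^2$ in the complex case. Beyond this, the argument is a formal manipulation of properties of $\vol$ and the distortion bounds already established in the paper.
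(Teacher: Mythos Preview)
Your proof is correct and follows essentially the same approach as the paper: codiagonalization plus change of variables for (i), and approximation by diagonalizable norms with Lipschitz/volume control for (ii). The only cosmetic difference is that in (ii) the paper first invokes the cocycle property to assume $\n$ itself is diagonalizable and then approximates only $\n'$, whereas you approximate both norms; this halves the number of estimates but does not change the idea or the $O(N\log N)$ outcome.
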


\begin{proof} If $\n$, $\n'$ are diagonalizable, we can pick an orthonormal basis $(e_i)$ for $\n$ in which $\n'$ is diagonalized, and the change-of-variable formula yields
$$
\vol(\n,\n')=\log\prod_i\|e_i\|'=\frac{1}{[K:\R]}\log\left(\frac{\vol(B)}{\vol(B')}\right), 
$$
which proves (i). The proof of (ii) is entirely similar to that of (v) in Proposition~\ref{prop:vol}. 
\end{proof}

Here again, the error term in~\eqref{equ:logvol} is generally nonzero in the Archimedean non-diagonalizable case. 

\begin{exam} By Example~\ref{exam:detarch}, the $\ell^1$ and $\ell^2$ norms $\n_1,\n_2$ on $\R^N$ satisfy $\vol(\n_1,\n_2)=0$, while the volume of the unit ball of $\n_1$ is strictly smaller than that of $\n_2$. 
\end{exam} 
%
%
\subsection{The content of a torsion module}
We assume in this section that $K$ is non-Archimedean and nontrivially valued, with associated valuation $v_K=-\log|\cdot|$. The next result was proved for instance in~\cite[Proposition 2.10]{Scho} (see also~\cite[Corollary 2.3.8]{Tem}). 

\begin{lem}\label{lem:finpres} Every finitely presented torsion $K^\circ$-module $M$ is isomorphic to a finite direct sum of cyclic modules
$$
M\simeq\bigoplus_{i=1}^r K^\circ/a_i K^\circ. 
$$
with $a_i\in K^{\circ\circ}$. Further, $r$ and the sequence $v_K(a_1),\ldots,v_K(a_r)$ are uniquely determined by $M$, up to permutation. 
\end{lem}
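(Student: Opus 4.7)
The plan is to establish existence via a Smith Normal Form type argument, exploiting that $K^\circ$ is a valuation ring, and uniqueness via Fitting ideals.

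\emph{Existence.} Since $M$ is finitely presented, fix a presentation
$$(K^\circ)^n \xrightarrow{A} (K^\circ)^m \to M \to 0.$$
The key structural input is that $K^\circ$ is a valuation ring: for any two nonzero $a, b \in K^\circ$, one divides the other (if $v_K(a) \le v_K(b)$, then $b/a \in K^\circ$). This enables a pivoting procedure on $A$: select a nonzero entry of minimal valuation, move it to position $(1,1)$ by permuting rows and columns, and use it to clear all other entries in its row and column via elementary operations. Such operations correspond to base changes of $(K^\circ)^n$ and $(K^\circ)^m$, hence preserve the cokernel up to isomorphism. Iterating on the bottom-right submatrix terminates after at most $\min(m,n)$ steps and produces a diagonal form with entries $a_1, \dots, a_s \in K^\circ$. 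The cokernel then splits as $\bigoplus_i K^\circ/a_i K^\circ$, possibly together with a free summand coming from unoccupied rows. Since $M$ is torsion, tensoring with $K$ forces the free summand to vanish and each $a_i$ to be nonzero. Discarding indices with $a_i$ a unit yields the desired decomposition with $\pi_i \in K^{\circ\circ}$.

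\emph{Uniqueness.} I would appeal to Fitting ideals. Given any finite presentation $(K^\circ)^n \to (K^\circ)^m \to M \to 0$, let $F_k(M) \subset K^\circ$ denote the ideal generated by the $(m-k) \times (m-k)$ minors of the presentation matrix; a standard argument (valid over any commutative ring) shows that $F_k(M)$ is independent of the chosen presentation and is therefore an invariant of $M$. Applying this to the diagonal presentation produced above, and ordering so that $v_K(\pi_1) \ge \dots \ge v_K(\pi_r) > 0$, one computes $F_k(M) = (\pi_{k+1} \cdots \pi_r)$ for $0 \le k < r$ and $F_k(M) = K^\circ$ for $k \ge r$. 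Since $K^\circ$ is B\'ezout, the ideal $F_k(M)$ is principal, hence determined by its valuation. Thus $r$ is characterized as the smallest $k$ with $F_k(M) = K^\circ$, and each $v_K(\pi_k) = v_K(F_{k-1}(M)) - v_K(F_k(M))$ is recovered intrinsically from $M$, giving uniqueness up to permutation.

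\emph{Main obstacle.} The principal subtlety is that $K^\circ$ need not be Noetherian in the densely valued case, so the classical PID structure theorem is unavailable, and one cannot naively pass from finite generation to finite presentation of submodules. However, since we work with a module that is finitely presented to begin with, everything reduces to manipulations of finite matrices, and the total order on valuations makes the elementary divisor algorithm especially transparent, so the non-Noetherian nature of $K^\circ$ causes no actual difficulty.
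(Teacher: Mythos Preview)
Your proof is correct. Both existence and uniqueness go through as written: the total order on valuations makes the Smith Normal Form algorithm work over $K^\circ$ exactly as you describe, and Fitting ideals are presentation-independent over any commutative ring, so your uniqueness argument is clean.

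The paper takes a different route for existence. Rather than running elementary operations on a presentation matrix, it first observes that any finitely presented torsion module is a quotient $\cV/\cV'$ of two lattices in a common $K$-vector space $V$, and then invokes the codiagonalization result for diagonalizable norms (Proposition~\ref{prop:codiag}) to find a single basis $(e_i)$ of $V$ that is orthogonal for both lattice norms $\n_\cV$ and $\n_{\cV'}$; this basis automatically puts the inclusion $\cV'\hookrightarrow\cV$ in diagonal form. For uniqueness the paper simply says the argument is ``as in the usual structure theorem for torsion modules over PID's,'' without spelling it out. So your argument is more self-contained and elementary (it does not rely on the norm-theoretic machinery developed earlier), while the paper's version has the virtue of reusing infrastructure already in place and tying the lemma back into the ambient theme of spaces of norms. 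Underneath, of course, both are simultaneous-diagonalization arguments: your pivot-and-clear procedure and the paper's joint orthogonal basis are two faces of the same coin.
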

\begin{proof} Pick a presentation $(K^\circ)^a\to (K^\circ)^b\to M\to 0$. The image $\cV'$ of $(K^\circ)^a$ in $\cV:=(K^\circ)^b$ is finitely generated and torsion free, hence a free submodule. Since $\cV/\cV'$ is torsion, $\cV'\subset\cV$ are both lattices in $V:=K^b$. We claim that $V$ admits a basis $(e_i)$ such that $\cV=\bigoplus_i K^\circ e_i$ and $\cV'=\bigoplus_i K^\circ a_i e_i$ for some nonzero $a_i\in K^\circ$, which will yield as desired $M\simeq\bigoplus_{i=1}^r K^\circ/a_i K^\circ$ with $v_K(a_i)>0$ (since $K^\circ/a_i K^\circ=0$ when $v_K(a_i)=0$). Indeed, Proposition~\ref{prop:codiag} yields a basis $(e_i)$ that jointly diagonalizes the lattice norms $\n_\cV$ and $\n_{\cV'}$. Since lattice norms take values in $|K|$, we can arrange that $\|e_i\|_\cV=1$ after multiplying each $e_i$ by a scalar. Since $\cV'\subset\cV$, we then have $\|e_i\|_{\cV'}=|a_i^{-1}|$ for some nonzero $a_i\in K^\circ$, and we get the claim by Lemma~\ref{lem:latticenorm}. The uniqueness part is proved as in the usual structure theorem for torsion modules over PID's. 
\end{proof}

\begin{defi}\label{def:cont} The \emph{content} of a finitely presented torsion module $M$ is defined as 
$$
\cont(M):=\sum_{i=1}^r v_K(a_i)\in(0,+\infty).
$$
\end{defi}
Note that this is $-\log$ of the content as defined in~\cite[2.6.1]{Tem}. Alternatively, $\cont(M)$ is obtained by applying $v_K$ to the fractional ideal sheaf $\{ a\in K\mid a\cdot\det M=0\}$.  

\begin{exam}\label{exam:content} If $K$ is discretely valued with uniformizing parameter $\pi_K$, then $\cont(M)=v_K(\pi_K)\ell(M)$ with $\ell(M)$ the length of $M$. 
\end{exam}

The content is closely related to the relative volume. Indeed, as we saw during the proof of Lemma~\ref{lem:finpres}, every finitely presented torsion module is the quotient of two lattices in the same vector space, and we have: 

\begin{lem}\label{lem:contEuler} If $\cV'\subset\cV$ are lattices in a finite dimensional vector space $V$, then 
$$
\cont(\cV/\cV')=\vol\left(\n_\cV,\n_{\cV'}\right).
$$
\end{lem}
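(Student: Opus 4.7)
The plan is to reduce the computation to an explicit joint diagonalization, after which both sides of the identity become the same sum of valuations.

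First I would apply the codiagonalization result (Proposition~\ref{prop:codiag}) to the two lattice norms $\n_\cV$ and $\n_{\cV'}$ to obtain a basis $(e_i)$ of $V$ which is orthogonal for both. Since lattice norms take values in $|K|$, I can rescale each $e_i$ by a suitable scalar in $K^\times$ so as to arrange $\|e_i\|_\cV = 1$ for all $i$ (such rescaling changes neither the lattices, the module $\cV/\cV'$, nor the relative volume, since the rescaling factors multiply to $1$ in $\det V$ on both sides). The containment $\cV' \subset \cV$ translates to $\n_\cV \le \n_{\cV'}$, so $\|e_i\|_{\cV'} = |\pi_i^{-1}|$ for some $\pi_i \in K^\circ$.

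Next, by Lemma~\ref{lem:latticenorm} applied to both lattice norms, $\cV = \bigoplus_i K^\circ e_i$ and $\cV' = \bigoplus_i K^\circ \pi_i e_i$, so there is an isomorphism
$$
\cV/\cV' \;\simeq\; \bigoplus_{i=1}^N K^\circ/\pi_i K^\circ,
$$
and by Definition~\ref{def:cont} this yields $\cont(\cV/\cV') = \sum_i v_K(\pi_i)$.

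Finally, I compute the right-hand side using Lemma~\ref{lem:detdiag}, which states that for a diagonalizable norm, the determinant factors as the product of the values on an orthogonal basis. Applied to $\n_\cV$ and $\n_{\cV'}$ in the common orthogonal basis $(e_i)$, this gives $\det\|e_1 \wedge \cdots \wedge e_N\|_\cV = 1$ and $\det\|e_1 \wedge \cdots \wedge e_N\|_{\cV'} = \prod_i |\pi_i|^{-1}$. Therefore
$$
\vol(\n_\cV, \n_{\cV'}) \;=\; \log\!\left(\frac{\det\n_{\cV'}}{\det\n_\cV}\right) \;=\; \sum_i v_K(\pi_i) \;=\; \cont(\cV/\cV'),
$$
as desired. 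There is no genuine obstacle here: the only substantive inputs are codiagonalization and the multiplicativity of the determinant on an orthogonal basis, both already established. The only small point of care is the rescaling step, which must be done with scalars in $K^\times$ (using that lattice norms take values in $|K|$) so that it preserves the lattices up to isomorphism of quotients.
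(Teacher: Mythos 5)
Your proof is correct and follows essentially the same route as the paper: the paper treats this lemma as immediate from the argument in the proof of Lemma~\ref{lem:finpres}, where one codiagonalizes $\n_\cV$ and $\n_{\cV'}$ via Proposition~\ref{prop:codiag}, normalizes so $\|e_i\|_\cV=1$, and identifies $\cV=\bigoplus_i K^\circ e_i$, $\cV'=\bigoplus_i K^\circ\pi_i e_i$ via Lemma~\ref{lem:latticenorm}, after which both sides equal $\sum_i v_K(\pi_i)$ (the paper computing determinants through Corollary~\ref{cor:detmodel}, which is just Lemma~\ref{lem:detdiag} specialized to lattice norms). Your handling of the rescaling and of possible unit $\pi_i$'s (contributing zero) is fine, so there is no gap.
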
 

Assuming now that $K$ is discretely valued, we conclude this section with an analogue of Proposition~\ref{prop:volarch}, relating the relative volume to the virtual length used in~\cite{BG+}. Recall that the virtual length $\ell(\cV/\cV')\in\Z$ of two lattices $\cV,\cV'$ in a $K$-vector space is defined as
$$
\ell(\cV/\cV'):=\ell(\cV/\cV'')-\ell(\cV'/\cV'')
$$
for any lattice $\cV''$ contained in both $\cV$ and $\cV'$ (cf.~\cite[Definition 4.1.1]{BG+},~\cite[III,\S1]{Ser}). 

\begin{prop}\label{prop:voldiscr} Assume that $K$ is discretely valued with uniformizing parameter $\pi_K$, and let $\n,\n'$ be two norms on $V$. Denote by $B,B'$ their unit balls, and by $\n_B,\n_{B'}$ the associated lattice norms. Then 
$$
\vol\left(\n_B,\n_{B'}\right)=v_K(\pi_K)\ell(B/B')=\vol(\n,\n')+O(N).  
$$
\end{prop}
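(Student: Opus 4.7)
The plan is to establish the two equalities independently, using the results of the previous sections as black boxes.

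For the first equality $\vol(\n_B,\n_{B'}) = v_K(\pi_K)\,\ell(B/B')$, the key point is that since $K$ is discretely valued, $K^\circ$ is a DVR and in particular Noetherian, so the intersection $B'':=B\cap B'$ is a finitely generated $K^\circ$-submodule of $V$. As $B''$ contains a nonzero $K^\circ$-multiple of any lattice basis of $B$, it spans $V$ over $K$, and is therefore itself a lattice in $V$. Write $\n_{B''}$ for the associated lattice norm. The cocycle relation (Proposition~\ref{prop:vol}(i)) gives
$$
\vol(\n_B,\n_{B'}) = \vol(\n_B,\n_{B''}) - \vol(\n_{B'},\n_{B''}).
$$
Since $B''\subset B$ and $B''\subset B'$ are inclusions of lattices, Lemma~\ref{lem:contEuler} identifies each term on the right with the content of the corresponding torsion quotient, and Example~\ref{exam:content} turns those contents into $v_K(\pi_K)$ times the length. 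Combining, I get
$$
\vol(\n_B,\n_{B'}) = v_K(\pi_K)\bigl(\ell(B/B'') - \ell(B'/B'')\bigr),
$$
and the parenthesized expression is exactly $\ell(B/B')$ by definition of the virtual length.

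For the second equality $\vol(\n_B,\n_{B'}) = \vol(\n,\n') + O(N)$, I invoke Lemma~\ref{lem:latticedense}(ii): in the discretely valued case the unit ball $B$ of any ultrametric norm $\n$ is a lattice, and the comparison
$$
\dGI(\n,\n_B)\le v_K(\pi_K)
$$
holds (and likewise for $\n',\n_{B'}$). Since $\vol$ is $N$-Lipschitz in each variable (Proposition~\ref{prop:vol}(iv)), applying the triangle inequality yields
$$
\bigl|\vol(\n_B,\n_{B'}) - \vol(\n,\n')\bigr| \le N\,\dGI(\n_B,\n) + N\,\dGI(\n_{B'},\n') \le 2N\,v_K(\pi_K),
$$
which is the desired $O(N)$ bound (with constant depending only on $K$).

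The proof is essentially routine once the right lemmas are in place: the only mild subtlety is the use of Noetherianity of $K^\circ$ to know that $B\cap B'$ is again a lattice, which lets the cocycle-plus-content argument go through without having to assume an inclusion $B'\subset B$. Nothing here should be hard; the $O(N)$ term is forced by the fact that passing from an ultrametric norm to its lattice approximation only costs $v_K(\pi_K)$ per basis direction.
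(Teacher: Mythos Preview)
Your proof is correct and follows essentially the same route as the paper's: the second equality is handled identically via the distance bound $\dGI(\n,\n_B)\le v_K(\pi_K)$ and $N$-Lipschitz continuity of $\vol$, while for the first equality the paper simply says ``follows from Example~\ref{exam:content}'', leaving implicit the cocycle-plus-virtual-length reduction that you spell out with $B''=B\cap B'$. Incidentally, your citation of Lemma~\ref{lem:latticedense}(ii) for the distance bound is the correct one; the paper's reference to Lemma~\ref{lem:latticenorm} there appears to be a slip.
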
 
Note that the absolute value of $K$ is normalized by $ v_K(\pi_K)=1$ in~\cite{BG+}.
\begin{proof}[Proof of Proposition~\ref{prop:voldiscr}] The first equality follows from Example~\ref{exam:content}. By Lemma~\ref{lem:latticenorm}, we further have $\dGI(\n,\n_B)\le v_K(\pi_K)$ and $\dGI(\n',\n_{B'})\le v_K(\pi_K)$, and hence $\vol(\n,\n')=\vol(\n_B,\n_{B'})+O(N)$ by $N$-Lipschitz continuity of $\vol$. 
\end{proof}

%
%
\subsection{Relative spectra}\label{sec:successive}
We introduce a general notion of relative spectrum of two norms, and relate it to relative volumes. In the Archimedean (resp.~discretely valued) case, the results of this section can be traced back to Minkowski's work on successive minima (resp.~Mahler's paper~\cite{Mah}).

\begin{defi}\label{defi:spectrum} Let $\n,\n'\in\cN(V)$ be two norms on $V$. We define the \emph{relative spectrum} of $\n$ with respect to $\n'$ as the finite decreasing sequence
$$
\la_1(\n,\n')\ge\dots\ge\la_N(\n,\n')
$$
defined by the minmax-type formulas
\begin{equation}\label{equ:minmax}
\la_i(\n,\n'):=\sup_{W\subset V,\,\dim W\ge i}\left(\inf_{w\in W\setminus\{0\}}\log\frac{\|w\|'}{\|w\|}\right).
\end{equation}
\end{defi}

The following properties are immediate to check. 

\begin{lem}\label{lem:spectrum} For any two norms $\n,\n'$ on $V$ we have:
\begin{itemize} 
\item[(i)] $\la_1(\n,\n')=\sup_{v\in V\setminus\{0\}}\left(\log\|v\|'-\log\|v\|\right)$; 
\item[(ii)] $\la_N(\n,\n')=\inf_{v\in V\setminus\{0\}}\left(\log\|v\|'-\log\|v\|\right)=-\la_1(\n',\n)$; 
\item[(iii)] $\dGI(\n,\n')=\max\left\{\la_1(\n,\n'),\la_1(\n',\n)\right\}$; 
\item[(iv)] for each $i$, $\la_i(\n,\n')$ is a $1$-Lipschitz continuous function of $\n,\n'$ with respect to the Goldman--Iwahori distance $\dGI$.  
\end{itemize}
\end{lem}

The next result justifies the chosen terminology. 

\begin{prop}\label{prop:succ} Assume that $\n,\n'\in\cN^\diag(V)$ are diagonalizable. Choose a basis $(e_i)$ of $V$ in which both norms are diagonalized, as in Proposition~\ref{prop:codiag}, and order it so that 
$$
\frac{\|e_1\|'}{\|e_1\|}\ge\dots\ge\frac{\|e_N\|'}{\|e_N\|}. 
$$ 
Then 
$$
\la_i(\n,\n')=\log\frac{\|e_i\|'}{\|e_i\|}. 
$$
\end{prop}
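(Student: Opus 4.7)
The plan is to verify the identity via the min–max characterization \eqref{equ:minmax} of the relative successive minima, by producing on one hand a subspace $W$ of dimension $i$ on which the ratio $\|w\|'/\|w\|$ is uniformly bounded below by $\|e_i\|'/\|e_i\|$, and on the other hand, for any $W$ of dimension $\geq i$, a nonzero vector in $W$ on which the ratio is bounded above by the same quantity. Given the ordering $\|e_1\|'/\|e_1\| \geq \cdots \geq \|e_N\|'/\|e_N\|$, the natural candidates are $W_i := \Vect(e_1,\dots,e_i)$ for the lower bound, and $W'_i := \Vect(e_i,\dots,e_N)$ for the upper bound, which has codimension $i-1$ and thus intersects any $W$ of dimension $\geq i$ nontrivially.

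For the lower bound, take any $w = \sum_{j=1}^i \a_j e_j \in W_i \setminus\{0\}$. In the Archimedean case, $\|w\|'^2/\|w\|^2$ is the weighted average of $(\|e_j\|'/\|e_j\|)^2$ for $j \leq i$ with nonnegative weights $|\a_j|^2\|e_j\|^2$, so it is at least $(\|e_i\|'/\|e_i\|)^2$ by the ordering. In the non-Archimedean case, let $j_0 \leq i$ be an index achieving $\|w\| = |\a_{j_0}|\|e_{j_0}\|$; since $\|w\|' \geq |\a_{j_0}|\|e_{j_0}\|'$, we get $\|w\|'/\|w\| \geq \|e_{j_0}\|'/\|e_{j_0}\| \geq \|e_i\|'/\|e_i\|$. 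This yields $\la_i(\n,\n') \geq \log(\|e_i\|'/\|e_i\|)$.

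For the upper bound, let $W \subset V$ have $\dim W \geq i$. Since $\dim W + \dim W'_i \geq N+1$, the intersection $W \cap W'_i$ contains some nonzero $w = \sum_{j=i}^N \a_j e_j$. In the Archimedean case, $\|w\|'^2/\|w\|^2$ is a weighted average of $(\|e_j\|'/\|e_j\|)^2$ for $j \geq i$, hence bounded above by $(\|e_i\|'/\|e_i\|)^2$. In the non-Archimedean case, pick $j_1 \geq i$ realizing $\|w\|' = |\a_{j_1}|\|e_{j_1}\|'$; then $\|w\| \geq |\a_{j_1}|\|e_{j_1}\|$, so $\|w\|'/\|w\| \leq \|e_{j_1}\|'/\|e_{j_1}\| \leq \|e_i\|'/\|e_i\|$. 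Taking the infimum over $w \in W \setminus\{0\}$ and then the supremum over $W$ gives $\la_i(\n,\n') \leq \log(\|e_i\|'/\|e_i\|)$, completing the proof.

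No step here is a serious obstacle; the only thing to keep in mind is to handle the Archimedean and non-Archimedean cases in parallel, since the arithmetic/geometric averaging arguments take slightly different forms (weighted quadratic mean versus maximum), yet both exploit the same monotonicity of $j \mapsto \|e_j\|'/\|e_j\|$.
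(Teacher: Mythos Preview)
Your proof is correct and follows essentially the same approach as the paper's: both use the min--max characterization \eqref{equ:minmax} together with the subspaces $W_i=\Vect(e_1,\dots,e_i)$ and $W'_i=\Vect(e_i,\dots,e_N)$, and the dimension argument $W\cap W'_i\ne\{0\}$ for the upper bound. You merely spell out in more detail the weighted-average (Archimedean) and max-index (non-Archimedean) verifications that the paper condenses into the single observed identity $\log(\|e_i\|'/\|e_i\|)=\inf_{W_i\setminus\{0\}}\log(\|w\|'/\|w\|)=\sup_{W'_i\setminus\{0\}}\log(\|w\|'/\|w\|)$.
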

\begin{proof} Set $W_i:=\Vect(e_1,\ldots,e_i)$, $W'_i:=\Vect(e_i,\ldots,e_N)$, and observe that
$$
\log\frac{\|e_i\|'}{\|e_i\|}=\inf_{w\in W_i\setminus\{0\}}\log\frac{\|w\|'}{\|w\|}=\sup_{w\in W'_i\setminus\{0\}}\log\frac{\|w\|'}{\|w\|}.
$$
By (\ref{equ:minmax}), the first equality yields $\log\frac{\|e_i\|'}{\|e_i\|}\le\la_i(\n,\n')$. On the other hand, each subspace $W\subset V$ with $\dim W\ge i$ satisfies $W\cap W'_i\ne\{0\}$ for dimension reason, and hence 
$$
\inf_{w\in W\setminus\{0\}}\log\frac{\|w\|'}{\|w\|}\le\sup_{w\in W'_i\setminus\{0\}}\log\frac{\|w\|'}{\|w\|}=\log\frac{\|e_i\|'}{\|e_i\|},
$$
and using (\ref{equ:minmax}) again yields $\la_i(\n,\n')\le\log\frac{\|e_i\|'}{\|e_i\|}$. 
\end{proof}

We are now in a position to prove the following analogue of Minkowski's second theorem. 

\begin{thm}\label{thm:mink} Let $\n,\n'$ be two norms on $V$. If $\n,\n'$ are diagonalizable, or $K$ is non-Archimedean, then 
$$
\vol(\n,\n')=\sum_{i=1}^N\la_i(\n,\n'). 
$$
Otherwise, 
$$
\left|\vol(\n,\n')-\sum_{i=1}^N\la_i(\n,\n')\right|\le 2N\log N.
$$
\end{thm}
\begin{proof} Note that $\vol(\n,\n')$ and $\sum_i\la_i(\n,\n')$ are both $N$-Lipschitz continuous in $\n,\n'$, by Proposition~\ref{prop:vol} and Lemma~\ref{lem:spectrum}, respectively. When both norms are diagonalizable, Lemma~\ref{lem:detdiag} and Proposition~\ref{prop:succ} yield
$$
\vol(\n,\n')=\sum_{i=1}^N\la_i(\n,\n').
$$
If $K$ is non-Archimedean, this propagates to all norms, by density of diagonalizable norms and (Lipschitz) continuity. Finally, the case of arbitrary norms in the Archimedean case is handled just as point (v) in Proposition~\ref{prop:vol}, by choosing diagonalizable norms at distance at most $\tfrac 12\log N$ of $\n$, $\n'$. 
\end{proof}

\begin{exam} By Example~\ref{exam:detarch}, the $\ell^1$ and $\ell^2$ norms $\n_1,\n_2$ on $\R^2$  satisfy $\vol(\n_2,\n_1)=0$, while $\la_1(\n_2,\n_1)=\log\sqrt 2$ and $\la_2(\n_2,\n_1)=0$. 
\end{exam}

%
%
\section{Alternative metric structures on spaces of norms}\label{sec:building}
The goal of this section, which stands somewhat apart from the rest of the paper, is to exploit the properties of determinants of norms to endow the space of diagonalizable norms with natural metric structures, recovering in particular the Bruhat--Tits metric in the non-Archimedean case. 

In this section, $K$ is again an arbitrary complete valued field, and $V$ an $N$-dimensional $K$-vector space. 
%
%
\subsection{The triangle inequality} 
The relative spectrum of two norms $\n,\n'\in\cN(V)$ defines a point $\la\left(\n,\n'\right)$ in the rational polyhedral cone
$$
\cC:=\left\{\la\in\R^N\mid\la_1\ge\dots\ge\la_N\right\}\simeq\R^N/\fS_N, 
$$
which is a Weyl chamber for the Weyl group $\fS_N$ of $\GL_N$. Given an $\mathfrak{S}_N$-invariant norm $\chi$ on $\R^N$, set
$$
\dist_\chi\left(\n,\n'\right):=\chi\left(\la\left(\n,\n'\right)\right). $$
The resulting function $\dist_\chi:\cN(V)\times\cN(V)\to\R_+$ is symmetric, with $\dist_\chi(\n,\n')=0$ if and only if $\n=\n'$. The following result is inspired by Gerardin's proof of~\cite[2.4.7, Corollaire 2]{Ger}. 

\begin{thm}\label{thm:normdist} For each $\fS_N$-invariant norm $\chi$ on $\R^N$, $\dist_\chi$ satisfies the triangle inequality on $\cN^\diag(V)$, and is characterized as the unique metric on $\cN^\diag(V)$ for which 
$$
\iota_\be:(\R^N,\chi)\hookrightarrow(\cN^\diag(V),\dist_\chi)
$$ 
is an isometric embedding for each basis $\be$ of $V$.
\end{thm}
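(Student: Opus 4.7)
The isometric embedding claim is a direct consequence of Proposition~\ref{prop:succ}. If $\n = \n_{\be,\mu}$ and $\n' = \n_{\be,\mu'}$ both lie in the apartment $\A_\be$, then choosing the ordering of $\be$ that makes $\mu_i - \mu'_i$ decreasing gives $\la_i(\n,\n') = \mu_i - \mu'_i$, and $\fS_N$-invariance of $\chi$ yields $\dist_\chi(\n,\n') = \chi(\mu - \mu')$. Thus $\iota_\be$ is an isometric embedding. Uniqueness then follows immediately from Proposition~\ref{prop:codiag}, since any two points of $\cN^\diag(V)$ lie in a common apartment, so a metric on $\cN^\diag(V)$ is determined by its restriction to apartments.

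The heart of the proof is the triangle inequality. The plan is first to establish a Ky Fan-type identity: for all $\n,\n' \in \cN^\diag(V)$ and $k \in \{1,\dots,N\}$,
\begin{equation*}
\sum_{i=1}^k \la_i(\n,\n') \;=\; \sup_{W \subset V,\ \dim W = k} \vol\bigl(\n|_W,\, \n'|_W\bigr).
\end{equation*}
Equality ``$\ge$'' is achieved by $W = \Vect(e_1,\dots,e_k)$, where $(e_i)$ is a codiagonalizing basis for $\n,\n'$ (Proposition~\ref{prop:codiag}) ordered so that $\log(\|e_i\|'/\|e_i\|)$ is decreasing: Lemma~\ref{lem:detdiag} then gives $\vol(\n|_W,\n'|_W) = \sum_{i=1}^k \log(\|e_i\|'/\|e_i\|)$, which equals $\sum_{i=1}^k \la_i(\n,\n')$ by Proposition~\ref{prop:succ}. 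The opposite bound ``$\le$'' combines the min-max principle (\ref{equ:minmax}), which gives $\la_j(\n|_W, \n'|_W) \le \la_j(\n,\n')$ for every $W$, with Theorem~\ref{thm:mink} applied to the diagonalizable restrictions (Lemma~\ref{lem:quotientdiag}) to write $\vol(\n|_W,\n'|_W) = \sum_{j=1}^k \la_j(\n|_W,\n'|_W)$.

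Applying this identity to a triple $\n_1,\n_2,\n_3$ together with the cocycle property of $\vol$ (Proposition~\ref{prop:vol}(i)) yields the Ky Fan inequality
\begin{equation*}
\sum_{i=1}^k \la_i(\n_1,\n_3) \;\le\; \sum_{i=1}^k \la_i(\n_1,\n_2) + \sum_{i=1}^k \la_i(\n_2,\n_3), \quad 1 \le k \le N,
\end{equation*}
with equality at $k=N$ (both sides then equal $\vol(\n_1,\n_3)$ by Theorem~\ref{thm:mink} and the cocycle property). Setting $a := \la(\n_1,\n_3)$, $b := \la(\n_1,\n_2)$, $c := \la(\n_2,\n_3) \in \cC$, and noting that $\sum_{i=1}^k (b+c)_\downarrow \ge \sum_{i=1}^k (b_i + c_i)$ since $b$ and $c$ are already decreasingly ordered, this translates into the majorization $a \prec b+c$. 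Since $\chi$ is a symmetric convex function on $\R^N$ (being a $\fS_N$-invariant norm), it is Schur-convex by the Hardy-Littlewood-Polya theorem, so $\chi(a) \le \chi(b+c)$; subadditivity of the norm $\chi$ then gives $\chi(a) \le \chi(b) + \chi(c)$, which is the triangle inequality. The principal obstacle is the Ky Fan identity; once in hand, Schur-convexity combined with subadditivity finishes the argument cleanly.
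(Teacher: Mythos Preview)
Your proof is correct, but it takes a genuinely different route from the paper's.

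The paper proves the triangle inequality by showing that the Gram--Schmidt retraction $\rho_\be:\cN^\diag(V)\to\A_\be$ is $\dist_\chi$-nonexpansive (Lemma~\ref{lem:normdist}). Given three norms, one codiagonalizes $\n_1,\n_2$ in a basis $\be$, applies the triangle inequality inside the apartment $\A_\be$ to $\rho_\be(\n_1),\rho_\be(\n_3),\rho_\be(\n_2)$, and uses the nonexpansiveness of $\rho_\be$ to compare with the original distances. The nonexpansiveness itself is established via majorization, using Lemma~\ref{lem:detretr} (that $\det$ is invariant under $\rho_\be$) together with Theorem~\ref{thm:mink} and the min-max principle.

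You instead prove the Ky Fan inequality directly by establishing the variational formula $\sum_{i\le k}\la_i(\n,\n')=\sup_{\dim W=k}\vol(\n|_W,\n'|_W)$, then use the cocycle property of $\vol$ on subspaces to get $\sum_{i\le k}\la_i(\n_1,\n_3)\le\sum_{i\le k}\la_i(\n_1,\n_2)+\sum_{i\le k}\la_i(\n_2,\n_3)$, and conclude by majorization and Schur-convexity. This bypasses the Gram--Schmidt machinery entirely (Definition~\ref{defi:retraction}, Lemma~\ref{lem:detretr}) and is arguably more elementary. The underlying ingredients are the same (codiagonalization, Theorem~\ref{thm:mink}, the min-max principle, Lemma~\ref{lem:convex}), but organized around a global supremum rather than a retraction. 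What the paper's approach buys is that the nonexpansiveness of $\rho_\be$ is of independent interest in building theory and is reused in the Finsler setting (Lemma~\ref{lem:Finsler}); your approach is more self-contained for this particular theorem.

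One small remark: the justification ``since $b$ and $c$ are already decreasingly ordered'' for $\sum_{i\le k}(b+c)_\downarrow\ge\sum_{i\le k}(b_i+c_i)$ is not the actual reason; this inequality holds simply because the partial sum of the $k$ largest entries of any vector dominates the sum over any $k$ fixed indices.
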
 

By Proposition~\ref{prop:GIisom}, the Goldman--Iwahori metric $\dGI$ on $\cN^\diag(V)$ corresponds to the $\ell^\infty$-norm on $\R^N$. By equivalence of norms on $\R^N$, any metric $\dist_\chi$ produced by Theorem~\ref{thm:normdist} is Lipschitz equivalent to $\dGI$. Besides the latter, the most important case is the Euclidian metric $\dist_2$ induced by the $\ell^2$-norm:

\begin{exam} When $K$ is Archimedean, $\dist_2$ coincides with the Riemannian metric of the symmetric space $\cN^\diag(V)\simeq\GL_N(K)/\mathrm{U}_N(K)$ (see Theorem~\ref{thm:Finsler} below). When $K$ is non-Archimedean, $(\cN^\diag(V),\dist_2)$ is a realization of the Bruhat--Tits building of $\GL_N(K)$ with its Euclidian metric, see for instance~\cite[Chapter III]{Par}. In both cases, $(\cN^\diag(V),\dist_2)$ is a $\mathrm{CAT}(0)$ metric space. 
\end{exam}

\begin{cor} If $K$ is non-Archimedean, the space $\cN(V)$ is complete with respect to the metric $\dist_\chi$, which is characterized as the unique compatible metric such that $\iota_\be:(\R^N,\chi)\hookrightarrow(\cN(V),\dist_\chi)$ is an isometric embedding for all bases $\be$. For $\chi=\ell^2$, $(\cN(V),\dist_2)$ is a $\mathrm{CAT}(0)$ metric space. 
\end{cor}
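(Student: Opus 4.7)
The plan is to extend $\dist_\chi$ from $\cN^\diag(V)$ to $\cN^\ultr(V)$ by continuity, then transfer each desired property across the extension. The key bridge is that, on $\cN^\diag(V)$, the metric $\dist_\chi$ is globally Lipschitz equivalent to $\dGI$: by equivalence of norms on $\R^N$, there exists $C\ge 1$ with $C^{-1}\ell^\infty\le\chi\le C\ell^\infty$, and given any $\n,\n'\in\cN^\diag(V)$ we can codiagonalize them by Proposition~\ref{prop:codiag} in a common orthogonal basis $\be$; since both $\dist_\chi$ and $\dGI$ restrict along $\iota_\be$ to the norms $\chi$ and $\ell^\infty$ (Theorem~\ref{thm:normdist} and Proposition~\ref{prop:GIisom}), the Lipschitz comparison propagates.

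With this at hand, by Theorem~\ref{thm:diag}(iii) the set $\cN^\diag(V)$ is dense in $\cN^\ultr(V)$ with respect to $\dGI$, and hence also with respect to $\dist_\chi$. By Proposition~\ref{prop:topnorm}, $\cN^\ultr(V)$ is closed in the complete metric space $(\cN(V),\dGI)$, so it is itself $\dGI$-complete. The Lipschitz equivalence then shows that $\cN^\diag(V)$ is dense in $\cN^\ultr(V)$ for $\dist_\chi$, and that any $\dist_\chi$-Cauchy sequence is $\dGI$-Cauchy, hence $\dGI$-convergent to a limit in $\cN^\ultr(V)$. Consequently $\dist_\chi$ extends uniquely and uniformly continuously from $\cN^\diag(V)$ to $\cN^\ultr(V)$; the extension is symmetric, non-degenerate (by Lipschitz equivalence with $\dGI$), and satisfies the triangle inequality by passing to the limit. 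Completeness of $(\cN^\ultr(V),\dist_\chi)$ follows from completeness with respect to $\dGI$ and Lipschitz equivalence.

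For the characterization by apartments: each $\iota_\be:(\R^N,\chi)\hookrightarrow(\cN^\diag(V),\dist_\chi)$ is isometric by Theorem~\ref{thm:normdist}, and this property is preserved by the extension since $\iota_\be(\R^N)\subset\cN^\diag(V)$ already. Conversely, if $d'$ is any other compatible metric on $\cN^\ultr(V)$ with the isometric embedding property, then $d'$ and $\dist_\chi$ agree on $\cN^\diag(V)$ (by the uniqueness part of Theorem~\ref{thm:normdist}, since every pair of diagonalizable norms lies in some apartment), and hence on $\cN^\ultr(V)$ by density.

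For the CAT(0) statement with $\chi=\ell^2$, one first invokes the identification of $(\cN^\diag(V),\dist_2)$ with the Bruhat--Tits building of $\GL_N(K)$ (cf.\ \cite{Par}), which is a CAT(0) metric space. The conclusion then follows from the general fact that the metric completion of a CAT(0) space is CAT(0): geodesics and comparison triangles in the dense subspace converge, by continuity of the distance, to geodesics and valid comparison triangles in the completion. Since $(\cN^\ultr(V),\dist_2)$ is this completion by the above density and completeness, it inherits the CAT(0) property. The main obstacle is essentially this last point, namely verifying that one can extract a geodesic between two limit points from geodesics between approximating points; this is the standard argument combining uniform continuity of midpoint maps in CAT(0) spaces with completeness, and requires no local compactness assumption on $\cN^\ultr(V)$.
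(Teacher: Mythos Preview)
Your proof is correct and follows essentially the same route as the paper: Lipschitz equivalence of $\dist_\chi$ with $\dGI$ on $\cN^\diag(V)$, density of $\cN^\diag(V)$ in $\cN^\ultr(V)$ (Theorem~\ref{thm:diag}), completeness of $\cN^\ultr(V)$ for $\dGI$, uniqueness via density plus codiagonalization, and preservation of CAT(0) under completion via the Bruhat--Tits identification. The only cosmetic difference is that you phrase the passage to $\cN^\ultr(V)$ as an \emph{extension} of $\dist_\chi$, whereas in the paper $\dist_\chi(\n,\n')=\chi(\la(\n,\n'))$ is already defined on all of $\cN(V)$ via the relative successive minima (Definition~\ref{defi:filtr}), and one simply transfers the triangle inequality by continuity (Lemma~\ref{lem:succlip}); of course the two viewpoints agree since the continuous extension must coincide with the restriction of the global formula.
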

\begin{proof} On $\cN^\diag(V)$, $\dist_\chi$ is equivalent to $\dist_\infty$ and satisfies the triangle inquality. This is also the case on $\cN(V)$, by density of $\cN^\diag(V)$. Since $\cN(V)$ is complete for $d_\infty$, it is also complete for $d_\chi$, and is thus the completion of $(\cN^\diag(V),d_\chi)$. Conversely, any metric on $\cN(V)$ with the stated property must coincide with $\dist_\chi$ on the dense subset $\cN^\diag(V)$, hence everywhere. For $\chi=\ell^2$, the Bruhat--Tits building $(\cN^\diag(V),\dist_2)$ is a $\mathrm{CAT}(0)$ metric space, and this property is preserved under completion. 
\end{proof}

As in the construction of the Euclidian metric on any Euclidian building, the key to the proof of Theorem~\ref{thm:normdist} is to show that the Gram--Schmidt projections introduced in Definition~\ref{defi:retraction} are distance-decreasing. 

\begin{lem}\label{lem:normdist} For each basis $\be$ of $V$, the Gram--Schmidt projection $\rho_\be:\cN^\diag(V)\to\A_\be$ satisfies 
\begin{equation}\label{equ:normretr}
\dist_\chi\left(\rho_\be(\n),\rho_\be(\n')\right)\le \dist_\chi(\n,\n')
\end{equation}
for all $\n,\n'\in\cN^\diag(V)$. 
\end{lem}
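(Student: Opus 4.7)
The plan is to recast the inequality as a majorization statement $\mu\prec\la$ in $\R^N$ and deduce it from Schur convexity of the $\fS_N$-invariant norm $\chi$. Set
\[
\mu_i := \log\bigl(\|e_i\|_{\rho_\be(\n')}/\|e_i\|_{\rho_\be(\n)}\bigr),
\]
where $\|e_i\|_{\rho_\be(\n)}=\inf_{v\in W_{i-1}}\|e_i+v\|$ is the Gram--Schmidt coefficient associated to the flag $W_\bullet$ induced by $\be$ (Definition~\ref{defi:retraction}). Since $\rho_\be(\n)$ and $\rho_\be(\n')$ are jointly diagonal in $\be$, Proposition~\ref{prop:succ} identifies their sequence of relative successive minima with a permutation of $(\mu_i)_i$, and $\fS_N$-invariance of $\chi$ gives $\dist_\chi(\rho_\be(\n),\rho_\be(\n'))=\chi(\mu)$. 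Codiagonalizing $\n,\n'$ themselves (Proposition~\ref{prop:codiag}) and applying the same proposition yields $\dist_\chi(\n,\n')=\chi(\la)$ with $\la=\la(\n,\n')$ in decreasing order. Every $\fS_N$-invariant norm on $\R^N$ is a symmetric convex function, hence Schur convex by Hardy--Littlewood--P\'olya, so $\mu\prec\la$ will imply $\chi(\mu)\le\chi(\la)$, as desired.

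Two ingredients toward majorization come easily. First, Lemma~\ref{lem:detretr} yields $\det\rho_\be(\n)=\det\n$ (and analogously for $\n'$), so that Theorem~\ref{thm:mink} applied to both sides gives the equality of sums
\[
\sum_i\mu_i=\log(\det\n'/\det\n)=\sum_i\la_i.
\]
Second, the tautological identity $\rho_\be(\n)|_{W_k}=\rho_{(e_1,\dots,e_k)}(\n|_{W_k})$, immediate from the definition of Gram--Schmidt coefficients, combined with Lemma~\ref{lem:detretr} applied \emph{inside} $W_k$, yields $\det\rho_\be(\n)|_{W_k}=\det\n|_{W_k}$, whence
\[
\sum_{i=1}^k\mu_i=\vol(\n|_{W_k},\n'|_{W_k})=\sum_{i=1}^k\la_i(\n|_{W_k},\n'|_{W_k})\le\sum_{i=1}^k\la_i,
\]
the last inequality being the Cauchy-type interlacing $\la_i(\n|_W,\n'|_W)\le\la_i(\n,\n')$ immediate from the min-max formula~\eqref{equ:minmax}.

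The main obstacle is strengthening these initial-segment bounds to the inequalities $\sum_{i\in S}\mu_i\le\sum_{i=1}^{|S|}\la_i$ for \emph{every} subset $S\subset\{1,\dots,N\}$, which is what the decreasingly rearranged partial sums $\sum_{i=1}^k\mu_{[i]}=\max_{|S|=k}\sum_{i\in S}\mu_i$ demand in order to conclude $\mu\prec\la$. This is a form of Kostant's convexity theorem, asserting that $\mu$ lies in the convex hull of the Weyl orbit $\fS_N\cdot\la$: in the Archimedean case it reduces to the classical Schur--Horn theorem applied to the positive Hermitian operator representing the relative norm of $(\n,\n')$, and in the non-Archimedean case it is the corresponding statement in the Bruhat--Tits building of $\GL_N(K)$. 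I plan to establish it by induction on $N$, bootstrapping from the case $N=2$ (where the initial-segment analysis directly gives $\la_2\le\mu_1,\mu_2\le\la_1$, which is already enough for majorization since the sums agree) by applying it repeatedly to the $2$-dimensional subquotients $W_{i+1}/W_{i-1}$: there the pair $(\mu_i,\mu_{i+1})$ is sandwiched between the two relative successive minima $\tilde\la_1\ge\tilde\la_2$ of the subquotient, and the subquotient interlacing $\tilde\la_j\le\la_j$ furnishes the ``adjacent-swap'' T-transform data needed to propagate the bound from initial segments to arbitrary subsets.
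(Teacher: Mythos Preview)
Your approach coincides with the paper's: reduce to showing that the vector $\mu$ of Gram--Schmidt log-ratios is majorized by $\la=\la(\n,\n')$, then invoke Schur convexity (the paper's Lemma~\ref{lem:convex}). The paper establishes precisely the initial-segment bounds you derive, via $\det(\rho_\be(\n)|_{W_k})=\det(\n|_{W_k})$ and min-max interlacing, and then writes ``$\la_1+\dots+\la_i=\vol(\rho_\be(\n)_{W_i},\rho_\be(\n')_{W_i})$''. But the right-hand side equals $\sum_{j\le i}\mu_j$ in the \emph{given} basis order, not the sum of the $i$ largest $\mu_j$; the two differ whenever the $\mu_j$ are not already decreasing in $j$. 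So the ``main obstacle'' you flag is a genuine gap in the paper's argument as written.

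Your proposed fix via adjacent $T$-transforms on the subquotients $W_{i+1}/W_{i-1}$ is under-specified; it is not clear how the local $2$-dimensional bounds chain into a global majorization. A cleaner completion, staying within the paper's toolkit, is induction on $N$ via the quotient $V/W_1$. The Gram--Schmidt values $(\mu_2,\dots,\mu_N)$ are \emph{unchanged} when computed on $V/W_1$ with the induced basis and quotient norms, so by induction $(\mu_2,\dots,\mu_N)\prec\tilde\la$ where $\tilde\la_j:=\la_j(\n_{V/W_1},\n'_{V/W_1})$. Quotient interlacing (deduced from subspace interlacing via Lemma~\ref{lem:dualsub} and the duality $\la_j(\n^\vee,(\n')^\vee)=-\la_{N+1-j}(\n,\n')$) gives $\la_{j+1}\le\tilde\la_j\le\la_j$. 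Writing $\mu_1=\sum_j\la_j-\sum_j\tilde\la_j$ from the equality of total sums and splitting on whether $\mu_1$ lies among the $k$ largest $\mu_j$, the upper half $\tilde\la_j\le\la_j$ handles the first case and the lower half $\tilde\la_j\ge\la_{j+1}$ the second, yielding $\sum_{j\le k}\mu_{[j]}\le\sum_{j\le k}\la_j$ in both. In the Archimedean case you may alternatively observe that $(e^{\mu_j})$ and $(e^{\la_j})$ are respectively the moduli of the eigenvalues and the singular values of a single lower-triangular matrix (the relative Cholesky factor of the two Gram matrices in the basis $\be$), so Weyl's majorant theorem gives the log-majorization directly.
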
 

We first recall some elementary facts. 

\begin{defi} Given $\la,\la'\in\cC=\left\{\la\in\R^N\mid\la_1\ge\dots\ge\la_N\right\}$, one says that $\la$ is \emph{majorized by $\la'$}, written $\la\preceq\la'$, if $\la_1+\dots+\la_i\le\la'_1+\dots+\la'_i$ for all $i$, with equality for $i=N$. \end{defi}

\begin{lem}\label{lem:convex} We have $\la\preceq\la'$ if and only $\la$ belongs to the convex envelope of the $\fS_N$-orbit of $\la'$, and then $\chi(\la')\le \chi(\la)$ for any $\fS_N$-invariant norm $\chi$ on $\R^N$.  
\end{lem}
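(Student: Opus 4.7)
The plan is to prove both parts via the classical Hardy--Littlewood--P\'olya/Rado majorization theory. I expect the inequality at the end of the statement to be the standard majorization inequality $\chi(\la) \le \chi(\la')$ (the stated direction $\chi(\la') \le \chi(\la)$ appears to be a typo, as can already be seen with $\la = (1,1) \preceq (2,0) = \la'$ and $\chi = \ell^\infty$).

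I would first dispose of the easy implication: if $\la = \sum_\sigma t_\sigma\, \sigma(\la')$ is a convex combination, then for each $k \in \{1,\dots,N\}$ the function
$$
\phi_k \colon \mu \mapsto \max_{|I|=k}\sum_{i\in I}\mu_i
$$
is convex and $\fS_N$-invariant on $\R^N$, and coincides with $\sum_{i=1}^k \mu_i$ on the chamber $\cC$. Convexity and $\fS_N$-invariance then give
$$
\sum_{i=1}^k \la_i \;=\; \phi_k(\la) \;\le\; \sum_\sigma t_\sigma\, \phi_k(\sigma\la') \;=\; \sum_{i=1}^k \la'_i,
$$
with equality at $k = N$ since every $\sigma \in \fS_N$ preserves the total sum.

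For the converse, I would proceed by induction on the number of coordinates where $\la$ and $\la'$ disagree, using T-transforms. Assuming $\la \ne \la'$, the majorization hypothesis forces indices $i < j$ with $\la'_i > \la_i$ and $\la_j > \la'_j$; setting
$$
\la'' := (1-t)\la' + t\,\tau_{ij}(\la')
$$
for the unique $t \in (0, 1/2]$ making $\la''$ agree with $\la$ in at least one more coordinate than $\la'$ did, one checks that $\la \preceq \la''$ still holds while $\la''$ is manifestly a convex combination of elements of $\fS_N\cdot\la'$; iteration then expresses $\la$ itself as such a combination. Alternatively, I could invoke the Hardy--Littlewood--P\'olya theorem in the form $\la = D\la'$ for a doubly stochastic matrix $D$, and decompose $D$ via Birkhoff's theorem.

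With the equivalence in hand, the norm inequality is immediate: writing $\la = \sum_\sigma t_\sigma\, \sigma(\la')$, the triangle inequality for $\chi$ combined with its $\fS_N$-invariance gives $\chi(\la) \le \sum_\sigma t_\sigma\, \chi(\sigma \la') = \chi(\la')$. The only genuinely nontrivial point is the hard direction of the equivalence, but since this is entirely standard majorization theory, the most economical route is to cite Marshall--Olkin or Bhatia rather than reproduce the T-transform argument in detail.
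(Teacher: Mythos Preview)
Your proposal is correct, and your observation about the typo is spot on: the paper's own proof concludes with $\chi(\la)\le\chi(\la')$, so the statement should indeed read that way.

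For the easy direction and the final norm inequality you match the paper (your $\phi_k$ argument is a clean explicit version of what the paper dismisses as ``straightforward''). The genuine difference is in the hard direction. The paper, following Rado, uses a separation argument: by Hahn--Banach it suffices to show that for every linear form $\mu\in\R^N$ there is some $\sigma\in\fS_N$ with $\langle\mu,\la\rangle\le\langle\mu,\sigma\la'\rangle$, and this is established directly via an Abel summation after permuting $\mu$ into decreasing order. Your route via T-transforms (or Birkhoff) is the other classical proof: it is more constructive, actually producing the convex combination, whereas the paper's argument is shorter and avoids any iteration or appeal to the structure of doubly stochastic matrices, at the price of being non-constructive. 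Both are standard and entirely acceptable here.
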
 
\begin{proof} It is straightforward to see that any $\la$ in the convex envelope of the $\fS_N$-orbit of $\la'$ satisfies $\la\preceq\la'$. As observed in~\cite{Rad}, the converse is a simple consequence of the Hahn--Banach theorem. Assuming indeed that $\la\preceq\la'$, it is enough to show that for each $\mu\in\R^N$ there exists $\rho\in\fS_N$ with 
$$
\sum_i\mu_i\la_i\le\sum_i\mu_i\la'_{\sigma(i)}.
$$
Choose $\sigma$ such that $\mu_{\sigma(1)}\ge\dots\ge\mu_{\sigma(N)}$. Then 
$$
\sum_i\mu_i\la_i=\sum_i\mu_{\sigma(i)}\la_{\sigma(i)}=\sum_{i<N}\left(\mu_{\sigma(1)}-\mu_{\sigma(i+1)}\right)\left(\la_{\sigma(1)}+\dots+\la_{\sigma(i)}\right)+\mu_{\sigma(N)}\left(\la_{\sigma(1)}+\dots+\la_{\sigma(N)}\right)
$$
$$
\le\sum_{i<N}\left(\mu_{\sigma(1)}-\mu_{\sigma(i+1)}\right)\left(\la'_1+\dots+\la'_i\right)+\mu_{\sigma(N)}\left(\la'_1+\dots+\la'_N\right)=\sum_i\mu_{\sigma(i)}\la'_i=\sum_i\mu_i\la'_{\sigma^{-1}(i)},
$$
and it remains to set $\rho:=\sigma^{-1}$. Assume finally that $\la\preceq\la'$, and hence $\la=\sum_j t_j\mu_j$ with $\mu_i$ $\fS_N$-equivalent to $\la'$, $t_j\in\R_{\ge 0}$ and $\sum_j t_j=1$, by what we just saw. By $\fS_N$-invariance of $\chi$, we infer
$$
\chi(\la)\le\sum_j t_j\chi(\mu_i)=\sum_j t_j\chi(\la')=\chi(\la').
$$
\end{proof}

\begin{proof}[Proof of Lemma~\ref{lem:normdist}] By Lemma~\ref{lem:convex},  it will be enough to show that the relative spectrum $\la$ (resp. $\mu$) of $\rho_\be(\n')$ with respect to $\rho_\be(\n')$ (resp. $\n'$ with respect to $\n$) satisfy 
$\la_1+\dots+\la_i\le\mu_1+\dots+\mu_i$ for all $i$, with equality for $i=N$. Set $W:=\Vect(e_1,\ldots,e_i)$, and observe that the restriction $\rho_\be(\n)_W$ of $\rho_\be(\n)$ to $W$ satisfies by definition
$$
\rho_\be(\n)_W=\rho_{\be_W}(\n_W)
$$ 
with $\be_{W}=(e_1,\ldots,e_i)$. By Lemma~\ref{lem:detretr}, we thus have $\det(\rho_\be(\n)_W)=\det(\n_W)$, and Theorem~\ref{thm:mink} yields
$$
\la_1+\dots+\la_i=\vol\left(\rho_\be(\n)_W,\rho_\be(\n')_W\right)
$$
$$
=\vol\left(\n_{W},\n'_{W}\right)=\sum_{j\le i}\la_j(\n_{W},\n'_{W})
$$
$$
\le\sum_{j\le i}\la_j(\n,\n')=\mu_1+\dots+\mu_i,
$$
where the last inequality follows directly from~\eqref{equ:minmax}, and is an equality when $i=N$, \ie $W=V$. 
\end{proof}

\begin{proof}[Proof of Theorem~\ref{thm:normdist}] By construction, $\iota_\be$ is an isometric embedding with respect to $\chi$ and $\dist_\chi$, and the latter therefore satisfies the triangle inequality on each apartment $\A_\be=\iota_\be(\R^N)$. Pick three diagonalizable norms $\n_i\in\cN^\diag(V)$, $i=1,2,3$. We may then choose a basis $\be$ with $\n_1,\n_2\in\A_\be$. Since $\dist_\chi$ satisfies the triangle inequality on $\A_\be$, we have 
$$
\dist_\chi\left(\n_1,\n_2\right)=\dist_\chi\left(\rho_\be(\n_1),\rho_\be(\n_2)\right)
$$
$$
\le \dist_\chi\left(\rho_\be(\n_1),\rho_\be(\n_3)\right)+\dist_\chi\left(\rho_\be(\n_3),\rho_\be(\n_2)\right),
$$
which shows that $\dist_\chi$ satisfies the triangle inequality on $\cN^\diag(V)$, by (\ref{equ:normretr}). 
\end{proof}

%
%
\subsection{The Archimedean case: Finsler metrics}
Assume that $K$ is Archimedean, \ie $K=\R$ or $\C$, and denote by $H(V)$ the real vector space of all quadratic/Hermitian forms $h$ on $V$. Each diagonalizable norm $\n\in\cN^\diag(V)$ is then associated to a positive definite form $\g(v):=\|v\|^2$, thereby defining an embedding of $\cN^\diag(V)$ as an open convex subset of $H(V)$. Further, $\cN^\diag(V)$ is diffeomorphic to the symmetric space $\GL_N(K)/\mathrm{U}_N(K)$, with $\mathrm{U}_N(K)$ the unitary/orthogonal group.

As we now show, the metric $\dist_\chi$ constructed above is induced by a natural Finsler metric on $\cN^\diag(V)$. Recall first that for each $h,\g\in H(V)$ with $\g$ positive definite, one can find a basis $\be=(e_i)$ of $V$ which is orthonormal for $\g$ and orthogonal for $h$, \ie
$$
\g\left(\sum_i  a_i e_i\right)=\sum_i| a_i|^2
$$
and 
$$
h\left(\sum_i  a_i e_i\right)=\sum_i\la_i| a_i|^2
$$
for all $ a_i\in K$. The spectrum $(\la_i)$ is independent of the choice of $\be$ up to ordering, hence defines a point $\la_\g(h)\in\cC$, and we then have for any two $h,h'\in H(V)$
$$
\la_\g(h+h')\preceq\la_\g(h)+\la_\g(h').
$$
This is indeed a simple consequence of the min-max principle, known as the \emph{Ky Fan inequality}. Given a symmetric norm $\chi$ on $\R^N$, it follows as in Lemma~\ref{lem:convex} that setting for each $\g\in\cN^\diag(V)$
$$
|h|_{\chi,\g}:=\chi\left(\la_g(h)\right)
$$
defines a norm on $H(V)$, and we thus get a continuous Finsler norm $|\cdot|_\chi$ on the tangent bundle of $\cN^\diag(V)$.

\begin{thm}\label{thm:Finsler} The metric $\dist_\chi$ on $\cN^\diag(V)$ in Theorem~\ref{thm:normdist} coincides with the length metric defined by the Finsler norm $|\cdot|_\chi$. In other words, for any two $\g,\g'\in\cN^\diag(V)$, $\dist_\chi(\g,\g')$ is the infimum over all smooth paths $(\g_t)_{t\in[0,1]}$ in $\cN^\diag(V)$ joining $\g$ to $\g'$ of the corresponding length
$$
\ell_\chi(\g):=\int_0^1|\dot\g_t|_{\chi,\g_t}dt.
$$
\end{thm}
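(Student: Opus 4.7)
The plan is to establish the two inequalities separately: I will construct an explicit smooth path from $\g$ to $\g'$ whose Finsler length equals $\dist_\chi(\g,\g')$, and I will show conversely that every smooth path from $\g$ to $\g'$ has Finsler length at least $\dist_\chi(\g,\g')$. The first half will exhibit a geodesic inside a single apartment; the second half will bound $\dist_\chi$ infinitesimally by $|\cdot|_\chi$ along arbitrary paths and then integrate using the triangle inequality supplied by Theorem~\ref{thm:normdist}.

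For the upper bound, I would start from Proposition~\ref{prop:codiag}: choose a basis $\be=(e_j)$ codiagonalizing $\g,\g'$, and after rescaling assume it is $\g$-orthonormal, so that $\g'(e_j)=e^{2\la_j}$ with $\la=\la(\g,\g')\in\cC$. The natural path in the apartment $\A_\be$ is $\g_t$ with $\g_t(e_j)=e^{2t\la_j}$, which connects $\g$ to $\g'$. The basis $(e^{-t\la_j}e_j)$ is $\g_t$-orthonormal and simultaneously diagonalizes $\dot\g_t$ with eigenvalues proportional to $\la_j$, so $|\dot\g_t|_{\chi,\g_t}=\chi(\la_{\g_t}(\dot\g_t))$ is independent of $t$. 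Integrating yields $\ell_\chi(\g_\cdot)=\dist_\chi(\g,\g')$ (up to the normalization convention of the Finsler norm, chosen so that the map of Proposition~\ref{prop:GIisom} restricted to any apartment is an infinitesimal isometry onto $(\R^N,\chi)$; concretely, $|h|_{\chi,\g}$ should be normalized to match $\chi$ on the tangent space of an apartment, which is exactly the content of the computation).

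For the lower bound, the central estimate is the infinitesimal inequality
$$
\dist_\chi(\g_t,\g_{t+s})\;\le\;s\,|\dot\g_t|_{\chi,\g_t}+o(s)\qquad(s\to 0^+)
$$
for any smooth path $(\g_t)$. To prove it, fix $t$ and pick a basis $(f_j)$ that is $\g_t$-orthonormal and $\dot\g_t$-diagonal with eigenvalues $\mu_j=\la_{\g_t}(\dot\g_t)_j$; then $\g_{t+s}(f_j)=1+s\mu_j+O(s^2)$. Via the min-max formula \eqref{equ:minmax}, the relative successive minima of $\g_t$ with respect to $\g_{t+s}$ computed against $(f_j)$ agree with $\frac{s}{2}\mu_j+o(s)$; the $1$-Lipschitz continuity of the successive minima (Lemma~\ref{lem:succlip}) and the fact that $\g_{t+s}$ codiagonalizes with $\g_t$ to leading order in $s$ (a Hadamard-lemma type argument on the matrix pair $(\g_t,\g_{t+s})$) show that these agree with the true $\la(\g_t,\g_{t+s})$ up to $o(s)$. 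Applying $\chi$, together with its $\fS_N$-invariance to handle the ordering, yields the claimed estimate. Combining this with the triangle inequality from Theorem~\ref{thm:normdist} on a Riemann partition of $[0,1]$ and passing to the limit gives $\dist_\chi(\g,\g')\le\ell_\chi(\g_\cdot)$.

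The main obstacle is the infinitesimal estimate, and specifically the verification that the \emph{ordered} spectrum $\la(\g_t,\g_{t+s})\in\cC$ is well approximated by $\frac{s}{2}$ times the sorted $\mu$; small perturbations of a symmetric pair can a priori permute eigenvalues, so one must argue via Ky Fan-type inequalities (a variant of Lemma~\ref{lem:convex}) that the whole vector, not just its entries up to permutation, converges. Once this is controlled, uniform smoothness of $t\mapsto(\g_t,\dot\g_t)$ on $[0,1]$ makes the $o(s)$ error uniform, and the Riemann sum/triangle-inequality passage is routine.
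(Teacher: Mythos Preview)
Your upper bound via a straight-line path in a codiagonalizing apartment is exactly what the paper does. Your lower bound, however, takes a genuinely different route.

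The paper does not argue infinitesimally along the path. Instead, it fixes a basis $\be$ codiagonalizing the endpoints $\g,\g'$, and uses the Gram-Schmidt projection $\rho_\be:\cN^\diag(V)\to\A_\be$. The key technical step is Lemma~\ref{lem:Finsler}, a differential version of Lemma~\ref{lem:normdist}: $\rho_\be$ is smooth and satisfies $\rho_\be^*|\cdot|_\chi\le|\cdot|_\chi$ pointwise on tangent spaces (proved by differentiating the volume identity $\vol(\g,\g')=\vol(\rho_\be(\g),\rho_\be(\g'))$ and combining the resulting trace identity with min-max to get majorization of spectra). Thus any smooth path $\g_\bullet$ satisfies $\ell_\chi(\g_\bullet)\ge\ell_\chi(\rho_\be\circ\g_\bullet)$, and the projected path lies entirely in the flat apartment $\A_\be\simeq(\R^N,\chi)$, where the lower bound is trivial.

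Your approach is correct: the sorted spectrum of $\g_t^{-1}\g_{t+s}$ depends continuously (indeed Lipschitz) on $s$ by standard Hermitian perturbation theory, so $\la(\g_t,\g_{t+s})=\tfrac{s}{2}\la_{\g_t}(\dot\g_t)+O(s^2)$ uniformly on $[0,1]$, and the Riemann-sum/triangle-inequality passage goes through. Compared with the paper, your argument trades the structural input (retractions onto apartments, which the paper also uses for Theorem~\ref{thm:normdist}) for a direct spectral estimate; this is arguably more elementary, but you lose the clean picture that \emph{the same} Gram-Schmidt retraction simultaneously controls both the metric $\dist_\chi$ (Lemma~\ref{lem:normdist}) and the Finsler norm $|\cdot|_\chi$ (Lemma~\ref{lem:Finsler}), which is really what makes the two coincide. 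The paper's route also sidesteps the uniformity bookkeeping entirely, since the length comparison is done path-by-path, not via partitions.
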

As a consequence, the distance $d_\chi$ coincides with the one constructed in~\cite[\S 4]{Bha03} and (for $\chi=\ell^p$) in~\cite{DLR}. 

\begin{lem}\label{lem:Finsler} For each basis $\be$ of $V$, the Gram--Schmidt projection $\rho_\be:\cN^\diag(V)\to \A_\be$ is a smooth map, and it satisfies $\rho_\be^\star |\cdot|_\chi\le|\cdot|_\chi$.
\end{lem}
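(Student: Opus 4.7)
The plan is to write $\rho_\be$ in coordinates using the complete flag $W_i := \Vect(e_1,\dots,e_i)$ and then reduce the Finsler inequality to the Schur--Horn majorization theorem, applied through Lemma~\ref{lem:convex}.

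\emph{Step 1: smoothness.} Let $e'_i(\g)$ be the Gram--Schmidt vector, i.e.\ the $\g$-orthogonal projection of $e_i$ onto the $\g$-orthogonal complement of $W_{i-1}$ in $W_i$. Then by construction $\rho_\be(\g)$ is the Hermitian/Euclidean form that is diagonal in $\be$ with diagonal entries $\la_i(\g):=\|e'_i(\g)\|_\g^2$, and an elementary computation gives the closed formula
\[
\la_i(\g)=\frac{\det(\g|_{W_i})}{\det(\g|_{W_{i-1}})},
\]
where the determinant is taken with respect to the basis $(e_1,\dots,e_i)$ of $W_i$. Since each $\det(\g|_{W_i})$ is a polynomial in the matrix entries of $\g$ and is strictly positive on the positive cone $\cN^\diag(V)\subset H(V)$, the map $\rho_\be:\g\mapsto\bigl(\la_i(\g)\bigr)_i$ is smooth.

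\emph{Step 2: differential of $\rho_\be$.} Fix $\g\in\cN^\diag(V)$ and $h\in T_\g\cN^\diag(V)=H(V)$, and set $\g_t=\g+th$. Writing $e'_i(t)$ for $e'_i(\g_t)$, one has
\[
\dot\la_i(0)\cdot h=h(e'_i,e'_i)+2\,\mathrm{Re}\,\g(\dot{e'_i}(0),e'_i).
\]
The Gram--Schmidt relation $e'_i(t)=e_i-\sum_{j<i}\frac{\g_t(e_i,e'_j(t))}{\g_t(e'_j(t),e'_j(t))}e'_j(t)$ shows inductively that $\dot{e'_i}(0)\in W_{i-1}$, while $e'_i(0)=e'_i$ is $\g$-orthogonal to $W_{i-1}$. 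Hence the cross term vanishes and
\[
\dot\la_i(0)\cdot h=h(e'_i,e'_i).
\]
Introducing the $\g$-orthonormal basis $\hat e_i:=e'_i/\|e'_i\|_\g$ and the $\rho_\be(\g)$-orthonormal basis $\hat e^*_i:=e_i/\sqrt{\la_i(\g)}$, the preceding formula reads
\[
d\rho_\be(h)(\hat e^*_i,\hat e^*_j)=h(\hat e_i,\hat e_i)\,\d_{ij},
\]
so that $\la_{\rho_\be(\g)}\bigl(d\rho_\be(h)\bigr)$ is the vector of diagonal entries of $h$ in the basis $(\hat e_i)$, and consequently
\[
|d\rho_\be(h)|_{\chi,\rho_\be(\g)}=\chi\bigl((h(\hat e_i,\hat e_i))_i\bigr).
\]

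\emph{Step 3: Schur--Horn and conclusion.} By the classical Schur theorem, the diagonal vector $(h(\hat e_i,\hat e_i))_i$ of a Hermitian form in any orthonormal basis lies in the convex hull of the $\fS_N$-orbit of its spectrum $\la_\g(h)$. Invoking Lemma~\ref{lem:convex} (in the convex-hull form used there) and the $\fS_N$-invariance of $\chi$, this yields
\[
\chi\bigl((h(\hat e_i,\hat e_i))_i\bigr)\le \chi(\la_\g(h))=|h|_{\chi,\g},
\]
which is exactly the pointwise inequality $\rho_\be^*|\cdot|_\chi\le|\cdot|_\chi$.

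The main obstacle is the differential computation in Step~2: the key subtlety is that $e'_i(\g)$ depends nontrivially on $\g$, and one needs the inductive statement that its variation lies in $W_{i-1}$ in order to kill the cross term and get a clean formula for $d\rho_\be(h)$. Once this is in hand, the inequality reduces to a classical convexity/majorization fact that has already been packaged for us in Lemma~\ref{lem:convex}.
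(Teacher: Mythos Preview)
Your proof is correct, but the route differs from the paper's. You compute $d_\g\rho_\be(h)$ explicitly via Gram--Schmidt, identify its $\rho_\be(\g)$-spectrum with the diagonal entries $(h(\hat e_i,\hat e_i))_i$ of $h$ in a $\g$-orthonormal basis, and then invoke Schur's theorem to get majorization by $\la_\g(h)$. The paper instead avoids any explicit differential computation: it differentiates the determinant identity $\det\n=\det\rho_\be(\n)$ (Lemma~\ref{lem:detretr}) to obtain $\Tr_{\rho_\be(\g)}(d_\g\rho_\be(h))=\Tr_\g(h)$, applies this to the restrictions to each $W_i$, and bounds $\Tr_{\g|_{W_i}}(h|_{W_i})$ by the top partial sums of $\la_\g(h)$ via the min-max principle, thereby verifying the majorization inequalities directly. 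Your approach is more hands-on and imports a classical linear-algebra fact; the paper's stays inside its own determinant framework and mirrors the structure of the proof of Lemma~\ref{lem:normdist}. Both land on the same majorization and finish with Lemma~\ref{lem:convex}.
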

\begin{proof} The smoothness of $\rho_\be$ follows from its description in terms of the Gram--Schmidt orthogonalization process. Pick $\g\in\cN^\diag(V)$, $h\in H(V)$. As in the proof of Lemma~\ref{lem:normdist}, it will be enough to show that $\la:=\la_{\rho_\be(\g)}(d_\g\rho_\be(h))$ is majorized by $\mu:=\la_\g(h)$. Differentiating the identity
$$
\vol(\g,\g')=\vol(\rho_\be(\g),\rho_\be(\g'))
$$
with respect to $\g'$ shows that the trace $\Tr_\g(h)$ satisfies
$$
\Tr_\g(h)=\Tr_{\rho_\be(\g)}(d_\g\rho_\be(h)),
$$
\ie $\la_1+\dots+\la_N=\mu_1+\dots+\mu_N$. Arguing as in Lemma~\ref{lem:normdist}, we apply this fact to the restrictions of $\g$ and $h$ to the span $W$ of $(e_1,\ldots,e_i)$ for a given $i$, and get
$$
\la_1+\dots+\la_i=\Tr_{\g|_W}(h|_W)\le\mu_1\dots+\mu_i
$$
thanks to the min-max principle. 
\end{proof}

\begin{proof}[Proof of Theorem~\ref{thm:Finsler}] Pick a basis $e$, and observe that the differential $d_\la\iota_\be:\R^N\to H(V)$ at $\la\in\R^N$ satisfies for all $\mu\in\R^N$ 
$$
\la_{\iota_\be(\la)}(d_\la\iota_\be(\mu))=\mu\mod\fS_N. 
$$
As a result, $\iota_\be^\star |\cdot|_\chi$ is the constant Finsler norm $\chi$ on $\R^N$, and the $\chi$-length of any smooth path $\g:[0,1]\to \A_\be$ joining $\g=\iota_\be(\la)$ to $\g'=\iota_\be(\la')$ thus satisfies 
$$
\ell_\chi(\g)\ge\chi(\la'-\la)=\dist_\chi(\g,\g'),
$$
with equality when $\g$ is the image of the line segment $[\la,\la']$. If $\g:[0,1]\to\cN^\diag(V)$ is now a smooth path joining $\g$ to $\g'$ in $\cN^\diag(V)$ only, the previous case applies to $\rho_\be\circ\g:[0,1]\to \A_\be$, which combines with Lemma~\ref{lem:Finsler}~(ii) to give $\ell_\chi(\g)\ge\ell_\chi(\rho_\be\circ\g)\ge \dist_\chi(\g,\g')$. 
\end{proof}

%
%

\part{Models and metrics}

%
%
\section{Analytification and models}\label{sec:models}
%
%
This section reviews some well-known facts on Berkovich analytifications and models, with an emphasis on the reduced fiber condition. We provide in particular a direct proof of a version of the Bosch--L\"utkebohmert--Raynaud reduced fiber theorem for models. 

As before, $K$ denotes a complete valued field. All schemes over $K$ (or $K^\circ$, when $K$ is non-Archimedean) considered below are \emph{separated}, unless otherwise specified. 
%
%
\subsection{Analytification}
To any scheme $X$ of finite type over $K$, Berkovich functorially associates in~\cite[\S 3.4]{Ber} an analytification $X^\an$, together with a continuous map $\ker:X^\an\to X$. In the present paper, we mostly only need the underlying topological space, which is Hausdorff and locally compact. As such, the analytifications of $X$ and of the reduced scheme $X_{\red}$ coincide, but it will nevertheless be useful for inductive arguments to allow $X$ to be non-reduced. 

Assume first that $X$ is affine, \ie $X=\Spec A$ with $A$ a finite type $K$-algebra. The topological space $X^\an$ is defined as the set of all multiplicative seminorms on $A$ extending the given absolute value on $K$, endowed with the topology of pointwise convergence. The multiplicative seminorm associated to $x\in X^\an$ is denoted by $f\mapsto |f(x)|$. The set of $f\in A$ with $|f(x)|=0$ is a prime ideal, the \emph{kernel} of $x$, thereby defining a natural continuous map $\ker:X^\an\to X$. We thus have $|f(x)|=0$ if and only if $f$ vanishes at $\xi=\ker(x)$, and $f\mapsto|f(x)|$ defines a norm on the residue field $\kappa(\xi)$. 

\begin{lem}\label{lem:nilpo} A function $f\in A$ satisfies $|f|\equiv 0$ on $X^\an$ if and only if $f$ is nilpotent.
\end{lem}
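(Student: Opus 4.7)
The plan is to prove both directions, the nontrivial one being the contrapositive of ``if $|f|\equiv0$ then $f$ is nilpotent''.

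For the easy direction, suppose $f$ is nilpotent, say $f^n=0$. Then for any $x\in X^{\mathrm{an}}$, the multiplicativity of the seminorm yields $|f(x)|^n=|f^n(x)|=|0|=0$, hence $|f(x)|=0$.

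For the converse, I will argue contrapositively: assume $f$ is not nilpotent and construct a point $x\in X^{\mathrm{an}}$ with $|f(x)|\neq 0$. Since $A$ is a finite type $K$-algebra, it is a Jacobson ring: the nilradical equals the intersection of all maximal ideals. So I can choose a maximal ideal $\mathfrak{m}\subset A$ with $f\notin\mathfrak{m}$. By the Nullstellensatz for finite type $K$-algebras, the residue field $\kappa:=A/\mathfrak{m}$ is a finite extension of $K$.

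The next step is to equip $\kappa$ with an absolute value extending the one on $K$. In the Archimedean case this is classical since $\kappa$ is $\R$ or $\C$. In the non-Archimedean nontrivially valued case, existence (and uniqueness) of an extension of $|\cdot|$ to any finite extension is standard for complete valued fields. In the trivially valued case, the trivial absolute value on $\kappa$ works: indeed, any absolute value on a finite extension of a trivially valued field which restricts to the trivial one is forced to be trivial, as one sees by using the minimal polynomial and the ultrametric inequality. In all cases the composition
\[
A\twoheadrightarrow \kappa\xrightarrow{\;|\cdot|\;}\R_+
\]
is a multiplicative seminorm on $A$ extending $|\cdot|_K$, hence defines a point $x\in X^{\mathrm{an}}$. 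By construction $|f(x)|$ equals the absolute value of the nonzero image of $f$ in $\kappa$, which is nonzero, concluding the proof.

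The only mildly delicate step is the extension of the absolute value to $\kappa/K$, and in particular handling uniformly the Archimedean, non-Archimedean nontrivially valued, and trivially valued cases; once that is in place the argument is a direct application of the Jacobson/Nullstellensatz property of finite type $K$-algebras.
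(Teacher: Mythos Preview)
Your proof is correct and follows essentially the same approach as the paper: both use that closed points of $X$ lift to $X^{\mathrm{an}}$ via extending the absolute value to the finite residue field, and then appeal to the Jacobson/Nullstellensatz property to conclude that a function vanishing at all closed points is nilpotent. Your treatment is in fact slightly more detailed, as you spell out the easy direction and the case distinction for extending the absolute value.
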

\begin{proof} For each closed point $\xi\in X$, the absolute value on $K$ (uniquely) extends to the finite field extension $\kappa(\xi)$ of $K$, and image of the kernel map therefore contains the set of closed points of $X$ (in fact, $\ker$ injects $X^\an$ onto the set of closed points of $X$ if $K$ is Archimedean, while $\ker$ maps $X^\an$ onto $X$ when $K$ is non-Archimedean). As a consequence, a function $f\in A$ with $|f|\equiv 0$ vanishes at all closed points of $X$, and hence is nilpotent. 
\end{proof}

Consider now an arbitrary $K$-scheme of finite type $X$, and cover it with finitely many affine open subschemes $U_i$. Since $X$ is separated, each $U_{ij}=U_i\cap U_j$ is affine, and $U_{ij}^\an$ is homeomorphic to the inverse image of $U_{ij}$ in both $U_i^\an$ and $U_j^\an$. We can thus glue $U_i^\an$ and $U_j^\an$ together along their common open subset $U_{ij}^\an$ to define the topological space $X^\an$ with a continuous map $\ker:X^\an\to X$. 

The GAGA theorem~\cite[3.4.8, 3.5.3]{Ber} guarantees that $X^\an$ is Hausdorff (since we always assume $X$ separated), locally compact, and $X^\an$ is compact if and only if $X$ is proper. 

\begin{exam} If $K$ is Archimedean, the Gelfand-Mazur theorem yields the following description of $X^\an$. When $K=\C$, $X^\an$ is the usual analytification of $X$, \ie $X^\an=X(\C)$ with its Euclidian topology. When $K=\R$, $X^\an$ is identified with the set of closed points of $X$, \ie the quotient of $(X\otimes\C)^\an=X(\C)$ by complex conjugation. 
\end{exam}

\begin{exam}\label{exam:semival} When $K$ is non-Archimedean with valuation $v_K=-\log|\cdot|$, $X^\an$ can be seen as a space of semivaluations on $X$, \ie real valuations on the residue fields of points of $X$. More precisely, the bijective map $x\mapsto(\ker(x),v_x)$ with $v_x(f):=-\log|f(x)|$ describes $X^\an$ as the set of pairs $(\xi,v)$ where $\xi\in X$ is a scheme point and $v:\kappa(\xi)^\star \to\R$ is a rank $1$ valuation on the residue field $\kappa(\xi)$ extending $v_K$. 
\end{exam}
%
%
\subsection{Smooth functions}\label{sec:smooth}
In the Archimedean case, every point of $X^\an$ admits a neighborhood $V$ with a closed (analytic) embedding in a polydisc $\DD^r$, and a smooth function $u:V\to\R$ is defined as the restriction of a smooth function on $\DD^r$, the definition being independent of the choice of closed embedding. 

In the non-Archimedean case, Chambert-Loir and Ducros have introduced in~\cite{CLD} a notion of $(p,q)$-form on any Berkovich analytic space (see also~\cite{Gub13} for the case of an analytification). In particular, a sheaf of \emph{smooth functions} on $X^\an$ is defined, basically prescribed by the following two natural requirements: 
\begin{itemize}
\item[(i)] $\log|f|$ is smooth for each invertible analytic function $f$; 
\item[(ii)] if $u_1,\ldots,u_r$ are smooth functions on an open $V\subset X^\an$ and $\chi$ is a smooth function defined near the range of the map $V\to\R^r$ with components $(u_i)$, then $\chi(u_1,\ldots,u_r)$ is smooth on $V$. 
\end{itemize}
More explicitly, a function $u$ on an open subset of $X^\an$ is smooth iff it is locally of the form 
\begin{equation}\label{equ:smooth}
u=\chi\left(\log|f_1|,\ldots,\log|f_r|\right)
\end{equation}
where the $f_i$ are invertible analytic functions and $\chi$ is a smooth function on an appropriate subset of $\R^r$. Note that this description of smooth functions also holds in the Archimedean case (using $\Rea z_i=\log|e^{z_i}|$). 

%
\subsection{Models and reduction}\label{sec:model}
In what follows, $K$ is non-Archimedean (possibly trivially valued). Recall that the valuation ring $K^\circ$ is Noetherian if and only if $K$ is discretely or trivially valued. Let $X$ be a $K$-scheme of finite type. 

\begin{defi} A \emph{model} of $X$ is a (separated) flat, finite type $K^\circ$-scheme $\cX$ together with an identification of $K$-schemes $\cX_K:=\cX\otimes_{K^\circ} K\simeq X$. 
\end{defi}
The \emph{special fiber} of a model $\cX$ is the $\tK$-scheme of finite type $\cX_s:=\cX\otimes_{K^\circ}\tK$. We say that a model $\cX$ is \emph{proper} (resp.~\emph{projective}) if it is proper (resp.~projective) as a $K^\circ$-scheme. This implies of course that $X$ is proper (resp.~projective) as a $K$-scheme. 

Models of $X$ form a category, in which a morphism of models $\mu:\cX'\to\cX$ is a morphism of $K^\circ$-schemes compatible with the given identifications $\cX'_K\simeq X\simeq\cX_K$. If two models $\cX'$, $\cX$ admit a morphism $\mu:\cX'\to\cX$, then $\mu$ is unique, by separatedness, and we then say that $\cX'$ \emph{dominates} $\cX$. We say that $\cX'$ \emph{properly dominates} $\cX$ if $\mu$ is proper. 

In the trivially valued case, models of $X$ are in 1--1 correspondence with automorphisms of $X$, and $\cX=X$ is thus the only model up to isomorphism.
 
\begin{lem}\label{lem:modelfp} Any model $\cX$ of $X$ is automatically finitely presented over $K^\circ$. 
\end{lem}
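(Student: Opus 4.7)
My approach is to reduce the statement to showing that any flat finite-type algebra over the valuation ring $K^\circ$ is automatically finitely presented. Since the assertion is Zariski-local on $\cX$, I can assume $\cX=\Spec A$ with $A$ a finite-type flat $K^\circ$-algebra. Writing $A=K^\circ[T_1,\dots,T_n]/I$, the claim amounts to showing that $I$ is a finitely generated ideal of $K^\circ[T_1,\dots,T_n]$.

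When $K$ is trivially or discretely valued, $K^\circ$ is Noetherian, so the polynomial ring $K^\circ[T_1,\dots,T_n]$ is Noetherian by Hilbert's basis theorem, and $I$ is automatically finitely generated. The substantial case is that of a densely valued $K$, where $K^\circ$ is a rank-one valuation ring but not Noetherian. Here I invoke the theorem of Raynaud--Gruson asserting that any flat finite-type algebra over a valuation ring is of finite presentation.

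I would briefly recall the idea of the Raynaud--Gruson proof, which proceeds by Noetherian approximation. Write $K^\circ$ as the filtered colimit of its finitely generated $\Z$-subalgebras $R_\alpha$, each Noetherian by Hilbert's basis theorem. Descend $A$ to a finite-type $R_\alpha$-algebra $A_\alpha$ for some suitable $\alpha$, and use a flatness descent argument (the technical heart of Raynaud--Gruson) to ensure, after possibly enlarging $\alpha$, that $A_\alpha$ is flat over $R_\alpha$. Since $R_\alpha$ is Noetherian, $A_\alpha$ is automatically finitely presented over $R_\alpha$, and hence the base change $A=A_\alpha\otimes_{R_\alpha}K^\circ$ is finitely presented over $K^\circ$.

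The main obstacle is the flatness descent step. It cannot be replaced by direct manipulations of $I$: picking $f_1,\dots,f_r\in I$ whose images generate the (finitely generated) ideal $I_K\subset K[T_1,\dots,T_n]$ yields only a finitely generated subideal $I'=(f_1,\dots,f_r)\subset I$ with $I/I'$ torsion as a $K^\circ$-module, and proving $I/I'=0$ amounts to controlling the torsion submodule of the auxiliary quotient $K^\circ[T]/I'$, which is not formally implied by the flatness of the target $A=K^\circ[T]/I$. Consequently, a self-contained argument seems to require importing the full Raynaud--Gruson machinery rather than only the structural results on finitely generated $K^\circ$-modules already established in the excerpt.
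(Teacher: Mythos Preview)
Your proposal is correct: citing Raynaud--Gruson (or Nagata) is a perfectly valid proof, and the paper itself acknowledges this before giving its own argument. However, the paper takes a genuinely different and more elementary route, and your final paragraph's claim that ``a self-contained argument seems to require importing the full Raynaud--Gruson machinery'' turns out to be too pessimistic.

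The paper's argument (attributed to Ducros) avoids Noetherian approximation and flatness descent entirely. It first reduces to the \emph{projective} case: given an affine $\cX$, embed it in affine space and take the schematic closure in the corresponding projective space over $K^\circ$. Now for $\cX\hookrightarrow\P^N_{K^\circ}$ with homogeneous ideal $I\subset R=K^\circ[t_0,\dots,t_N]$, both $R/I\otimes K$ and $R/I\otimes\tK$ are Noetherian, so one can choose a finitely generated homogeneous ideal $I'\subset I$ inducing isomorphisms after tensoring with either $K$ or $\tK$. The closed subscheme $\cX'$ cut out by $I'$ thus has the same generic and special fibers as $\cX$. The point is that flatness of $\cX'$ over $K^\circ$ can be checked on the graded pieces $\cV_m=(R/I')_m$ for $m\gg 0$: these are finite $K^\circ$-modules with $\dim_K\cV_m\otimes K=\dim_{\tK}\cV_m\otimes\tK$, hence free. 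Once $\cX'$ is flat, it equals the schematic closure of its generic fiber, so $\cX'=\cX$.

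What this buys is a short, self-contained argument using only elementary facts about finite modules over valuation rings and the Noetherianity of the fibers, whereas your approach outsources the work to a substantial theorem. Your sketch of the Raynaud--Gruson mechanism is accurate, but the paper demonstrates that in this specific setting the projective structure and the graded decomposition provide exactly the leverage needed to bypass it.
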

This follows from a general result of Raynaud--Gruson~\cite[Th\'eor\`eme 3.4.6]{RG}, and goes back to Nagata~\cite[Theorem 3]{Nag}. For the sake of completeness, we reproduce here a simple argument due to Antoine Ducros~\cite{Ducros}, which is basically equivalent to that of Nagata. 

\begin{proof} We claim that it is enough to prove the result when $\cX$ is projective over $K^\circ$. Indeed, arguing locally, we may first assume that $\cX$ is affine, and we get the claim by choosing a closed embedding in an affine space and passing to the schematic closure in the corresponding projective space. Pick a closed embedding $\cX\hookrightarrow\P^N_{K^\circ}$, and denote by $I$ the corresponding homogeneous ideal of $R:=K^\circ[t_0,\ldots,t_N]$. Since both $(R/I)\otimes K$ and $(R/I)\otimes\tK$ are Noetherian, we may choose a finitely generated homogeneous ideal $I'\subset I$ such that $R/I'\to R/I$ becomes an isomorphism after tensoring with either $K$ or $\tK$. This means that the finitely presented closed subscheme $\cX'\subset\P^N_{K^\circ}$ defined by $I'$ has the same special fiber and generic fiber as $\cX$. If we can show that $\cX'$ is flat over $K^\circ$, it will coincide with the schematic closure of its generic fiber, which will prove that $\cX=\cX'$ is finitely presented. 
But $\cX'$ is flat over $K^\circ$ if and only if the finite type $K^\circ$-module $\cV_m:=(R/I')_m$ is free for all $m\in\N$ large enough, which is indeed the case since $\dim_K \cV_m\otimes K=\dim_{\tK} \cV_m\otimes\tK$, by choice of $I'$. 
\end{proof}

A model $\cX$ of $X$ determines a compact subset $\cX^\beth$ \footnote{The letter $\beth$ ('bet') is the second letter of the Hebrew alphabet. The chosen notation follows the lead of~\cite{thu}.} of $X^\an$ and an anticontinuous\footnote{The inverse image of an open is closed.} \emph{reduction map}
$$
\red_\cX:\cX^\beth\to\cX_s,
$$
as follows. If $\cX$ is affine, \ie $\cX=\spec(\cA)$ with $\cA$ a flat (\ie torsion-free) finite type $K^\circ$-algebra, then 
$$
\cX^\beth=\left\{x\in X^\an\mid|f(x)|\le 1\text{ for all }f\in\cA\right\}, 
$$
and the reduction $\red_\cX(x)$ of $x\in\cX^\beth$ is the point of $\cX_s$ induced by the prime ideal $\{f\in\cA\mid|f(x)|<1\}$. In the general case, $\cX$ is covered by finitely many affine open subschemes $\cU_i$, whose generic fibers $U_i$ give an affine open cover of $X$, and 
$$
\cX^\beth=\bigcup_i\cU_i^\beth\subset\bigcup_i U_i^\an=X^\an.
$$
In the language of Example~\ref{exam:semival}, $\cX^\beth$ consists of those semivaluations on $X$ that admit a center on $\cX$, and $\red_\cX$ maps a semivaluation to its center, which is necessarily on $\cX_s$. By the valuative criterion of properness, we thus have:

\begin{lem}\label{lem:doman} If a model $\cX'$ properly dominates $\cX$, then $(\cX')^\beth=\cX^\beth$. If $\cX$ is proper over $K^\circ$ (and hence $X$ is proper over $K$), then $\cX^\beth=X^\an$.
\end{lem}

\begin{exam} In the trivially valued case, $X$ is the only model up to isomorphism, and $X^\beth$ coincides with the construction of~\cite{thu}. In particular, $X^\beth\subset X^\an$, with equality iff $X$ is proper. 
\end{exam}
The compact set $\cX^\beth$ associated to a model $\cX$ can also be understood as (the underlying topological space of) the generic fiber in the sense of~\cite[\S 1]{Ber3} of the formal completion $\hcX$ of $\cX$. As above, it is enough to consider the case where $X=\spec(A)$ and $\cX=\spec(\cA)$ are affine. For any two nonzero $a,a'\in K^{\circ\circ}$, there exists $n\gg 1$ with $a^n\in K^\circ a'$, and the formal completion $\hcA$ of $\cA$ with respect to $a$ is thus independent of the choice of $a\in K^{\circ\circ}$ (set $\hcA=\cA=A$ in the trivially valued case). The $K^\circ$-algebra $\hcA$ is flat and topologically of finite type, and $\hcX=\Spf(\hcA)$ is thus an admissible formal $K^\circ$-scheme, whose generic fiber $\hcX_\eta$ is defined as the set of bounded multiplicative seminorms on the $K$-affinoid algebra $\hA:=\hcA\otimes K$. Composing such a seminorm with the canonical map $A\to\hA$ defines a continuous map $\hcX_\eta\to\cX^\beth$, which is easily see to be bijective by density of the image of $A$ in $\hA$, and hence a homeomorphism, by compactness of $\hcX_\eta$. 
 
The following well-known result holds in fact for the reduction map of any admissible formal scheme (see for instance~\cite[\S 2.13]{GRW}). 
\begin{lem}\label{lem:redmodel} The reduction map $\red_\cX:\cX^\beth\to\cX_s$ of any model $\cX$ is anticontinuous and surjective. Further, the preimage $\Ga(\cX)$ of the set of generic points of $\cX_s$ is a finite set. 
\end{lem}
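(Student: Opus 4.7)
The plan is to reduce to the affine case $\cX = \Spec\cA$ and exploit the explicit descriptions $\cX^\beth = \{x \in X^\an : |f(x)| \le 1 \text{ for all } f \in \cA\}$ and $\red_\cX(x) = \{f \in \cA : |f(x)| < 1\}$, since all three assertions are local on $\cX_s$.

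Anticontinuity is then essentially formal: the basic open sets of $\cX_s = \Spec(\cA \otimes_{K^\circ} \tK)$ are the $D(\bar f)$ for $f \in \cA$, and their preimages $\red_\cX^{-1}(D(\bar f)) = \{x \in \cX^\beth : |f(x)| = 1\}$ are closed in $\cX^\beth$, as complements in $\cX^\beth$ of the open sets $\{x : |f(x)| < 1\}$.

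The main obstacle is surjectivity. Given $\mathfrak{p} \in \cX_s$, I would first pick a minimal prime $\mathfrak{q} \subset \mathfrak{p}$ of $\cA$, which by flatness over $K^\circ$ and going-down contracts to $(0)$, so that $\cA/\mathfrak{q}$ is a domain with fraction field $F$ equal to the residue field at some scheme point $\xi \in X$. Chevalley's extension theorem then yields a valuation ring $R \subset F$ dominating the local ring $(\cA/\mathfrak{q})_\mathfrak{p}$, giving a (possibly higher rank) valuation $v$ on $F$; its restriction to $K$ satisfies $R \cap K = K^\circ$, since $K$ admits only the trivial and $v_K$-valuations while $K^{\circ\circ}$ is carried into $\mathfrak{m}_R$ by the dominance. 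To land in $X^\an$, I would then reduce $v$ to rank $1$: quotient $R$ by the largest prime disjoint from the image of $K^{\circ\circ}$ and further pass to the smallest convex subgroup of the resulting value group that contains $v_K(K^*)$. The surviving rank $1$ valuation still has center $\mathfrak{p}$ on $\cA$ and defines the required point $x \in \cX^\beth$.

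For the finiteness of $\Ga(\cX)$: since $\cX_s$ is of finite type over $\tK$, it is noetherian and has only finitely many generic points $\eta_1, \ldots, \eta_r$. To show that each preimage $\red_\cX^{-1}(\eta_i)$ is finite, I would identify $\cX^\beth$ with the generic fiber of the formal completion $\hcX$ (as explained in the paragraph just before the lemma), thereby reducing to the well-known finiteness of the Shilov boundary of the affinoid algebra $\hcA \otimes_{K^\circ} K$, which surjects onto the set of generic points of its canonical reduction with finite fibers.
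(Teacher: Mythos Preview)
The paper does not actually prove this lemma: it is stated as a well-known fact with a pointer to \cite[\S 2.13]{GRW}. Your sketch is along the standard lines and is essentially correct; a couple of remarks are in order.

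For surjectivity, your two-step rank reduction is slightly redundant: once you quotient $R$ by the largest prime $\mathfrak q_0$ not meeting $K^{\circ\circ}\setminus\{0\}$, the resulting value group of $R/\mathfrak q_0$ is already the smallest convex subgroup of $\Gamma_R$ containing $v(K^{\circ\circ})$, hence archimedean of rank $1$. No further passage is needed. It is also worth noting explicitly that the induced point of $X^\an$ lives over a possibly different scheme point of $X$ (namely the image of $\mathfrak q_0$), but its center on $\cA$ is still the preimage of $\mathfrak m_R$, which is $\mathfrak p$ by construction.

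For finiteness, your appeal to the Shilov boundary is the right idea, but there is a gap. The Berkovich reduction map goes to $\Spec\widetilde{\hA}$ (the affinoid reduction $\hA^\circ/\hA^{\circ\circ}$), whereas $\red_\cX$ goes to $\cX_s=\Spec(\hcA\otimes\tK)$; these differ exactly when $\cX_s$ is non-reduced. One has $\red_\cX$ factoring through the Berkovich reduction via the integral map $\hcA\otimes\tK\to\widetilde{\hA}$ (coming from $\hcA\hookrightarrow\hA^\circ$, cf.\ Proposition~\ref{prop:intunit}). To conclude that $\red_\cX^{-1}(\eta)$ is finite for $\eta$ generic in $\cX_s$, you need that the fibre of $\Spec\widetilde{\hA}\to\cX_s$ over $\eta$ consists only of generic points of $\Spec\widetilde{\hA}$ (over which the Berkovich reduction has singleton fibres). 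This follows because the map is integral and $\widetilde{\hA}$ is reduced, but you should say so. The paper in fact asserts more strongly (just after the lemma) that $\Ga(\cX)$ \emph{equals} the Shilov boundary in the affine case.
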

We shall call $\Ga(\cX)$ the set of \emph{Shilov points}. When $\cX$ (and hence $X$) is affine, $\Ga(\cX)$ is exactly the Shilov boundary of the $K$-affinoid domain $\cX^\beth$ in the sense of~\cite[2.4.4]{Ber}, cf.~\cite[Proposition A.3]{GM}. Covering a general model with affine open subschemes, we get: 

\begin{lem}\label{lem:Shilov} For any model $\cX$ of $X$ and $f\in\cO(X)$, the sup-seminorm on $\cX^\beth$ satisfies 
$$
\|f\|_{\cX^\beth}:=\sup_{\cX^\beth}|f|=\max_{\Ga(\cX)}|f|.
$$
\end{lem}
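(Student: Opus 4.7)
The plan is to reduce immediately to the affine case, which is the substance of the statement, and then glue.

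First, suppose $\cX = \Spec\cA$ is affine. By the discussion preceding the lemma, $\cX^\beth$ is homeomorphic to the generic fiber $\hcX_\eta$ of the admissible formal scheme $\hcX = \Spf\hcA$, \ie to the Berkovich spectrum of the $K$-affinoid algebra $\hA = \hcA\otimes_{K^\circ}K$. The maximum modulus principle for affinoid algebras asserts that for any $f\in\hA$ the sup-seminorm $\|f\|_{\cX^\beth}$ is attained on the Shilov boundary of $\cX^\beth$ in the sense of \cite[2.4.4]{Ber}. By Lemma~\ref{lem:redmodel} and the remark preceding the lemma, this Shilov boundary coincides with $\Ga(\cX)$. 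Since any $f\in\cO(X)=\cA\otimes K$ lies in $\hA$, we obtain $\|f\|_{\cX^\beth}=\max_{\Ga(\cX)}|f|$.

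Next, for a general model $\cX$, choose a finite affine open cover $\cX=\bigcup_i\cU_i$, which induces $\cX^\beth=\bigcup_i\cU_i^\beth$ and hence
$$
\|f\|_{\cX^\beth}=\max_i\|f\|_{\cU_i^\beth}.
$$
Since each $\cU_i$ is open in $\cX$, its special fiber $(\cU_i)_s$ is open in $\cX_s$, and the generic points of $(\cU_i)_s$ are exactly the generic points of $\cX_s$ that lie in $\cU_i$. Combined with the fact that the reduction map of $\cU_i$ is the restriction of $\red_\cX$ to $\cU_i^\beth$, this gives $\Ga(\cU_i)=\Ga(\cX)\cap\cU_i^\beth$, and in particular $\Ga(\cX)=\bigcup_i\Ga(\cU_i)$ because every generic point of $\cX_s$ belongs to some $\cU_i$. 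Applying the affine case to each $\cU_i$ then yields
$$
\|f\|_{\cX^\beth}=\max_i\max_{\Ga(\cU_i)}|f|=\max_{\Ga(\cX)}|f|.
$$

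The only delicate point is the affine case, \ie the identification of $\Ga(\cX)$ with the Shilov boundary of the affinoid $\hcX_\eta$ together with the maximum modulus principle. In the discretely valued case $\hA$ is a strictly $K$-affinoid algebra and this is classical, while in the densely valued case $\cA$ (and $\hcA$) may fail to be Noetherian, but $\hA$ is nonetheless a $K$-affinoid algebra in Berkovich's sense, for which the Shilov boundary and the maximum modulus principle are available. Once this is in hand, the gluing step is routine and only uses the trivial observations about generic points of the special fiber stated above.
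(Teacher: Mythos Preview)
Your proposal is correct and follows exactly the approach indicated in the paper: the paper simply notes that in the affine case $\Ga(\cX)$ coincides with the Shilov boundary of the affinoid domain $\cX^\beth$ in the sense of \cite[2.4.4]{Ber}, and then states the lemma as an immediate consequence of covering a general model by affine opens. You have merely spelled out the gluing step (the identities $\Ga(\cU_i)=\Ga(\cX)\cap\cU_i^\beth$ and $\Ga(\cX)=\bigcup_i\Ga(\cU_i)$) in more detail than the paper does.
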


\begin{exam}\label{exam:shilovtriv} If $K$ is trivially valued and $\eta$ is a generic point of $X=X_s$, the point of $X^\beth$ corresponding to the trivial valuation on $\kappa(\eta)=\cO_{X,\eta}$ is the unique Shilov point mapping to $\eta$. 
\end{exam}

\begin{exam}\label{exam:shilovnormal} Assume that $K$ is nontrivially valued and $\cX$ is normal, \ie integrally closed in $X$ with $X$ normal. For each generic point $\eta$ of $\cX_s$, the local ring $\cO_{\cX,\eta}$ is a rank one valuation ring. This is of course well-known when $\cX$ is Noetherian (\ie $K$ trivially or discretely valued), while the general case is proved in~\cite[Theorem 2.6.1]{Kna}. As observed in~\cite[Proposition 2.3]{GS}, one easily checks that the corresponding point of $\cX^\beth$ is the unique Shilov point of $\cX$ mapping to $\eta$. 
\end{exam}

\begin{exam}\label{exam:shilovintclos} If we merely assume that $\cX$ is integrally closed in $X$, it is still true that there is a unique Shilov point mapping to any given generic point $\eta$ of $\cX_s$. In the discretely valued case, this is proved in~\cite[Lemme 2.1]{CLT}. In the densely valued case, the assumption implies that $\cX_s$ is reduced by Theorem~\ref{thm:redint} below, which also implies that $\red_\cX:\cX^\beth\to\cX_s$ coincides with the affinoid reduction map, and we conclude by~\cite[Proposition 2.4.4]{Ber}. 
\end{exam}

%
%
\subsection{Sup-seminorm, integral closure and reduced fiber}
The goal of this section is to review the relation between sup-seminorm, integral closure and reduced fiber. 

Assume first that $K$ is non-Archimedean and nontrivially valued, $X=\spec(A)$ is affine and $\cX=\spec(\cA)$ is an affine model of $X$. Denote as above by $\hcA$ the formal completion of $\cA$ with respect to any nonzero $a\in K^{\circ\circ}$, by $\hA:=\hcA\otimes K$ the associated $K$-affinoid algebra, and write $f\mapsto\hf$ for the canonical map $A\to\hA$ (which is not injective in general, cf.~Example~\ref{exam:notinj} below). The affinoid algebra $\hA$ is equipped with the \emph{sup-seminorm} $\n_{\sup}$, defined by setting for $g\in\hA$ 
$$
\|g\|_{\sup}:=\sup_{\cX^\beth}|g|=\max_{\Ga(\cX)}|g|,
$$
where the second equality holds by~\cite[2.4.4]{Ber}. The sup-seminorm on $\cX^\beth$ of $f\in A$ as in Lemma~\ref{lem:Shilov} can thus be written as
$$
\|f\|_{\cX^\beth}=\|\hf\|_{\sup}. 
$$
By~\cite[6.2.1/4]{BGR} and~\cite[3.4.3]{Ber}, we have: 

\begin{lem}\label{lem:seminorm} The sup-seminorm on $\hA$ is a norm if and only if $\hA$ is reduced. This holds in particular if $A$ is reduced. 
\end{lem}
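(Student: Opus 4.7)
The plan is to split the lemma into the equivalence ``sup-seminorm on $\hA$ is a norm $\Leftrightarrow$ $\hA$ is reduced'' and the implication ``$A$ reduced $\Rightarrow$ $\hA$ reduced,'' and handle them separately.

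For the equivalence I would rely on the identification of the sup-seminorm on a $K$-affinoid algebra with the spectral seminorm $\rho(g)=\lim_n\|g^n\|_r^{1/n}$ associated with any residue Banach norm; this is the content of \cite[Theorem 1.3.1]{Ber} in the general $K$-affinoid setting and goes back to \cite[\S 3.8]{BGR} for Tate algebras. Power-multiplicativity of $\rho$ makes $\ker\|\cdot\|_{\sup}$ a radical ideal, giving one direction immediately. For the other, I would use that $\hA$ is Jacobson by the affinoid Nullstellensatz, so its nilradical coincides with the intersection of all maximal ideals. Each such $\mathfrak{m}$ has residue field a finite extension of $K$ with a unique extension of $|\cdot|$, hence determines a point of $\cX^\beth$; consequently $\|g\|_{\sup}=0$ forces $g\in\bigcap\mathfrak{m}=\sqrt{0}$.

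For the implication ``$A$ reduced $\Rightarrow$ $\hA$ reduced,'' my first step would be to realise $\hA$ as a classical Tate $K$-affinoid algebra. By Lemma~\ref{lem:modelfp}, $\cA$ is of finite presentation, so we may write $\cA=K^\circ[T_1,\dots,T_n]/J$ with $J$ finitely generated. A direct $\pi$-adic approximation argument on the generators of $J$ then identifies $\hcA$ with $K^\circ\langle T_1,\dots,T_n\rangle/J\cdot K^\circ\langle T_1,\dots,T_n\rangle$, and hence
\[
\hA \;\simeq\; K\langle T_1,\dots,T_n\rangle / \widehat{I},
\]
where $\widehat{I}$ is the extension to the Tate algebra of $I=\ker(K[T_1,\dots,T_n]\twoheadrightarrow A)$. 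The desired reducedness of $\hA$ then follows from the standard affinoid fact that the Tate-completion of a reduced finite-type $K$-algebra is itself reduced, proved via Noether normalization and a reduction to the torsion-free finite case over a Tate subalgebra $K\langle S_1,\dots,S_d\rangle$, where nilpotents are detected on a total ring of fractions that coincides with a localization of $A$. The hard part, and the only step where the non-Noetherian case requires real work, is the identification $\hcA\simeq K^\circ\langle T_1,\dots,T_n\rangle/J\cdot K^\circ\langle T_1,\dots,T_n\rangle$ when $K$ is densely valued; one has to verify that $\pi$-adic completion respects the quotient by the finitely generated ideal $J$, which is precisely the non-Noetherian subtlety that Lemma~\ref{lem:modelfp} was introduced to control. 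Once that bridge is in hand, the rest of the argument is classical.
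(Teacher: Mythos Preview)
Your approach is correct and closely parallels the paper's own treatment, which simply records the lemma as a consequence of \cite[6.2.1/4]{BGR} (for the equivalence) and \cite[3.4.3]{Ber} (for ``$A$ reduced $\Rightarrow\hA$ reduced''); your unpacking of the first via the Jacobson property of affinoid algebras is fine. One comment on your diagnosis of the second part: the identification $\hcA\simeq K^\circ\langle T\rangle/J\,K^\circ\langle T\rangle$ is in fact routine once $J$ is finitely generated and $\cA$ is $K^\circ$-flat (flatness gives $\cA/\pi^m=K^\circ[T]/(J,\pi^m)$ for all $m$, and finite generation of $J$ makes its extension to $K^\circ\langle T\rangle$ closed, so the quotient is already complete). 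The substantive content sits in the ``standard affinoid fact'' you invoke, and your Noether-normalization sketch of it does not quite close: detecting nilpotents in a total ring of fractions over $K\langle S\rangle$ still requires knowing that $K[S]\to K\langle S\rangle$ is flat (to embed $\hA$ into that localization when $A$ is a domain) and, in positive characteristic, that the relevant residue-field extensions are separable --- which is exactly the regularity/excellence input packaged in \cite[3.4.3]{Ber}. So your route and the paper's ultimately rest on the same nontrivial ingredient; you have relocated rather than removed it.
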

While $A\to\hA$ has dense image, it is not injective in general, even when $A$ is reduced:

\begin{exam}\label{exam:notinj} If $K$ is discretely valued, $\cA:=K$ is of finite type of $K^\circ$, and hence a model of $A=K$, for which $\hA=\{0\}$ (thanks to Antoine Ducros for this simple example). 
\end{exam}

\begin{thm}\label{thm:intunit} The unit ball of $\n_{\sup}$ coincides with the integral closure $\hcA'$ of $\hcA$ in $\hA$. Similarly, the unit ball of $\n_{\cX^\beth}$ coincides with the integral closure $\cA'$ of $\cA$ in $A$, and the induced map $\cA'\to\hcA'$ further has dense image.  
\end{thm}

\begin{cor}\label{cor:intunit} A given $f\in A$ is integral over $\cA$ if and only if $\hf$ is integral over $\hcA$, and $\cA$ is integrally closed in $A$ if and only if $\hcA$ is integrally closed in $\hA$. 
\end{cor}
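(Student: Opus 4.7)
Both equivalences will be deduced directly from Proposition~\ref{prop:intunit}.

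The first equivalence is a chain of reformulations: $f\in A$ is integral over $\cA$ iff $f\in\cA'$, which by the proposition is equivalent to $\|f\|_{\cX^\beth}\le 1$. The identity $\|f\|_{\cX^\beth}=\|\hat f\|_{\sup}$ recorded just before the proposition converts this into $\|\hat f\|_{\sup}\le 1$, equivalently $\hat f\in\hcA'$ by the proposition again, i.e.\ $\hat f$ integral over $\hcA$.

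For the second equivalence, $\cA$ integrally closed in $A$ means $\cA=\cA'$, and likewise $\hcA$ integrally closed in $\hA$ means $\hcA=\hcA'$. For the forward direction I would argue as follows: if $\cA=\cA'$, then the image of $\cA'$ in $\hcA'$ coincides with that of $\cA$, which is contained in $\hcA$. The density statement in Proposition~\ref{prop:intunit} then forces $\hcA'$ to lie in the sup-norm closure of $\hcA$ inside $\hA$; since $\hcA$ is $\pi$-adically complete and bounded in sup-norm (as $\hcA\subset\hcA'$ sits in the unit ball of $\n_{\sup}$), it is sup-norm closed in $\hA$, yielding $\hcA'\subset\hcA$ and hence equality.

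For the reverse direction, assume $\hcA=\hcA'$ and take $f\in\cA'$. By the first equivalence $\hat f\in\hcA'=\hcA$, but this does not immediately give $f\in\cA$ because $\cA\to\hcA$ may fail to be injective (Example~\ref{exam:notinj}). The plan is to argue contrapositively: should $\cA\subsetneq\cA'$ hold, the quotient $\cA'/\cA$ is a nonzero finitely presented $K^\circ$-module (using Lemma~\ref{lem:modelfp} together with the finiteness of $\cA'$ over $\cA$), and its $\pi$-adic completion computes $\hcA'/\hcA$; a $\pi$-adic separation argument on such a module then produces a nonzero image in $\hcA'/\hcA$, contradicting the hypothesis. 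The main obstacle is precisely this separation step in the non-Noetherian (densely valued) case, where Krull's intersection theorem is unavailable and one must lean instead on the finite presentation afforded by Lemma~\ref{lem:modelfp} to control the completion.
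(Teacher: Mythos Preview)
Your first equivalence is exactly the intended argument.

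For the second equivalence, however, you have overlooked Lemma~\ref{lem:sigma}, which is precisely the missing ingredient you need. For the reverse direction you write that ``$\hat f\in\hcA$ does not immediately give $f\in\cA$'', but Lemma~\ref{lem:sigma} says exactly that $f\in\cA\Longleftrightarrow\hat f\in\hcA$ for any $f\in A$. So if $\hcA=\hcA'$ and $f\in\cA'$, the first equivalence gives $\hat f\in\hcA'=\hcA$, and Lemma~\ref{lem:sigma} then yields $f\in\cA$ directly. Your proposed detour through $\cA'/\cA$ and its completion is unnecessary, and the separation step you flag as problematic is indeed a genuine obstacle in the densely valued case that you would not be able to close. (Note also that finiteness of $\cA'$ over $\cA$ is not something you have available at this point in the paper; it is the content of Theorem~\ref{thm:redfiber}, which is proved later and only under the additional hypothesis that $K$ is algebraically closed.)

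Your forward direction is morally right but the justification is wobbly: ``$\pi$-adically complete and bounded in sup-seminorm'' does not by itself imply sup-seminorm closed, and the sup-seminorm need not define the Banach topology of $\hA$. The clean argument is that $\hcA$ is \emph{open} in $\hA$ (Lemma~\ref{lem:infnorm}), hence closed as an additive subgroup. If $\cA=\cA'$, the image of $\cA'$ lies in $\hcA$; density of this image in $\hcA'$ (Proposition~\ref{prop:intunit}) then forces $\hcA'\subset\hcA$.
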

We start with a useful observation. 

\begin{lem}\label{lem:sigma} For each $f\in A$, we have $f\in\cA\Longleftrightarrow\hf\in\hcA$. 
\end{lem}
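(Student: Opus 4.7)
The forward direction is immediate: the canonical map $\cA\to\hcA$ intertwines $\cA\hookrightarrow A$ with $\hcA\to\hA$, so $f\in\cA$ forces $\hf\in\hcA$. Moreover, in the trivially valued case $\hcA=\cA$ and $\hA=A$, so the lemma is vacuous; I therefore assume throughout that $K$ is nontrivially valued and fix a nonzero $\pi\in K^{\circ\circ}$.

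For the converse, I would first use $A=\cA\otimes_{K^\circ}K=\cA[\pi^{-1}]$ to pick $n\ge 0$ with $a:=\pi^n f\in\cA$. Since $\pi$ is a unit in $A$, multiplication by $\pi^n$ is injective on $A$, so showing that $a$ is divisible by $\pi^n$ in $\cA$ — i.e.\ $a=\pi^n b$ for some $b\in\cA$ — would yield $f=b\in\cA$ as required. The problem is therefore reduced to proving the single divisibility statement $a\in\pi^n\cA$ inside $\cA$.

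The key input is the tautological identification $\cA/\pi^N\cA=\hcA/\pi^N\hcA$ coming from the definition of the $\pi$-adic completion, valid for every $N\ge 0$. The hypothesis that $\hf$ lies in the image of $\hcA\to\hA$ produces some $g\in\hcA$ such that $\hat a-\pi^n g$ lies in the kernel of $\hcA\to\hA=\hcA[\pi^{-1}]$; since that kernel is $\pi$-power torsion, there exists $m\ge 0$ with $\pi^m\hat a=\pi^{n+m}g$ in $\hcA$. Reducing this equality modulo $\pi^{n+m}\hcA$ and transferring it through the isomorphism $\hcA/\pi^{n+m}\hcA\cong\cA/\pi^{n+m}\cA$ gives $\pi^m a\in\pi^{n+m}\cA$. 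Finally, flatness of $\cA$ over $K^\circ$ — i.e.\ $\pi$-torsion-freeness of $\cA$ — allows me to cancel the factor $\pi^m$ and obtain the desired $a\in\pi^n\cA$.

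The main delicate point is that neither $\cA\to\hcA$ nor $\hcA\to\hA$ needs to be injective in general (as witnessed by Example~\ref{exam:notinj}), so one cannot simply identify $\hat a$ with $\pi^n g$ on the nose; this is precisely what forces the auxiliary factor $\pi^m$ to appear, and flatness of $\cA$ is what ultimately makes it harmless.
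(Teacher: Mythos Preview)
Your proof is correct and follows essentially the same route as the paper's: pass to $\cA/\pi^N\cA\simeq\hcA/\pi^N\hcA$, read off that $\pi^n f$ lies in $\pi^n\cA$, and cancel using flatness of $\cA$. The paper streamlines this in two ways. First, it absorbs the exponent into $\pi$ by choosing a single $\pi\in K^{\circ\circ}$ with $\pi f\in\cA$, so only $\cA/\pi\cA\simeq\hcA/\pi\hcA$ is needed. Second, and more to the point, it uses that $\hcA\to\hA$ is in fact \emph{injective}: the paper records (in the paragraph introducing $\hcX=\Spf(\hcA)$) that $\hcA$ is flat over $K^\circ$, hence torsion-free, so the kernel you worried about is zero and your auxiliary exponent $m$ may be taken to be $0$. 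Example~\ref{exam:notinj} only witnesses the failure of injectivity for $A\to\hA$ (and, incidentally, for $\cA\to\hcA$), not for $\hcA\to\hA$. So your extra care is harmless but unnecessary.
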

\begin{proof} We imitate~\cite[Lemma 1.4]{BL2}. Pick a nonzero $a\in K^{\circ\circ}$ such that $a f\in\cA$, and note that the canonical map $\cA\to\hcA$ induces an isomorphism $\cA/a\cA\simeq\hcA/a\hcA$. If $\hf\in\hcA$, we thus have $a f=a g$ for some $g\in\cA$, and hence $f=g\in\cA$ after multiplying by $a^{-1}$ in $A$.  
\end{proof}

\begin{proof}[Proof of Theorem~\ref{thm:intunit}] That the unit ball of $\n_{\sup}$ is the integral closure of $\hcA$ is a reformulation of~\cite[6.3.4/1]{BGR} (and the remark that follows). Before dealing with the unit ball of $\n_{\cX^\beth}$, we first establish the density of the image of $\cA'$ in $\hcA'$. 

Let thus $g\in\hcA'$, so that $g^n+\sum_{i=1}^{n-1}b_i g^{n-i}\in\hcA$ for some $b_i\in\hcA$. Since $\cA\to\hcA$ and $A\to\hA$ have dense images, we can pick sequences $f_j\in A$ and $a_{ij}\in\cA$ with $\hf_j\to g$ and $\ha_{ij}\to b_i$ as $j\to\infty$. As $\hcA$ is open in $\hA$ (for instance by Lemma~\ref{lem:infnorm} below), it follows that $\hf_j^n+\sum_{i=1}^{n-1}\ha_{ij} \hf_j^{n-i}\in\hcA$ for all $j\gg 1$, \ie $f_j^n+\sum_{i=1}^{n-1} a_{ij} f_j^{n-i}\in\cA$, by Lemma~\ref{lem:sigma}. As a result, $f_j$ is integral over $\cA$, \ie $f_j\in\cA'$, which proves that $\cA'\to\hcA'$ has dense image. 

It remains to show that an element $f\in A$ with $\|f\|_{\cX^\beth}\le 1$ is integral over $\cA$. Since $\|\hf\|_{\sup}\le 1$, we already know that $\hf$ belongs to $\hcA'$. Since $\cA'\to\hcA'$ has dense image and $\hcA$ is open in $\hA$, we find $f'\in\cA'$ with $\hf-\hf'\in\hcA$, \ie $f-f'\in\cA$, and hence $f\in\cA'$ (see also~\cite[Theorem 2.10]{CM}) for a direct proof). 
\end{proof}

Besides the sup-seminorm $\n_{\cX^\beth}$, $A$ is also equipped with a 'lattice seminorm' $\n_\cA$, defined by
$$
\|f\|_\cA:=\inf\left\{|a|\mid  a\in K,\,f\in a\cA\right\}. 
$$
By Example~\ref{exam:notinj}, this is again not a norm in general. However, similarly setting for $g\in\hA$ 
$$
\|g\|_{\hcA}:=\inf\left\{|a|\mid  a\in K,\,g\in a\hcA\right\},
$$
does yield a norm on $\hA$:

\begin{lem}\label{lem:infnorm} For each $g\in\hA$, the infimum defining $\|g\|_{\hcA}$ is achieved. In particular, $\hcA$ is the closed unit ball of $\n_{\hcA}$, and $K^{\circ\circ}\hcA$ is its open unit ball. 
\end{lem}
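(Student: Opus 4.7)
The argument breaks into three cases according to $r := \|g\|_{\hcA}$. If $g = 0$, take $\alpha = 0$. If $g \ne 0$ but $r = 0$, I would derive a contradiction from the $\pi$-adic separatedness of $\hcA$: picking any nonzero $\pi \in K^{\circ\circ}$ and, for each $n$, some $\alpha \in K^*$ with $|\alpha| \le |\pi|^n$ and $g = \alpha h$, $h \in \hcA$, one has $\alpha \pi^{-n} \in K^\circ$, hence $g = \pi^n(\alpha\pi^{-n})h \in \pi^n \hcA$. Since $\hcA$ is the $\pi$-adic completion of $\cA$ and therefore $\pi$-adically separated, we get $g \in \bigcap_n \pi^n \hcA = 0$, contradicting $g \ne 0$. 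Hence this case does not occur, and we are reduced to $r > 0$ and the task of exhibiting $\alpha \in K^*$ with $|\alpha| = r$ and $g/\alpha \in \hcA$.

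If $K$ is discretely valued with uniformizing parameter $\pi_K$, then $|K^*| = |\pi_K|^\Z$ is discrete in $\R_{>0}$, so the infimum over any subset of $|K^*|$ is automatically attained, and we are done in a single step.

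The densely valued case is the main obstacle and requires the finite presentation of $\cA$ afforded by Lemma~\ref{lem:modelfp}. Writing $\cA = K^\circ[T_1,\dots,T_n]/(f_1,\dots,f_r)$, the $\pi$-adic completion presents $\hcA$ as a quotient $K^\circ\langle T\rangle / J$ of a Tate ring, and $\hA = K\langle T\rangle / JK$ is the resulting $K$-affinoid algebra. Since $K^\circ\langle T\rangle$ is precisely the Gauss-norm unit ball of $K\langle T\rangle$, a direct check shows that $g \in \alpha\hcA$ iff $g$ admits a lift $G \in K\langle T\rangle$ with $\|G\|_{\mathrm{Gauss}} \le |\alpha|$, so that
$$
\|g\|_{\hcA} \;=\; \inf\bigl\{\,\|G\|_{\mathrm{Gauss}} \;:\; G \in K\langle T\rangle,\ G \equiv g \bmod JK\,\bigr\}.
$$
The Gauss norm of any individual element of $K\langle T\rangle$ equals $\max_I |a_I| \in |K^*| \cup \{0\}$ and is trivially achieved; thus the genuine issue is showing this infimum of Gauss norms over lifts is attained. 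This is the standard strictness result for Tate affinoids: for a finitely generated (hence closed) ideal in $K\langle T\rangle$, the residue norm on the quotient takes values in $|K^*| \cup \{0\}$ and is achieved by some lift (see e.g.~\cite{BGR}). Given such an achieving lift $G_0$ with $\|G_0\|_{\mathrm{Gauss}} = r = |a_{I_0}|$ for some coefficient $a_{I_0}$, one takes $\alpha := a_{I_0}$, so that $G_0/\alpha \in K^\circ\langle T\rangle$ and $g = \alpha \cdot (G_0/\alpha \bmod J) \in \alpha \hcA$.

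The remaining statements about the unit balls follow formally. If $\|g\|_{\hcA} \le 1$, an achieving $\alpha$ satisfies $|\alpha| \le 1$, hence $g = \alpha h \in K^\circ \hcA = \hcA$; the reverse inclusion is obvious. For the open unit ball, $K^{\circ\circ}\hcA \subseteq \{\|\cdot\|_{\hcA} < 1\}$ is direct from the definition, and conversely any $g$ with $\|g\|_{\hcA} < 1$ admits---without even needing the attainment---some $\alpha \in K^{\circ\circ}$ and $h \in \hcA$ with $g = \alpha h$, putting $g$ in $K^{\circ\circ}\hcA$.
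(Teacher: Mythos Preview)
Your proof is correct and follows essentially the same route as the paper: identify $\n_{\hcA}$ with the quotient of the Gauss norm under a surjection from a Tate algebra, then invoke the strict closedness of ideals in Tate algebras (\cite[5.2.7/8]{BGR}) to find a lift achieving the infimum. The extra preliminaries you add---separating out $g=0$, ruling out $r=0$ via $\pi$-adic separatedness, treating the discretely valued case by hand, and invoking Lemma~\ref{lem:modelfp} for finite presentation---are all fine but unnecessary: the strict-closedness argument handles every case uniformly (if the achieved quotient norm is $0$ then the lift is $0$, forcing $g=0$), and one only needs a surjection $K^\circ[T]\twoheadrightarrow\cA$ from finite type, since any ideal in the Noetherian ring $K\langle T\rangle$ is automatically finitely generated and hence strictly closed.
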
 
\begin{proof} The result is true for the polynomial ring $\cB:=K^\circ[t_1,\ldots,t_r]$, since $\n_{\hcB}$ is then the Gauss norm on the Tate algebra $\widehat{B}=K\{t_1,\ldots,t_r\}$. In the general case, choose a surjection $\cB:=K^\circ[t_1,\ldots,t_r]\twoheadrightarrow\cA$ for some $r\ge 1$, and observe that $\n_{\hcA}$ is the quotient seminorm of $\n_{\hcB}$ with respect to the induced surjection $\rho:\widehat{B}\twoheadrightarrow\hA$. The kernel of $\rho$, being an ideal in a Tate algebra, is strictly closed~\cite[5.2.7/8]{BGR}. By definition, this means that for each $g\in\hA$, there exists $h\in \widehat{B}$ such that $\rho(h)=g$ and $\|h\|_{\hcB}=\|g\|_{\hcA}$. Since the desired result holds for $\hcB$, we can then find $ a\in K$ with $|a|=\|h\|_{\hcB}=\|g\|_{\hcA}$, which implies that $h\in a\hcB$, and hence $g=\rho(h)\in a\hcA$. 
\end{proof}

\begin{thm}\label{thm:redint} As above, let $\cX = \Spec \cA$ be an affine model of $X = \Spec A$. We then have $\n_{\sup}\le\n_{\hcA}$ on $\hA$, and hence $\n_{\cX^\beth}\le\n_{\cA}$ on $A$. Consider further the following properties: 
\begin{itemize}
\item[(i)] $\cX_s$ is reduced; 
\item[(ii)] $\n_{\sup}=\n_{\hcA}$ on $\hA$;
\item[(iii)] $\n_{\cX^\beth}=\n_{\cA}$ on $A$;
\item[(iv)] $\hcA$ is integrally closed in $\hA$;
\item[(v)] $\cA$ is integrally closed in $A$.
\end{itemize}
Then $(i)\Longleftrightarrow(ii)\Longleftrightarrow(iii)\Longrightarrow(iv)\Longleftrightarrow(v)$. If $K$ is densely valued, we also have $(v)\Longrightarrow(i)$, and $(i)$---$(v)$ are then equivalent. 
\end{thm}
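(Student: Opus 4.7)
\emph{Proof plan.}

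The inequality $\n_{\sup}\le\n_{\hcA}$ on $\hA$ is immediate from the definition of $\cX^\beth$: if $g\in\a\hcA$ with $\a\in K^*$, then $|g(x)|\le|\a|$ for each $x\in\cX^\beth$, and taking the infimum over such $\a$ gives $\|g\|_{\sup}\le\|g\|_{\hcA}$. The corresponding inequality $\n_{\cX^\beth}\le\n_\cA$ on $A$ follows from the two identities $\|f\|_{\cX^\beth}=\|\hf\|_{\sup}$ (by definition) and $\|f\|_\cA=\|\hf\|_{\hcA}$, the latter being a direct consequence of Lemma~\ref{lem:sigma} applied to $f/\a$. These identities also show that (iii) is precisely the restriction of (ii) to the image of $A$ in $\hA$, so $(\mathrm{ii})\Leftrightarrow(\mathrm{iii})$ follows by density of that image together with continuity of $\n_{\sup}$ with respect to $\n_{\hcA}$, itself a consequence of the first inequality.

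The equivalence $(\mathrm{i})\Leftrightarrow(\mathrm{ii})$ forms the technical core. Because $\hcA$ is the $\pi$-adic completion of $\cA$ for any nonzero $\pi\in K^{\circ\circ}$, passing to colimits over such $\pi$ yields a canonical identification $\hcA/K^{\circ\circ}\hcA=\cA/K^{\circ\circ}\cA$, so (i) amounts to reducedness of $\hcA/K^{\circ\circ}\hcA$. The key input is the classical spectral-radius identity $\|g\|_{\sup}=\lim_n\|g^n\|_{\hcA}^{1/n}$ for $g\in\hA$, which holds because $\hA$ is a $K$-affinoid algebra (see e.g.\ \cite[\S 3.8]{BGR}). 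For $(\mathrm{i})\Rightarrow(\mathrm{ii})$, given nonzero $g\in\hA$, Lemma~\ref{lem:infnorm} supplies $\a\in K^*$ with $g=\a g_0$, $|\a|=\|g\|_{\hcA}$ and $g_0\in\hcA\setminus K^{\circ\circ}\hcA$; reducedness forces $g_0^n\notin K^{\circ\circ}\hcA$ for all $n$, \ie $\|g_0^n\|_{\hcA}=1$, and the spectral-radius formula then gives $\|g_0\|_{\sup}=1$, whence $\|g\|_{\sup}=|\a|=\|g\|_{\hcA}$. Conversely, if (ii) holds and $g\in\hcA$ satisfies $g^n\in K^{\circ\circ}\hcA$, then $\|g^n\|_{\hcA}<1$, so $\|g\|_{\sup}^n=\|g^n\|_{\sup}<1$ by power-multiplicativity of $\n_{\sup}$, hence $\|g\|_{\hcA}=\|g\|_{\sup}<1$ by (ii), meaning $g\in K^{\circ\circ}\hcA$; this establishes reducedness.

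The implication $(\mathrm{ii})\Rightarrow(\mathrm{iv})$ is obtained by comparing closed unit balls: by Proposition~\ref{prop:intunit} and Lemma~\ref{lem:infnorm}, those of $\n_{\sup}$ and $\n_{\hcA}$ are respectively $\hcA'$ and $\hcA$, and they coincide under (ii). The equivalence $(\mathrm{iv})\Leftrightarrow(\mathrm{v})$ is Corollary~\ref{cor:intunit}. Finally, assume $K$ is densely valued and (v) holds; given $f\in\cA$ with $f^n\in K^{\circ\circ}\cA$, write $f^n=\pi h$ with $\pi\in K^{\circ\circ}$ and $h\in\cA$ (absorbing the finitely many coefficients appearing into the one of largest absolute value). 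Using density of $|K^*|$ in $\R_+^*$, pick $\pi'\in K^*$ with $|\pi|^{1/n}\le|\pi'|<1$; then $c:=\pi/(\pi')^n\in K^\circ$, and $f/\pi'\in A$ satisfies the monic integral equation $(f/\pi')^n=ch$ over $\cA$, so $f/\pi'\in\cA$ by (v). Hence $f\in\pi'\cA\subset K^{\circ\circ}\cA$, establishing reducedness of $\cA/K^{\circ\circ}\cA=\cX_s$.

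The main obstacle is the implication $(\mathrm{i})\Rightarrow(\mathrm{ii})$, whose proof rests on the spectral-radius identity for $K$-affinoid algebras; this is what bridges the analytic sup-seminorm and the algebraically defined lattice norm $\n_{\hcA}$. The second delicate point is the $n$th-root trick used for $(\mathrm{v})\Rightarrow(\mathrm{i})$: it exploits the density of $|K^*|$ in $\R_+^*$ and is unavailable when $K$ is discretely valued, consistent with the theorem only asserting this direction in the densely valued case.
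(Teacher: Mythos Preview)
Your proof is correct and follows essentially the same approach as the paper. The only organizational differences are that you close the equivalence $(ii)\Leftrightarrow(iii)$ directly via density of $A$ in $\hA$ (the paper instead cycles through $(iii)\Rightarrow(i)\Rightarrow(ii)$), and you invoke the spectral-radius identity $\|g\|_{\sup}=\lim_n\|g^n\|_{\hcA}^{1/n}$ where the paper uses the equivalent characterization of topological nilpotence from \cite[6.2.3/2]{BGR}; both routes rest on the same affinoid input.
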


\begin{proof} Pick a nonzero $g\in\hcA$. By Lemma~\ref{lem:infnorm}, $\|g\|_{\hcA}$ is in the value group $|K^\times|$, and we may thus assume that $\|g\|_{\hcA}=1$ after multiplying $g$ by a nonzero scalar. We then have $g\in\hcA$, hence $|g(x)|\le 1$ for all $x \in\cX^\beth$, which proves that $\|g\|_{\sup}\le 1$, and hence $\n_{\sup}\le\n_{\hcA}$. 

Suppose now that $\cX_s$ is reduced, and assume by contradiction that $g$ as above satisfies $\|g\|_{\sup}<1$. By~\cite[6.2.3/2]{BGR}, $g$ is topologically nilpotent, \ie $g^n\to 0$. For $n\gg 1$, we thus have $\|g^n\|_{\hcA}<1$, \ie $g^n\in K^{\circ\circ}\hcA$; since  $\cA\otimes\tK\simeq\hcA\otimes\tK$ is reduced, this implies $g\in K^{\circ\circ}\hcA$, which contradicts $\|g\|_{\hcA}=1$. We have thus proved (i)$\Longrightarrow$(ii), which trivially implies (iii) by composing with $A\to\hA$. If (iii) holds, then $\n_{\cA}$ is power-multiplicative, \ie $\|f^n\|_\cA=\|f\|_\cA^n$ for each $f\in A$ and $n\in\N$. In particular, $f^n\in K^{\circ\circ}\cA\Longleftrightarrow\|f^n\|_\cA<1\Longleftrightarrow f\in K^{\circ\circ}\cA$, which means that $\cA\otimes\tK\simeq\cA/K^{\circ\circ}\cA$ is reduced. 

Since $\hcA$ (resp. $\cA$) is the unit ball of $\n_{\hcA}$ (resp. $\n_{\cA})$, Theorem~\ref{thm:intunit} shows that (ii) and (iii) respectively imply (iv) and (v), while Corollary~\ref{cor:intunit} shows that (iv) and (v) are equivalent. 

Assume finally that $K$ is densely valued and that (v) holds. To prove (i), we need to show that each $f\in\cA$ such that $f^n\in a\cA$ for some $n\ge 1$ and $a\in K^{\circ\circ}$ actually satisfies $f\in K^{\circ\circ}\cA$. Since $K$ is densely valued, we can find $a'\in K^{\circ\circ}$ with $|a|^{1/n}\le|a'|< 1$, and hence $a/a'^n\in K^\circ$. As a result, $g:=a'^{-1}f\in A$ satisfies $g^n\in\cA$, and hence $g\in\cA$, since $\cA$ is  integrally closed in $A$. We have thus shown as desired that $f=a' g\in K^{\circ\circ}\cA$ (we are grateful to Walter Gubler for his help with this argument). 
\end{proof}

We conclude this section with the following rather special case of the scheme-theoretic version of the Bosch--L\"utkebohmert--Raynaud reduced fiber theorem~\cite[Theorem 2.1']{BLR}. We provide some details for the convenience of the reader (see also~\cite[Th\'eor\`eme 1', p.73]{Ana}). 

\begin{thm}\label{thm:redfiber} Assume that $K$ is non-Archimedean and nontrivially valued, and either discretely valued or algebraically closed. Let $X$ be a reduced $K$-scheme of finite type, and pick a model $\cX$ of $X$. The integral closure $\cX'$ of $\cX$ in $X$ is then finite over $\cX$, and hence a model of $X$ as well. In the algebraically closed case, $\cX'_s$ is further reduced. 
\end{thm}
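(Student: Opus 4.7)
The trivially valued case is immediate: $\cX = X$ is then the unique model, and the integral closure of the reduced ring $A$ in itself is $A$. From now on I assume $K$ is non-trivially valued; since $K$ is algebraically closed, $|K^*|$ is divisible, so $K$ is densely valued.

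Both conclusions in the theorem are local on $\cX$, so I would cover $\cX$ by affine opens and reduce to $\cX = \Spec\cA$ with $A = \cA \otimes_{K^\circ} K$ reduced. The task is then to show that the integral closure $\cA' \subset A$ of $\cA$ is a finite $\cA$-module. Once this is known, $\cA'$ is automatically $K^\circ$-flat, being torsion-free inside the $K$-vector space $A$, so $\cX' := \Spec\cA'$ is a model of $X$; and the reducedness of $\cX'_s$ follows at once from Theorem~\ref{thm:redint} applied to $\cX'$, since $\cA'$ is integrally closed in $A$ by construction.

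For the finiteness, I would pass to the formal completion. Fix a nonzero $\pi \in K^{\circ\circ}$ and consider the $\pi$-adic completion $\hcA$, the associated strictly $K$-affinoid algebra $\hA = \hcA \otimes K$, and the integral closure $\hcA' \subset \hA$ of $\hcA$. By Proposition~\ref{prop:intunit}, $\hcA'$ is the unit ball of the sup-seminorm on $\hA$ and $\cA' \to \hcA'$ has dense image. The key analytic input is the Grauert--Remmert finiteness theorem: over an algebraically closed $K$, $\hcA'$ is finite over $\hcA$, via Noether normalization for strictly $K$-affinoid algebras. Having this, I would lift a finite set of $\hcA$-generators of $\hcA'$ to elements $g_1, \ldots, g_n \in \cA'$ using the density, form $M := \sum_i \cA g_i \subset \cA'$, and use that $\widehat{M} = \hcA'$ together with the characterization $\cA' = \{f \in A : \|f\|_{\cX^\beth} \le 1\}$ from Proposition~\ref{prop:intunit} to force $M = \cA'$.

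The main obstacle is this last descent step: in the densely valued case, $K^\circ$ and hence $\cA$ are typically non-Noetherian, so $\cA \to \hcA$ need not be flat and classical faithfully flat descent is unavailable. It must be replaced by explicit $\pi$-adic approximation arguments combined with the sup-seminorm characterization recalled above. A related subtlety is that $\hA$ may fail to be reduced even when $A$ is (cf.~Example~\ref{exam:notinj}); one handles this by working with $\hA_{\mathrm{red}}$ and noting that the nilradical of $\hA$ is a finitely generated $\hcA$-module since $\hA$ is Noetherian, so finiteness of $\hcA'_{\mathrm{red}}$ over $\hcA$ implies finiteness of $\hcA'$ itself.
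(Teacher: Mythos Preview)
Your outline is correct and follows essentially the same route as the paper: reduce to the affine case, invoke Grauert--Remmert to get $\hcA'$ finite over $\hcA$, lift generators, and descend via the $\pi$-adic approximation encoded in Lemma~\ref{lem:sigma} together with the characterization of $\cA'$ as the unit ball of $\|\cdot\|_{\cX^\beth}$ (Proposition~\ref{prop:intunit} and Corollary~\ref{cor:intunit}). The paper makes the descent step explicit: it includes $1$ among the generators $f_i$, chooses $N$ with $\pi^N f_i\in\cA$ for all $i$, approximates $b_i\in\hcA$ by $\ha_i$ with $b_i-\ha_i\in\pi^N\hcA$, and then applies Lemma~\ref{lem:sigma} to $f-\sum_i a_i f_i$ directly, avoiding any appeal to completions of the submodule $M$.

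One correction: your worry that $\hA$ may fail to be reduced when $A$ is reduced is unfounded. Lemma~\ref{lem:seminorm} states precisely that $A$ reduced implies $\hA$ reduced (this is \cite[3.4.3]{Ber}), and the paper uses this to apply Grauert--Remmert (\cite[6.4.1/5]{BGR}) without further ado. Example~\ref{exam:notinj} only shows that $A\to\hA$ can fail to be injective, not that $\hA$ can acquire nilpotents; your detour through $\hA_{\mathrm{red}}$ is therefore unnecessary.
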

Note conversely that the existence of a model with reduced special fiber implies that $X$ is reduced, by~\cite[IV,12.1.1]{EGA}. 

\begin{proof}[Proof of Theorem~\ref{thm:redfiber}] If $K$ is discretely valued, then $\cX$ is excellent, which implies that its integral closure in $X$ is finite. Assume that $K$ is algebraically closed. It is then densely valued, and the final point will thus follow from Theorem~\ref{thm:redint}. 

The finiteness of $\cX'$ over $\cX$ being local, we assume that $\cX=\spec(\cA)$ is affine and use the above notation. We will reduce the result to the Grauert--Remmert finiteness theorem, basically arguing as in~\cite[Proposition 1.5]{BL2} and~\cite[Theorem 3.5.5, Step 3]{Tem10}. 

Since $A$ is reduced, $\hA$ is reduced as well by Lemma~\ref{lem:seminorm}, and~\cite[6.4.1/5]{BGR} thus shows that $\hcA'$ is finite over $\hcA$, \ie $\hcA'=\sum_i \hcA g_i$ for a finite set $g_i\in\hA$, in which we include $1$ for convenience. As in the proof of Lemma~\ref{lem:sigma}, we can find for each $i$ some $f_i\in A$ with $g_i-\hf_i\in\hcA$, and hence $\hcA'=\sum_i\hcA \hf_i$. By Corollary~\ref{cor:intunit}, an element $f\in A$ belongs to the integral closure $\cA'$ of $\cA$ in $A$ if and only $\hf$ belongs to $\hcA'=\sum_i\hcA \hf_i$, which is also equivalent to $f\in\sum_i\cA f_i$ by Lemma~\ref{lem:sigma}. We conclude as desired that $\cA'=\sum_i\cA f_i$ is finite over $\cA$. 
\end{proof}

\begin{rmk} Theorem~\ref{thm:redfiber} fails in general over an arbitrary densely valued non-Archimedean field $K$. Let indeed $X=\spec A$ be an affine reduced $K$-scheme, $\cX=\spec\cA$ an affine model, and denote by $\cA'$ the integral closure of $\cA$ in $A$. If $K$ is densely valued and $\cA'$ is finite over $\cA$, then $\cX':=\spec\cA'$ is a model of $X$, and hence $\cX'_s$ is reduced, by Theorem~\ref{thm:redint}. As a result, the sup-seminorm $\n_{\cX^\beth}=\n_{\cX'^\beth}=\n_{\cA'}$ takes values in $|K|$, a condition that is not satisfied in general when the group $|K^\times|$ is not divisible. 
\end{rmk} 

%
%
\section{Fubini--Study metrics and model metrics}
This section introduces Fubini--Study metrics, and compares them with model metrics. The main result is Theorem~\ref{thm:supmodel}, which compares the supnorms and lattice norms induced by a model metric, and relies on the reduced fiber theorem. 

In what follows, $X$ denotes a \emph{projective} $K$-scheme, where $K$ is a complete valued field.
%
%
\subsection{Metrics}
Let $L$ be a line bundle on $X$. A \emph{continuous metric} $\phi$ on $L$ is a family of norms 
$$
|\cdot|_{\phi_x}:L_x:=L\otimes\cH(x)\to[0,+\infty),\,x\in X^\an,
$$
such that for any local section $s$ of $L$ on an open $U\subset X$, the induced function $|s|_\phi$ on $U^\an$ is continuous. We use additive notation for metrics, which amounts to the following rules: 
\begin{itemize}
\item[(i)] if $\phi,\p$ are continuous metrics on line bundles $L,M$, then $\phi\pm\p$ denotes the induced metric on $L\pm M=L\otimes M^{\pm 1}$, \ie
$$
|\cdot|_{\phi\pm\psi}= |\cdot|_\phi \otimes |\cdot|_\psi^{\pm 1}; 
$$
\item[(ii)] a continuous metric $\phi$ on the trivial line bundle $L=\cO_X$ is identified with the continuous function $-\log|1|_\phi$ on $X^\an$. 
\end{itemize}
If $\phi,\p$ are two continuous metrics on the same line bundle $L$, $\phi-\p$ is thus a continuous function on $X^\an$, which means that the space $\cz(L)$ of continuous metrics on $L$ is an affine space modeled on $\cz(X^\an)$. 

A \emph{smooth metric} $\phi$ on $L$ is a continuous metric such that $|s|_\phi$ is smooth for any local trivialization $s$ of $L$. In the non-Archimedean case, smoothness is understood in the sense of~\cite{CLD}, see~\S\ref{sec:smooth}. The set of smooth metrics $C^\infty(L)\subset \cz(L)$ is an affine space modeled on $C^\infty(X)$. 

We occasionally use the notion of a \emph{singular metric} on $L$, by which we mean a metric of the form $\phi=\p+f$ with $\p\in \cz(L)$ and $f:X^\an\to[-\infty,+\infty)$ an arbitrary function. For a local trivialization $s$ of $L$, $|s|_\phi=|s|_\p e^{-f}$ is thus allowed to be $+\infty$ at certain points. 

If $f$ is bounded, we say that $\phi$ is a \emph{bounded metric}, defining an affine space $\cL^\infty(L)\supset \cz(L)$ modeled on the space of bounded functions. 

\begin{exam}\label{exam:singsec} Every section $s\in \Hnot(X,L)$ defines a singular metric $\log|s|$ on $L$, such that $\log|s|=\p+\log|s|_\p$ for any $\p\in \cz(L)$. 
\end{exam}

For each $m\ge 1$, every metric on $L$ is of the form $m\phi$ with $\phi$ a metric on $L$. As a result, all of the above notions immediately extend to $\Q$-line bundles. 

%
%
\subsection{Fubini--Study metrics}\label{sec:FS}
The usual Fubini--Study metric on the tautological ample line bundle $\cO(1)$ over the projective space (also known as the Weil metric in the non-Archimedean context) generalizes in a natural way as follows (see Corollary~\ref{cor:FSdiag} below for a more conceptual characterization). 

\begin{defi}\label{defi:FSmetric} A metric $\phi$ on a line bundle $L$ over $X$ is called a \emph{Fubini--Study metric} if there exists $m\ge 1$, a finite set of sections $(s_i)$ of $mL$ without common zeroes, and constants $\la_i\in\R$ such that
\begin{itemize}
\item[(A)] $\phi=\frac{1}{2m}\log\sum_i|s_i|^2 e^{2\la_i}$ (Archimedean case);
\item[(NA)] $\phi=\frac{1}{m}\max_i\{\log|s_i|+\la_i\}$ (non-Archimedean case). 
\end{itemize}
We say that $\phi$ is \emph{pure} if $\la_i=0$.  
\end{defi}

By definition, $L$ admits a (pure) Fubini--Study metric iff $L$ is \emph{semiample}, \ie $mL$ is globally generated for some $m\ge 1$. 

The notation $|s_i|=e^{\log|s_i|}$ is understood as in Example~\ref{exam:singsec}, and thus means that for any local trivializing section $\tau$ of $L$, the corresponding functions $f_i:=s_i/\tau^m\in\cO_X$ satisfy 
$$
-\log|\tau|_\phi=\frac{1}{2m}\log\sum_i|f_i|^2 e^{-2\la_i}\,\,\,\,\,\,(\text{resp. }\frac{1}{m}\max_i\{\log|f_i|+\la_i\}).
$$
 In the Archimedean case, one can replace $s_i$ with $e^{\la_i}s_i$, and all Fubini--Study metrics are thus pure in that case. 

Given a subgroup $\Ga\subset\R$, we say that a metric $\phi$ as above is a \emph{$\Ga$-Fubini--Study metric} if $\la_i\in\Ga$ for all $i$. We denote by 
$$
\FS_\Ga(L)\subset\cz(L)
$$ 
the set of $\Ga$-Fubini--Study metrics on $L$, and by $\FS(L)=\FS_\R(L)$ the set of all Fubini--Study metrics. The set of pure Fubini--Study metrics is thus $\FS_{\{0\}}(L)$.

\begin{rmk} While Fubini--Study metrics are smooth in the Archimedean case, they are instead piecewise linear when $K$ is non-Archimedean (see \S\ref{sec:QPL} below). But it is anyway easy to explicitly approximate a Fubini--Study metric by smooth metrics, cf.~Theorem~\ref{thm:pshCLD}. 
\end{rmk}

The next result summarized the main properties of Fubini--Study metrics. 
\begin{prop}\label{prop:FS} If $L,L'$ are line bundles on $X$ and $\Ga\subset\R$ is a subgroup, then:
\begin{itemize}
\item[(i)] $\FS_\Ga(L)+\FS_\Ga(L')\subset\FS_{\Ga}(L+L')$; 
\item[(ii)] $\FS_\Ga(L)=\FS_{\Ga'}(L)$ with $\Ga':=\Q(\Ga+\Ga_K)$ the $\Q$-linear subspace of $\R$ spanned by $\Ga$ and $\Ga_K$; 
\item[(iii)] $\FS_\Ga(mL)=m\FS_\Ga(L)$ for all $m\in\Z_{>0}$; 
\item[(iv)] $f^\star\FS_\Ga(L)\subset\FS_\Ga(f^\star L)$ for any projective morphism $f:X'\to X$; 
\item[(v)] if $\Ga\ne\{0\}$, or $K$ is nontrivially valued, then $\FS_\Ga(L)$ is dense in $\FS(L)$ with respect to uniform convergence. 
\end{itemize}
When $K$ is non-Archimedean, we further have: 
\begin{itemize}
\item[(vi)] $\FS_\Ga(L)$ is stable under max.
\end{itemize}
\end{prop}
\begin{proof} Pick $m,m'\ge 1$ and finitely many sections without common $s_i\in \Hnot(mL)$, $s'_j\in \Hnot(m'L')$ such that $\phi=\frac{1}{2m}\log\sum_i|s_i|^2$, $\phi'=\frac{1}{2m'}\log\sum_j|s'_j|^2$ in the Archimedean case, and 
$\phi=\frac{1}{m}\max_i\{\log|s_i|+\la_i\}$, $\phi'=\frac{1}{m}\max_j\{\log|s'_j|+\la'_j\}$ in the non-Archimedean case. In the former case,  
$$
mm'(\phi+\phi')=\log\left(\sum_i|s_i|^2\right)^{m'}\left(\sum_j|s'_j|^2\right)^m, 
$$
which expands out as $mm'(\phi+\phi')=\log\sum_\a |\sigma_\a|^2$ for a finite set of sections $\sigma_ a\in \Hnot(mm'L)$ without common zeroes, proving that $\phi+\phi'$ is Fubini--Study. In the non-Archimedean case,
$$
mm'(\phi+\phi')=\max_i\{\log|s_i^{m'}|+m'\la_i\}+\max_j\{\log|s'^m_j|+m\la'_j\}
$$
$$
=\max_{i,j}\{\log|s_i^{m'}s'^m_j|+m'\la_i+m\la'_j\}, 
$$
which proves (i). The proof of (ii) and (iii) is similar, and (iv), (vi) follow directly from the definitions. To prove (v), note that the assumption implies that $\Ga'=\Q(\Ga+\Ga_K)$ is a nontrivial $\Q$-linear subspace of $\R$. It is thus dense in $\R$, and it is then straightforward to check that $\FS_\Ga(L)=\FS_{\Ga'}(L)$ is dense in $\FS(L)=\FS_\R(L)$, simply by approximating the coefficients $\la_i$ in Definition~\ref{defi:FSmetric}. 
\end{proof}

By Proposition~\ref{prop:FS}~(ii), we can make sense of $\Ga$-Fubini--Study metrics on any $\Q$-line bundle $L$ over $X$. As noted above, such metrics exist iff $L$ is semiample. As we now show, Fubini--Study metrics always descend to an ample $\Q$-line bundle. 

\begin{lem}\label{lem:Stein} If $L$ is a semiample $\Q$-line bundle on $X$, then there exists a surjective morphism $f:X\to Y$ to a projective $K$-scheme with $f_\star\cO_X=\cO_Y$ and an ample $\Q$-line bundle $A$ on $Y$ such that $f^\star A=L$. Further, $f$ and $(Y,A)$ are unique up to isomorphism. 
\end{lem}
We call $A$ the \emph{Stein factorization} of $L$. 
\begin{proof} After passing to a multiple, we may assume that $L$ is a globally generated line bundle. We then have a morphism $h:X\to\P \Hnot(L)$ such that $L=h^\star\cO(1)$. By Stein factorization, we have $h=g\circ f$ with $f:X\to Y$ such that $f_\star\cO_X=\cO_Y$ and $g:Y\to\P \Hnot(L)$ finite. Thus $A:=g^\star \cO(1)$ is ample, and $L=f^\star A$. By the projection formula, we have 
$\Hnot(Y,mA)\simeq \Hnot(X,mL)$ for all $m$. This shows that $(Y,A)$ is recovered from the graded ring $R(X,L)=\bigoplus_{m\in\N} \Hnot(X,mL)$ by the Proj construction, and hence is unique up to isomorphism. 
\end{proof}

\begin{lem}\label{lem:Steinbis} Let $L$ be a semiample $\Q$-line bundle on $X$. Denote by $X_\redu\subset X$ the reduction of $X$, and by $L_\redu$ the restriction of $L$. For all $m$ sufficiently divisible, the restriction map $\Hnot(X,mL)\to \Hnot(X_\redu,mL_\redu)$ is surjective. 
\end{lem}
\begin{proof} Use the notation of Lemma~\ref{lem:Stein}. Since every germ on $\cO_{Y_\redu}$ locally lifts to $\cO_Y$, we have $f_\star\cO_{X_{\redu}}=\cO_{Y_\redu}$, hence $\Hnot(X_\redu,mL_\redu)\simeq \Hnot(Y_\redu,mA_\redu)$, by the projection formula. We are thus reduced to the case where $L$ is ample, which follows from Serre vanishing. 
\end{proof}

\begin{cor}\label{cor:Stein} Let $L$ be a semiample $\Q$-line bundle on $X$. Denote by $A$ the Stein factorization of $L$ as in Lemma~\ref{lem:Stein}, and by $L_\redu$ the restriction of $L$ to $X_\redu$. Then 
$$
\FS_\Ga(A)\simeq\FS_\Ga(L)\simeq\FS_\Ga(L_\redu).
$$
\end{cor}

%
%
\subsection{Model metrics}\label{sec:modelmetric}
In this section, $K$ is non-Archimedean (possibly trivially valued). A \emph{model} of a line bundle $L$ on the projective $K$-scheme $X$ is a line bundle $\cL$ on a \emph{projective} model $\cX$ of $X$, together with an isomorphism $\cL|_{\cX_K}\simeq L$ compatible with the given isomorphism $\cX_K\simeq X$. When $L=\cO_X$, each model of $L$ is of the form $\cO_\cX(D)$ where $D$ is a \emph{vertical} Cartier divisor on a projective model $\cX$, \ie $\supp D\subset\cX_s$. 

\begin{lem}\label{lem:modelexist} Let $L$ be a line bundle on $X$, and $\cX$ a projective model of $X$. Then we can find a projective model $\cX'$ dominating $\cX$ and a model $\cL$ of $L$ determined on $\cX'$. 
\end{lem}
\begin{proof} Pick very ample line bundles $A_1,A_2$ on $X$ such that $L=A_1-A_2$. Sections of $A_i$ yields an embedding $X\hookrightarrow\P^{N_i}_K$ such that $A_i=\cO(1)|_X$. The scheme theoretic closure of $X$ in $\P^{N_i}_{K^\circ}$ is thus a projective model $\cX_i$ of $X$, and $\cA_i:=\cO(1)|_{\cX_i}$ is a model of $A_i$ determined on $\cX_i$. It remains to choose a projective model $\cX'$ dominating $\cX$, $\cX_1$ and $\cX_2$, and to define $\cL$ as the difference of the pullbacks to $\cX'$ of $\cA_1$ and $\cA_2$.  
\end{proof}

A model $\cL$ of $L$ defines a continuous metric $\phi_\cL$ on $L$, as follows. Cover $\cX$ with finitely many open subschemes $\cU_i$ with a trivializing section $\tau_i$ of $\cL$. Since $\cX$ is in particular proper over $K^\circ$, $X^\an=\cX^\beth$ is covered by the compact sets $\cU_i^\beth$. We may thus define a continuous metric $\phi_\cL$ on $L^\an$ by requiring that $|\tau_i|_{\phi_\cL}=1$ on $\cU_i^\beth$. This is indeed well-defined, since any other trivializing section of $\cL$ on $\cU_i$ is of the form $u_i\tau_i$ with $u_i\in\cO^\times(\cU_i)$ a unit, and hence $|u_i|\equiv 1$ on $\cU_i^\beth$. 

\begin{exam}\label{exam:trivialmodel} If $K$ is trivially valued, the model metric defined by the unique model $(X,L)$ is called the \emph{trivial metric} of $L$. 
\end{exam}

\begin{lem}\label{lem:func} Let $\cL$ be a model of $L$, with corresponding metric $\phi_\cL$. 
\begin{itemize}
\item[(i)] For each $m\in\Z$ we have $\phi_{m\cL}=m\phi_\cL$.
\item[(ii)] If a model $\cX'$ dominates $\cX$, then the pull-back $\cL'$ of $\cL$ to $\cX'$ satisfies $\phi_{\cL'}=\phi_{\cL}$. 
\end{itemize}
\end{lem}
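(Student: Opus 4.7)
The plan is to unwind the definition of model metrics in terms of trivializing sections and check the two assertions locally.

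For (i), I would cover $\cX$ by affine open subschemes $\cU_i$ on each of which $\cL$ admits a trivializing section $\tau_i$. Then $\tau_i^{\otimes m}$ is a trivializing section of $m\cL = \cL^{\otimes m}$ on the same $\cU_i$. By the very definition of the model metric, $|\tau_i^{\otimes m}|_{\phi_{m\cL}}\equiv 1$ on $\cU_i^\beth$. On the other hand, multiplicativity of $|\cdot|_\phi$ in $\phi$ gives $|\tau_i^{\otimes m}|_{m\phi_\cL} = |\tau_i|_{\phi_\cL}^m \equiv 1$ on $\cU_i^\beth$, again by definition of $\phi_\cL$. Since the two continuous metrics $\phi_{m\cL}$ and $m\phi_\cL$ on $mL$ assign the same length to a trivializing section on each member of an open cover of $\cX^\beth = X^\an$, they coincide.

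For (ii), let $\pi\colon\cX'\to\cX$ be the proper dominating morphism, so that $\cL' = \pi^*\cL$. By Lemma~\ref{lem:doman}, $\cX'^\beth = \cX^\beth = X^\an$, and for any open subscheme $\cU\subset\cX$, the model $\pi^{-1}(\cU)$ of $U:=\cU_K$ dominates $\cU$, whence $\pi^{-1}(\cU)^\beth = \cU^\beth$ inside $U^\an$ by the same lemma. Cover $\cX$ as above by trivializing opens $\cU_i$ with sections $\tau_i$ of $\cL$. Then $\pi^*\tau_i$ is a trivializing section of $\cL'$ on $\pi^{-1}(\cU_i)$, and under the identification $\cL'|_{\cX'_K}\simeq\cL|_{\cX_K}\simeq L$ it restricts on the generic fiber to the same section of $L$ as $\tau_i$. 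By definition of the model metric, $|\pi^*\tau_i|_{\phi_{\cL'}}\equiv 1$ on $\pi^{-1}(\cU_i)^\beth = \cU_i^\beth$, which is exactly the defining property of $\phi_\cL$ evaluated on $\tau_i$. Hence $\phi_{\cL'} = \phi_\cL$ on each $\cU_i^\beth$, and therefore everywhere on $X^\an$.

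Both parts are essentially unpacking the definition; there is no real obstacle. The only point requiring slight care is the identification of sections under pullback in (ii), which relies on $\pi$ being an isomorphism on generic fibers, together with the equality $\pi^{-1}(\cU)^\beth = \cU^\beth$ supplied by Lemma~\ref{lem:doman}.
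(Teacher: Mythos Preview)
Your proof is correct and follows essentially the same approach as the paper: both parts reduce to the observation that if $\tau$ trivializes $\cL$ on $\cU$, then $\tau^m$ trivializes $m\cL$ on $\cU$, and $\pi^*\tau$ trivializes $\cL'$ on $\pi^{-1}(\cU)$ with $\pi^{-1}(\cU)^\beth=\cU^\beth$ by properness. You have simply written out the details more fully than the paper's one-sentence argument.
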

\begin{proof} If $\tau$ is trivializing section of $\cL$ on an open set $\cU$, then $\tau^m$ is a trivializing section of $m\cL$, and the pull-back of $\tau$ is a local trivialization of $\cL'$ on the inverse image $\cU'$, which satisfies $\cU'^\beth=\cU^\beth$ since $\cU'$ is proper over $\cU$ (cf.~Lemma~\ref{lem:doman}). 
\end{proof}

If $L$ is now a $\Q$-line bundle on $X$, a \emph{$\Q$-model} of $L$ is defined as a $\Q$-line bundle $\cL$ on a projective model $\cX$ of $X$ such that $m\cL$ is a model of $mL$ for some $m\ge 1$. By Lemma~\ref{lem:func}~(i), the metric $\phi_\cL:=m^{-1}\phi_{m\cL}$ on $L$ is independent of the choice of $m$.
\begin{defi} A \emph{model metric} on a $\Q$-line bundle $L$ is a metric of the form $\phi=\phi_\cL$, where $\cL$ is a $\Q$-model of $L$. A \emph{model function} is a model metric $\phi$ on $\cO_X$, identified with the continuous function $-\log|1|_\phi$ on $X^\an$. 
\end{defi}
A model function $f$ is thus determined by a vertical $\Q$-Cartier divisor $D$ on some projective model $\cX$ of $X$. Every line bundle $L$ on $X$ admits a model metric $\phi$, and any other model metric on $L$ is then of the form $\phi+f$ with $f$ a model function.

\begin{thm}\label{thm:modelmet} Assume that $K$ is either discretely (or trivially) valued, or algebraically closed. Let $\cL,\cL'$ be two models of a the same line bundle $L$, determined on models $\cX,\cX'$ of $X$. Then the model metrics $\phi_\cL$, $\phi_{\cL'}$ on $L$ coincide iff the pullbacks of $\cL,\cL'$ to some model $\cX''$ dominating both $\cX$ and $\cX$' are equal. 
\end{thm}

\begin{proof} If the pullbacks of $\cL,\cL'$ to some high enough model agree, then $\phi_\cL=\phi_{\cL'}$, by Lemma~\ref{lem:func}. Assume conversely that $\phi_{\cL}=\phi_{\cL'}$. By Theorem~\ref{thm:redfiber}, $\cX$ and $\cX'$ are dominated by a model $\cX''$ which is integrally closed in $X$. After pulling back $\cL$ and $\cL'$ to $\cX''$, we may thus assume that $\cL$ and $\cL'$ are determined on $\cX$ integrally closed in $X$, and we then claim that $\cL=\cL'$. To see thus, let $\cU=\spec(\cA)$ be an affine open subscheme of $\cX$ with trivializing sections $\tau\in \Hnot(\cU,\cL)$, $\tau'\in \Hnot(\cU,\cL')$. Since $\cL,\cL'$ are both models of the same line bundle $L$, the restrictions of $\tau,\tau'$ to the generic fiber $U=\spec(\cA_K)$ of $\cU$ satisfy $\tau'|_U=u\tau|_U$ with $u\in\cA_K$ a unit. As $\phi_{\cL}$ and $\phi_{\cL'}$ coincide, the definition of model metrics yields $|u|\equiv 1$ on $\cU^\beth$.  By Theorem~\ref{thm:redint}, $\cA$ coincides with the unit ball of $\n_{\sup}$ on $\cA_K$. As $\|u\|_{\sup}=\|u^{-1}\|_{\sup}=1$, it follows that $u$ and $u^{-1}$ belong to $\cA$, \ie $u$ is a unit in $\cA$, and we conclude that $\cL=\cL'$. 
\end{proof}

The next result describes the behavior of model metrics under pullback.
\begin{prop}\label{prop:modelpullback} Let $L$ be a $\Q$-line bundle on $X$, and $\phi_\cL$ be the model metric determined by a $\Q$-model $\cL$ of $L$ on a model $\cX$ of $X$. For any projective morphism $f:Y\to X$, the induced metric $f^\star \phi$ on $f^\star  L$ is also a model metric. More precisely, the family of projective models $\cY$ of $Y$ such that $f$ extends to a $K^\circ$-morphism $f:\cY\to\cX$ is cofinal in all projective models, and we have $f^\star \phi_\cL=\phi_{f^\star \cL}$ for any such model $\cY$. 

If $f\colon Y\to X$ is flat, then $\cY$ can further be assumed to be flat over $\cX$. 
\end{prop}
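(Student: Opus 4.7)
The plan is to construct an extension of $f$ to suitable models, verify that model metrics pull back correctly, and finally invoke a flattening theorem for the last assertion.

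For the existence of extensions, I would proceed as follows. Since $f:Y\to X$ is projective, fix a closed immersion $Y\hookrightarrow X\times_K\P^N_K$ over $X$ for some $N$, and let $\widetilde\cY$ be the schematic closure of $Y$ inside $\cX\times_{K^\circ}\P^N_{K^\circ}$. This is a closed subscheme of a projective $\cX$-scheme, hence projective over $\cX$, and it is $K^\circ$-flat since over a valuation ring torsion-free modules are flat. Its generic fiber is $Y$ by construction, so $\widetilde\cY$ is a projective model of $Y$ equipped with a morphism $\widetilde\cY\to\cX$ extending $f$. Any projective model $\cY$ of $Y$ dominating $\widetilde\cY$ then admits such an extension by composition, giving the asserted cofinal family.

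For the compatibility $f^*\phi_\cL=\phi_{f^*\cL}$, I would cover $\cX$ by affine opens $\cU_i$ trivializing $\cL$ with sections $\tau_i$, so that $\cV_i:=f^{-1}(\cU_i)$ covers $\cY$ with trivializing sections $f^*\tau_i$ of $f^*\cL$. By functoriality of the analytification and reduction, $f^\an:Y^\an\to X^\an$ sends $\cV_i^\beth$ into $\cU_i^\beth$, and since $|\tau_i|_{\phi_\cL}\equiv 1$ on $\cU_i^\beth$ by the definition of model metrics, pulling back yields $|f^*\tau_i|_{f^*\phi_\cL}\equiv 1$ on $\cV_i^\beth$. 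This characterizes $\phi_{f^*\cL}$, and the $\Q$-model case then reduces to that of genuine models by passing to a positive multiple via Lemma~\ref{lem:func}(i).

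The main obstacle lies in the flatness assertion, where I would invoke the Raynaud-Gruson flattening theorem in its finitely presented form, applicable here since $\cX$ and $\cY$ are finitely presented over $K^\circ$ by Lemma~\ref{lem:modelfp}; this covers the possibly non-Noetherian densely valued case. Applied to the extension $f:\cY\to\cX$, whose restriction to the generic fiber $X=\cX_K$ is flat by hypothesis, this yields an admissible blow-up $\cX'\to\cX$ with center supported in $\cX_s$ such that the strict transform $\cY'$ of $\cY\times_\cX\cX'$ is flat over $\cX'$. Since $\cY\to\cX$ is projective, $\cY'$ remains projective over $\cX'$ and is a model of $Y$. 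By Lemma~\ref{lem:func}(ii) the metric $\phi_\cL$ is unchanged under pullback along $\cX'\to\cX$, so replacing $(\cX,\cY)$ with $(\cX',\cY')$ yields the required flat extension while preserving the metric compatibility just established.
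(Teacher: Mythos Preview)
Your proof is correct and follows essentially the same approach as the paper. The only minor variation is in how the extending model is constructed: you take the schematic closure of $Y$ inside $\cX\times_{K^\circ}\P^N_{K^\circ}$ via a chosen projective embedding, whereas the paper takes, for an arbitrary model $\cY'$, the schematic closure of the graph of $f$ in $\cY'\times_{K^\circ}\cX$, which directly exhibits a dominating model with the extension property and hence cofinality; your route reaches the same conclusion via the directedness of the system of models. The verification of $f^*\phi_\cL=\phi_{f^*\cL}$ and the appeal to Raynaud--Gruson flattening (together with Lemma~\ref{lem:func}(ii)) are handled just as in the paper.
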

\begin{proof} Pick a projective model $\cY'$ of $Y$, and denote by $\cY$ the schematic (or flat) closure in $\cY'\times_{K^\circ}\cX$ of the graph of $f$. Then $\cY$ dominates $\cY'$, and the projection $\cY\to\cX$ extends $f$. To see the last point, we may assume that $\cL$ is a line bundle. Let $(\cU_i)$ be a finite open cover of $\cY$ with trivializing sections $\tau_i\in \Hnot(\cU_i,\cL)$. Then $(f^{-1}(\cU_i))$ is an open cover of $\cY$ with trivializing sections $f^\star \tau_i$ for $f^\star \cL$, and the result easily follows. \\
For the last point, by the Raynaud--Gruson flattening theorem~\cite{RG}, we can blow up $\cX$ and take the proper transform of $\cY$ to obtain a flat morphism $f: \cY' \to \cX'$ of models. By Lemma~\ref{lem:func} this does not change the metrics induced by the models.
\end{proof}

We are now in a position to compare model metrics and Fubini--Study metrics (see~\cite[Proposition 3.8]{CM} for a related result). 

\begin{thm}\label{thm:puremodel} For a continuous metric $\phi$ on a $\Q$-line bundle $L$ over $X$, the following are equivalent:
\begin{itemize}
\item[(i)] $\phi$ is a pure Fubini--Study metric;
\item[(ii)] $\phi$ is a model metric determined by a semiample $\Q$-model $\cL$ of $L$, \ie $m\cL$ is a globally generated line bundle when $m\in\N$ is sufficiently divisible. 
\end{itemize}
\end{thm}
 
 \begin{proof} The key observation is that the Weil metric $\log\max_i|z_i|$ on $\P^r_K$ with homogeneous coordinates $[z_0:\dots:z_r]$ coincides with the model metric $\phi_{\cO(1)}$ determined by the canonical model of $(\P^r,\cO(1))$ over $K^\circ$. 

Let first $\phi$ be a pure Fubini--Study metric on $L$. After replacing $L$ by a multiple, we may assume that $L$ is globally generated and $\phi=\log\max_i|s_i|$ with $(s_i)$ a basis of $\Hnot(L)$. These sections induce a morphism of $K$-schemes $f:X\to\P^r_K$ with an identification  $L=f^\star \cO(1)$ such that $\phi=f^\star \phi_{\cO(1)}$. By Proposition~\ref{prop:modelpullback}, $X$ admits a projective model $\cX$ such that $f$ extends to a morphism $f:\cX\to\P^r_{K^\circ}$, and $\phi$ is the model metric defined by $(\cX,f^\star \cO(1))$. Since $f^\star \cO(1)$ is globally generated, this proves (i)$\Longrightarrow$(ii). Conversely, let $\cL$ be a semiample $\Q$-model of $L$. After passing to a multiple, we may assume that $\cL$ is a globally generated line bundle. Choosing a $K^\circ$-basis $(s_i)$ of the lattice $\Hnot(\cL)$ yields a morphism $f:\cX\to\P^r_{K^\circ}$ with $f^\star \cO(1)=\cL$, and hence $\phi_\cL=f^\star \phi_{\cO(1)}=\log\max_i|s_i|$, by Proposition~\ref{prop:modelpullback}.
\end{proof}

%
%
\subsection{PL metrics}\label{sec:QPL}
In this section, $K$ is non-Archimedean, and $\Ga\subset\R$ is an additive subgroup. 

\begin{defi}\label{defi:QPL} We say that a continuous metric $\phi$ on a $\Q$-line bundle $L$ is \emph{$\Ga$-piecewise linear}, or \emph{$\Ga$-PL} for short, if there exists (semiample) $\Q$-line bundles $M,M'$ and $\Ga$-Fubini--Study metrics $\p,\p'$ on $M,M'$ such that $L=M-M'$ and $\phi=\p-\p'$. 
\end{defi}
We denote by $\PL_\Ga(L)$ the set of $\Ga$-PL metrics on $L$. A $\Ga$-PL metric on $\cO_X$ is called a $\Ga$-PL function, defining a subset $\PL_\Ga(X^\an)\subset \cz(X^\an)$. 

When $\Ga=\{0\}$, we speak of \emph{pure PL metrics} and \emph{pure PL functions}, respectively. 

\begin{prop}\label{prop:QPL} Let $L,L'$ be $\Q$-line bundles on $X$. Then:
\begin{itemize}
\item[(i)] $\PL_\Ga(L)+\PL_\Ga(L')\subset\PL_\Ga(L+L')$; 
\item[(ii)] $\PL_\Ga(L)$ is stable under max and min; 
\item[(iii)] $\PL_\Ga(mL)=m\PL_\Ga(L)$ for all nonzero $m\in\Z$; 
\item[(iv)] $\PL_\Ga(L)=\PL_{\Ga'}(L)$ with $\Ga'=\Q(\Ga+\Ga_K)$; 
\item[(v)] $\PL_\Ga(X^\an)$ is a $\Q$-linear subspace of $\cz(X^\an)$, and $\PL_\Ga(L)$ is a $\Q$-affine space modeled on $\PL_\Ga(X^\an)$; 
\item[(vi)] pure PL metrics on $L$ coincide with model metrics. 
\end{itemize}
\end{prop}
We refer to~\cite[Theorem 1.1]{GM} for an alternative description of (pure) PL metrics. 

\begin{proof} (i)---(v) are direct consequences of Proposition~\ref{prop:FS}.

To prove (vi), pick a $\Q$-model $\cL$ of $L$, determined on a projective model $\cX$ of $X$. Pick an ample line bundle $\cA$ on $\cX$, and denote by $A$ its restriction to $X$. For $a\gg 1$, $\cL+a\cA$ is then ample on $\cX$. By Theorem~\ref{thm:puremodel}, $\phi_{\cL+a\cA}$ and $\phi_{a\cA}$ are pure Fubini--Study metrics, and $\phi_\cL=\phi_{\cL+a\cA}-\phi_{a\cA}$ is thus a pure PL metric. 

Conversely, Theorem~\ref{thm:puremodel} implies that any pure Fubini--Study metric is a model metric. Every pure PL metric is thus also a model metric, which concludes the proof of (vi). 
\end{proof}

For later use, we also note: 
\begin{lem}\label{lem:QPL} Let $L$ be a $\Q$-line bundle, and $\phi$ be a $\Ga$-PL metric on $L$. Then we can find $\Ga$-Fubini--Study metrics $\p,\p'$ on ample $\Q$-line bundles $A,A'$ such that $L=A-A'$ and $\phi=\p-\p'$. 
\end{lem}
\begin{proof} By definition, we can find $\Ga$-Fubini--Study metrics $\phi,\phi'$ on $\Q$-line bundles $M,M'$ such that $L=M-M'$ and $\phi=\p-\p'$. Pick an ample line bundle $H$, and $a\gg 1$ such that $aH-M$ and $aH-L$ are both ample, and pick $\tau\in\FS_\Ga(aH-M)$. By Proposition~\ref{prop:FS}, $\p+\tau$, and $\p'+\tau$ are $\Ga$-Fubini--Study metrics on the ample line bundles $A:=aH$, $A':=aH-L$, and $\phi=(\p+\tau)-(\p'+\tau)$, which proves the result. 
\end{proof}

The following result generalizes the well-known density of model functions~\cite{GM} in the nontrivially valued case. 

\begin{thm}\label{thm:PLdense} If $\Ga\ne\{0\}$, or if $K$ is nontrivially valued, then $\PL_\Ga(X^\an)$ is dense in $\cz(X^\an)$. 
\end{thm}

\begin{proof} The assumption guarantees that $\Ga'=\Q(\Ga+\Ga_K)$ is nontrivial; replacing $\Ga$ with $\Ga'$, we may thus assume in all cases that $\Ga\ne \{0\}$, by Proposition~\ref{prop:QPL}~(iv). By Proposition~\ref{prop:QPL}, $\PL_\Ga(X^\an)$ is a $\Q$-linear subspace of $\cz(X^\an)$, stable under max. By the 'lattice version' of the Stone--Weierstrass theorem, it will thus be enough to show that $\PL_\Ga(X^\an)$ separates the points of $X^\an$. 

To see this, fix $\gamma\in\Ga\cap\R_{>0}$, $\rho\in\FS_\Ga(L)$, and pick $x_1\ne x_2\in X^\an$ and a very ample line bundle $L$. After replacing $L$ with a multiple, we can find nonzero sections $s,s'\in \Hnot(L)$ such that $s'$ does not vanish at $x_1,x_2$ and $|s/s'|(x_1)\ne|s/s'|(x_2)$. For each $m\in\N$ set 
$$
\phi_m=\max\{\log|s|,\rho-m\gamma\},\quad\phi'_m:=\max\{\log|s'|,\rho-m\gamma\},\quad f_m:=\phi_m-\phi'_m.
$$
Then $\phi_m,\phi'_m\in\FS_\Ga(L)$, and hence $f_m\in\PL_\Ga(X^\an)$. Since $\log|s'|$ is finite at $x_1,x_2$, $\phi'_m=
\log|s'|$ at $x_1,x_2$ for all $m$ large enough. Thus 
$$
f_m(x_i)=\max\{\log|s/s'|(x_i),\rho(x_i)-m\gamma\}\to \log|s/s'|(x_i)
$$
as $m\to\infty$, and $|s/s'|(x_1)\ne|s/s'|(x_2)$ thus yields $f_m(x_1)\ne f_m(x_2)$ for $m\gg 1$. This proves as desired that $\PL_\Ga(X^\an)$ separates the points of $X^\an$. 
\end{proof}
%
%
\section{The supnorm of a metric}\label{sec:supnorm}
In this section, $K$ is an arbitrary complete valued field, $X$ is a projective $K$-scheme, and $L$ is a line bundle on $X$. 

%
%
%
\subsection{Supnorms vs.~lattice norms}
 
The data of a bounded metric $\phi$ on $L$ defines a sup-seminorm $\n_\phi$ on $\Hnot(L)$ by setting 
$$
\|s\|_\phi:=\sup_{X^\an}|s|_\phi
$$
for each $s\in \Hnot(L)$. If $X$ is reduced, then $\n_\phi$ is a norm, by Lemma~\ref{lem:nilpo}.

\begin{lem}\label{lem:supground} Let $F/K$ be a complete field extension, and assume that both $X$ and $X_F$ are reduced. Let $\phi$ be a bounded metric on $L$, and denote by $\phi_F$ the pullback of $\phi$ to the base change $L_F$. Then $\n_{\phi_F}=\n_\phi$ on $\Hnot(L)\hookrightarrow \Hnot(L_F)=\Hnot(L)_F$, and $\n_{\phi_F}\le(\n_\phi)_F$ on $\Hnot(L_F)$. 

If $K$ is Archimedean, we further have 
$$
\tfrac 12(\n_\phi)_F\le\n_{\phi_F}\le(\n_\phi)_F.
$$
\end{lem}
\begin{proof} Denoting by $p:X_F^\an\to X^\an$ the surjective projection map, we plainly have for $s\in \Hnot(L)$
$$
\|s\|_{\phi_F}=\sup_{X_F^\an}|s|_\phi\circ p=\sup_{X^\an}|s|_\phi=\|s\|_\phi. 
$$
By Proposition~\ref{prop:ground}, $(\n_\phi)_F$ is the largest norm on $\Hnot(L)_F$ that coincides with $\n_\phi$ on $\Hnot(L)$, thus $\n_{\phi_F}\le(\n_\phi)_F$. If $K$ is Archimedean, the only nontrivial case is $K=\R$, $F=\C$. The metric $\phi_\C$ is conjugation invariant, thus $\n_{\phi_\C}$ is conjugation invariant as well, and hence $\n_{\phi_\C}\ge\tfrac 12(\n_{\phi})_\C$, by Proposition~\ref{prop:ground} again. 
\end{proof}

The inequality $\n_{\phi_F}\le(\n_\phi)_F$ of Lemma~\ref{lem:supground} is strict in general. One has indeed: 

\begin{lem}\label{lem:supgroundgalois} Assume that $K$ is discretely (nontrivially) valued, and let $F/K$ be a finite Galois extension. The following are equivalent:
\begin{itemize}
\item[(i)] $F/K$ is tamely ramified, \ie the residue field extension $\tF/\tK$ is separable and the ramification order $\left[|F^\times |:|K^\times|\right]$ is prime to the residue characteristic; 
\item[(ii)] for each bounded metric $\phi$ on a line bundle $L$ over a reduced projective $K$-scheme $X$ such that $X_F$ is reduced, we have $\n_{\phi_F}=(\n_\phi)_F$.
\end{itemize}
\end{lem}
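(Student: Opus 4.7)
The strategy is to compute $\|\cdot\|_{\phi'}$ fiberwise along the surjection $p\colon X'^\an\to X^\an$ and reduce both implications to an orthogonality identity for a $K$-basis of $K'$ adapted to the ramification structure. Since $K'/K$ is separable and $X$ is geometrically reduced, $H^0(X',L')=H^0(X,L)\otimes_K K'$, and we fix a $K$-orthogonal basis $(\beta_i)$ of $K'$ of the form $\beta_{(j,k)}=u_j\pi^k$, where $(u_j)$ are units lifting a $\tilde K$-basis of $\tilde K'$ and $\pi$ is a tame uniformizer (so $\pi^e$ is a unit multiple of a uniformizer of $K$, with $e$ the ramification index). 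Lemma~\ref{lem:diagground} then gives $\n'_\phi(\sum_i s_i\otimes\beta_i)=\max_i|\beta_i|\,\|s_i\|_\phi$, and the whole question becomes whether the sup-norm of the same expression matches, Lemma~\ref{lem:supground} providing the inequality $\n_{\phi'}\le\n'_\phi$ for free.

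For (i) $\Rightarrow$ (ii), at each $x\in X^\an$ we embed $K'$ via its $K$-embeddings $\sigma\in G$ into $E:=\widehat{\kappa(x)^{\mathrm{alg}}}$; the fiber sup-norm then identifies as $\sup_{x'\mapsto x}|s'(x')|=\max_\sigma|\sum_i s_i(x)\sigma(\beta_i)|_E$. The key claim is the Fourier-type identity
$$\max_\sigma\Bigl|\sum_i a_i\sigma(\beta_i)\Bigr|_E=\max_i|a_i||\beta_i|$$
for any $a_i\in E$; the inequality $\le$ is immediate, while for $\ge$ we rescale to $\max_i|a_i||\beta_i|=1$ and suppose every $\sigma$-sum lies in $E^{\circ\circ}$. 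Reducing modulo $E^{\circ\circ}$ and using the form of the basis produces a homogeneous system $\bar\Gamma\,\bar c=0$ in $\tilde E$, where $\bar\Gamma$ is the Kronecker product of the residue Galois matrix on $\tilde K'/\tilde K$ and the Vandermonde matrix $(\bar\zeta_\sigma^k)$ in the $e$-th roots of unity. Tameness renders $\bar\Gamma$ invertible: the first factor is nondegenerate by Dedekind's independence of characters (residue extension separable), and the Vandermonde factor is nondegenerate because $e$ is coprime to the residue characteristic, so the $\bar\zeta_\sigma$ remain $e$ distinct roots of unity in $\tilde E$. This forces $\bar c=0$, a contradiction. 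Applying the identity pointwise with $a_i=s_i(x)$, then taking $\sup_x$ and exchanging it with the finite $\max_i$, we conclude $\|s'\|_{\phi'}=\max_i|\beta_i|\,\|s_i\|_\phi=\n'_\phi(s')$.

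For (ii) $\Rightarrow$ (i), we argue by contraposition. If $K'/K$ is wildly ramified, take $X=\Spec K'$ (a projective, geometrically reduced $K$-scheme, being finite and separable over $\Spec K$), $L=\cO_X$, and $\phi=0$. Then $X^\an$ is a single point, $H^0(X,L)=K'$ with $\n_\phi=|\cdot|_{K'}$, and $H^0(X',L')=K'\otimes_K K'\simeq\prod_{\sigma\in G}K'$ with $\n_{\phi'}$ equal to the max-norm on the components. Wildness makes $\bar\Gamma$ singular---either $p\mid e$ collapses the $e$-th roots of unity in $\tilde K$ (Vandermonde degenerates), or the residue extension is inseparable and the Dedekind block fails. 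In either case, pick $\bar c\ne 0$ in the kernel, lift to $(a_i)\in(K')^{[K':K]}$ with every $\sigma$-sum lying in $K'^{\circ\circ}$, and obtain $v=\sum_i a_i\otimes\beta_i$ satisfying $\|v\|_{\phi'}<\max_i|a_i||\beta_i|=\|v\|'_\phi$. A concrete instance in $K'=\Q_2(\sqrt 2)/\Q_2$ is $v=1\otimes 1-\tfrac{1}{\sqrt 2}\otimes\sqrt 2$, for which one computes $\|v\|_{\phi'}=1/2<1=\|v\|'_\phi$.

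The main obstacle is the pointwise fiber identification: one must match the Berkovich sup-norm on $p^{-1}(x)$, which is the max over the distinct factor fields $E_j$ of the product decomposition $\kappa(x)\otimes_K K'=\prod_j E_j$, with the max over all $K$-embeddings $K'\hookrightarrow\widehat{\kappa(x)^{\mathrm{alg}}}$ needed to run the Fourier argument. This rests on the uniqueness and Galois-invariance of the norm on $\widehat{\kappa(x)^{\mathrm{alg}}}$ extending $|\cdot|_{\kappa(x)}$, together with the fact that the $E_j$ embed isometrically into this completed algebraic closure and jointly realize all $G$-orbits of embeddings.
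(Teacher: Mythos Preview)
Your proof is correct and takes a genuinely different route from the paper. The paper's argument is a two-line appeal to external results: for (i)$\Rightarrow$(ii) it observes that $\n_{\phi'}$ is Galois-invariant and invokes Rousseau's descent theorem \cite[Proposition~5.1.1]{Rou} (every Galois-invariant norm on $V\otimes_K K'$ comes from ground field extension when $K'/K$ is tame); for (ii)$\Rightarrow$(i) it specializes to $X=\spec K'$, $L=\cO_X$, $\phi$ trivial, and cites \cite[\S 5.1]{RTW} for the equivalence on $K'\otimes_K K'$. You instead carry out the computation by hand: build an explicit orthogonal $K$-basis $(u_j\pi^k)$ of $K'$, identify the fiber of $p$ over $x$ with the set of $K$-embeddings $K'\hookrightarrow\widehat{\kappa(x)^{\mathrm{alg}}}$, and reduce the pointwise equality to the invertibility over the residue field of the matrix $(\overline{\sigma(\beta_i)/\beta_i})_{\sigma,i}$, which you check via a Vandermonde-plus-Dedekind argument. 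This is longer but entirely self-contained, and it makes transparent exactly where tameness enters (distinct $e$-th roots of unity in $\tilde E$, separable residue extension), effectively re-proving the relevant special cases of \cite{Rou} and \cite{RTW}. Two minor points of precision: the identity $\n'_\phi(\sum_i s_i\otimes\beta_i)=\max_i|\beta_i|\,\|s_i\|_\phi$ follows from Proposition~\ref{prop:ground}(iii) combined with orthogonality of $(\beta_i)$ in $K'$, rather than directly from Lemma~\ref{lem:diagground}; and the ``Kronecker product'' description of $\bar\Gamma$ is only literally correct when $G$ splits as $(G/I)\times I$ or $\pi^e\in K$---in general one should group rows by cosets of the inertia group $I$ and run Vandermonde within each coset, then Dedekind across cosets, which still yields invertibility (and for the converse, wildness is equivalent to the higher ramification group $I_1$ being nontrivial, which immediately produces two equal rows).
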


\begin{proof} In the situation of (ii), $\n_{\phi_F}$ is a Galois invariant norm on $\Hnot(L)_F$. If $F/K$ is tamely ramified, the descent result of~\cite[Proposition 5.1.1]{Rou} (see also~\cite{Pra}) implies that every Galois invariant norm is obtained by ground field extension, and hence (i)$\Longrightarrow$(ii). 

Conversely, for $X:=\spec F$, $L=\cO_X$ and $\phi=0$, the supnorm $\n_{\phi_F}$ is the spectral norm on $F\otimes_K F$, and~\cite[\S 5.1]{RTW} thus says that $\n_{\phi_F}=(\n_{\phi})_F$ if and only if $F$ is tamely ramified. 
\end{proof} 

In the remainder on this section, we assume that $K$ is non-Archimedean. 

\begin{lem}\label{lem:supmodel} $\cL$ be a model of $L$ determined on a projective model $\cX$ of $X$, with associated set of Shilov points $\Ga(\cX)\subset X^\an$. Denote by $\phi=\phi_\cL$ the induced model metric on $L$, and recall that $\n_{\Hnot(\cL)}$ denotes the norm on $\Hnot(L)$ determined by the lattice $\Hnot(\cL)$. Then: 
\begin{itemize}
\item[(i)] for each $s\in \Hnot(L)$, we have $\|s\|_\phi=\max_{\Ga(\cX)}|s|_\phi$;
\item[(ii)] $\n_\phi\le\n_{\Hnot(\cL)}$; 
\item[(iii)] if $\cX_s$ is further reduced, then $\n_\phi=\n_{\Hnot(\cL)}$. 
\end{itemize}
\end{lem}

\begin{proof} Cover $\cX$ with finitely many open subschemes $\cU_i$ with trivializing sections $\tau_i\in \Hnot(\cU_i,\cL)$. Denoting by $U_i$ the generic fiber of $\cU_i$, we have $s|_{U_i}=f_i\tau_i$ with $f_i\in\cO(U_i)$, and $|s|_\phi=|f_i|$ on $\cU_i^\beth$, by definition of a model metric. By Lemma~\ref{lem:Shilov}, it follows that $\sup_{\cU_i^\beth}|s|_\phi=\max_{\Ga(\cU_i)}|s|_\phi$, and (i) follows since $X^\an=\bigcup_i\cU_i^\beth$ and $\Ga(\cX)=\bigcup_i\Ga(\cU_i)$. 

In order to prove (ii) and (iii), let $s\in \Hnot(L)$ be a nonzero section. Since $\|s\|_{\Hnot(\cL)}$ belongs to value group $|K^\times|$, we may then multiply $s$ be a scalar and assume that $\|s\|_{\Hnot(\cL)}=1$, \ie $s\in \Hnot(\cL)$ but $s\notin K^{\circ\circ} \Hnot(\cL)$. Choose as above a finite cover $\cX$ by affine open subscheme $\cU_i=\spec(\cA_i)$ with a trivializing section $\tau_i$ of $\cL$, and write $s|_{U_i}=f_i\tau_i$ with $f_i\in\cA_i$. On $\cU_i^\beth$, we have $|s|_\phi=|f_i|\le 1$, and hence $\|s\|_\phi\le 1$, which proves (ii). 

Finally, suppose that $\cX_s$ is reduced. To prove (iv), it is enough to show that a section $s\in \Hnot(L)$ with $\|s\|_{\Hnot(\cL)}=1$ satisfies $\|s\|_\phi=1$. Assume by contradiction that a nonzero section $\|s\|_\phi<1$. For each $i$, we then have $\sup_{\cU_i^\beth}|f_i|<1$, and hence $a_i^{-1}f_i\in\cA_i$ for some nonzero $a_i\in K^{\circ\circ}$, by Theorem~\ref{thm:redint}. Setting $a:=a_{i_0}$ for an index $i_0$ achieving $\max_i|a_i|$, we infer that $a^{-1}f_i\in\cA_i$ for all $i$, and hence $a^{-1}s\in \Hnot(\cL)$, a contradiction.  
\end{proof} 

The next result will be a key ingredient in the proof of Theorem~\ref{thm:Euler} below. 

\begin{thm}\label{thm:supmodel} Assume that $X$ is geometrically reduced, let $\cL$ be a model of $L$ determined on a projective model $\cX$ of $X$, and denote by $\phi=\phi_\cL$ the induced model metric on $L$. Then the supnorm $\n_{m\phi}$ and the lattice norm $\n_{\Hnot\left(m\cL\right)}$ on $\Hnot\left(mL\right)$ satisfy 
$$
\n_{m\phi}\le\n_{\Hnot\left(m\cL\right)}\le C\n_{m\phi}.
$$ 
for a constant $C>0$ independent of $m$. 
\end{thm}
\begin{proof} Lemma~\ref{lem:supmodel} yields the left-hand inequality. Fix an algebraically closed complete field extension $F/K$, and denote by $(\cX_F,\cL_F)$ the base change of $(\cX,\cL)$ to $F^\circ$, which is thus a model of $(X_F,L_F)$. The pulled back metric $\phi_F$ is the model metric determined by $\cL_F$, and the ground field extension of $\n_{\Hnot(\cL)}$ to $\Hnot(L_F)=\Hnot(L)_F$ coincides with the lattice norm  $\n_{\Hnot(\cL_F)}$, by Lemma~\ref{lem:latticedense}. By Lemma~\ref{lem:supground} and Lemma~\ref{lem:supmodel}, we infer
$$
\n_{m\phi_F}\le\left(\n_{m\phi}\right)_F\le\n_{\Hnot(m\cL_F)}. 
$$
Since $X$ is geometrically reduced, $X_F$ is reduced, and Theorem~\ref{thm:redfiber} thus yields a model $\cX'$ of $X_F$ with a finite morphism $\mu:\cX'\to\cX_F$ such that $\cX'_s$ is reduced. By Lemma~\ref{lem:func}, we have $\phi_F=\phi_{\cL_F}=\phi_{\mu^\star \cL_F}$, and hence $\n_{\Hnot(m\mu^\star \cL_F)}=\n_{m\phi_F}$, by Lemma~\ref{lem:supmodel}. All in all, we get
$$
\n_{\Hnot(m\mu^\star \cL_F)}=\n_{m\phi_F}\le\left(\n_{m\phi}\right)_F\le(\n_{\Hnot(m\cL)})_F=\n_{\Hnot(m\cL_F)}, 
$$
and it will thus be enough to show that $\n_{\Hnot(m\cL_F)}\le C\n_{\Hnot(m\mu^\star \cL_F)}$ for a uniform constant $C>0$. By the projection formula, we have an injection of $K^\circ$-modules
$$
\Hnot\left(\cX',m\mu^\star \cL_F\right)/\Hnot(m\cL_F)\hookrightarrow \Hnot\left(\cX_F,\cF(m\cL_F)\right),
$$
where $\cF:=\left(\mu_\star \cO_{\cX'}\right)/\cO_{\cX_F}$ is a coherent module on $\cX_F$ supported in the special fiber (cf.~Theorem~\ref{thm:kiehl} for coherence), and hence $a$-torsion for some nonzero $a\in K'^{\circ\circ}$. We thus have $a \Hnot(m\mu^\star \cL_F)\subset \Hnot(m\cL_F)$, and hence $\n_{\Hnot(m\cL_F)}\le |a|^{-1}\n_{\Hnot(m\mu^\star \cL_F)}$, which yields the desired result. 
\end{proof}

For later use, we also provide the following technical generalization of Theorem~\ref{thm:supmodel}. A bounded metric on a line bundle $L$ over $X$ induces for each $r\ge 1$ a bounded metric $\phi^{\boxtimes r}$ on the external tensor product $L^{\boxtimes r}$ over $X^r:=X\times_K\dots\times_K X$ ($r$ times). If $\phi$ is the model metric determined by a line bundle $\cL$ over $\cX$, then $\phi^{\boxtimes r}$ is the model metric determined by $\cL^{\boxtimes r}$ over $\cX^r:=\cX\times_{K^\circ}\dots\times_{K^\circ}\cX$. 

\begin{thm}\label{thm:supmodelbis} Assume that $X$ is geometrically reduced, and let $\phi_i$ be a finite family of model metrics on line bundles $L_i$, with $\phi_i$ determined by a line bundle $\cL_i$ on a given model $\cX$ of $X$. We can then find a constant $C \geq 1$ such that for all integers $r,m_i\ge 1$, we have 
$$
\n_{(\sum_im_i\phi_i)^{\boxtimes r}}\le\n_{\Hnot\left(\cX^r,(\sum_i m_i\cL_i)^{\boxtimes r}\right)}\le C^r\n_{(\sum_i m_i\phi_i)^{\boxtimes r}}
$$
as norms on $\Hnot(X^r,(\sum_i m_i L_i)^{\boxtimes r})$. 
\end{thm}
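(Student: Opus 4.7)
The strategy is to follow the proof of Theorem~\ref{thm:supmodel}, carrying the product structure through each step. Write $\cL:=\sum_i m_i\cL_i$, a line bundle on $\cX$, and $\phi:=\sum_i m_i\phi_i$; by Lemma~\ref{lem:func} this is the model metric $\phi_\cL$, so its external $r$-th power $\phi^{\boxtimes r}$ is the model metric $\phi_{\cL^{\boxtimes r}}$ on the fiber product $\cX^r$ (which is a projective model of $X^r$, since projectivity, flatness and finite presentation are preserved under $K^\circ$-fiber products). The left-hand inequality is then immediate from Lemma~\ref{lem:supmodel}(ii) applied to the line bundle $\cL^{\boxtimes r}$ on the model $\cX^r$.

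For the right-hand inequality, I first reduce to the case where $K$ is algebraically closed. Fix a complete algebraically closed extension $K'/K$ and write primes for all base changes. Lemma~\ref{lem:supground} yields $\n_{\phi^{\boxtimes r}}\ge\n_{\phi'^{\boxtimes r}}$, while Lemma~\ref{lem:latticedense}(iv) together with the base change isomorphism $H^0(\cX^r,\cL^{\boxtimes r})\otimes_{K^\circ}K'^\circ\simeq H^0(\cX'^r,\cL'^{\boxtimes r})$ (used just as in the proof of Theorem~\ref{thm:supmodel}) shows that $\n_{H^0(\cX^r,\cL^{\boxtimes r})}$ induces $\n_{H^0(\cX'^r,\cL'^{\boxtimes r})}$ under ground field extension, reducing the inequality to the case of $K'$. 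Assuming then $K$ algebraically closed, Theorem~\ref{thm:redfiber} furnishes a finite morphism of models $\mu\colon\cX''\to\cX$ with $\cX''_s$ reduced; because $\tK$ is algebraically closed, $\cX''_s$ is in fact geometrically reduced, so its $r$-fold product remains reduced and hence $(\cX''^r)_s$ is reduced. Lemma~\ref{lem:func}(ii) gives $\phi_{\mu^{r*}\cL^{\boxtimes r}}=\phi^{\boxtimes r}$, and Lemma~\ref{lem:supmodel}(iii) applied to $\cX''^r$ then yields the exact equality
$$
\n_{\phi^{\boxtimes r}}=\n_{H^0(\cX''^r,\,\mu^{r*}\cL^{\boxtimes r})}.
$$

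The main remaining step, which is also the main obstacle, is to control the index of $H^0(\cX^r,\cL^{\boxtimes r})$ inside $H^0(\cX''^r,\mu^{r*}\cL^{\boxtimes r})$ by $|\pi|^{-r}$ for some $\pi\in K^{\circ\circ}$ depending only on $\cX$ (and not on $\cL$ or $r$). Set $\cE:=\mu_*\cO_{\cX''}$ and $\cF:=\cE/\cO_\cX$; the latter is a coherent $\cO_\cX$-module supported on the special fiber (since $\mu_K$ is an isomorphism), and quasi-compactness of $\cX$ together with a finite affine cover argument produce a single $\pi\in K^{\circ\circ}$ with $\pi\cF=0$, equivalently $\pi\cE\subset\cO_\cX$ as subsheaves of $\cE$. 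Flat base change along the projections identifies $\mu^r_*\cO_{\cX''^r}$ with the external tensor power $\cE^{\boxtimes r}$ on $\cX^r$, and on any affine open a local section $\sum_j a_{j,1}\otimes\cdots\otimes a_{j,r}$ of $\cE^{\boxtimes r}$ satisfies
$$
\pi^r\sum_j a_{j,1}\otimes\cdots\otimes a_{j,r}=\sum_j(\pi a_{j,1})\otimes\cdots\otimes(\pi a_{j,r})\in\cO_\cX^{\boxtimes r}=\cO_{\cX^r},
$$
so $\pi^r$ annihilates $\cE^{\boxtimes r}/\cO_{\cX^r}$. The projection formula gives $\mu^r_*\mu^{r*}\cL^{\boxtimes r}=\cL^{\boxtimes r}\otimes\mu^r_*\cO_{\cX''^r}$ and hence $\pi^r H^0(\cX''^r,\mu^{r*}\cL^{\boxtimes r})\subset H^0(\cX^r,\cL^{\boxtimes r})$, yielding $\n_{H^0(\cX^r,\cL^{\boxtimes r})}\le|\pi|^{-r}\n_{\phi^{\boxtimes r}}$ with the constant $C:=|\pi|^{-1}$ visibly independent of $r$ and the $m_i$.
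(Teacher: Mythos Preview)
Your proof is correct and follows essentially the same approach as the paper's. Both arguments pass to an algebraically closed extension, invoke the reduced fiber theorem to produce a finite $\mu\colon\cX''\to\cX$ with $(\cX''^r)_s$ reduced, identify the sup-norm with the lattice norm on $\cX''^r$, and then show that $\pi^r$ annihilates $(\mu^r)_*\cO_{\cX''^r}/\cO_{\cX^r}$ via the affine-local inclusion $\pi\cA''\subset\cA$; the paper phrases this last step in terms of the quotient sheaf $\cF_r$ and a cohomology injection, while you use the projection formula and the identification $(\mu^r)_*\cO_{\cX''^r}\simeq\cE^{\boxtimes r}$, but the content is identical.
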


Note that $X^r$ is reduced for each $r$, since $X$ is geometrically reduced, so that $\n_{(\sum_i m_i\phi_i)^{\boxtimes r}}$ is indeed a norm. 

\begin{proof} Use the notation in the proof of Theorem~\ref{thm:supmodel}. Since $F$ is algebraically closed, so is its residue field $\tF$; the $\tF$-scheme $\cX'_s$ is thus geometrically reduced, and 
$$
(\cX'^r)_s=(\cX'\times_{F^\circ}\dots\times_{F^\circ}\cX')_s=\cX'_s\times_{\tF}\dots\times_{\tF}\cX'_s
$$
is therefore reduced. As above, we have 
$$
\n_{\Hnot\left(\cX'^r,(\mu^r)^\star (\sum_i m_i\cL_{i,F})^{\boxtimes r}\right)}=\n_{(\sum_i m_i\phi_{i,F})^{\boxtimes r}}\le\left(\n_{(\sum_i m_i\phi_i)^{\boxtimes r}}\right)_F
$$
$$
\le\left(\n_{\Hnot\left(\cX^r,(\sum_i m_i\cL_i)^{\boxtimes r}\right)}\right)_F=\n_{\Hnot\left(\cX_F^r,(\sum_i m_i\cL_F)^{\boxtimes r}\right)}, 
$$
and 
$$
\Hnot\left(\cX'^r,(\mu^r)^\star (\sum_i m_i\cL_F)^{\boxtimes r}\right)/\Hnot\left(\cX_F^r,(\sum_i m_i\cL_{i,F})^{\boxtimes r}\right)\hookrightarrow \Hnot\left(\cX_F^r,\cF_r(\sum_i m_i\cL_{i,F})^{\boxtimes r}\right)
$$
with $\cF_r:=\left((\mu^r)_\star \cO_{\cX'^r}\right)/\cO_{\cX_F^r}$. It will thus be enough to show that $\cF_r$ is $a^r$-torsion, where $a\in F^{\circ\circ}$ annihilates $\cF=\left(\mu_\star \cO_{\cX'}\right)/\cO_{\cX_F}$ as above. Let $\cU=\spec(\cA)$ be an affine open subset of $\cX_F$. Since $\mu$ is finite, $\mu^{-1}(\cU)=\spec(\cA')$ is also affine. Since $a\cA'\subset\cA$, we get $a^r\cA'\otimes_{F^\circ}\dots\otimes_{F^\circ}\cA'\subset\cA\otimes_{F^\circ}\dots\otimes_{F^\circ}\cA'$, which implies as desired $a^r(\mu^r)_\star \cO_{\cX'^r}\subset\cO_{\cX_F^r}$.  
\end{proof} 
%
\subsection{Supnorms and ground field extension}
Even though supnorms are generally not compatible with ground field extension (see Lemma~\ref{lem:supgroundgalois}), the following result shows that it becomes asymptotically true for large powers of a line bundle (thanks to Walter Gubler for his help with the argument). 

\begin{thm}\label{thm:supground} Assume that $X$ is geometrically reduced. Let $F/K$ be an arbitrary complete field extension, $\phi$ a continuous metric on $L$, and denote by $\phi_F$ its pull-back to $L_F$. Then 
$$
\dGI\left(\n_{m\phi_F},(\n_{m\phi})_F\right)=o(m).
$$
If $K$ is Archimedean, or if $\phi$ is a model metric, then $\dGI\left(\n_{m\phi_F},(\n_{m\phi})_F\right)=O(1)$. 
\end{thm}
\begin{proof} The Archimedean case follows directly from Lemma~\ref{lem:supground}. From now on, $K$ is non-Archimedean. Assume that $\phi$ is a model metric, and pick a model $(\cX,\cL)$ of $(X,aL)$,  $a\ge 1$, such that $\phi=a^{-1}\phi_\cL$. After passing to a higher model, we can also assume that $L$ has a model $\cM$ on $\cX$ (Lemma~\ref{lem:modelexist}). Pick $m\ge 1$ and write $m=qa+r$ with $q\in\N$ and $0\le r<a$. Denote by $\cX_F$, $\cL_F$ and $\cM_F$ the base change of $\cX$, $\cL$ and $\cM$ to $F^{\circ}$. Since $m\phi=\phi_{q\cL}+r\phi$ and $\phi-\phi_\cM$ is bounded, 
$$
\dGI\left(\n_{m\phi_F},\n_{\phi_{q\cL_F+r\cM_F}}\right)=O(1)\text{  and  }\dGI\left((\n_{m\phi})_F,(\n_{\phi_{q\cL+r\cM}})_F\right)=O(1),
$$
using again Proposition~\ref{prop:ground}. By Theorem~\ref{thm:supmodel}, 
$$
\dGI\left(\n_{\phi_{q\cL_F+r\cM_F}},\n_{\Hnot(q\cL_F+r\cM_F)}\right)=O(1)\text{ and }\dGI\left((\n_{\phi_{q\cL+r\cM}})_F,(\n_{\Hnot(q\cL+r\cM)})_F\right)=O(1). 
$$
By Lemma~\ref{lem:latticenorm}, $\n_{\Hnot(q\cL_F+r\cM_F)}=(\n_{\Hnot(q\cL+r\cM)})_F$, and it follows that 
$$
\dGI((\n_{m\phi})_F,\n_{m\phi_F})\le \dGI\left((\n_{m\phi})_F,(\n_{\phi_{q\cL+r\cM}})_F\right)+\dGI\left((\n_{\phi_{q\cL+r\cM}})_F,(\n_{\Hnot(q\cL+r\cM)})_F\right)
$$
$$
+\dGI\left(\n_{\Hnot(q\cL_F+r\cM_F)},\n_{\phi_{q\cL_F+r\cM_F}}\right)+\dGI\left(\n_{\phi_{q\cL_F+r\cM_F}},\n_{m\phi_F}\right),
$$
which is thus bounded. 

Let now $\phi$ be an arbitrary continuous metric. Assume first that $K$ is nontrivially valued, and pick $\e>0$. By Theorem~\ref{thm:PLdense}, we can find a model metric $\p$ on $L$ such that $\sup|\phi-\p|\le\e$. 
By invariance of $\dGI$ under ground field extension (Proposition~\ref{prop:ground}), we infer 
$$
\dGI\left(\n_{m\phi_F},(\n_{m\phi})_F\right)\le\dGI\left(\n_{m\phi_F},\n_{m\p_F}\right)+\dGI\left(\n_{m\p_F},(\n_{m\p})_F\right)
$$
$$+\dGI\left((\n_{m\p})_F,(\n_{m\phi})_F\right)\le C+2m\e. 
$$
Thus $\limsup_{m\to\infty} m^{-1}\dGI((\n_{m\phi})_F,\n_{m\phi_F})\le\e$ for all $\e>0$, and hence 
$$
\dGI((\n_{m\phi})_F,\n_{m\phi_F})=o(m).
$$
Assume next that $K$ is trivially valued, and that $F$ is nontrivially valued. By Lemma~\ref{lem:CM}, there exists a nontrivially valued extension $K'/K$ such that for each $m\in\N$, the ground field extension $(\n_{m\phi})_{K'}$ to $\Hnot(mL)_{K'}$ is the unique norm that coincides with $\n_{m\phi}$ on $\Hnot(mL)$, and hence $(\n_{m\phi})_{K'}=\n_{m\phi_{K'}}$, by Lemma~\ref{lem:supground}. 

Choose a common non-Archimedean extension $F'$ of both $K'$ and $F$, and note that $(\n_{m\phi})_{F'}=(\n_{m\phi_{K'}})_{F'}$. By Proposition~\ref{prop:ground} and the triangle inequality, we get 
$$
\dGI\left((\n_{m\phi})_F,\n_{m\phi_F}\right)=\dGI\left((\n_{m\phi})_{F'},(\n_{m\phi_F})_{F'}\right)
$$
$$
=\dGI\left((\n_{m\phi_{K'}})_{F'},(\n_{m\phi_F})_{F'}\right)
$$
$$
\le\dGI\left(\n_{m\phi_{K'}})_{F'},\n_{m\phi_{F'}}\right)+\dGI\left(\n_{m\phi_{F'}},(\n_{m\phi_F})_{F'}\right),
$$
which is $o(m)$ by the first part of the proof applied to the nontrivially valued fields $F$ and $K'$.

Assume finally that both $K$ and $F$ are trivially valued, and chose a nontrivially valued extension $F'$ of $F$. We similarly get
$$
\dGI\left(\n_{m\phi_F},(\n_{m\phi})_F\right)=\dGI\left((\n_{m\phi_F})_{F'},(\n_{m\phi})_{F'}\right)
$$
$$
\le \dGI\left((\n_{m\phi_F})_{F'},\n_{m\phi_{F'}}\right)+\dGI\left(\n_{m\phi_{F'}},(\n_{m\phi})_{F'}\right)
$$
Since $F'$ is nontrivially valued, the previous step shows that the last two terms are $o(m)$, and we are done. 
\end{proof}

%
%
\section{Plurisubharmonic metrics and envelopes} 
Following~\cite{siminag,trivval}, we introduce a general notion of plurisubharmonic metric, and compare it with other existing notions. In what follows, $X$ denotes a projective scheme over a complete valued field $K$. 
 %
%
\subsection{Plurisubharmonic metrics}\label{sec:limFS}
Assume first that $K$ is Archimedean (the case $K=\R$ merely consisting in working with conjugation-invariant objects). A (possibly singular) metric $\phi$ on a line bundle $L$ is \emph{plurisubharmonic} (\emph{psh} for short) if, for each local trivialization $\tau$ of $L$, the function $-\log|\tau|_\phi$ is psh, \ie locally the restriction of a psh function in a polydisc. Note that $\phi$ is psh iff its pullback to $L_\redu$ is psh. For any $m\ge 1$, $\phi$ is psh iff $m\phi$ is psh, and psh metrics thus make sense on $\Q$-line bundles. 

\begin{thm}\label{thm:dem} Assume that $K$ is Archimedean, and let $L$ be a semiample $\Q$-line bundle. Then every psh metric $\phi$ on $L$ is the limit of a decreasing sequence $(\phi_j)$ of Fubini--Study metrics on $L$. 
\end{thm}
\begin{proof} When $X$ is smooth and $L$ is ample, the result is due to Demailly, and is based on the deep Ohsawa--Takegoshi $L^2$-extension theorem (see for instance~\cite[Theorem 8.1]{GZ} and its proof). In the general case, le $A$ be the Stein factoration of $(X,L)$ as in Lemma~\ref{lem:Stein}, \ie $A$ is an ample $\Q$-line bundle on a projective scheme $Y$, and $L=f^\star A$ with $f:X\to Y$ surjective and such that $f_\star\cO_X=\cO_Y$. By Zariski's main theorem, $f:X(\C)\to Y(\C)$ has connected fibers, and $\phi$ thus descends to a (singular) usc metric on $A$. Relying on the deep Fornaess--Narasimhan theorem, one checks as in the final part of the proof of \cite[Th\'eor\`eme 1.7]{DemSMF} that the induced metric on $A$ is psh. As a result, we may assume wlog that $L$ was ample to begin with. Passing to a multiple, we can even assume that $L$ is a very ample line bundle, and hence $L=\cO(1)|_X$ for an embedding of $X$ in a projective space $\P$. By~\cite[Theorem B']{CGZ}, every psh metric $\phi$ on $L$ is then the restriction of a psh metric $\p$ on $\cO(1)$. Since $\P$ is smooth, Demailly's result implies that $\p$ is the decreasing limit of a sequence $\p_j$ of Fubini--Study metrics on $\cO(1)$. The restriction of each $\p_j$ to $X$ is then a Fubini--Study metric on $L$ (Proposition~\ref{prop:FS}~(iv)), and the result follows. 
\end{proof}

In view of Theorem~\ref{thm:dem}, it is natural to introduce as in~\cite{trivval}: 

\begin{defi}\label{defi:psh} Assume that $K$ is non-Archimedean, and let $L$ be a semiample $\Q$-line bundle. We say that a (possibly singular) metric $\phi$ on $L$ is \emph{plurisubharmonic} (\emph{psh} for short) if $\phi$ can be written as the pointwise limit of a decreasing net $\phi_i\in\FS(L)$ of Fubini--Study metrics on $L$. We denote by $\PSH(L)$ the set of psh metrics on $L$. 
\end{defi}
Given a subgroup $\Ga\subset\R$, nontrivial if $K$ is trivially valued, $\FS_\Ga(L)$ is dense in $\FS(L)$ with respect to uniform convergence, by Proposition~\ref{prop:FS}~(v); it would thus be equivalent to require $\phi_i\in\FS_\Ga(L)$ in the above definition. 

As a direct consequence of Corollary~\ref{cor:Stein}, we have:
\begin{lem} Let $A$ be the Stein factorization of $L$. Then 
$$
\PSH(A)\simeq\PSH(L)\simeq\PSH(L_\redu).
$$
\end{lem}

The following general result, borrowed from~\cite[Proposition 5.6]{trivval}, ensures that $\PSH(L)$ is closed under decreasing limits. 

\begin{lem}\label{lem:deacr} Let $\cF\subset  \cz(L)$ be a family of continuous metrics on a $\Q$-line bundle $L$, closed under addition of a constant, and define $\widetilde{\cF}$ as the set of possibly singular metrics on $L$ that can be written as decreasing limits of nets in $\cF$. Then $\widetilde{\cF}$ is closed under decreasing limits. 
\end{lem}
\begin{proof} Let $(\phi_i)_{i\in I}$ be a decreasing net in $\widetilde{\cF}$. For each $i\in I$ choose a decreasing net $(\phi_{i,j})_{j\in J_i}$ in $\cF$ converging to $\phi_i$. Let $A$ be the set of triples $\a=(i,j,m)$ with $i\in I$, $j\in J_i$ and $m\in\Z_{>0}$. For each $\a=(i,j,m)$ in $A$, set 
$$
\p_\a:=\phi_{i,j}+m^{-1}\in\cF,
$$
and define a partial order on $A$ by 
$$
\a\ge \a'\Longleftrightarrow\p_\a\le\p_{\a'}.
$$
We claim that $A$ is directed, \ie any two elements $ a_1=(i_1,j_1,m_1)$, $ a_2=(i_2,j_2,m_2)$ in $A$ can be dominated by a third. To see this, first pick $i\ge i_1,i_2$, so that $\phi_i\le\phi_{i_1}$ and $\phi_i\le\phi_{i_2}$, and set $m:=\max\{m_1,m_2\}+1$. A simple variant of Dini's lemma shows that $\phi_{i,j}+m^{-1}\le\phi_{i_k,j_k}+m_k^{-1}=\p_{ a_k}$ on $X^\an$ for $k=1,2$ and all $j\in J_i$ large enough, which proves the claim. By construction, $(\p_\a)_{\a\in A}$ is a decreasing net in $\cF$ such that $\p_\a\to\phi$ pointwise on $X^\an$, and hence $\phi\in\widetilde{\cF}$. 
\end{proof}

\begin{cor}\label{cor:psh} The space $\PSH(L)$ is stable under finite max and decreasing limits. 
\end{cor}

\begin{cor} \label{cor:contpsh} Pick a subgroup $\Ga\subset\R$, nontrivial if $K$ is trivially valued. For any semiample $\Q$-line bundle $L$, the set $\cz\cap\PSH(L)$ of continuous psh metrics on $L$ is the closure of $\FS_\Ga(L)$ with respect to uniform convergence. 
\end{cor}
\begin{proof} Let $\phi$ be a continuous psh metric, and pick a decreasing net $(\phi_i)$ in $\FS_\Ga(L)$ converging pointwise to $\phi$. By Dini's lemma, $\phi_i\to\phi$ uniformly on $X^\an$, and $\phi$ is thus in the closure of $\FS_\Ga(L)$. Conversely, let $(\phi_i)$ be a sequence in $\FS_\Ga(L)$ converging unifornly to some (continuous) metric $\phi$. We can inductively construct a sequence of constants $c_i\to 0$ such that $\phi_i+c_i$ is decreasing, and it follows that $\phi=\lim_i(\phi_i+c_i)$ is psh.
\end{proof}

%
%
\subsection{Nef metrics}
Assume that $K$ is non-Archimedean. A model metric $\phi$ on a $\Q$-line bundle $L$ is \emph{nef} if $\phi=\phi_\cL$ for some $\Q$-model $\cL$ of $L$ which is \emph{nef}, \ie $\cL\cdot C\ge 0$ for each $\tK$-projective curve $C\subset\cX_s$. By~\cite[Proposition 3.5]{GM}, any $\Q$-model $\cL'$ of $L$ such that $\phi=\phi_{\cL'}$ is then also nef. 

\begin{defi}\label{defi:nefZhang} A \emph{nef metric} on a $\Q$-line bundle $L$ is a continuous metric $\phi$ that can be written as a uniform limit of nef model metrics. 
\end{defi} 
This is usually known as a semipositive metric in the sense of S.W.~Zhang, cf.~\cite{GM} for a thorough discussion. When $K$ is trivially valued, the trivial metric is the only nef metric. 

\begin{thm}\label{thm:pshZhang} Assume that $K$ is nontrivially valued, and let $L$ be an ample $\Q$-line bundle. Then a continuous metric on $L$ is nef iff it is psh (in the sense of Definition~\ref{defi:psh}). 
\end{thm}
A proof of this result is also sketched, in a slightly different language, in~\cite[Remark 3.18]{CM}; we nevertheless provide a proof below, for completeness. By~\cite[Corollary 1.4]{GM}, we get:

\begin{cor}\label{cor:pshZhang} Let $L$ be an ample $\Q$-line bundle, and pick a $\Q$-model $\cL$ of $L$. Then $\phi_\cL$ is psh iff $\cL$ is nef. 
\end{cor}

\begin{proof}[Proof of Theorem~\ref{thm:pshZhang}] By Theorem~\ref{thm:puremodel}, pure Fubini--Study metrics on $L$ coincide with model metrics associated to $\Q$-models $\cL$ of $L$ that are semiample, and hence nef. Since $K$ is nontrivially valued, Corollary~\ref{cor:contpsh} implies that every continuous psh metric is a uniform limit of pure Fubini--Study metrics, and hence is nef. 

To prove the converse, let $\cL$ be a nef $\Q$-model of $L$, determined on a model $\cX$ of $X$. Using again Theorem~\ref{thm:puremodel}, it will be enough to show that $\phi_\cL$ is a uniform limit of model metrics associated to semiample $\Q$-models. Since $L$ is ample, it admits an ample $\Q$-model $\cA$ on some projective model $\cX'$ dominating $\cX$ (cf.~\cite[Lemma 4.12]{GM}). The pull-back $\cL'$ of $\cL$ to $\cX'$ is nef. For each $\e \in (0,1) \cap \Q$, $\cL_\e:=(1-\e)\cL'+\e\cH$ is thus ample on $\cX_s$, and hence on $\cX$, by~\cite[IV.9.6.4]{EGA}. In particular, $\cL_\e$ is a semiample model of $L$. 
Since $\phi_\cL=\phi_{\cL'}$ (Lemma~\ref{lem:func}), $\phi_{\cL_\e}-\phi_\cL=\e(\phi_\cH-\phi_{\cL})$ converges to $0$ uniformly as $\e>0$, and we are done. 
\end{proof}

\begin{rmk} We expect Theorem~\ref{thm:pshZhang} and Corollary~\ref{cor:pshZhang} to hold when $L$ is merely semiample. For this, it would be enough to show that any nef model metric on $L$ descends to the Stein factorization $A$ as in Lemma~\ref{lem:Stein}; this would for instance follow from a relative version of the local Hodge index theorem proved in~\cite[Theorem 2.1]{YZ}. 
\end{rmk}

 %
%
\subsection{Psh-regularizable metrics}\label{sec:pshreg}
The \emph{curvature form} of a smooth metric $\phi$ on a line bundle $L$ is a closed $(1,1)$-form, which we denote by $dd^c\phi$ (see~\cite[\S 6.4.1]{CLD} for the non-Archimedean case). For each trivializing section $\tau$ of $L$ over an open $U\subset X$, the function $\log|\tau|_\phi$ is smooth, and $dd^c\phi=-dd^c\log|\tau|_\phi$ on $U^\an$. Again, this makes sense for $\Q$-line bundles as well. 

\begin{defi} A smooth metric $\phi$ on a $\Q$-line bundle $L$ is \emph{semipositive} if its curvature form $dd^c\phi$ is semipositive (see~\cite[\S 5.4]{CLD} for the non-Archimedean case).
\end{defi}

\begin{exam}\label{exam:smooth} Let $L$ be a line bundle, $s_1,\dots s_r\in \Hnot(L)$ be sections without common zeroes, $\la_1,\ldots,\la_r\in\R$, and $\max_\e:\R^r\to\R$ a regularized max function. Then
$$
\phi:=\mathrm{max}_\e\left\{\log|s_1|+\la_1,\ldots,\log|s_r|+\la_r\right\}
$$
is a smooth semipositive metric on $L$, see~\cite[6.3.2]{CLD}. 
\end{exam}

\begin{defi} A (possibly singular) metric $\phi$ on a line bundle $L$ is \emph{psh-regularizable} if $\phi$ can be written as the limit of a decreasing net $\phi_i$ of smooth semipositive metrics. 
\end{defi}
If $\phi$ is continuous then $\phi_i\to\phi$ uniformly, by Dini's lemma. 

\begin{thm}\label{thm:pshCLD} Every psh metric $\phi$ on a semiample $\Q$-line bundle $L$ is psh-regularizable. 
\end{thm}
\begin{proof} By Lemma~\ref{lem:deacr}, it is enough to prove the result when $\phi$ is a Fubini--Study metric. After replacing $L$ with a multiple, there exists a basis $(s_1,\ldots,s_N)$ of $\Hnot(L)$ without common zeroes and constants $\la_i\in\R$ such that $\phi=\max\left\{\log|s_1|+\la_1\dots,\log|s_N|+\la_N\right\}$, and $\phi$ is thus the uniform limit as $\e\to 0$ of the smooth semipositive metrics 
$$
\phi_\e=\mathrm{max}_\e\left\{\log|s_1|+\la_1\dots,\log|s_N|+\la_N\right\}
$$
as in Example~\ref{exam:smooth}. 
\end{proof}
%
%
\subsection{The Fubini--Study operator}\label{sec:projop}
The following discussion is closely related to the point of view developed by Chen and Moriwaki in~\cite[\S 3.2]{CM}. 

\begin{defi}\label{defi:FSmetr} Let $L$ be a globally generated line bundle. We say that a subspace $V\subset \Hnot(L)$ is \emph{basepoint free} if sections in $V$ generate $L$ at all points. To each norm $\n$ on $V$, we then associate the metric $\FS(\n)$ on $L$ defined at each $x\in X^\an$ as the quotient norm
$$
\FS(\n)(x):=\log\sup_{s\in V\setminus\{0\}}\frac{|s(x)|}{\|s\|}. 
$$
\end{defi}
Recall that $\log|s|$ denotes the singular metric attached to $s\in \Hnot(L)$, cf.~ Example~\ref{exam:singsec}. 

\begin{thm}\label{thm:FScont} For any norm $\n$ on a basepoint free subspace $V\subset \Hnot(L)$, the metric $\FS(\n)$ is continuous and psh. 
\end{thm}

\begin{lem}\label{lem:FSdiag} Let $s_1,\ldots,s_r$ be finitely many sections of $L$ without common zeroes, and $\la_1,\ldots,\la_r\in\R$. Let $V\subset \Hnot(L)$ be the basepoint free subspace generated by the $s_i$. Let $\n_\la$ be the norm on $K^r$ which is diagonal in the canonical basis $(e_i)$ and such that $\|e_i\|_\la=e^{-\la_i}$, and denote by $\n$ the induced quotient norm on $V$ with respect to the surjective map $K^r\to V$ that sends $(e_i)$ to $(s_i)$. Then: 
\begin{itemize}
\item[(i)] $\FS(\n)=\tfrac 12\log\sum_i |s_i|^2 e^{2\la_i}$ when $K$ is Archimedean; 
\item[(ii)] $\FS(\n)=\max_i(\log|s_i|+\la_i)$ when $K$ is non-Archimedean. 
\end{itemize}
\end{lem}
\begin{proof} For each $s\in V$, we have $\|s\|=\inf\left\{\|a\|_\la\mid a\in K^r,\,s=\sum_i a_i s_i\right\}$. Using this, one immediately checks that
$$
\FS(\n)=\log\sup_{a\in K^r\setminus\{0\}}\frac{|\sum_i a_i s_i|}{\|a\|_\la}
$$
pointwise on $X^\an$. In the Archimedean case, the Cauchy--Schwarz inequality yields
$$
\sup_{a\in K^r\setminus\{0\}}\frac{|\sum_i a_i s_i|^2}{\|a\|_\la^2}=\sum_i\frac{|s_i|^2}{\|e_i\|_\la^2}=\sum_i |s_i|^2 e^{2\la_i}. 
$$
In the non-Archimedean case, Lemma~\ref{lem:supdiag} yields
$$
\sup_{a\in K^r\setminus\{0\}}\frac{|\sum_i a_i s_i|}{\|a\|_\la}=\max_i\frac{|s_i|}{\|e_i\|_\la}=\max_i |s_i| e^{\la_i}. 
$$
The result follows. 
\end{proof}

\begin{cor}\label{cor:FSdiag} For a continuous metric $\phi$ on a $\Q$-line bundle $L$, the following are equivalent: 
\begin{itemize}
\item[(i)] $\phi$ is a Fubini--Study metric; 
\item[(ii)] for $m\in\Z_{>0}$ divisible enough, there exists a basepoint free subspace $V\subset \Hnot(mL)$ and a diagonalizable norm $\n$ on $V$ such that $\phi=m^{-1}\FS(\n)$. 
\end{itemize}
\end{cor}

\begin{proof} That (ii)$\Longrightarrow$(i) follows directly from Lemma~\ref{lem:FSdiag}. Assume conversely that $\phi$ is Fubini--Study. For $m$ divisible enough we can find sections $s_1,\ldots,s_r$ of $mL$ without common zeroes such that 
$\phi=\tfrac{1}{2m}\log\sum_i |s_i|^2$ in the Archimedean case, and $\phi=\tfrac{1}{m}\max_i(\log|s_i|+\la_i)
$ with $\la_i\in\R$ in the non-Archimedean case. By Lemma~\ref{lem:FSdiag}, $\phi=m^{-1}\FS(\n)$ with $\n$ the quotient norm on $V=\Vect(s_i)$ of some diagonalizable norm. By Lemma~\ref{lem:quotientdiag}, $\n$ is diagonalizable, and we are done. 
\end{proof}

\begin{exam} Assume that $K$ is non-Archimedean, and let $\cL$ be a semiample $\Q$-model of a (semiample) $\Q$-line bundle $L$. For $m$ divisible enough, $m\cL$ is a globally generated line bundle, and the lattice norm $\n_{\Hnot(m\cL)}$ on $\Hnot(mL)$ satisfies
$$
\phi_\cL=m^{-1}\FS\left(\n_{\Hnot(m\cL)}\right)
$$ 
This follows indeed from Theorem~\ref{thm:puremodel} (and its proof). 
\end{exam}

\begin{proof}[Proof of Theorem~\ref{thm:FScont}] Let $V\subset \Hnot(L)$ be a basepoint free subspace, and note first that for any two norms $\n,\n'$ on $V$, we trivially have
\begin{equation}\label{equ:FSlip}
\FS(\n)-\dGI(\n,\n')\le\FS(\n')\le\FS(\n)+\dGI(\n,\n'). 
\end{equation}
When $K$ is non-Archimedean, Theorem~\ref{thm:diag} shows that, for each norm $\n$ on $V$, there exists a sequence of diagonalizable norms $\n_i$ such that $\dGI(\n_i,\n)\to 0$. By Lemma~\ref{lem:FSdiag}, $\FS(\n_i)$ is a Fubini--Study metric for all $i$, and $\FS(\n_i)$ converges uniformly to $\FS(\n)$, by~\eqref{equ:FSlip}. This proves that $\FS(\n)$ is continuous and psh. 

In the Archimedean case, the unit ball $B\subset V$ of $\n$ is compact, and 
$$
\FS(\n)=\sup_{s\in B}\log|s|
$$
pointwise. Using this, it is easy to see that $\FS(\n)$ is continuous and psh. 
\end{proof}

\begin{lem}\label{lem:pshbase} Let $\n$ be a norm on a basepoint free subspace $V\subset \Hnot(L)$. Let $F/K$ be a complete field extension, and denote by $\n_F$ the ground field extension of $\n$ to $V_F\subset \Hnot(L)_F=\Hnot(L)_F$. Then $\FS(\n_F)$ is the pullback to $L_F$ of $\FS(\n)$. 
\end{lem}
\begin{proof} In the Archimedean case, the only nontrivial case is $K=\R$, $F=\C$. Denote by $B\subset V$ and $B_\C\subset V_\C$ the unit balls of $\n$, $\n_\C$. We need to show that $\sup_{s\in B}\log|s|=\sup_{s\in B_\C}\log|s|$, which is an easy consequence of Proposition~\ref{prop:ground}. Assume now that $K$ is non-Archimedean. As above, it is enough to prove the result when $\n$ is diagonalizable. Let $(s_i)$ be an orthogonal basis of $\Hnot(L)$. The pullback $(s'_i)$ of $(s_i)$ to $\Hnot(L_F)$ is orthogonal for $\n_F$ with $\|s'_i\|_F=\|s_i\|$,  and Lemma~\ref{lem:FSdiag} thus yields
$$
\FS(\n)=\max_i\{\log|s_i|-\log\|s_i\|\},\,\,\,\,\FS(\n_F)=\max_i\{\log|s'_i|-\log\|s'_i\|_F\},
$$
which proves the result. 
\end{proof}

\begin{defi} Let $L$ be a globally generated line bundle, and denote by $L_\redu$ its restriction to $X_\redu$. For each bounded metric $\phi$, we set
$\FS(\phi):=\FS(\n_\phi)$, where $\n_\phi$ is the supnorm induced by $\phi$ on $\Hnot(L_\redu)$. 
\end{defi}

\begin{thm}\label{thm:projFS} If $\phi$ is a Fubini--Study metric on a semiample $\Q$-line bundle $L$, then $\phi=m^{-1}\FS(m\phi)$ for all $m$ sufficiently divisible.
\end{thm}

\begin{lem}\label{lem:projFS} For each bounded metric $\phi$ on a globally generated line bundle $L$, we have
\begin{itemize}
\item[(i)] $\FS(\phi)\le\phi$; 
\item[(ii)] if $\phi=\FS(\n)$ for some norm $\n$ on a basepoint free subspace $V\subset \Hnot(L)$, then $\FS(\phi)=\phi$; 
\item[(iii)] $\n_{\FS(\phi)}=\n_{\phi}$ as norms on $\Hnot(L_\redu)$; 
\item[(iv)] for any other bounded metric $\phi'$ on $L$ we have 
\begin{equation}\label{equ:FSphilip}
\sup\left|\FS(\phi)-\FS(\phi')\right|\le\sup|\phi-\phi'|. 
\end{equation}
\end{itemize}
\end{lem}
\begin{proof} For any nonzero $s\in \Hnot(L_\redu)$, the definitional inequality $|s|_\phi\le\|s\|_\phi$ yields $\log|s|-\phi\le\log\|s\|_\phi$, and 
$$
\FS(\phi)=\sup_{s\in \Hnot(L_\redu)\setminus\{0\}}\left(\log|s|-\log\|s\|_\phi\right)\le\phi,
$$
proving (i). Assume $\phi=\FS(\n)$ for a norm $\n$ on a basepoint free subspace $V\subset \Hnot(L)$. For each $s\in V$, we have $\log|s|-\log\|s\|\le\phi$, \ie $\|s\|_\phi\le\|s\|$. Thus
$$
\FS(\n_\phi)\ge\sup_{s\in V\setminus\{0\}}\left(\log|s|-\log\|s\|\right)=\phi,
$$
and hence (ii). Since $\FS(\phi)\le\phi$, each nonzero $s\in \Hnot(L_\redu)$ satisfies $\|s\|_\phi\le\|s\|_{\FS(\phi)}$. On the other hand, $\log|s|-\log\|s\|_\phi\le\FS(\phi)$, hence $|s|_{\FS(\phi)}\le\|s\|_\phi$, which proves (iii). 

Finally, we trivially have $\phi\le\phi'\Longrightarrow\FS(\phi)\le\FS(\phi')$ and $\FS(\phi+c)=\FS(\phi)+c$ for $c\in\R$, which formally imply~\eqref{equ:FSphilip}.  
\end{proof}

\begin{proof}[Proof of Theorem~\ref{thm:projFS}] By Corollary~\ref{cor:FSdiag}, for all $m$ sufficiently divisible we have $\phi=m^{-1}\FS(\n)$ with $\n$ a norm on a basepoint free subspace $V\subset \Hnot(mL)$, and Lemma~\ref{lem:projFS}~(ii) thus yields $\phi=m^{-1}\FS(m\phi)$.  
\end{proof}

%
%
\subsection{Envelopes and ground field invariance} 
In what follows, $L$ is a semiample $\Q$-line bundle on $X$. 

\begin{defi}\label{defi:env} Let $\phi$ be a bounded metric on $L$. 
\begin{itemize}
\item[(i)] The \emph{psh envelope} of $\phi$ is defined as the pointwise upper envelope
\begin{equation}\label{equ:env}
\env(\phi):=\sup\{\p\in\PSH(L)\mid\p\le\phi\}. 
\end{equation}
\item[(ii)] The \emph{regular psh envelope} of $\phi$ is 
\begin{equation}\label{equ:Q}
\qq(\phi):=\sup\{\p\in \FS(L)\mid\p\le\phi\}.
\end{equation}
\end{itemize}
\end{defi}
Since any continuous psh metric on $L$ is a uniform limit of Fubini--Study metrics, it would be equivalent to require $\p$ continuous and psh in (ii). Trivially, $\qq(\phi)$ is lsc, $\qq(\phi)\le\env(\phi)\le\phi$, and the operators $\env$ and $\qq$ are monotone increasing and $1$-Lipschitz with respect to the supnorm. They are related as follows.  

\begin{prop}\label{prop:QP} For any bounded metric $\phi$ on $L$ we have $\qq(\phi)=\env(\phi_\star)$, with $\phi_\star\le\phi$ denoting the lsc regularization of $\phi$. 
\end{prop}
\begin{proof} A continuous metric $\p$ satisfies $\p\le\phi$ iff $\p\le\phi_\star$, and hence $\qq(\phi)=\qq(\phi_\star)$. We may thus assume from the start that $\phi$ is lsc, and we then need to show that any psh metric $\p$ on $L$ such that $\p\le\phi$ satisfies $\p\le\qq(\phi)$. By Definition~\ref{defi:psh}, there exists a decreasing net $(\p_j)$ of Fubini--Study metrics on $L$ with $\p_j\to\phi$. Pick $\e>0$. By lower semicontinuity of $\phi-\p_j$ and a simple compactness argument (Dini's lemma), we thus have $\p_j\le\phi+\e$ on $X^\an$ for $j$ large enough. Thus $\p_j\le \qq(\phi)+\e$, and hence $\p\le \qq(\phi)+\e$, which yields the result.
\end{proof}

To $L$ we attach the semigroup 
$$
\N(L)=\left\{m\in\N\mid mL\text{ is a globally generated line bundle}\right\}.
$$

\begin{thm}\label{thm:FSQ} For any bounded metric $\phi$ on $L$ we have
\begin{equation}\label{equ:FSQ}
\lim_{\N(L)\ni m,\,m\to\infty} m^{-1}\FS(m\phi)=\qq(\phi),
\end{equation}
and the convergence is uniform iff $\qq(\phi)$ is continuous. 
\end{thm}

\begin{proof} Pick $m,m'\in\N(L)$. For any two $s\in \Hnot(mL_\redu)$, $s'\in \Hnot(m'L_\redu)$, we have 
\begin{equation}\label{equ:supgraded}
\|s\cdot s'\|_{(m+m')\phi}\le\|s\|_{m\phi}\|s'\|_{m'\phi}.
\end{equation}
This implies $\FS((m+m')\phi)\ge\FS(m\phi)+\FS(m'\phi)$, and hence 
$$
\lim_{\N(L)\ni m,\,m\to\infty} m^{-1}\FS(m\phi)=\sup_{m\in\N(L)_{>0}}m^{-1}\FS(m\phi),
$$
by Fekete's lemma. For each $m\in\N(L)_{>0}$, $m^{-1}\FS(m\phi)$ is continuous and psh, by Theorem~\ref{thm:FScont}. By Lemma~\ref{lem:projFS}, $m^{-1}\FS(m\phi)\le\phi$, and hence $m^{-1}\FS(m\phi)\le\qq(\phi)$, which proves that
$$
\sup_{m\in\N(L)_{>0}}m^{-1}\FS(m\phi)\le\qq(\phi).
$$
Conversely, pick $\p\in\FS(L)$ such that $\p\le\phi$. By Theorem~\ref{thm:projFS}, we find $m\in\N(L)_{>0}$ such that $\p=m^{-1}\FS(m\p)$. Thus $\p\le\sup_{m\in\N(L)_{>0}}m^{-1}\FS(m\phi)$, and hence 
$$
\qq(\phi)\le\sup_{m\in\N(L)_{>0}}m^{-1}\FS(m\phi).
$$
\end{proof}

\begin{cor}\label{cor:QP} Let $\phi$ be a bounded metric on $L$, and pick $m\in\N$ such $mL$ is a line bundle. Then 
$$
\n_\phi=\n_{\qq(\phi)}=\n_{\env(\phi)}
$$
as norms on $\Hnot(mL_\redu)$.  
\end{cor}
\begin{proof} For each $s\in \Hnot(mL)$ and $k\in\N$, we have $\|s^k\|_{km\phi}=\|s\|_{m\phi}^k$. After replacing $m$ with a multiple, we may thus assume that $mL$ is globally generated. Then 
$$
m^{-1}\FS(m\phi)\le \qq(\phi)\le\env(\phi)\le\phi,
$$ 
and we are done since $\n_{\FS(m\phi)}=\n_{m\phi}$ on $\Hnot(mL_\redu)$, by Lemma~\ref{lem:projFS}. 
\end{proof}

\begin{defi} Let $L$ be a semiample $\Q$-line bundle. We say that \emph{continuity of envelopes} holds for $L$ if for any continuous metric $\phi$ on $L$, $\env(\phi)=\qq(\phi)$ is continuous. 
\end{defi}
By Theorem~\ref{thm:FSQ} and Dini's lemma, $\env(\phi)$ is continuous iff $m^{-1}\FS(m\phi)$ converges uniformly as $m\to\infty$. 

\begin{lem}\label{lem:cont} Continuity of envelopes is equivalent to the following property: for any family $(\phi_\a)$ of psh metrics on $L$, uniformly bounded above, the usc upper envelope $\left(\sup_\a\phi_\a\right)^\star$ is psh.
\end{lem}
\begin{proof} Suppose first that continuity of envelopes holds, and let $\phi$ be a continuous metric on $L$. Since each $\FS(m\phi)$ is continuous, $\env(\phi)=\sup_m m^{-1}\FS(m\phi)$ is lsc, and its usc regularization $\env(\phi)^\star $ is the decreasing limit of a net of asymptotically Fubini--Study metrics $\p_j$. Since $\env(\phi)\le\phi$ and $\phi$ is continuous, $\lim_j\p_j=\env(\phi)^\star \le\phi$, and (a small variant of) Dini's lemma therefore yields $\p_j\le\phi+\e_j$ for some constants $\e_j\to 0$. It follows that $\p_j=\env(\p_j)\le \env(\phi)+\e_j$, and hence $\env(\phi)^\star \le \env(\phi)$ in the limit, which proves that $\env(\phi)$ is usc. 

Conversely, asssume that $\env(\phi)$ is continuous for each continuous metric $\phi$, and let $(\phi_\a)$ be a family of psh metrics, uniformly bounded above. The metric $\p:=\left(\sup_\a\phi_\a\right)^\star $, being usc, can be written as the limit of a decreasing net of continuous metrics $\tau_j$. For each $\a,j$, we have $\phi_\a\le\tau_j$, and hence $\phi_\a\le \env(\tau_j)$, which in turn yields $\p\le \env(\tau_j)\le\tau_j$. We have thus written $\p$ as the limit of the decreasing net of psh metrics $\env(\tau_j)$, which shows that $\p$ is psh. 
\end{proof}

In the Archimedean case, the equivalent formulation of Lemma~\ref{lem:cont} is a classical property, assuming $X$ to be normal. It is thus natural to conjecture: 

\begin{conj}\label{conj:env} Continuity of envelopes holds for any semiample $\Q$-line bundle $L$ over a \emph{normal} projective variety $X$. 
\end{conj}

As of this writing, continuity of envelopes in the non-Archimedean case has been established when $X$ is smooth and one of the following is satisfied:
\begin{itemize}
\item $X$ is a curve, as a consequence of A.~Thuillier's work~\cite{Thu} (see~\cite{GJKM});
\item $K$ discretely or trivially valued, of residue characteristic $0$~\cite{siminag,trivval}, building on
multiplier ideals and the Nadel vanishing theorem;
\item $K$ is discretely valued of characteristic $p$, $(X,L)$ is defined over a function field of
transcendence degree $d$, and resolution of singularities is assumed in dimension $n+d$~\cite{GJKM}, replacing multiplier ideals with test ideals.
\end{itemize}

We conclude this section with the following application of Theorem~\ref{thm:FSQ}. 

\begin{thm}\label{thm:pshbase} Assume that $X$ is geometrically reduced. Let $\phi$ be a continuous metric on a semiample $\Q$-line bundle $L$ over $X$. Pick a complete field extension $F/K$, and denote $\phi_F$ the pullback of $\phi$ to $L_F$. Then $\phi$ is psh iff $\phi_F$ is psh. 
\end{thm}

\begin{proof} By definition, the pullback of a Fubini--Study metric on $L$ is a Fubini--Study metric on $L_F$, and hence $\phi$ psh implies $\phi_F$ psh. Assume conversely that $\phi_F$ is psh. By Theorem~\ref{thm:FSQ}, 
$$
\e_m:=\sup_{X_F^\an}\left|m^{-1}\FS(m\phi_F)-\phi_F\right|
$$ 
tends to $0$ as $m\in\N(L)$ tends to $\infty$; we need to show that $\phi_m:=m^{-1}\FS(m\phi)$ converges uniformly to $\phi$. By Lemma~\ref{lem:pshbase}, we have
$$
(\phi_m)_F=m^{-1}\FS\left(\left(\n_{m\phi}\right)_F\right).
$$
Thus
$$
\sup_{X^\an}\left|\phi_m-\phi\right|=\sup_{X^\an_F}\left|(\phi_m)_F-\phi_F\right|\le\sup_{X^\an_F}\left|(\phi_m)_F-m^{-1}\FS(m\phi_F)\right|+\e_m
$$
$$
=m^{-1}\sup_{X^\an_F}\left|\FS\left(\left(\n_{m\phi}\right)_F\right)-\FS\left(\n_{m\phi_F}\right)\right|+\e_m\le m^{-1}\dGI(\n_{m\phi_F},(\n_{m\phi})_F)+\e_m,
$$
by~\eqref{equ:FSlip}. By Theorem~\ref{thm:supground}, $m^{-1}\dGI(\n_{m\phi_F},(\n_{m\phi})_F)\to 0$, and are done. 
\end{proof}

\part{Asymptotics of relative volumes}
%
%
\section{Monge--Amp\`ere measures and Deligne pairings}
In this section, we review some basic properties of Monge--Amp\`ere operators, and use them to define metrics on Deligne pairings (Theorem~\ref{thm:delmetr}), following Deligne's program~\cite{Del}. In what follows, $X$ is as before an $n$-dimensional projective scheme over a complete valued field $K$.
%
%
\subsection{Mixed Monge--Amp\`ere measures}\label{sec:MA}
The (algebraic) fundamental class of $X$ is the $n$-dimensional cycle 
$$
[X]=\sum_\a m_\a[X_\a],
$$
where the $X_\a$ denote the $n$-dimensional irreducible components of $X$ (with their reduced structure) and $m_\a$ is the length of the local ring of $X$ at the generic point of $X_\a$. The intersection number of $\Q$-line bundles $L_1,\ldots,L_n$ on $X$ is defined as
$$
(L_1\cdot\ldots\cdot L_n):=\deg\pi_\star \left(c_1(L_1)\cdot\ldots\cdot c_1(L_n)\cdot [X]\right),
$$ 
with $\pi:X\to\Spec K$ the structure morphism.

Now let $\phi_1,\ldots,\phi_n$ be smooth metrics on $L_1,\ldots,L_n$, and recall that $dd^c\phi_i$ denotes the curvature form of $\phi_i$, a smooth, closed $(1,1)$-form on $X^\an$ (cf.~\cite[\S 6.4.1]{CLD} for the non-Archimedean case). The smooth $(n,n)$-form $dd^c\phi_1\wedge\ldots\wedge dd^c\phi_n$ determines a Radon measure
\begin{equation}\label{equ:mixedMA}
dd^c\phi_1\wedge\ldots\wedge dd^c\phi_n\wedge\d_X=\sum_\a m_\a\,dd^c\phi_1\wedge\ldots\wedge dd^c\phi_n\wedge\d_{X_\a}
\end{equation}
on the topological space $X^\an$, the \emph{mixed Monge--Amp\`ere measure} of the $\phi_i$, with 
$\d_X=\sum_\a m_\a\d_{X_\a}$ denoting the analytic fundamental class of $X$ (see~\cite[\S 3.7]{CLD} for the non-Archimedean case). 

\begin{rmk}\label{rmk:MAarch} When $K=\R$, we define $dd^c\phi_1\wedge\ldots\wedge dd^c\phi_n\wedge\d_X$ as the image of the corresponding measure on $X(\C)=X_\C^\an$ by the projection map $X_\C^\an\to X^\an$. 
\end{rmk}

\begin{exam}\label{exam:dX} Assume that $n=0$, \ie $X=\spec A$ with $A$ a finite $K$-algebra. The previous construction produces a positive measure $\d_X$ on the finite set $X^\an$, which is described as follows. We have a product decomposition $A=\prod_i A_i$ into local finite $K$-algebras $A_i$ corresponding to the connected components of $X$. The (reduced) irreducible components of $X$ are given by $X_i=\spec K_i$, where the residue field $K_i$ of $A_i$ is a finite extension of $K$, and the unique extension of the absolute value of $K$ to $K_i$ defines a point $x_i\in X^\an$. The current $\d_X=\sum_i m_i\d_{X_i}$ is a measure on $X^\an$, and the requirement that $\d_{X_i}$ has total mass $\deg\pi_\star [X_i]=[K_i:K]$ yields $\d_{X_i}=[K_i:K]\d_{x_i}$. As $m_i[K_i:K]=\dim_K A_i$, we conclude that
$$
\d_X=\sum_i(\dim_K A_i)\d_{x_i}. 
$$
\end{exam}

\begin{prop}\label{prop:mixedMA} For each tuple $\phi_1,\ldots,\phi_n$ of smooth metrics on $\Q$-line bundles $L_1,\ldots,L_n$, the mixed Monge--Amp\`ere measure $dd^c\phi_1\wedge\ldots\wedge dd^c\phi_n\wedge\d_X$ satisfies the following properties: 
\begin{itemize}
\item[(i)] it is a symmetric, multilinear function of the $\phi_i$, and a positive Radon measure if the $\phi_i$ are semipositive, \ie $dd^c\phi_i\ge 0$; 
\item[(ii)] $\int dd^c\phi_1\wedge\ldots\wedge dd^c\phi_n\wedge\d_X=(L_1\cdot\ldots\cdot L_n)$; 
\item[(iii)] if $L_0=L_1=\cO_X$ (and hence $\phi_0,\phi_1$ are smooth functions on $X^\an$), then 
\begin{equation}\label{equ:intpart}
\int\phi_0\,dd^c\phi_1\wedge dd^c\phi_2\wedge\ldots\wedge dd^c\phi_n\wedge\d_X=\int \phi_1\,dd^c\phi_0\wedge dd^c\phi_2\wedge\ldots\wedge dd^c\phi_n\wedge\d_X. 
\end{equation}
\item[(iv)] for any complete field extension $F/K$, we have
$$
p_\star\left(dd^c\phi_{1,F}\wedge\ldots\wedge dd^c\phi_{n,F}\wedge\d_{X_F}\right)=dd^c\phi_1\wedge\ldots\wedge dd^c\phi_n\wedge\d_X,
$$
where $\phi_{i,F}$ is the pullback of $\phi_i$ to $L_{i,F}$ and $p:X_F^\an\to X^\an$ is the canonical projection.
\end{itemize}
\end{prop}
\begin{proof} (i) is straightforward. (ii) is classical in the Archimedean case, and proved in~\cite[Proposition 6.4.3]{CLD} in the non-Archimedean case. (iii) is a consequence of the Stokes formula (see~\cite[Th\'eor\`eme 3.12.2]{CLD} in the non-Archimedean case). In the Archimedean case, (iv) is nontrivial only when $K=\R$, $F=\C$, in which case it holds by definition (see Remark~\ref{rmk:MAarch}). In the non-Archimedean case, it is implicit in~\cite{CLD}, and can be checked as follows. Pick a smooth, compactly supported function $f$ on $X^\an$, and consider the smooth, compactly supported $(n,n)$-form $\a:=f\,dd^c\phi_1\wedge\ldots\wedge dd^c\phi_n$. According to~\cite[Proposition 5.13]{Gub13}, there exists a Zariski open subset $U\subset X$ with a closed embedding $U\hookrightarrow\G_{m,K}^r$ and a compactly supported smooth $(n,n)$-superform on $\R^r$ such that $\a$ is obtained by pulling-back $\eta$ by the tropicalization map $\trop:U^\an\to\R^r$, and
$$
\int f\,dd^c\phi_1\wedge\dots\wedge dd^c\phi_n\wedge\d_X=\int_{X}\a
$$ 
is defined as the integral of $\eta$ on the tropical cycle $\trop(U)$. Unravelling the definitions, it is clear that the pull-back $\a_F$ of $\a$ to $X_F^\an$ is simply the pull-back of $\eta$ by the tropicalization map $\trop_F:U_F^\an\to\R^r$. The construction of the tropical cycle of an algebraic variety being invariant under ground field extension, we have $\trop_F(U_F)=\trop(U)$ as tropical cycles, and we conclude as desired that $\int_{X}\a=\int_{X_F}\a_F$.  
\end{proof}

Recall from \S\ref{sec:pshreg} that a continuous metric on a $\Q$-line bundle $L$ is psh-regularizable iff it is a uniform limit of smooth semipositive metrics on $L$. If $L$ is semiample, then any continuous psh metric (in the sense of Definition~\ref{defi:psh}, \ie a uniform limit of Fubini--Study metrics) is psh-regularizable, by Theorem~\ref{thm:pshCLD}.

\begin{thm}\label{thm:BT} Let $L_1,\ldots,L_n$ be $\Q$-line bundles on $X$. The measure-valued operator 
$$
(\phi_1,\ldots,\phi_n)\mapsto dd^c\phi_1\wedge\ldots\wedge dd^c\phi_n\wedge\d_X,
$$ 
defined so far on smooth metrics, admits a unique continuous extension to tuples of continuous, psh-regularizable metrics on the $L_i$, with respect to uniform convergence for tuples of metrics, and the weak topology on Radon measures. 
\end{thm}

As a direct consequence of Proposition~\ref{prop:mixedMA}, we get:
\begin{prop} For each tuple of continuous, psh-regularizable metrics $(\phi_i)$, 
$$
dd^c\phi_1\wedge\ldots\wedge dd^c\phi_n\wedge\d_X=\sum_\a m_\a\, dd^c\phi_1\wedge\ldots\wedge dd^c\phi_n\wedge\d_{X_\a}
$$
is a positive Radon measure on $X^\an$, of total mass $(L_1\cdot\ldots\cdot L_n)$. It is further symmetric and multiadditive as a function of the $\phi_ i$. 
\end{prop} 
Theorem~\ref{thm:BT} follows from the classical Bedford--Taylor theory~\cite{BT76} in the Archimedean case, and from its analogue in the non-Archimedean case~\cite[Corollaire 5.6.5]{CLD}. In the present global setting, one can however provide a simple direct argument, based on the following global version of the classical Chern--Levine--Nirenberg inequality that will be put to use again a bit later. 

\begin{lem}\label{lem:CLN} Let $L_0,\ldots,L_n$ be line bundles on $X$, and suppose given a pair of smooth, semipositive metrics $\phi_i,\phi'_i$ on each $L_i$. Then 
$$
\left|\int(\phi_0-\phi'_0)\,dd^c\phi_1\wedge\ldots\wedge dd^c\phi_n\wedge\d_X-\int (\phi_0-\phi'_0)\,dd^c\phi'_1\wedge\ldots\wedge dd^c\phi'_n\wedge\d_X\right|
$$
$$
\le C\sum_{i=1}^n\sup|\phi_i-\phi'_i|
$$
with $C:=2\max_{1\le i\le n}(L_0\cdot L_1\cdot\ldots\cdot\widehat{L_i}\cdot\ldots\cdot L_n)$. 
\end{lem}
\begin{proof} For brevity of notation, set $T:=dd^c\phi_2\wedge\ldots\wedge dd^c\phi_n\wedge\d_X$. By~\eqref{equ:intpart}, 
$$
\int (\phi_0-\phi'_0)\,dd^c\phi_1\wedge T-\int(\phi_0-\phi'_0)\,dd^c\phi'_1\wedge T=\int (\phi_0-\phi'_0)\,dd^c(\phi_1-\phi'_1)\wedge T
$$
$$
=\int(\phi_1-\phi_1')\,dd^c(\phi_0-\phi'_0)\wedge T=\int(\phi_1-\phi_1')\,dd^c\phi_0\wedge T+\int(\phi_1-\phi'_1)\,dd^c\phi'_0\wedge T. 
$$
Since $dd^c\phi_0\wedge T$ and $dd^c\phi'_0\wedge T$ are both positive measures of mass $(L_0\cdot L_2\cdot\ldots\cdot L_n)$, we infer
$$
\left|\int (\phi_0-\phi'_0)\,dd^c\phi_1\wedge T-\int (\phi_0-\phi'_0)\,dd^c\phi'_1\wedge T\right|\le 2(L_0\cdot L_2\cdot\ldots\cdot L_n)\sup|\phi_1-\phi'_1|.
$$
We conclude by symmetry and multilinearity. 
\end{proof}

\begin{proof}[Proof of Theorem~\ref{thm:BT}] Assume first that $K$ is non-Archimedean, and pick $f\in\PL_\R(X^\an)$. By Lemma~\ref{lem:QPL}, we can find a pair of Fubini--Study metrics on an ample line bundle $L_0$ such that $f=\phi_0-\phi'_0$. By Theorem~\ref{thm:pshCLD}, $\phi_0$ and $\phi'_0$ are uniform limits of smooth, semipositive metrics on $L_0$. Lemma~\ref{lem:CLN} thus shows that 
$$
(\phi_1,\ldots,\phi_n)\mapsto\int f\,dd^c\phi_1\wedge\ldots\wedge dd^c\phi_n\wedge\d_X
$$
is Lipschitz continuous with respect to the supnorm on tuples of smooth, semipositive metrics (with Lipschitz constant depending on $f$), and hence extends continuously to tuples of continuous, psh-regularizable metrics. For each tuple of continuous, psh-regularizable metrics $(\phi_i)$, the linear form on $\PL_\R(X^\an)$ $f\mapsto\int f\,dd^c\phi_1\wedge\ldots\wedge dd^c\phi_n\wedge\d_X$ is positive, hence continuous, and it thus extends to a positive linear form on $\cz(X)$, by density of $\PL_\R(X^\an)$. 

In the Archimedean case, the argument is similar, arguing with a smooth function $f$ instead. It is in fact even simpler, since $f$ can directly be written as a difference of smooth psh metrics on some ample line bundle. 
\end{proof}

In order to exploit the multilinearity of the previous construction, it is convenient to introduce the following terminology.

\begin{defi}\label{defi:dpsh} We say that a continuous metric $\phi$ on a line bundle $L$ is  \emph{dpsh} if it can be written as a difference of continuous, psh-regularizable metrics.
\end{defi}
A dpsh metric is thus the same as an \emph{approachable metric} in the sense of~\cite[\S 6.3]{CLD}. 

\begin{lem}\label{lem:dpsh} Every line bundle $L$ admits a smooth, dpsh metric.
\end{lem}
\begin{proof} By Theorem~\ref{thm:pshCLD}, any ample line bundle admits a smooth, semipositive metric, and 
the result follows since $L$ can be written as a difference of ample line bundles.  
\end{proof}

\begin{exam} If $K$ is Archimedean, every smooth metric is dpsh.
\end{exam}
While this is probably true in the non-Archimedean case as well, it is less obvious, since strictly positive $(1,1)$-forms do not exist globally in this context.

\begin{exam} If $K$ is non-Archimedean, every $\R$-PL metric is dpsh, by Lemma~\ref{lem:QPL}. 
\end{exam} 

The operator $(\phi_1,\ldots,\phi_n)\mapsto dd^c\phi_1\wedge\ldots\wedge dd^c\phi_n\wedge\d_X$, being symmetric and multiadditive on tuples of continuous, psh-regularizable metrics, extends to a symmetric, multilinear operator on continuous dpsh metrics, with values in signed Radon measure. Note that Proposition~\ref{prop:mixedMA} still holds for such metrics. 

\begin{exam}\label{exam:MAshilov} If $K$ is non-Archimedean and $\phi_1,\dots\phi_n$ are $\R$-PL metrics, then $dd^c\phi_1\wedge\ldots\wedge dd^c\phi_n\wedge\d_X$ has finite support. By ground field invariance, we may indeed assume that $K$ is algebraically closed, and that its value group is large enough to ensure that the $\phi_i$ become pure after base change. By Proposition~\ref{prop:QPL}, $\phi_1,\ldots,\phi_n$ are then induced by $\Q$-models $\cL_1,\ldots,\cL_n$ of $L_1,\ldots,L_n$ determined on a projective model $\cX$ of $X$. By linearity with respect to the fundamental class $[X]$, we may further assume that $X$ is (geometrically) irreducible, which allows us to assume that $\cX_s$ is reduced, by Theorem~\ref{thm:redfiber}. Each irreducible component $Y$ of $\cX_s$ then determines a unique Shilov point $x_Y\in\Ga(\cX)$, and 
\begin{equation}\label{equ:MAredfib}
dd^c\phi_1\wedge\ldots\wedge dd^c\phi_n\wedge\d_X=\sum_Y(\cL_1|_Y\cdot\ldots\cdot\cL_n|_Y)\d_{x_Y},
\end{equation}
by~\cite[Proposition 6.9.2]{CLD} (compare~\cite[Proposition 3.11]{Gub07}). 
\end{exam}

%
%
\subsection{A Poincar\'e--Lelong formula}
We start with the following integrability result. 

\begin{thm}\label{thm:logs} Let $\phi_1,\ldots,\phi_n$ be continuous, psh-regularizable metrics on line bundles $L_1,\ldots,L_n$, and let $s\in \Hnot(X,L)$ be a regular section of a line bundle $L$, equiped with any continuous metric $\phi$. Then $\log|s|_\phi$ is integrable with respect to $dd^c\phi_1\wedge\ldots\wedge dd^c\phi_n\wedge\d_X$, and 
$$
\int\log|s|_\phi\,dd^c\phi_1\wedge\ldots\wedge dd^c\phi_n\wedge\d_X
$$ 
depends continuously on the $\phi_i$ with respect to uniform convergence. 
\end{thm}

\begin{lem}\label{lem:logs} We can find two ample line bundles $A,A'$, a decreasing sequence $(\p_j)$ of smooth, semipositive metrics on $A$, and a smooth, semipositive metric $\p'$ on $A'$ such that $L=A-A'$ and $\p_j-\p'\to\log|s|$ pointwise. 
\end{lem}
\begin{proof} Pick a very ample line bundle $A'$ such that $A:=L+A'$ is also very ample. Let $(s_\a)$ and $(s'_\b)$ be bases of $\Hnot(A)$ and $\Hnot(A')$, respectively, and set
$$
\p_j:=\tfrac{1}{2}\log\left(\sum_\b |s\cdot s'_\b|^2+e^{-j}\sum_\a |s_\a|^2\right),\,\,\,\,\,\,\p':=\tfrac12\log\left(\sum_\b |s'_\b|^2\right).
$$
These metrics are smooth and semipositive (both in the Archimedean and non-Archimedean cases), and
$$
\log|s|+\p'=\tfrac 12\log\left(\sum_\b |s\cdot s'_\b|^2\right)
$$
is the decreasing limit of $\p_j$.
\end{proof}

\begin{proof}[Proof of Theorem~\ref{thm:logs}] Given any other metric $\phi'$ on $L$, $\log|s|_\phi-\log|s|_{\phi'}=\phi'-\phi$ is a continuous function on $X^\an$, and Theorem~\ref{thm:logs} for $\phi$ or $\phi'$ are thus equivalent, by Theorem~\ref{thm:BT}. By Lemma~\ref{lem:dpsh}, we may therefore assume that $\phi$ is smooth dpsh.

Write $\log|s|$ as the decreasing limit of a sequence of smooth dpsh metrics $\p_j-\p'$ as in Lemma~\ref{lem:logs}, so that 
$$
f_j:=\p_j-\p'-\phi\to f:=\log|s|_\phi,
$$
where $\p_j$ and $\p'+\phi$ are metrics on a fixed (ample) line bundle $A$. By Lemma~\ref{lem:CLN}, there exists $C>0$ only depending on the $L_i$ and $A$ such that for any other tuple $\phi'_1,\ldots,\phi'_n$ of continuous, psh-regularizable metrics on $L_1,\ldots,L_n$ we have 
\begin{equation}\label{equ:logslip}
\left|\int f_j\,dd^c\phi_1\wedge\ldots\wedge dd^c\phi_n\wedge\d_X-\int f_j\,dd^c\phi'_1\wedge\ldots\wedge dd^c\phi'_n\wedge\d_X\right|\le C\sum_{i=1}^n\sup|\phi_i-\phi'_i|. 
\end{equation}
By monotone convergence, we infer
\begin{align*}
-C\sum_{i=1}^n\sup|\phi_i-\phi'_i|+\int f\,dd^c\phi_1\wedge\ldots\wedge dd^c\phi_n\wedge\d_X\le\int f\,dd^c\phi'_1\wedge\ldots\wedge dd^c\phi'_n\wedge\d_X\\
\le\int f\,dd^c\phi_1\wedge\ldots\wedge dd^c\phi_n\wedge\d_X+C\sum_{i=1}^n\sup|\phi_i-\phi'_i|
\end{align*}
in $[-\infty,+\infty)$. It thus remains to show that $f=\log|s|_\phi$ is integrable with respect to $dd^c\phi_1\wedge\ldots\wedge dd^c\phi_n\wedge\d_X$ for \emph{some} choice of continuous, psh-regularizable metrics $(\phi_i)$, which holds as soon as the $\phi_i$ are smooth (see~\cite[4.6.2]{CLD} for the non-Archimedean case, where we could alternatively take the $(\phi_i)$ to be model metrics, since 
$dd^c\phi_1\wedge\ldots\wedge dd^c\phi_n\wedge\d_X$ is then supported in a finite set of Shilov points, by Example~\ref{exam:MAshilov}). 
\end{proof}

We are now in a position to state the following version of the Poincar\'e--Lelong formula, which plays a key role in the proof of Theorem A. 
\begin{thm}\label{thm:PLformula} Let $\phi_2,\ldots,\phi_n$ be continuous dpsh metrics on line bundles $L_2,\ldots,L_n$, and set for brevity $T:=dd^c\phi_2\wedge\ldots\wedge dd^c\phi_n$. Suppose also given a regular section $s\in \Hnot(X,L)$ of a line bundle $L$, with divisor $Z$, a continuous dpsh metric $\phi$ on $L$, and a continuous dpsh function $f$. Then 
\begin{equation}\label{equ:PL}
\int\log|s|_{\phi}\,dd^c f\wedge T\wedge\d_X=\int f\,T\wedge\d_Z-\int f\,dd^c\phi\wedge T\wedge\d_X.
\end{equation}
\end{thm}
Note that the first integral is well-defined, by Theorem~\ref{thm:logs} and multilinearity. 

\begin{proof} Given any other continuous dpsh metric $\phi'$ on $L$, we have $\log|s|_{\phi}-\log|s|_{\phi'}=\phi'-\phi$, hence
$$
\int\log|s|_{\phi}\,dd^c f\wedge T\wedge\d_X-\int\log|s|_{\phi'}\,dd^c f\wedge T\wedge\d_X=\int(\phi'-\phi)\,dd^c f\wedge T\wedge\d_X
$$
$$
=\int f\,dd^c\phi'\wedge T\wedge\d_X-\int f\,dd^c\phi\wedge T\wedge\d_X,
$$
by integration by parts. As a result, the desired formula for $\phi$ is equivalent to that for $\phi'$, and we may thus assume from the start that $\phi$ is smooth, by Lemma~\ref{lem:dpsh}. 

Assume next that $f$ and $\phi_2,\ldots,\phi_n$ are also smooth. The Poincar\'e--Lelong formula (cf.~\cite[Theorem 4.6.5]{CLD} in the non-Archimedean case) implies that 
$$
dd^c\log|s|_{\phi}\wedge\d_X=\d_Z-dd^c\phi\wedge\d_X
$$ 
in the sense of currents, which yields~\eqref{equ:PL}. Assume finally that $f$ and $\phi_2,\ldots,\phi_n$ are merely continuous dpsh. We can then find line bundles $M,M_2,M'_2,\ldots,M_n,M'_n$ and continuous, psh-approchable metrics $\tau,\tau'$ on $M$, $\p_i,\p'_i$ on $M_i$, $M'_i$ such that $f=\p-\p'$, $L_i=M_i-M'_i$ and $\phi_i=\p_i-\p'_i$. Choose sequences $(\tau_j)$, $(\tau'_j)$, $(\p_{i,j})$ and $(\p'_{i,j})$ of smooth, semipositive metrics on $M$, $M_i$ and $M'_i$ such that $\tau_j\to\tau$, $\tau'_j\to\tau_j$, $\p_{i,j}\to\p_i$ and $\p'_{i,j}\to\p'_i$ uniformly. Set $f_j:=\tau_j-\tau'_j$, $\phi_{i,j}:=\p_{i,j}-\p'_{i,j}$, and $T_j:=dd^c\phi_{2,j}\wedge\ldots\wedge dd^c\phi_{n,j}$. Theorem~\ref{thm:logs} and multilinearity yield
$$
\lim_j\int\log|s|_{\phi}\,dd^c f_j\wedge T_j\wedge\d_X=\int\log|s|_{\phi}\,dd^c f\wedge T\wedge\d_X
$$
By Theorem~\ref{thm:BT}, we have weak convergence of measures
$$
T_j\wedge\d_Z\to T\wedge\d_Z,\,\,\,\,dd^c\phi\wedge T_j\wedge\d_X\to dd^c\phi\wedge T\wedge\d_X,
$$
and hence 
$$
\int f_j T_j\to\int_Z f T\,\,\,\text{   and   }\,\,\,\int f_j\,dd^c\phi\wedge T_j\wedge\d_X\to\int f\,dd^c\phi\wedge T\wedge\d_X,
$$
by uniform convergence $f_j\to f$. Thus~\eqref{equ:PL} in the smooth case implies the general case. 
\end{proof}
%
%
\subsection{Metrics on Deligne pairings}\label{sec:delmetric}
As a special case of a general construction discussed extensively in Appendix~\ref{sec:Deligne}, the \emph{Deligne pairing} associates to an $(n+1)$-tuple of line bundles $L_0,\ldots,L_n$ on an $n$-dimensional projective $K$-scheme $X$ a line bundle $\langle L_0,\ldots,L_n\rangle$ on $\spec K$, \ie a one-dimensional $K$-vector space. 

Following F.~Ducrot's approach~\cite{Duc}, we view the Deligne pairing as the $(n+1)$-st iterated difference of the determinant of cohomology. In our situation, the determinant of cohomology of a line bundle $L$ on $X$ can simply be described as
$$
\la(L)=\sum_i(-1)^i\det H^i(L),
$$
and
$$
\langle L_0,\ldots,L_n\rangle=\sum_{I\subset\{0,\ldots,n\}}(-1)^{n+1-|I|}\la\left(\sum_{i\in I} L_i\right).
$$
This pairing is functorial, symmetric, multilinear, and the data of a regular section $s\in \Hnot(X,L_0)$ with zero divisor $Z$ induces a canonical isomorphism 
\begin{equation}\label{equ:deldiv}
\langle L_0,\ldots,L_n\rangle=\langle L_1|_{Z},\ldots,L_n|_{Z}\rangle.
\end{equation}

\begin{exam}\label{exam:delnorm} When $n=0$, a regular section $s$ of $L_0$ is a trivializing section of $L_0$, and~\eqref{equ:deldiv} thus yields a generator $\langle s_0\rangle\in\langle L_0\rangle$. Any other trivializing section of $L_0$ is of the form $u s_0$ with $u\in A^\times$ a unit, and 
\begin{equation}\label{equ:normsection}
\langle us_0\rangle=N_{A/K}(u)\langle s_0\rangle
\end{equation}
where $N_{A/K}(u)\in K^\times$ is the \emph{norm} of $u$, \ie the determinant of the endomorphism of the $K$-vector space $A$ given by multiplication by $u$. In other words, the Deligne pairing coincides with norm functor when $n=0$.  
\end{exam}


In line with Deligne's program~\cite{Del} and the work of Elkik~\cite{Elkik}, we prove: 

\begin{thm}\label{thm:delmetr} There exists a unique way to associate to each tuple of continuous dpsh metrics $\phi_0,\ldots,\phi_n$ on line bundles $L_0,\ldots,L_n$ over an $n$-dimensional projective $K$-scheme $X$ a metric $\langle\phi_0,\ldots,\phi_n\rangle$ on $\langle L_0,\ldots,L_n\rangle$, such that the following holds: 
\begin{itemize}
\item[(i)] the pairing $(\phi_0,\ldots,\phi_n)\mapsto\langle\phi_0,\ldots,\phi_n\rangle$ is symmetric and multilinear; 
\item[(ii)] if $s\in \Hnot(L_0)$ is a regular section with zero divisor $Z$, the following \emph{restriction formula} holds:
\begin{equation}\label{equ:delmetres}
\langle\phi_0,\ldots,\phi_n\rangle=\langle\phi_0|_Z,\ldots,\phi_n|_Z\rangle-\int\log|s|_{\phi_0}dd^c\phi_1\wedge\ldots\wedge dd^c\phi_n\wedge\d_X
\end{equation}
as metrics on $\langle L_0,\ldots,L_n\rangle=\langle L_1|_Z,\ldots,L_n|_Z\rangle$.
\end{itemize}
\end{thm}
Note that (ii) implies the \emph{change of metric formula} 
\begin{equation}\label{equ:changemetr}
\langle\phi_0,\phi_1,\ldots,\phi_n\rangle-\langle\phi'_0,\phi_1,\ldots,\phi_n\rangle=\int(\phi_0-\phi'_0) dd^c\phi_1\wedge\ldots\wedge dd^c\phi_n\wedge\d_X
\end{equation}
for all continuous dpsh metrics $\phi_0,\phi'_0$ on $L_0$ and $\phi_1,\ldots,\phi_n$ on $L_1,\ldots,L_n$. This follows indeed by applying~\eqref{equ:delmetres} to the dpsh metrics $\phi_0-\phi'_0,\phi_1,\ldots,\phi_n$ on $\cO_X,L_1,\ldots,L_n$ and the regular section $s=1\in \Hnot(X,\cO_X)$. 

%

\begin{lem}\label{lem:delmetr} Let $\phi_0,\ldots,\phi_n$ be continuous dpsh metrics on line bundles $L_0,\ldots,L_n$. Let $s_0\in \Hnot(L_0)$, $s_1\in \Hnot(L_1)$ be regular sections with divisors $Z_0,Z_1$, and assume that $s_1|_{Z_0}$ and $s_0|_{Z_1}$ are also regular. Then:
\begin{itemize}
\item[(i)] the induced isomorphisms 
$$
\langle L_0,L_1,L_2,\ldots,L_n\rangle=\langle L_1|_{Z_0},L_2|_{Z_0},\ldots,L_n|_{Z_0}\rangle=\langle L_2|_{Z_0\cap Z_1},\ldots,L_n|_{Z_0\cap Z_1}\rangle
$$
and
$$
\langle L_0,L_1,L_2,\ldots,L_n\rangle=\langle L_1,L_0,L_2,\ldots,L_n\rangle
$$
$$
=\langle L_0|_{Z_1},L_2|_{Z_1},\ldots,L_n|_{Z_1}\rangle=\langle L_2|_{Z_0\cap Z_1},\ldots,L_n|_{Z_0\cap Z_1}\rangle
$$
coincide; 
\item[(ii)] setting for brevity $T:=dd^c\phi_2\wedge\ldots\wedge dd^c\phi_n$, we have 
$$
\int\log|s_0|_{\phi_0}\,T\wedge\d_{Z_1}-\int\log|s_0|_{\phi_0}\,dd^c\phi_1\wedge T\wedge\d_X
$$
$$
=\int\log|s_1|_{\phi_1}\,T\wedge\d_{Z_0}-\int\log|s_1|_{\phi_1}\,dd^c\phi_0\wedge T\wedge\d_X. 
$$
\end{itemize}
\end{lem}
When $X$, $Z_0$ and $Z_1$ are smooth over an Archimedean field $K$ and the metrics are smooth, the result is a special case~\cite[Lemma I.1.2]{Elkik}. The proof we propose here follows standard monotone regularization arguments in pluripotential theory (compare for instance~\cite[Proposition III.4.9]{Dem}). 

\begin{proof} To prove (i), recall from Theorem~\ref{thm:del} that the restriction isomorphism 
$$
\langle L_0,L_1,L_2,\ldots,L_n\rangle=\langle L_1|_{Z_0},L_2|_{Z_0},\ldots,L_n|_{Z_0}\rangle
$$
is induced by~\eqref{equ:deldiv} and the restriction isomorphism 
$$
\la(L)-\la(L-Z_0)=\la_{Z_0}(L|_{Z_0}),
$$
valid for any line bundle $L$ on $X$. Thus
$$
\langle L_0,L_1,L_2,\ldots,L_n\rangle=\langle L_2|_{Z_0\cap Z_1},L_2|_{Z_0\cap Z_1},\ldots,L_n|_{Z_0\cap Z_1}\rangle
$$
is induced by 
$$
\la(L)-\la(L-Z_0)-\la(L-Z_1)+\la(L-Z_0-Z_1)=\la_{Z_0\cap Z_1}(L),
$$
whose symmetry with respect to $Z_0$ and $Z_1$ implies (i). 

We turn to (ii). By Lemma~\ref{lem:dpsh} and multilinearity, we may assume that $\phi_2,\ldots,\phi_n$ are smooth and semipositive, and also that $\phi_0,\phi_1$ are smooth dpsh. 

For $i=0,1$, set $f_i:=\log|s_i|_{\phi_i}$. As in the proof of Theorem~\ref{thm:logs}, we can choose an (ample) line bundle $H_i$, a decreasing sequences of smooth, semipositive metrics $(\p_{i,j})_j$ on $H_i$ and a smooth, semipositive metric $\p'_i$ such that $f_{i,j}:=\p_{i,j}-\p'_i$ converges to $f_i$ as $j\to\infty$. By integration by parts, 
$$
\int f_{0,j}\,dd^c f_{1,j}\wedge T\wedge\d_X=\int f_{1,j}\,dd^cf_{0,j}\wedge T\wedge\d_X,
$$
and we will thus be done if we prove that
\begin{equation}\label{equ:delsym}
\int f_{0,j}\,dd^cf_{1,j}\wedge T\wedge\d_X\to\int f_0\,T\wedge\d_{Z_1}-\int f_0\,dd^c\phi_1\wedge T\wedge\d_X,
\end{equation}
and 
$$
\int f_{1,j}dd^cf_{0,j}\wedge T\wedge\d_X\to\int f_1\,T\wedge\d_{Z_0}-\int f_1\,dd^c\phi_0\wedge T\wedge\d_X, 
$$
the first of these being enough, by symmetry. Pick indices $j\ge k$. Then $f_{0,j}\le f_{0,k}$, and hence 
$$
\int f_{0,j}\,dd^cf_{1,j}\wedge T\wedge\d_X=\int f_{0,j}dd^c\p_{1,j}\wedge T\wedge\d_X-\int f_{0,j}\,dd^c\p'_1\wedge T\wedge\d_X
$$
$$
\le\int f_{0,k}\,dd^c\p_{1,j}\wedge T\wedge\d_X-\int f_{0,j}\,dd^c\p'_1\wedge T\wedge\d_X
$$
$$
=\int f_{0,k}\,dd^c f_{1,j}\wedge T\wedge\d_X+\int (f_{0,k}-f_{0,j})\,dd^c\p'_1\wedge T\wedge\d_X. 
$$
Letting first $j\to\infty$, we infer
$$
\limsup_j\int f_{0,j}\,dd^cf_{1,j}\wedge T\wedge\d_X\le\int f_{0,k}\,dd^c f_1\wedge T\wedge\d_X+\int (f_{0,k}-f_{0})\,dd^c\p'_1\wedge T\wedge\d_X
$$
$$
=\int f_{0,k}\,T\wedge\d_{Z_1}-\int f_{0,k}\,dd^c\phi_1\wedge T\wedge\d_X+\int (f_{0,k}-f_{0})\,dd^c\p'_1\wedge T\wedge\d_X,
$$
and letting next $k\to\infty$ yields 
$$
\limsup_j\int f_{0,j}\,dd^cf_{1,j}\wedge T\wedge\d_X\le\int f_0\,T\wedge\d_{Z_1}-\int f_0\,dd^c\phi_1\wedge T\wedge\d_X, 
$$
by monotone convergence. For the converse, pick now indices $j\le k$. Then
$$
\int f_{0,j}\,dd^c f_{1,j}\wedge T\wedge\d_X=\int f_{1,j}\,dd^cf_{0,j}\wedge T\wedge\d_X=\int f_{1,j}\,dd^c\p_{0,j}\wedge T\wedge\d_X-\int f_{1,j}\,dd^c\p'_0\wedge T\wedge\d_X
$$
$$
\ge\int f_{1,k}\,dd^c\p_{0,j}\wedge T\wedge\d_X-\int f_{1,j}\,dd^c\p'_0\wedge T\wedge\d_X=\int f_{1,k}\,dd^c f_{0,j}\wedge T\wedge\d_X+\int(f_{1,k}-f_{1,j})\,dd^c\p'_0\wedge T\wedge\d_X
$$
$$
=\int f_{0,j}\,dd^c f_{1,k}\wedge T\wedge\d_X+\int(f_{1,k}-f_{1,j})\,dd^c\p'_0\wedge T\wedge\d_X.
$$
As $k\to\infty$, $dd^c f_{1,k}\wedge T\wedge\d_X\to dd^c f_1\wedge T\wedge\d_X=T\wedge\d_{Z_1}+dd^c\phi_1\wedge T\wedge\d_X
$ in the sense of currents, while $\int f_{1,k}\,dd^c\p'_0\wedge T\wedge\d_X\to\int f_1\,dd^c\p'_0\wedge T\wedge\d_X$, by monotone convergence. Thus
$$
\int f_{0,j}dd^cf_{1,j}\wedge T\wedge\d_X\ge\int f_{0,j} T\wedge\d_{Z_1}-\int f_{0,j}\,dd^c\phi_1\wedge T\wedge\d_X+\int(f_1-f_{1,j})\,dd^c\p'_0\wedge T\wedge\d_X,
$$
and hence 
$$
\liminf_{j\to\infty}\int f_{0,j}dd^cf_{1,j}\wedge T\wedge\d_X\ge \int f_0\, T\wedge\d_{Z_1}-\int f_0\,dd^c\phi_1\wedge T\wedge\d_X,
$$
using again monotone convergence. 
\end{proof}

\begin{proof}[Proof of Theorem~\ref{thm:delmetr}] We argue by induction on $n$. Assume $n=0$, \ie $X=\spec A$ with $A$ finite, flat over $K$. We can then pick a trivializing section $s\in \Hnot(X,L_0)$, and~\eqref{equ:delmetres} yields
\begin{equation}\label{equ:normmet}
\log|\langle s\rangle|_{\langle\phi_0\rangle}=\int\log|s|_{\phi_0}\d_X.
\end{equation}
In view of~\eqref{equ:normsection}, \eqref{equ:normmet} defines a metric $\langle\phi_0\rangle$ on $\langle L_0\rangle$ iff
\begin{equation}\label{equ:logu}
\int\log|u|\,\d_X=\log|N_{A/K}(u)|
\end{equation}
To see this, we may assume that $A$ is local, by Example~\ref{exam:dX}. Then
$$
\int\log|u|\,\d_X=(\dim_K A)\log|u(x)|,
$$
with $x$ is the unique point of $X^\an$, corresponding to the absolute value on the residue field $K'$ of $A$. A standard computation gives $N_{A/K}(u)=  N_{K'/K}(u|_{K'})^m$ with $m=\dim_{K'} A$ the length of $A$ (see for instance~\cite[Lemma 1.16.2]{Mor}). It is also well-known that $|N_{K'/K}(u)|=|u|_{K'}^{[K':K]}=|u(x)|^{[K':K]}$, and~\eqref{equ:logu} follows since $\dim_K A=m[K':K]$. 

We now consider a tuple of continuous dpsh metrics on line bundles $L_0,\ldots,L_n$ on a projective $K$-scheme $X$ of dimension $n$, and argue basically as in the proof of~\cite[Th\'eor\`eme I.1.1]{Elkik}. Given a regular section $s_0\in \Hnot(L_0)$ with divisor $Z_0$, we force the restriction formula by setting temporarily
\begin{equation}\label{equ:resdel}
\langle(\phi_0,s_0),\phi_1,\ldots,\phi_n\rangle:=\langle\phi_1|_{Z_0},\ldots,\phi_n|_{Z_0}\rangle-\int\log|s_0|_{\phi_0}\,dd^c\phi_1\wedge\ldots\wedge dd^c\phi_n\wedge\d_X,
\end{equation}
as metrics on $\langle L_0,\ldots,L_n\rangle=\langle L_1|_{Z_0},\ldots,L_n|_{Z_0}\rangle$, where the right-hand side is well-defined by induction and Theorem~\ref{thm:logs}. By induction, this pairing is multilinear and symmetric in $(\phi_1,\ldots,\phi_n)$. 

Assume next given a regular section $s_1\in \Hnot(L_1)$, with divisor $Z_1$, such that $s_1|_{Z_0}$ is also regular. We are going to show that 
\begin{equation}\label{equ:regsecdel}
\langle(\phi_0,s_0),\phi_1,\ldots,\phi_n\rangle=\langle(\phi_1,s_1),\phi_0,\phi_2,\ldots,\phi_n\rangle
\end{equation}
as metrics on $\langle L_0,L_1,L_2,\ldots,L_n\rangle=\langle L_1,L_0,L_2,\ldots,L_n\rangle$. We first claim that $s_0|_{Z_1}$ is regular as well. Indeed, the sequence $(s_0,s_1)$ is regular at each point of $Z_0\cap Z_1$, and hence so is $(s_1,s_0)$, by general properties of regular sequences in a Noetherian local ring. This means that $s_0|_{Z_1}$ is regular at each point of $Z_0$, and also trivially at each point not in $Z_0=\div(s_0)$, proving the claim. By induction, the restriction formula applies to $Z_0$, and yields 
$$
\langle\phi_1|_{Z_0},\ldots,\phi_n|_{Z_0}\rangle=\langle\phi_2|_{Z_0\cap Z_1},\ldots,\phi_n|_{Z_0\cap Z_1}\rangle-\int\log|s_1|_{\phi_1}\,T\wedge\d_{Z_0}
$$
with $T:=dd^c\phi_2\wedge\ldots\wedge dd^c\phi_n$. Using the commutativity property of Lemma~\ref{lem:delmetr}, we infer
$$
\langle(\phi_0,s_0),\phi_1,\ldots,\phi_n\rangle
$$
$$
=\langle\phi_2|_{Z_0\cap Z_1},\ldots,\phi_n|_{Z_0\cap Z_1}\rangle-\int\log|s_1|_{\phi_1}\,T\wedge\d_{Z_0}-\int\log|s_0|_{\phi_0}\,dd^c\phi_1\wedge T\wedge\d_X
$$
$$
=\langle\phi_2|_{Z_0\cap Z_1},\ldots,\phi_n|_{Z_0\cap Z_1}\rangle-\int\log|s_0|_{\phi_0}\,T\wedge\d_{Z_1}-\int\log|s_1|_{\phi_1}\,dd^c\phi_0\wedge T\wedge\d_X
$$
$$
=\langle\phi_0|_{Z_1},\phi_2|_{Z_1},\ldots,\phi_n|_{Z_1}\rangle-\int\log|s_1|_{\phi_1}\,dd^c\phi_0\wedge T\wedge\d_X
$$
$$
=\langle(\phi_1,s_1),\phi_0,\phi_2,\ldots,\phi_n\rangle
$$
as metrics on 
$$
\langle L_0,\ldots,L_n\rangle=\langle L_2|_{Z_0\cap Z_1},\ldots,L_n|_{Z_0\cap Z_1}\rangle=\langle L_0|_{Z_1},\ldots,L_n|_{Z_1}\rangle=\langle L_1,L_0,L_2,\ldots,L_n\rangle,
$$
using again induction, which proves~\eqref{equ:regsecdel}. 

Assume now that $L_0$ admits two regular sections $s_0,s'_0\in \Hnot(L_0)$, with divisors $Z_0,Z'_0$. We claim that 
$$
\langle (\phi_0,s_0),\phi_1,\ldots,\phi_n\rangle=\langle (\phi_0,s'_0),\phi_1,\ldots,\phi_n\rangle
$$
as metrics on $\langle L_0.\dots,L_n\rangle$. By linearity with respect to $\phi_1$, we may assume that $L_1$ is ample. For $m\gg 1$, we then find a regular section $s_1\in \Hnot(m L_1)$ such that $s_1|_{Z_0}$ and $s_1|_{Z'_0}$ are also regular, since this amounts to requiring that $s_1$ is nonzero at each of the finitely many associated points of $X$, $Z_0$ and $Z'_0$. Applying~\eqref{equ:regsecdel} twice, we infer
$$
\langle (\phi_0,s_0),\phi_1,\ldots,\phi_n\rangle=m^{-1}\langle (m\phi_1,s_1),\phi_0,\phi_2,\ldots,\phi_n\rangle=\langle (\phi_0,s'_0),\phi_1,\ldots,\phi_n\rangle
$$
which proves the claim. Given a tuple of continuous dpsh metrics $\phi_0,\ldots,\phi_n$ on line bundles $L_0,\ldots,L_n$ such that $L_0$ admits a regular section, we can thus set
$$
\langle \phi_0,\ldots,\phi_n\rangle:=\langle (\phi_0,s_0),\phi_1,\ldots,\phi_n\rangle
$$
for any choice of regular section $s_0\in \Hnot(L_0)$. If $\phi'_0$ is a continuous dpsh metric on another line bundle $L'_0$ admitting a regular section, then $L_0+L'_0$ also admits a regular section, and~\eqref{equ:regsecdel} shows that
$$
\langle \phi_0+\phi'_0,\phi_1,\ldots,\phi_n\rangle=\langle \phi_0,\phi_1,\ldots,\phi_n\rangle+\langle \phi'_0,\phi_1,\ldots,\phi_n\rangle.
$$
Consider finally a continuous dpsh metric $\phi_0$ on an arbitrary line bundle $L_0$. After adding to $L_0$ a sufficiently large multiple of an ample line bundle, we can write $L_0=M_0-M'_0$ with $M_0,M'_0$ both admitting a regular section. Since $M'_0$ admits a continuous dpsh metric $\p'_0$, we get $\phi_0=\p_0-\p'_0$ with $\p_0:=\phi_0+\p'_0$ continuous dpsh on $M_0$, and we set 
$$
\langle\phi_0,\phi_1,\ldots,\phi_n\rangle:=\langle\p_0,\phi_1,\ldots,\phi_n\rangle-\langle\p'_0,\phi_1,\ldots,\phi_n\rangle.
$$
By the previous additivity property, this is independent of the choice of $\p_0,\p'_0$, and multiadditive with respect to $\phi_0$. By~\eqref{equ:regsecdel}, the pairing $\langle\phi_0,\phi_1,\ldots,\phi_n\rangle$ is symmetric with respect to $\phi_0,\phi_1$, and hence with respect to any permutation of $\phi_0,\ldots,\phi_n$, being already symmetric with respect to $\phi_1,\ldots,\phi_n$ by induction. Finally, the pairing satisfies by construction the restriction formula, and we are done. 
\end{proof}
%
%
\subsection{The non-Archimedean case}
We assume in this section that $K$ is non-Archimedean, possibly trivially valued. By the change of metric formula~\eqref{equ:changemetr}, the Deligne pairing is basically determined by its values on model metrics, which are described by the following result. 

\begin{thm}\label{thm:delmetrNA} Let $L_0,\ldots,L_n$ be line bundles on a projective $K$-scheme $X$ of dimension $n$. Let $\cL_0,\ldots,\cL_n$ be models of $L_0,\ldots,L_n$, determined on a projective model $\cX$ of $X$, and denote by $\phi_0,\ldots,\phi_n$ the corresponding model metrics. Then 
\begin{equation}\label{equ:delmodel}
\langle\phi_0,\ldots,\phi_n\rangle=\phi_{\langle\cL_0,\ldots,\cL_n\rangle}.
\end{equation}
\end{thm}
Here $\langle\cL_0,\ldots,\cL_n\rangle$ is the Deligne pairing with respect to the flat projective morphism $\cX\to\Spec K^\circ$, cf.~Appendix~\ref{sec:Deligne}. Since Deligne pairings commute with base change, $\langle\cL_0,\ldots,\cL_n\rangle$ is a model of $\langle L_0,\ldots,L_n\rangle$, and $\phi_{\langle\cL_0,\ldots,\cL_n\rangle}$ denotes the corresponding model metric. 

\begin{proof} We argue by induction on $n$. Assume first $n=0$, and hence $\cX=\spec\cA$ with $\cA$ finite free over $K^\circ$. As recalled in Lemma~\ref{lem:norm}, any line bundle on $\cX$ is trivial in a neighborhood of the special fiber of $\cX$, and hence trivial on $\cX$. A trivializing section $s\in \Hnot(\cL_0)$ induces a trivializing section $\langle s\rangle=N_{\cX/K^\circ}(s)$ of $\langle\cL_0\rangle=N_{\cX/K^\circ}(\cL_0)$, as well as a trivializing section $s\in \Hnot(X,L_0)$, such that $|s|_{\phi_0}=1$ on $X^\an$. By~\eqref{equ:normmet}, we infer
$$
\log|\langle s\rangle|_{\langle\phi_0\rangle}=\int\log|s|_{\phi_0}\d_X=0, 
$$
which is equivalent to $\langle\phi_0\rangle=\phi_{\langle\cL_0\rangle}$. 

Let now $L_0,\ldots,L_n$ be line bundle on a projective $K$-scheme $X$ of dimension $n$, and $\cL_0,\ldots,\cL_n$ be models of $L_0,\ldots,L_n$ determined on a projective model $\cX$ of $X$. Since any line bundle on $\cX$ can be written as a difference of ample line bundles, we may assume that $\cL_0$ is ample, by linearity. After passing to a large enough multiple, we assume that there exists a relatively regular section $s\in \Hnot(\cL_0)$, by Proposition~\ref{prop:reg}. Denote by $\cZ$ its divisor, with generic fiber $Z$. By the restriction property of Deligne pairings on models, $s$ induces an isomorphism 
$$
\langle\cL_0,\cdots,\cL_n\rangle=\langle\cL_1|_\cZ,\ldots,\cL_n|_\cZ\rangle
$$
of models of $\langle L_0,\ldots,L_n\rangle=\langle L_1|_Z,\ldots,L_n|_Z\rangle$. By induction, we get
$$
\phi_{\langle\cL_0,\cdots,\cL_n\rangle}=\phi_{\langle\cL_1|_\cZ,\ldots,\cL_n|_\cZ\rangle}=\langle\phi_1|_Z,\ldots,\phi_n|_Z\rangle
$$
$$
=\langle\phi_0,\ldots,\phi_n\rangle+\int\log|s|_{\phi_0}\,dd^c\phi_1\wedge\ldots\wedge dd^c\phi_n\wedge\d_X,
$$
using the restriction property of Theorem~\ref{thm:delmetr}. It remains to show that the right-hand integral vanishes, which follows from the fact that $dd^c\phi_1\wedge\ldots\wedge dd^c\phi_n\wedge\d_X$ is supported on $\Ga(\cX)$ (cf.~Example~\ref{exam:MAshilov}) together with Lemma~\ref{lem:normregsec} below. 
\end{proof}

\begin{lem}\label{lem:normregsec} If $s$ is a relatively regular section of a line bundle $\cL$ on a model $\cX$, then $|s|_{\phi_\cL}=1$ on $\Ga(\cX)$. 
\end{lem}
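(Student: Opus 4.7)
The plan is to work locally on $\cX$ and reduce the claim to a direct statement about the reduction map. Pick $x \in \Ga(\cX)$ with reduction $\eta := \red_\cX(x)$, a generic point of $\cX_s$. Choose an affine open $\cU = \spec(\cA)$ of $\cX$ containing $\eta$ on which $\cL$ admits a trivializing section $\tau$, and write $s|_\cU = f\tau$ with $f \in \cA$. By the very definition of the model metric $\phi_\cL$, we have $|s|_{\phi_\cL} = |f|$ on $\cU^\beth$, so the problem reduces to showing $|f(x)| = 1$.

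The inequality $|f(x)| \le 1$ is immediate from $f \in \cA$ and $x \in \cU^\beth$. For the reverse inequality, recall that the reduction map sends $x$ to the prime ideal $\fp_\eta \subset \cA$ (containing $K^{\circ\circ}\cA$) characterized by
\[
\fp_\eta = \{g \in \cA \mid |g(x)| < 1\}.
\]
The hypothesis that $s$ is relatively regular means, in particular, that $s$ does not vanish at any generic point of $\cX_s$, hence not at $\eta$. This translates precisely to $f \notin \fp_\eta$, which gives $|f(x)| \ge 1$, and combined with the previous inequality yields $|f(x)| = 1$.

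The argument is essentially a one-step unraveling of definitions, so there is no substantial obstacle; the only point that requires a moment's care is identifying "$s$ does not vanish at $\eta$" with the condition $f \notin \fp_\eta$. This is immediate because the residue class $\bar f$ of $f$ in $\cA/K^{\circ\circ}\cA = \cO(\cU_s)$ is precisely the local expression of $s \bmod K^{\circ\circ}$ in the trivialization $\tau$, and $s$ vanishes at $\eta$ if and only if $\bar f \in \fp_\eta/K^{\circ\circ}\cA$, i.e.\ $f \in \fp_\eta$.
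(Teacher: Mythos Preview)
Your proof is correct and follows essentially the same approach as the paper's: both localize at a Shilov point, use that a relatively regular section does not vanish at generic points of $\cX_s$, and conclude via the definition of the model metric. The only cosmetic difference is that the paper uses $s$ itself as the trivializing section on a neighborhood of $\xi$ (since $s$ is invertible there), whereas you pick an auxiliary trivialization $\tau$ and track the value $|f(x)|$ through the explicit description of the reduction map; both arguments unwind the same definitions.
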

\begin{proof} Since $s$ doesn't vanish at the associated points of $\cX_s$, $s$ is nonzero on the reduction $\xi:=\red_\cX(x)$ of any Shilov point $x\in X^\an$ defined by $\cX$. In other words, $s$ defines a trivializing section of $\cL$ on an open neighborhood $\cU$ of $\xi$, and we infer $|s|_{\phi_\cL}\equiv 1$ on $\cU^\beth$, by definition of the model metric $\phi_{\cL}$. 
\end{proof}
Using Theorem~\ref{thm:delmetrNA}, we relate Deligne pairings and Gubler's intersection theory on models~\cite{Gub98}. 

\begin{defi} If $D$ is a vertical Cartier divisor and $\cL_1,\ldots,\cL_n$ are line bundles on a projective model $\cX$ of $X$, we define their \emph{intersection number} as the real number
$$
(D\cdot\cL_1\cdot\ldots\cdot\cL_n):=\phi_{\langle\cO_\cX(D),\cL_1,\ldots,\cL_n\rangle}. 
$$
\end{defi} 
Here the right-hand side is a model function on $\spec K$, identified with its value on the unique point of $\spec K$. Equivalently, $(D\cdot\cL_1\ldots\cdot\cL_n)=v_K(f)$ for any choice of generator $f$ of the free $K^\circ$-module 
$$
\langle\cO(D),\cL_1,\ldots,\cL_n\rangle\hookrightarrow K.
$$
In particular, $(D\cdot\cL_1\ldots\cdot\cL_n)$ belongs to the (additive) value group $\Ga_K\subset\R$. 

As a special case of Theorem~\ref{thm:delmetrNA}, we have
\begin{equation}\label{equ:intnumb}
(D\cdot\cL_1\ldots\cdot\cL_n)=\int\phi_D\,dd^c\phi_{\cL_1}\wedge\ldots\wedge dd^c\phi_{\cL_n}\wedge\d_X.
\end{equation}
On the other hand, in~\cite[\S 3]{Gub98} Gubler associates to every vertical Cartier divisor $D$ on $\cX$ an $n$-dimensional cycle $\cyc(D)$ with coefficients in $\Ga_K$ on the finite type $\tilde K$-scheme $\cX_s$, such that the following holds:
\begin{itemize}
\item[(i)] if $\mu:\cX'\to\cX$ is a morphism of projective models, then $\cyc(D)=\mu_\star\cyc(\mu^\star D)$;
\item[(ii)] if $F/K$ is a non-Archimedean extension and $D_F$ is the pull-back of $D$ to the base change of $\cX$ to $F^\circ$, then $\cyc(D_F)=\cyc(D)_{\widetilde F}$;
\item[(iii)] if $\cX_s$ is reduced, every irreducible component $Y$ of $\cX_s$ determines a unique Shilov point $x_Y\in\Ga(\cX)$, and 
$$
\cyc(D)=\sum_Y\phi_D(x_Y)[Y].
$$
\end{itemize}

\begin{cor} Let $\cL_1,\ldots,\cL_n$ be line bundles on a model $\cX$ of $X$, and $D$ be a vertical Cartier divisor on $\cX$. Then 
$$
\left(D\cdot\cL_1\cdot \ldots\cdot\cL_n\right)=\left(\cyc(D)\cdot\cL_1|_{\cX_s}\cdot\ldots\cdot\cL_n|_{\cX_s}\right).
$$
\end{cor}
\begin{proof} By~\eqref{equ:intnumb}, $\left(D\cdot\cL_1\cdot \ldots\cdot\cL_n\right)$ is invariant under ground field extension, and we may thus assume that $K$ is algebraically closed and nontrivially valued. By multilinearity, we may further assume that $X$ is (geometrically) reduced. After replacing $\cX$ with a higher model, we can arrange for  $\cX_s$ to be reduced, by Theorem~\ref{thm:redfiber}. By~\eqref{equ:MAredfib}, we then have 
$$
\left(D\cdot\cL_1\cdot\ldots\cdot\cL_n\right)=\int\phi_D\,dd^c\phi_1\wedge\dots\wedge dd^c\phi_n
$$
$$
=\sum_Y\phi_D(x_Y)\left(\cL_1|_Y\cdot\ldots\cdot\cL_n|_Y\right)=\left(\cyc(D)\cdot\cL_1|_{\cX_s}\cdot\ldots\cdot\cL_n|_{\cX_s}\right).
$$
\end{proof}
%
%
\subsection{General metrics on the norm functor}
We extend the previous discussion on metrics on Deligne pairings to the setting of general finite flat morphisms $f: Y \to X$. In this setting, we will write abreviate $N = N_{X/Y}$ when there is no risk for ambiguity. 

The formula \eqref{equ:normmet} provides a general definition of a metric $N(\phi)$ on $N(L)$, provided we are given a metric $\phi$ on $L$. Namely, for $x\in X^\an$, and a section $s$ of $H^{0}(L|_{f^{-1}(U)})$ for some open neighborhood $U$ of $X$ such that $U^{\an}$ is an open neighborhood of $x$, we define
\begin{equation}\label{defi:normmetric}
\log |N(s)|_{N(\phi)} (x)= \int_{X} \log |s|_{\phi} \delta_{[f^{-1}(x)]}.
\end{equation}
Here $[f^{-1}(x)]$ denotes the fundamental cycle of $f^{-1}(x)$, defined in a way analogous to Example \ref{exam:dX}. The definition extends in a natural way to $\mathbb{Q}$-line bundles.

We have the following proposition. 
\begin{prop}\label{prop:descnormpsh}
Suppose $f: Y \to X$ is a finite flat morphism of projective geometrically reduced $K$-schemes, and $L$ an ample $\mathbb{Q}$-line bundle on $Y$. If $\phi$ is a continuous psh metric on $L$, then so is $N(\phi)$ on $N(L)$.
\end{prop}
\begin{proof} Arguing on the components of $X$ and $Y$, we may assume wlog that $f$ has constant degree $e>1$. The claim is standard in the Archimedean setting since it is straightforward to verify that the curvature of $N(L)$ is the direct image under $f$ of the curvature of $L$. 

Suppose now that $K$ is non-trivially valued and non-Archimedean. As it follows from \eqref{defi:normmetric}, the map $\phi\mapsto N(\phi)$ is Lipschitz continuous. By Theorem \ref{thm:pshZhang} we can hence restrict ourselves to the case when $\phi = \phi_\cL$, for a nef model $\cL$ of $L$ on a model $\cY$. By multiplicativity of the norm we may assume they are actual line bundles. By Proposition~\ref{prop:modelpullback} (and its proof) we can assume $f$ extends to a proper and flat morphism $f: \cY \to \cX$ of models. Flatness together with properness implies that the fiber dimension is locally constant, and hence $f$ is quasi-finite. As $f$ is automatically finitely presented these facts taken together imply it is also finite by \cite[IV.8.11.1]{EGA}. We can thus consider the norm of line bundles $\cL$ which are models of $N(L).$

 An application of the Riemann--Roch theorem for singular curves shows that $N(\cL)$ is nef if $\cL$ is. We claim that $N(\phi_{\cL}) = \phi_{N(\cL)}$ (compare with the first part of the proof of Theorem \ref{thm:delmetrNA}), in which case we conclude that $N(\phi_{\cL})$ is a nef metric. To prove the claim, pick a point $x \in \cX$, and an open neighborhood $\cU$ of $x$, and a trivialization $\tau$ of $\cL$ on $f^{-1}(\cU)$. Since $|\tau|_{\phi_\cL}(y')=1$ for $y' \in f^{-1}(x')$ for $x'\in \cU^\beth$, it follows from definition \eqref{defi:normmetric} that $N(\phi_\cL)$ is the model metric induced by $N(\cL)$. 

The trivially valued case can be deduced from the non-trivially valued case, in a straightforward application of Lemma \ref{lem:CM} and Theorem \ref{thm:pshbase}.
\end{proof}
%
%
\begin{rmk}
It is possible to prove that the norm functor maps continuous metrics to continuous metrics. 
\end{rmk}

\begin{prop}\label{lem:commutewithP} Let $f:Y \to X$ be a finite, flat morphism of geometrically reduced projective $K$-schemes. Suppose we are given an ample line bundle $L$ on $Y$ with a continuous metric $\phi$. Then $P(f^\star  \phi )= f^\star P(\phi)$. In particular, if $P(\phi)$ is continuous, so is $P(f^\star \phi)$. 
\end{prop}

\begin{proof} In the Archimedean case, this is a special case of \cite[Proposition 2.9]{BB}. The statement in the trivially valued case follows from the non-Archimedean non-trivially valued one by comparing with a non-trivially valued field extension as in the previous lemma. We will henceforth suppose $K$ is non-Archimedean non-trivially valued. 

It follows from Proposition~\ref{prop:modelpullback} and Lipschitz continuity of $f^\star$ that $f^\star \left( C^0\cap \PSH(L)\right) \subseteq C^0\cap\PSH(f^\star L)$, from which we infer the inequality $f^\star P(\phi) \leq P(f^\star \phi)$. 

To show the reverse inequality we show that any continuous psh metric $\psi$ on $f^\star L$ with $\psi \leq f^\star \phi$ admits a majoration $\psi \leq f^\star \psi_\infty \leq f^\star \phi$, where $\psi_\infty$ is continuous psh. Since $P$ is non-decreasing we conclude that 
$$\psi \leq f^\star \psi_\infty \leq f^\star P(\phi) \leq P(f^\star \phi)$$
which implies the statement by taking supremum over all continuous psh metrics $\psi.$

\end{proof}
\begin{lem}
Suppose that $f:Y \to X$ is a finite flat morphism of connected projective geometrically reduced $K$-schemes, of degree $e$, and $\psi$ is a  continuous psh metric on $f^\star L $. Inductively define $\psi_0 = \psi$ and $\psi_k = \max \left\{\frac{1}{e}f^\ast \phi_{N({\psi_{k-1}})}, \psi\right\}$. Then all $\psi_k$ are continuous psh, and converge uniformly to $f^\ast \psi_\infty$, where 
$$\psi_\infty(x)= \sup_{y' \in f^{-1}(x)} \psi(y')$$ 
is a continuous psh metric on $\frac{1}{e}N(L)$.
\end{lem}
\begin{proof}
The fact that all the metrics $\psi_k$ are continuous psh follows from Proposition \ref{prop:descnormpsh} together with the statement that continuous psh metrics are closed under max. Without loss of generality we suppose that $e > 1$. Here we can define 
$$
f^\star \psi_\infty(x)= \sup_{y' \in f^{-1}(x) }\left( \psi-f^\star \phi \right)(x)+\phi(f(x))
$$ 
for an auxiliary metric $\phi$ on $L$. It is independent of $\phi$, and $\psi_\infty$ defines a metric on $\frac{1}{e}N(L)$.

By construction we have 
$$
0 \leq \left(f^\star \psi_\infty-\psi_k\right)(y) \leq  f^\star \psi_\infty(y)- \left[\frac{1}{e}  \sum_{y' \in f^{-1}(f(y))}   {\psi_{k-1}}(y') \right ].
$$  Here the sum is computed with taking the multiplicities of the points into account. For any $x \in X^\an$, there is always at least one $y \in f^{-1}(x)$ such that $\psi_k(y) = \operatorname{max}\{\psi_{k-1}, \psi\}(y) = f^\star\psi_\infty(y)$. It follows that an upper bound $M_k$ of $f^\star \psi_\infty-\psi_k$ can be chosen so that $M_k \leq \left( 1- \frac{1}{e} \right) M_{k-1} \leq \left(1-\frac{1}{e}\right)^{k} M_0$. We conclude that $\psi_k \to f^\star \psi_\infty$ uniformly so that $f^\star  \psi_\infty$  is continuous psh, and hence so must $\psi_\infty$ be. 
\end{proof}
\section{Asymptotics of relative volumes}\label{sec:Euler}
This section introduces graded norms and their relative volumes, reviews the basic properties of the Monge--Amp\`ere energy functional, and then proves Theorem A, the main result of this paper.

As before, $K$ denotes a complete valued field. Throughout this section, $X$ is a \emph{geometrically reduced}\footnote{If $K$ is perfect (\eg of characteristic $0$), this is equivalent to being reduced.} projective $K$-scheme, and $n:=\dim X$. 
%
%
%
\subsection{Relative volumes of graded norms}
Let $L$ be any line bundle on $X$, and consider the graded $K$-algebra $R=R(X,L)$ with graded pieces $R_m:=\Hnot(X,mL)$. Set also 
$$
N_m:=\dim R_m=\hnot(X,mL).
$$

\begin{defi} A \emph{graded norm} $\n_\bullet$ on the graded algebra $R$ is defined as a sequence of norms $\n_m$ on the graded pieces $\Hnot(mL)$, $m\in\N$, which is
\begin{itemize}
\item[(i)] \emph{submultiplicative}, \ie
$$
\|s\cdot s'\|_{m+m'}\le\|s\|_m\cdot\|s'\|_{m'}
$$
for all $m,m'\in\N$, $s\in R_m$, $s'\in R_{m'}$;
\item[(ii)] \emph{bounded}, \ie
$$
\dGI\left(\n_m,\n_{m\phi}\right)=O(m). 
$$
for some (hence any) bounded metric $\phi$ on $L$. 
\end{itemize}
\end{defi}

\begin{exam}\label{exam:gradedsup} Each bounded metric $\phi$ on $L$ determines a \emph{graded supnorm} $\n_{\bullet\phi}$, with graded pieces $\n_{m\phi}$. 
\end{exam}

\begin{exam}\label{exam:gradedmodel} If $\cL$ is a model of $L$, then the lattice norms $\n_{\Hnot(m\cL)}$ form a graded sequence, denoted by $\n_{\Hnot(\bullet\cL)}$. Boundedness is a consequence of Theorem~\ref{thm:supmodel}.
\end{exam}

Graded norms are preserved under ground field extension: 

\begin{lem}\label{lem:chenmac} Let $F/K$ be a complete field extension. For any graded norm $\n_\bullet$ on $R=R(X,L)$, the sequence of ground field extensions $(\n_m)_F$ forms a graded norm on $R_F=R(X_F,L_F)$. Further, 
\begin{equation}\label{equ:volgroundbis}
\vol\left((\n_m)_F,(\n'_m)_F\right)=\vol(\n_m,\n'_m)+o(m^{n+1}). 
\end{equation}
\end{lem}
\begin{proof} Submultiplicativity of the sequence $(\n_m)_F$ follows from directly from Definition~\ref{defi:ground}. To see that the sequence is bounded, pick any continuous metric $\phi$ on $L$. By Proposition~\ref{prop:ground}, 
$$
\dGI((\n_m)_F,(\n_{m\phi})_F)=\dGI(\n_m,\n_{m\phi})=O(m),
$$
and it is thus enough to show that $\dGI\left((\n_{m\phi})_F,\n_{m\phi_F}\right)=O(m)$, which follows from Theorem~\ref{thm:supground}. Finally, \eqref{equ:volgroundbis} is a consequence of Proposition~\ref{prop:vol} combined with the estimate
\begin{equation}\label{equ:estimate}
N_m\log N_m=O(m^n\log m)=o(m^{n+1}),
\end{equation}

\end{proof}

The next result is basically due to Chen and Maclean~\cite{CMac}, following the strategy of Witt Nystr\"om~\cite{WN}, itself finding its roots in the work of Zaharjuta~\cite{Zah}. For the convenience of the reader, we review the argument below. 

\begin{thm}\label{thm:chenmac} Assume that $X$ is geometrically integral. Let $L$ be a line bundle on $X$, and pick two graded norms $\n_\bullet,\n'_\bullet$ on $R=R(X,L)$. Then $m^{-(n+1)}\vol(\n_m,\n'_m)$ admits a limit in $\R$. 
\end{thm}

\begin{defi} The \emph{relative volume} of the graded norms $\n_\bullet$, $\n'_\bullet$ is defined as
$$
\vol(\n_\bullet, \n'_\bullet):=\lim_{m\to\infty}\frac{n!}{m^{n+1}}\vol (\n_m, \n'_m).
$$
\end{defi}

\begin{proof}[Proof of Theorem~\ref{thm:chenmac}] Observe first that the result is trivial when $L$ is not big. Set indeed $N_m:=\hnot(mL)$. By the Lipschitz property of relative volumes (Proposition~\ref{prop:vol}), 
$$
\left|\vol(\n_m,\n'_m)\right|\le N_m\dGI(\n_m,\n'_m)=O(m N_m).
$$
If $L$ is not big, then $N_m=o(m^n)$, and hence $m^{-(n+1)}\vol(\n_m,\n'_m)\to 0$. 

We henceforth assume that $L$ is big. By Lemma~\ref{lem:chenmac}, we may further pass to a complete field extension and assume that $K$ is algebraically closed. 

We now follow the approach of~\cite{WN,CMac}, which combines the strategy of Zaharjuta~\cite{Zah}, relying on a multivariate version of Fekete's Lemma on subadditive sequences, with the Okounkov body construction~\cite{LM,KK}. Pick a regular point $p\in X(K)$, a regular sequence $(z_1,\ldots,z_n)$ in $\cO_{X,p}$, and consider the valuation $\nu:K(X)^\times\to(\Z^n,\mathrm{lex})$ defined as follows: by Cohen's structure theorem, every $f\in\cO_{X,p}$ admits a formal power series expansion $f=\sum_{\a\in\N^n} f_\a z^\a$, $f_\a\in K$, and we set
$$
\nu(f):=\min\{\a\in\N^n\mid f_\a\ne 0\}, 
$$
where the min is understood with respect to the lexicographic order. Note that $\nu$ is trivial on $K$, and has center $p$ on $X$. 

The valuation $\nu$ can be naturally evaluated on sections $s\in R_m=\Hnot(mL)$, by setting $\nu(s):=\nu(f)$ with $f\in\cO_{X,p}$ the function corresponding to $s$ in some choice of local trivialization of $L$ at $p$ (the definition being independent of the choice of trivialization). This induces an $\N^n$-filtration on $R_m$, with the key property that its graded pieces 
$$
\gr^\a R_m=\frac{\{s\in R_m\mid\nu(s)\ge\a\}}{\{s\in R_m\mid\nu(s)>\a\}}
$$
have dimension at most $1$ for all $\a\in\N^n$. Thus
$$
\Ga_m:=\nu\left(R_m\setminus\{0\}\right)\subset\N^n
$$
is finite, of cardinality $N_m=\dim R_m$, and $\gr^\a R_m$ is one-dimensional for each $\a\in\Ga_m$ (see for instance~\cite[Lemme 2.11]{Bourbaki}). The fact that $L$ is big implies that the semigroup 
$$
\Ga:=\bigcup_{m\in\N}\left(\{m\}\times\Ga_m\right)\subset\N^{n+1}
$$
generates $\Z^{n+1}$ as a group (cf.~\cite[Proposition 3.3]{Bourbaki}). Further, $\Ga_m$ grows linearly with $m$, \ie there exists $C>0$ such that 
\begin{equation}\label{equ:Gam}
|\a|:=\sum_i|\a_i|\le Cm\text{ for all }\a\in\Ga_m. 
\end{equation}
Indeed, the closure $\bigcup_{m\ge 1}m^{-1}\Ga_m$ in $\R^n$ is a convex body $\D(L)$, the \emph{Okounkov body} of $L$ with respect to $\nu$. 

Next fix a local trivialization $\tau$ of $L$ at $p$. For each $\a\in\Ga_m$, one can find a section $s\in\Hnot(mL)$ with Taylor expansion $s=z^\a+\text{higher order terms}$ (in the trivialization $\tau^m$). The class of $s$ in $\gr^\a R_m$ is independent of the choice of $s$, and hence a canonical generator $s_{m,\a}$ of $\gr^\a R_m$. The norm $\n_m$ induces a subquotient norm on $\gr^\a\Hnot(mL)$, characterized by
\begin{equation}\label{equ:grnm}
\|s_{m,\a}\|_m=\inf\{\|s\|_m\mid s\in \Hnot(mL),\,s=z^\a+\text{higher order terms}\},
\end{equation}
This gives rise to a superadditive function $\Phi:\Ga\to\R$, defined by 
$$
\Phi(m,\a):=-\log\|s_{m,\a}\|_m.
$$
We similarly have a superadditive function $\Phi':\Ga\to\R$ attached to $\n'_\bullet$, and a repeated application of Proposition~\ref{prop:vol}~ (vi), combined with the estimate $N_m\log N_m=o(m^{n+1})$, yields
$$
\vol(\n_m,\n'_m)=\sum_{\a\in\Ga_m}\left(\Phi(m,\a)-\Phi'(m,\a)\right)+o(m^{n+1}), 
$$
and hence
$$
\frac{1}{m^{n+1}}\vol(\n_m,\n'_m)=\frac{1}{m^n}\sum_{\a\in\Ga_m}\left(m^{-1}\Phi(m,\a)-m^{-1}\Phi'(m,\a)\right)+o(1).
$$
Now $\left|m^{-1}\Phi(m,\a)-m^{-1}\Phi'(m,\a)\right|\le m^{-1}\dGI(\n_m,\n'_m)=O(1)$ and $|\Ga_m|=N_m\sim\frac{m^n}{n!}\vol(L)$. By the general convergence result of~\cite[Theorem 4.3]{CMac} (see also~\cite[Theorem 1.3]{WN}), it remains to check that
$$
\sup_{m\ge 1,\,\a\in\Ga_m}m^{-1}\Phi(m,\a)<\infty,
$$
which will then also hold for $\Phi'$, by symmetry. By~\eqref{equ:Gam} and~\eqref{equ:grnm}, this is equivalent to the existence of a uniform constant $C>0$ such that 
\begin{equation}\label{equ:WN}
\log\|s\|_m\ge-C(m+|\a|)
\end{equation}
for all $s\in R_m$ with Taylor expansion $s=z^\a+\text{higher order terms}$. Thanks to the boundedness property of $\n_\bullet$, it is enough to check this when $\n_\bullet=\n_{\bullet\phi}$ is the graded supnorm attached to a continuous metric $\phi$, which we now prove along the lines of~\cite[Lemma 5.4]{WN}. By the $K$-analytic inverse function theorem, the coordinates $(z_i)$ at $p$ induce a $K$-analytic isomorphism of an open neighborhood $U$ of $p$ in $X(K)$ with an open polydisc $\DD(r)^n$ in $K^n$. The section $s$ induces an analytic function $f$ on $U$, with $s=f\tau^m$, and hence $\log|s|_{m\phi}=\log|f|+m\log|\tau|_{\phi}$. The function $\log|\tau|_\phi$ being locally bounded, it will be enough to show that any analytic function $f$ on $\DD(r)^n$ such that
$$
f=z^\a+\text{higher order terms}
$$
satisfies $\sup_{\DD(r)^n}|f|\ge r^{|\a|}$, which follows from a repeated application of the maximum principle in one variable. 
\end{proof}

By~\eqref{equ:volgroundbis}, relative volumes are invariant under ground field extension: 

\begin{prop}\label{prop:gradedvolground} Assume that $X$ is geometrically integral. Let $\n_\bullet$, $\n'_\bullet$ be graded norms on $R$, and $F/K$ be a complete field extension. Then 
$$
\vol\left((\n_\bullet)_F,(\n'_\bullet)_F\right)=\vol(\n_\bullet,\n'_\bullet).
$$
\end{prop}
%
%
%
\subsection{Relative volumes of metrics}
We come back here to the general setting of Section~\ref{sec:Euler}, and thus assume that $X$ is geometrically reduced. Using Theorem~\ref{thm:chenmac}, we prove: 
 
\begin{thm}\label{thm:relvolmetr} Assume that $X$ is geometrically reduced, and let $L$ be a line bundle on $X$. 
\begin{itemize}
\item[(i)] The \emph{volume} 
$$
\vol(L):=\lim_{m\to\infty}\frac{n!}{m^n}\hnot(mL)
$$ 
exists in $\R_{\ge 0}$. 
\item[(ii)] For any two bounded metrics $\phi,\p$ on $L$, the \emph{relative volume} 
$$
\vol(L,\phi,\p):=\lim_{m\to\infty}\frac{n!}{m^{n+1}}\vol(\n_{m\phi},\n_{m\p})
$$
exists in $\R$. 
\end{itemize}
Denote further by $(X_\a)_{\a\in A}$ the set of top-dimensional irreducible components of $X$, and let $\phi_\a$, $\p_\a$ be the pullbacks of $\phi,\p$ to $L_\a:=L|_{X_\a}$. Then  
\begin{equation}\label{equ:voladd}
\vol(L)=\sum_\a\vol\left(L_\a\right)
\end{equation}
and
\begin{equation}\label{equ:volreladd}
\vol(L,\phi,\p)=\sum_\a\vol(L_\a,\phi_\a,\p_\a).
\end{equation}
\end{thm}

\begin{lem}\label{lem:relvolmetr} Using the notation of Theorem~\ref{thm:relvolmetr}, we have
\begin{equation}\label{equ:volbig}
\hnot(X,mL)=\sum_{\a\in A}\hnot(X_\a,mL_\a)+o(m^n)
\end{equation}
and
\begin{equation}\label{equ:relvolbig}
\vol(\n_{m\phi},\n_{m\p})=\sum_{\a\in A}\vol(\n_{m\phi_\a},\n_{m\p_\a})+o(m^{n+1}). 
\end{equation}
\end{lem}
\begin{proof} Set $Y:=\coprod_{\a\in A} X_\a$, and denote by $\mu:Y\to X$ the canonical morphism. For each $m$, we have an exact sequence
$$
0\to\Hnot(X,mL)\to\Hnot(Y,m\mu^\star L)=\bigoplus_{\a\in A}\Hnot(X_\a,mL)\to\Hnot(X,\cF(mL))
$$
with $\cF:=\mu_\star\cO_Y/\cO_X$. Note that $\mu$ is an isomorphism over the complement of $\bigcup_{\a\ne\b} X_\a\cap X_\b$, which has dimension at most $n-1$. Thus $\supp\cF$ has dimension at most $n-1$, and hence $\hnot(X,\cF(mL))=O(m^{n-1})$, which proves~\eqref{equ:volbig}. Under the identification $\Hnot(Y,m\mu^\star L)=\bigoplus_{\a\in A}\Hnot(X_\a,mL_\a)$, the supnorm $\n_{m\mu^\star\phi}$ corresponds to $\max_\a\n_{m\phi_\a}$, and a repeated application of Proposition~\ref{prop:vol}, combined with~\eqref{equ:estimate}, yields~\eqref{equ:relvolbig}. 
\end{proof}

\begin{proof}[Proof of Theorem~\ref{thm:relvolmetr}] By~\eqref{equ:volgroundbis}, we may assume that $K$ is algebraically closed. By Lemma~\ref{lem:relvolmetr}, it is enough to prove the result with $X$ replaced by any of its top-dimensional irreducible component. We may thus assume that $X$ is (geometrically) integral, in which case the result follows from Theorem~\ref{thm:chenmac}. 
\end{proof}
We each $m\in\Z_{>0}$, we note the obvious homogeneity property
\begin{equation}\label{equ:volhom}
\vol(mL,m\phi,m\p)=m^{n+1}\vol(L,\phi,\p).
\end{equation}
As a result, we can make sense of relative volumes of metrics on a $\Q$-line bundle $L$. As usual, we say that $L$ is \emph{big} if $\vol(L)>0$. 

\begin{cor}\label{cor:big} Let $L$ be a $\Q$-line bundle on $X$. 
\begin{itemize}
\item[(i)] $L$ is big iff $L|_{X_\a}$ is big for some top-dimensional irreducible component $X_\a$ of $X$. 
\item[(ii)] If $L$ is nef, then $\vol(L)=(L^n)$.
\end{itemize}
\end{cor}
\begin{proof} By homogeneity, we may assume that $L$ is an honest line bundle. (i) is then a direct consequence of~\eqref{equ:voladd}. Assume that $L$ is nef. Since $X_\a$ is irreducible, it is well-known that the volume of the nef line bundle $L|_{X_\a}$ is given by $\vol(L|_{X_\a})=(L|_{X_\a})^n=c_1(L)^n\cdot[X_\a]$. By~\eqref{equ:voladd}, we get
$$
\vol(L)=\sum_\a c_1(L)^n\cdot[X_\a]=c_1(L)^n\cdot[X]=(L^n).
$$
\end{proof}

\begin{prop}\label{prop:volmetr} The following properties hold for all bounded metrics on a $\Q$-line bundle $L$: 
\begin{itemize}
\item[(i)] \emph{cocycle formula:} $\vol(L,\phi_1,\phi_2)+\vol(L,\phi_2,\phi_3)+\vol(L,\phi_3,\phi_1)=0$; 
\item[(ii)] \emph{monotonicity:} $\phi\le\phi'\Longrightarrow\vol(L,\phi,\p)\le\vol(L,\phi',\p)$; 
\item[(iii] \emph{scaling:} $\vol(L,\phi+c,\p)=\vol(L,\phi,\p)+\vol(L)c$ for $c\in\R$; 
\item[(iv)] \emph{Lipschitz continuity:}
$$
\left|\vol(L,\phi,\p)-\vol(L,\phi',\p')\right|\le\vol(L)\left(\sup|\phi-\phi'|+\sup|\p-\p'|\right). 
$$
\item[(v)] \emph{birational invariance:} for any projective birational morphism $\mu:X'\to X$, we have
$$
\vol(\mu^\star L,\mu^\star\phi,\mu^\star\p)=\vol(L,\phi,\p); 
$$
\item[(vi)] \emph{envelopes:} if $L$ is semiample, then the psh envelopes $\env(\phi)$, $\env(\p)$ satisfy
$$
\vol(L,\phi,\p)=\vol(\env(\phi),\env(\p)).
$$ 
\end{itemize}
\end{prop}
In particular, if $L$ is not big, \ie $\vol(L)=0$, then $\vol(L,\phi,\p)=0$ for all bounded metrics on $L$. 

\begin{proof} We may assume that $L$ is a line bundle, by homogeneity. Properties (i) and (ii) follow immediately from the analogous properties for relative volumes of norms. Further, $\n_{m(\phi+c)}=e^{-mc}\n_{m\phi}$, hence 
$$
\vol(\n_{m(\phi+c)},\n_{m\p})=\vol(\n_{m\phi},\n_{m\p})+m\hnot(mL)c,
$$
by Proposition~\ref{prop:vol}. Since $n!\hnot(mL)/m^n\to\vol(L)$, this yields (iii), and (iv) is a formal consequence of (i), (ii) and (iii). To see (v), note that the embedding 
$$
\Hnot(X,mL)\hookrightarrow\Hnot(X',m\mu^\star L)
$$
is an isometry with respect to the supnorms. By the projection formula, the quotient 
$$
W_m:=\Hnot(X',m\mu^\star L)/\Hnot(X,mL)
$$
injects into $\Hnot(X,\cF(mL))$, where $\cF:=\mu_\star\cO_{X'}/\cO_X$. The support of the latter sheaf is contained in the image of the exceptional locus of $\mu$, and hence has dimension at most $n-1$, and hence
$\dim W_m=o(m^n)$. By Proposition~\ref{prop:vol}, we infer
$$
\left|\vol\left(\n_{m\mu^\star\phi},\n_{m\mu^\star\p}\right)-\vol\left(\n_{m\phi},\n_{m\p}\right)\right|
$$
$$
\le(\dim W_m)\dGI\left(\n_{m\mu^\star\phi},\n_{m\mu^\star\p}\right)+O(N_m\log N_m)
$$
$$
\le (\dim W_m)m\sup|\phi-\p|+O(N_m\log N_m)=o(m^{n+1}),
$$
which implies (v). Assume finally that $L$ is semiample. Corollary~\ref{cor:QP} shows that $\n_{m\phi}=\n_{m\env(\phi)}$ and $\n_{m\p}=\n_{m\env(\p)}$ for all $m$, hence (vi). 

\end{proof}

\begin{prop}\label{prop:volground} Let $\phi,\p$ be \emph{continuous} metrics on $L$, $F/K$ a complete field extension, and $\phi_F,\p_F$ the pullbacks of $\phi,\p$ to the base change $L_F$. Then
$$
\vol(L_F,\phi_F,\p_F)=\vol(L,\phi,\p).
$$
\end{prop}

\begin{proof} By Theorem~\ref{thm:supground}, we have 
$$
\dGI\left((\n_{m\phi})_F,\n_{m\phi_F}\right)=o(m),\,\,\,\,\dGI\left((\n_{m\p})_F,\n_{m\p_F}\right)=o(m).
$$
By the Lipschitz property of relative volumes and Lemma~\ref{lem:chenmac}, we infer
$$
\vol\left(\n_{m\phi_F},\n_{m\p_F}\right)=\vol\left((\n_{m\phi})_F,(\n_{m\p})_F\right)+o(m^{n+1})
$$
$$
=\vol\left(\n_{m\phi},\n_{m\p}\right)+o(m^{n+1}),
$$
and the result now follows. 
\end{proof}
%
%
%
\subsection{Monge--Amp\`ere energy} 
In this section, $X$ is geometrically reduced, and $L$ is a semiample $\Q$-line bundle on $X$. Since $L$ is in particular nef, we have 
$$
V:=\vol(L)=(L^n),
$$
by Corollary~\ref{cor:big}. 

\begin{defi} Let $\phi,\p$ be continuous psh metrics on $L$. We define the \emph{relative Monge--Amp\`ere energy}\footnote{Note that the present normalization, which is more convenient for the purpose of this paper, is not uniform accross the literature.}of $\phi,\p$ as
\begin{equation}\label{equ:en}
\en(\phi,\p):=\frac{1}{n+1}\sum_{j=0}^n\int (\phi-\p)(dd^c\phi)^j\wedge(dd^c\p)^{n-j}\wedge\d_X. 
\end{equation}
\end{defi}

Recall that $\phi-\p$ is a continuous function on $X^\an$, which may thus be integrated against each positive Radon measure $(dd^c\phi)^j\wedge(dd^c\p)^{n-j}\wedge\d_X$. By~\eqref{equ:changemetr}, we have
\begin{equation}\label{equ:endel}
\en(\phi,\p)=\frac{1}{n+1}\left(\langle\phi^{n+1}\rangle-\langle\p^{n+1}\rangle\right). 
\end{equation}
Given a continuous psh metric $\p$, the functional $\phi\mapsto\en(\phi,\p)$ is characterized as the unique antiderivative of the Monge--Amp\`ere operator $\phi\mapsto (dd^c\phi)^n$ that vanishes at $\p$, in the sense that
\begin{equation}\label{equ:endiff}
\frac{d}{dt}\bigg|_{t=0}\en((1-t)\phi+t\phi',\p)=\int_{X^\an}(\phi'-\phi)(dd^c\phi)^n
\end{equation}
for any two continuous psh metrics $\phi,\phi'$, see~\cite[\S 3.8]{trivval}. 

\begin{prop}\label{prop:en} The Monge--Amp\`ere energy satisfies the following properties.
\begin{itemize}
\item[(i)] cocycle formula: $\en(\phi_1,\phi_2)+\en(\phi_2,\phi_3)+\en(\phi_3,\phi_1)=0$; 
\item[(ii)] monotonicity: $\phi\le\phi'\Longrightarrow\en(\phi,\p)\le\en(\phi',\p)$; 
\item[(iii)] scaling: $\en(\phi+c,\p)=\en(\phi,\p)+V c$ for $c\in\R$; 
\item[(iv)] homogeneity: $\en(a\phi,a\p)=a^{n+1}\en(\phi,\p)$ for $a\in\Z_{>0}$.
\item[(v)] Lipschitz continuity: 
$$
\left|\en(\phi,\p)-\en(\phi',\p')\right|\le V\left(\sup|\phi-\phi'|+\sup|\p-\p'|\right).
$$
\item[(vi)] for each complete field extension $F/K$ we have 
$$
\en(\phi_F,\p_F)=\en(\phi,\p).
$$
\item[(vii)] birational invariance: for any projective birational morphism $\mu:X'\to X$ we have 
$$
\en(\mu^\star\phi,\mu^\star\p)=\en(\phi,\p).
$$
\end{itemize}
\end{prop}
\begin{proof} (i) follows from~\eqref{equ:endiff}, and (ii)--- (iv) follow directly from~\eqref{equ:en}, using that
$$
\int(dd^c\phi)^j\wedge(dd^c\p)^{n-j}\wedge\d_X=(L^n)=V.
$$
(v) is a formal consequence of (ii) and (ii), and (vi) follows from~\eqref{equ:en} and the compatibility of mixed Monge--Amp\`ere measures with ground field extension, cf.~Proposition~\ref{prop:mixedMA}. Finally, (vii) is a consequence of~\eqref{equ:en} and the projection formula $\mu_\star\d_{X'}=\d_X$. 
\end{proof}

%
%
\subsection{Proof of Theorem A}
The following result corresponds to Theorem A in the introduction. It was first established in~\cite{BB} when $K$ is Archimedean, and in~\cite{BG+} when $K$ is discretely valued.

\begin{thm}\label{thm:Euler} Let $X$ be a geometrically reduced projective $K$-scheme, and $L$ be a semiample $\Q$-line bundle on $X$. For any two continuous psh metrics $\phi,\p$ on $L$, we then have
$$
\vol(L,\phi,\p)=\en(\phi,\p).
$$
\end{thm}

\begin{cor}\label{cor:Euler} Let $\phi,\p$ be arbitrary continuous metrics on $L$. Let $\nu:\tilde X\to X$ be the normalization morphism, and assume that Conjecture~\ref{conj:env} holds. Then 
$$
\vol(L,\phi,\p)=\en\left(\env(\nu^\star\phi),\env(\nu^\star\p)\right).
$$
\end{cor}
\begin{proof} By Proposition~\ref{prop:volmetr}, $\vol(L,\phi,\p)=\vol(\nu^\star\phi,\nu^\star\p)=\vol\left(\env(\nu^\star\phi),\env(\nu^\star\p)\right)$. Since $\tilde X$ is normal, Conjecture~\ref{conj:env} implies that $\env(\nu^\star\phi)$ and $\env(\nu^\star\p)$ are continuous and psh, and Theorem~\ref{thm:Euler} yields the result. 

\end{proof}

\begin{lem}\label{lem:volmodel} Assume that $K$ is non-Archimedean, and let $\cL$, $\cM$ be models of $L$, with associated model metrics $\phi_\cL,\phi_{\cM}$ and graded norms $\n_{\Hnot(\bullet\cL)}$, $\n_{\Hnot(\bullet\cM)}$ (cf.~Example~\ref{exam:gradedmodel}). Then 
$$
\vol\left(L,\phi_\cL,\phi_{\cM}\right)=\lim_{m\to\infty}\frac{n!}{m^{n+1}}\vol\left(\n_{\Hnot(m\cL)},\n_{\Hnot(m\cM)}\right).
$$
\end{lem}

\begin{proof} By Theorem~\ref{thm:supmodel}, we have 
$$
\dGI\left(\n_{m\phi_\cL},\n_{\Hnot(m\cL)}\right)=O(1),\,\,\,\,\,\dGI\left(\n_{m\phi_{\cM}},\n_{\Hnot(m\cM)}\right)=O(1).
$$
By Lipschitz continuity of relative volumes of norms (Proposition~\ref{prop:vol}), this yields
$$
\vol\left(\n_{m\phi_{\cL}},\n_{m\phi_{\cM}}\right)=\vol\left(\n_{\Hnot(m\cL)},\n_{\Hnot(m\cM)}\right)+O(N_m),
$$
which implies the result. 
\end{proof}

\begin{proof}[Proof of Theorem~\ref{thm:Euler}] We claim that it is enough to prove the result when $K$ is algebraically closed and nontrivially valued, $X$ is irreducible, and $L$ is an ample line bundle. The first condition can be reached by passing to an appropriate complete field extension of $K$, by invariance of relative volumes and the Monge--Amp\`ere energy under ground field extension (Proposition~\ref{prop:volground} and Proposition~\ref{prop:en}). By Lemma~\ref{lem:chenmac}, we may further assume that $X$ is irreducible and $L$ is big. By Lemma~\ref{lem:Stein}, there exists a birational morphism $f:X\to Y$ and an ample $\Q$-line bundle $A$, unique up to isomorphism, such that $L=f^\star A$ and $f_\star\cO_X=\cO_Y$, and $\phi,\p$ descend to continuous psh metrics on $A$. By birational invariance of relative volumes (Proposition~\ref{prop:volmetr}) and of the Monge--Amp\`ere energy (Proposition~\ref{prop:en}), we may thus replace $L$ with $A$ and assume that $L$ is ample. By homogeneity~\eqref{equ:volhom}, we may finally assume that $L$ is an honest (ample) line bundle, which concludes the proof of the claim. 

Consider first the Archimedean case, \ie $K=\C$. By birational invariance of relative volumes and of the Monge--Amp\`ere energy, we can replace $X$ with a resolution of singularities. Then $X$ is smooth, $L$ is big and semiample, and the result is then a special case of~\cite[Theorem A]{BB} (which deals with an arbitrary big line bundle). 

We now assume that $K$ is non-Archimedean. We claim that it is enough to prove Theorem~\ref{thm:Euler} when $\phi,\p$ are model metrics determined by ample models $\cL,\cM$ of $L$ on some model $\cX$. By Theorem~\ref{thm:pshZhang}, $\phi$ and $\p$ are uniform limits of model metrics determined by nef $\Q$-models $\cL$, $\cM$ of $L$. Thanks to the Lipschitz continuity of relative volumes (Proposition~\ref{prop:volmetr}) and of the Monge--Amp\`ere energy (Proposition~\ref{prop:en}), it is thus enough to prove the result for such model metrics. After pulling-back to a higher model, we can assume that $\cL$, $\cM$ are determined on the same model $\cX$ of $X$, and also that $L$ further extends to an ample $\Q$-line bundle $\cH$ on $\cX$ (cf.~\cite[Lemma 4.12]{GM}). Denote by $D,E$ the vertical $\Q$-Cartier divisors on $\cX$ such that $\cH-\cL=D$, $\cH-\cM=E$. For each $\d\in\Q\cap(0,1)$, the $\Q$-line bundles 
$$
\cL_\d:=(1-\d)\cL+\d\cH=\cL+\d D,\,\,\,\,\cM_\d:=(1-\d)\cM+\d\cH=\cM+\d E
$$ 
are ample, and the model metrics $\phi_{\cL_\d}=\phi_\cL+\d\phi_D$, $\phi_{\cM_\d}=\phi_{\cL}+\d\phi_{E}$ they determine converge uniformly to $\phi_\cL,\phi_{\cM}$. Replacing $\cL$, $\cM$ with $\cL_\d$, $\cM_\d$, we may thus assume that $\cL$, $\cM$ are ample on $\cX$. After replacing $L$ with a multiple, we can finally assume that $\cL$, $\cM$ are honest line bundles, by homogeneity of $\vol$ and $\en$, which proves the claim above. 

Suppose thus $\phi=\phi_{\cL}$, $\p=\phi_{\cM}$ with $\cL,\cM$ ample models of $L$ on a model $\cX$ of $X$. By Lemma~\ref{lem:volmodel}, 
\begin{equation}\label{equ:volH0}
\vol(L,\phi,\p)=\lim_{m\to\infty}\frac{n!}{m^{n+1}}\vol\left(\n_{\Hnot(m\cL)},\n_{\Hnot(m\cM)}\right).
\end{equation}
By Serre vanishing, the higher cohomology of $m\cL$ and $m\cM$ vanishes for $m\gg 1$, and Corollary~\ref{cor:KM} yields Knudsen--Mumford expansions
$$
\det \Hnot(m\cL)=\frac{m^{n+1}}{(n+1)!}\langle\cL^{n+1}\rangle+O(m^n)
$$
and
$$
\det \Hnot(m\cM)=\frac{m^{n+1}}{(n+1)!}\langle\cM^{n+1}\rangle+O(m^n), 
$$
as $\Q$-line bundles on $\spec K^\circ$. Thus
$$
\vol\left(\n_{\Hnot(m\cL)},\n_{\Hnot(m\cM)}\right)=\log\frac{\det\n_{\Hnot(m\cM)}}{\det\n_{\Hnot(m\cL)}}
$$
$$
=\phi_{\det \Hnot(m\cL)}-\phi_{\det \Hnot(m\cM)}=\frac{m^{n+1}}{(n+1)!}\left(\phi_{\langle\cL^{n+1}\rangle}-\phi_{\langle\cM^{n+1}\rangle}\right)+O(m^n). 
$$
By Theorem~\ref{thm:delmetrNA}, we have $\phi_{\langle\cL^{n+1}\rangle}=\langle\phi^{n+1}\rangle$ and $\phi_{\langle\cM^{n+1}\rangle}=\langle\p^{n+1}\rangle$. We thus get as desired
$$
\vol(L,\phi,\p)=\frac{1}{n+1}\left(\langle\phi^{n+1}\rangle-\p^{n+1}\rangle\right)=\en(\phi,\p),
$$
by~\eqref{equ:endel}. 
\end{proof}

%
%
\section{Transfinite diameter and Fekete points}\label{sec:Feketetrans}
%
%

Following the strategy developed in~\cite{BB,BBW} in the (complex) Archimedean case, we rely on Theorem~\ref{thm:Euler} to show the existence of transfinite diameters, and then use it to establish an equidistribution result for Fekete points, assuming a differentiability property that holds under appropriate assumptions on $K$ and $X$, by~\cite{nama,BG+,trivval}. 

As in the previous section, $X$ is a geometrically reduced, projective scheme over a complete valued field $K$. 

%
%
\subsection{Existence of the transfinite diameter}
Let $L$ be a line bundle on $X$. Set $N:=\hnot(X,L)$, and define the \emph{Vandermonde embedding} 
$$
\Psi:\det \Hnot(X,L)\hookrightarrow \Hnot(X^N,L^{\boxtimes N})
$$
as the composition of the antisymmetrization operator 
\begin{equation*}
\begin{split}
\det \Hnot(X,L) & \hookrightarrow \Hnot(X,L)^{\otimes N}\\
s_1 \wedge \dots \wedge s_N & \mapsto \sum_{\sigma \in\mathfrak{S}_N} (-1)^{\Op{sgn} \sigma} s_{\sigma(1)} \otimes \dots \otimes s_{\sigma(N)}
\end{split}
\end{equation*} 
with the canonical isomorphism $\Hnot(X,L)^{\otimes N}\simeq \Hnot(X^N,L^{\boxtimes N})$. Given a basis $(s_1,\ldots,s_N)$ of $\Hnot(L)$, $\Psi(s_1\wedge\ldots\wedge s_N)$ can be more informally written as the Vandermonde (or Slater) determinant 
$$
\Psi(s_1\wedge\ldots\wedge s_N)(x_1,\ldots,x_N)=\det(s_i(x_j))_{1\le i,j\le N}. 
$$
\begin{defi} Let $\phi$ be a continuous metric on $L$, and $\n$ be a norm on $\Hnot(L)$. We define the \emph{Vandermonde function} of $\phi$ relative to $\n$ as 
$$
V_{\phi,\n}:=\frac{|\Psi(\om)|_{\phi^{\boxtimes N}}}{\det\|\om\|}\in  \cz\left(\left(X^N\right)^\an\right),
$$
and the \emph{diameter of $\phi$ relative to $\n$} as
$$
\d\left(\phi,\n\right):=\sup_{\left(X^N\right)^\an} V_{\phi,\n}=\frac{\|\Psi(\om)\|_{\phi^{\boxtimes N}}}{\det\|\om\|}.
$$
\end{defi}
Here $\om$ is a generator of $\det \Hnot(L)$, the definition being independent of the choice of $\om$. 

\begin{rmk}\label{rmk:projNA} While the canonical map $(X^N)^\an\to(X^\an)^N$ is of course a homeomorphism in the Archimedean case, it is merely continuous and surjective in general in the non-Archimedean case. 
\end{rmk}

Using Theorem~\ref{thm:chenmac}, we are going to establish the following existence result for transfinite diameters. 

\begin{thm}\label{thm:trans} For any two continuous metrics $\phi,\p$ on $L$, the \emph{transfinite diameter} 
$$
\d_\infty\left(\phi,\p\right):=\lim_{m\to\infty}\d\left(m\phi,\n_{m\p}\right)^{n!/m^{n+1}}
$$
exists in $\R_+$. Further, 
\begin{equation}\label{equ:trans}
\log\d_\infty\left(\phi,\p\right)=\vol(L,\p,\phi). 
\end{equation}
\end{thm}

Combining~\eqref{equ:trans} with Theorem~\ref{thm:Euler} and Corollary~\ref{cor:Euler}, we get: 

\begin{cor}\label{cor:trans} Assume that $L$ is semiample, and let $\phi,\p$ be continuous metrics on $L$.
\begin{itemize}
\item[(i)] If $\phi,\p$ are psh, then $\d_\infty(\phi,\p)=\exp\en(\p,\phi)$. 

\item[(ii)] If Conjecture~\ref{conj:env} holds, then 
$$
\d_\infty(\phi,\p)=\exp\en(\env(\nu^\star\p),\env(\nu^\star\phi)),
$$
with $\nu:\tX\to X$ the normalization morphism. 
\end{itemize}
\end{cor}

%


%

For each $m$, denote by 
$$
\Psi_m:\det \Hnot(mL)\hookrightarrow \Hnot\left((mL)^{\boxtimes N_m}\right)
$$
the Vandermonde embedding. Via $\Psi_m$, the supnorm $\n_{(m\phi)^{\boxtimes N_m}}$ restricts to a norm on the line $\det \Hnot(mL)$, which we denote by $\Psi_m^\star \n_{(m\phi)^{\boxtimes N_m}}$. The key fact leading to Theorem~\ref{thm:trans} is the following estimate. 

\begin{lem}\label{lem:trans} For each $\phi\in  \cz(L)$ we have 
$$
\dGI\left(\det\n_{m\phi},\Psi_m^\star \n_{(m\phi)^{\boxtimes N_m}}\right)=o(m^{n+1}).
$$ 
\end{lem}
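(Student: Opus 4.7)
The statement concerns two norms on the one-dimensional $K$-vector space $\det H^0(X,mL)$, so $\dGI(\det\n_{m\phi},\Psi_m^*\n_{(m\phi)^{\boxtimes N_m}})$ equals $|\log\det\|\omega_m\|_{m\phi}-\log\Psi_m^*\|\omega_m\|_{(m\phi)^{\boxtimes N_m}}|$ for any nonzero generator $\omega_m$, and the task is to exhibit a single well-chosen $\omega_m$ making both terms $O(N_m)$. Using that the sup-norm operator is $m$-Lipschitz (Lemma~\ref{lem:suplip}), that $\det$ is $N_m$-Lipschitz (Lemma~\ref{lem:detlip}), and that pullback along $\Psi_m$ is $1$-Lipschitz, both norms depend $mN_m$-Lipschitz continuously on $\phi\in\cC^0(L)$. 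By a standard density argument (model metrics being uniformly dense in $\cC^0(L)$ in the non-Archimedean non-trivially valued case, and the trivially valued case being reducible to this via Lemma~\ref{lem:CM} together with the invariance of sup-norms under suitable ground-field extensions), I may reduce the problem to $\phi=\phi_\cL$ a model metric determined by an ample line bundle $\cL$ on a projective model $\cX$ of $X$. The Archimedean case is handled in parallel with $L^2$-norms and Bernstein-Markov in place of lattice norms and the reduced fiber theorem, following the scheme of \cite{BB}.

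Fix a $K^\circ$-basis $(s_1,\dots,s_{N_m})$ of the lattice $H^0(\cX,m\cL)\subset H^0(X,mL)$ and set $\omega_m:=s_1\wedge\dots\wedge s_{N_m}$. By Corollary~\ref{cor:detmodel}, $\omega_m$ is a $K^\circ$-generator of $\det H^0(\cX,m\cL)$ with $\det\n_{H^0(\cX,m\cL)}=\n_{\det H^0(\cX,m\cL)}$, so $\det\|\omega_m\|_{H^0(\cX,m\cL)}=1$. Theorem~\ref{thm:supmodel} furnishes a constant $C>1$ with $\dGI(\n_{m\phi},\n_{H^0(\cX,m\cL)})\le\log C$ uniformly in $m$, hence by the $N_m$-Lipschitz property of the determinant
\[
\bigl|\log\det\|\omega_m\|_{m\phi}\bigr|\le N_m\log C=O(N_m).
\]

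For the Vandermonde-pulled-back norm, the antisymmetrization
\[
\Psi_m(\omega_m)=\sum_{\sigma\in\cS_{N_m}}(-1)^{\sgn\sigma}\,s_{\sigma(1)}\otimes\dots\otimes s_{\sigma(N_m)}
\]
lies in $H^0(\cX^{N_m},(m\cL)^{\boxtimes N_m})$. For $m\gg 1$ the higher cohomology of $m\cL$ vanishes, and K\"unneth identifies this module with the free $K^\circ$-module $H^0(\cX,m\cL)^{\otimes N_m}$; in the canonical tensor basis the coefficients of $\Psi_m(\omega_m)$ are all $\pm 1$, so its lattice norm equals $1$. Applying Theorem~\ref{thm:supmodelbis} with $r=N_m$, $m_1=m$, and the single model metric $\phi=\phi_\cL$ produces a constant $C'\ge 1$ independent of $m$ and $r$ such that
\[
\n_{(m\phi)^{\boxtimes N_m}}\le\n_{H^0(\cX^{N_m},(m\cL)^{\boxtimes N_m})}\le C'^{N_m}\,\n_{(m\phi)^{\boxtimes N_m}};
\]
evaluating at $\Psi_m(\omega_m)$ gives $C'^{-N_m}\le\|\Psi_m(\omega_m)\|_{(m\phi)^{\boxtimes N_m}}\le 1$, whence
\[
\bigl|\log\Psi_m^*\|\omega_m\|_{(m\phi)^{\boxtimes N_m}}\bigr|\le N_m\log C'=O(N_m).
\]

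Combining the two displayed estimates yields $\dGI(\det\n_{m\phi},\Psi_m^*\n_{(m\phi)^{\boxtimes N_m}})\le N_m(\log C+\log C')=O(N_m)$, which since $N_m=O(m^n)$ is in particular $o(mN_m)$. The decisive step is the estimate on the Vandermonde pullback: the easier direction $\Psi_m^*\n\le\det\n$ follows immediately from the ultrametric inequality applied to the permutation expansion of the Vandermonde determinant, but the reverse direction is genuinely non-trivial—it requires controlling the distortion between sup-norm and lattice norm on the $N_m$-fold self-product $\cX^{N_m}$ by a factor $C'^{N_m}$ with $C'$ \emph{independent} of $N_m$. This uniform exponential bound is precisely the content of Theorem~\ref{thm:supmodelbis}, and ultimately rests on the Bosch-L\"utkebohmert-Raynaud reduced fiber theorem.
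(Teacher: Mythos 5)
Your reduction step contains a genuine gap. You reduce ``by density'' to the case where $\phi=\phi_\cL$ is the model metric of a genuine \emph{ample line bundle} $\cL$ extending $L$ on a projective model $\cX$, and the rest of your computation really does require $\cL$ to be an honest line bundle, since you form the lattices $H^0(\cX,m\cL)$ for \emph{every} $m$. But what is dense in $\cC^0(L)$ (for $K$ non-Archimedean, nontrivially valued) is the set of metrics attached to $\Q$-models, equivalently pure Fubini--Study metrics (Propositions~\ref{prop:puredense} and~\ref{prop:puremodel}); the set of metrics attached to line-bundle models of $L$ itself is in general \emph{not} dense. Indeed, two line-bundle models of $L$ differ by an integral vertical Cartier divisor, so at a fixed divisorial point (say the Gauss point of $\P^1$ over $\Q_p$) the values $\phi_\cM(x)$, as $\cM$ ranges over all line-bundle models of $L$ on all models of $X$, lie in a fixed coset of the discrete group $v_K(\pi_K)\Z$ when $K$ is discretely valued; such metrics therefore cannot approximate an arbitrary continuous metric, and your Lipschitz-plus-density argument does not reach them.

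This is precisely the point the paper's proof has to work around, and apart from it your computation is essentially the paper's own: Theorem~\ref{thm:supmodel} for the first estimate, the isometry of the Vandermonde embedding for lattice norms (your ``$\pm1$ coefficients'' computation is exactly Lemma~\ref{lem:PsiNA}, which needs no Serre vanishing), and Theorem~\ref{thm:supmodelbis} with $r=N_m$ for the uniform exponential bound. The missing device is the following: after reducing to a model metric $\phi_\cL$ with $\cL$ only a $\Q$-line bundle, pass to a higher model on which $L$ also extends to a genuine ample line bundle $\cH$, fix $a\ge1$ with $a\cL$ a line bundle, write $m=ka+r$ with $0\le r<a$, and run your argument with the genuine line bundle $ka\cL+r\cH$ (a model of $mL$), applying Theorem~\ref{thm:supmodelbis} to the pair $a\cL,\cH$; the discrepancy between $m\phi$ and $ka\phi+r\phi_{\cH}$ is bounded uniformly in $m$ because $r<a$ and $\phi-\phi_\cH$ is bounded, so after taking the $N_m$-fold external product it only contributes $O(N_m)$. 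With this insertion your estimates go through verbatim. Your Archimedean paragraph is only a sketch, but the ingredients you name (Bernstein--Markov in each variable and the $L^2$ isometry up to $\sqrt{N_m!}$, with $\log(N_m!)=o(mN_m)$) are exactly those of the paper, so that part is acceptable as stated.
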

Observe first that $\dGI\left(\det\n_{m\phi},\Psi_m^\star \n_{(m\phi)^{\boxtimes N_m}}\right)$ is a Lipschitz continuous function of $\phi\in  \cz(L)$, with Lipschitz constant $O(m^{n+1})$. It is thus enough to prove the result for $\phi$ in a dense subset of $ \cz(L)$. The proof will proceed by comparison with certain pure diagonalizable norms, an $L^2$ norm in the Archimedean case, and a lattice norm in the non-Archimedean. 

\begin{lem}\label{lem:Psiarch} Assume $K$ is Archimedean, pick a continuous metric $\phi$ on $L$ and a smooth volume form $\mu$ on $X$, and denote by $\n_{\mu,\phi}$ and $\n_{\mu^N,\phi^{\boxtimes N}}$ the induced $L^2$-norms on $\Hnot(L)$ and $\Hnot(L^{\boxtimes N})$. Then 
$$
\Psi^\star \n_{\mu^N,\phi^{\boxtimes N}}=\sqrt{N!}\det\n_{\mu,\phi}
$$
as norms on $\det \Hnot(L)$. 
\end{lem}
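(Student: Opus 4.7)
My plan is a direct computation based on the fact that $\det H^0(L)$ is one-dimensional, so it is enough to verify the claimed identity on a single nonzero element. I would pick an $L^2(\mu,\phi)$-orthonormal basis $(s_i)_{1\le i\le N}$ of $H^0(L)$. On the determinant side, since $\n_{\mu,\phi}$ is Hermitian (hence diagonalizable) and $(s_i)$ is orthonormal for it, Lemma~\ref{lem:detdiag} gives $\det\|s_1\wedge\dots\wedge s_N\|_{\mu,\phi}=\prod_i\|s_i\|_{\mu,\phi}=1$.

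The heart of the argument is then to compute $\|\Psi(s_1\wedge\dots\wedge s_N)\|^2_{\mu^N,\phi^{\boxtimes N}}$. Using the definition \eqref{equ:antisym}, the pointwise squared length at $(x_1,\dots,x_N)\in X^N$ expands as
\[
\Bigl|\sum_{\sigma\in\fS_N}(-1)^{\sgn\sigma}\,s_{\sigma(1)}(x_1)\otimes\dots\otimes s_{\sigma(N)}(x_N)\Bigr|^2_{\phi^{\boxtimes N}}
=\sum_{\sigma,\tau\in\fS_N}(-1)^{\sgn\sigma+\sgn\tau}\prod_{j=1}^N\langle s_{\sigma(j)}(x_j),s_{\tau(j)}(x_j)\rangle_\phi,
\]
where $\langle\cdot,\cdot\rangle_\phi$ is the Hermitian inner product on fibers of $L$ induced by $\phi$. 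Integrating with respect to $\mu^N=\mu\otimes\dots\otimes\mu$ and applying Fubini, each product of fiberwise pairings factors into a product of integrals $\int_X\langle s_{\sigma(j)},s_{\tau(j)}\rangle_\phi\,d\mu=\langle s_{\sigma(j)},s_{\tau(j)}\rangle_{\mu,\phi}$.

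By orthonormality this factor equals $\delta_{\sigma(j),\tau(j)}$, so the only surviving contributions come from $\sigma=\tau$, each contributing $1$. Summing over $\sigma\in\fS_N$ gives exactly $N!$, whence $\|\Psi(s_1\wedge\dots\wedge s_N)\|_{\mu^N,\phi^{\boxtimes N}}=\sqrt{N!}$. Comparing with $\det\|s_1\wedge\dots\wedge s_N\|_{\mu,\phi}=1$ and invoking one-dimensionality of $\det H^0(L)$ to extend the identity to every element of the determinant line concludes the proof. There is no real obstacle here; the computation is essentially the standard Gram-determinant identity for Slater/Vandermonde determinants, and one only needs to keep track of the antisymmetrization constant in the definition of $\Psi$.
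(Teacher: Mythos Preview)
Your proof is correct and follows essentially the same approach as the paper: pick an $L^2$-orthonormal basis, use Fubini to identify the $L^2$-norm on $H^0(L^{\boxtimes N})$ with the tensor norm on $H^0(L)^{\otimes N}$, and observe that the antisymmetrization of $s_1\wedge\dots\wedge s_N$ has squared norm $N!$ since the pure tensors $s_{i_1}\otimes\dots\otimes s_{i_N}$ form an orthonormal basis. The paper states this last point in one line, while you unwind the double sum over $\fS_N\times\fS_N$ explicitly, but the content is identical.
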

\begin{proof} The statement is equivalent to~\cite[Lemma 5.3]{BB}, and goes as follows. By Fubini, the $L^2$-norm $\n_{\mu^N,\phi^{\boxtimes N}}$ on $\Hnot(L^{\boxtimes N})$ corresponds to the tensor norm $\n_{\mu,\phi}^{\otimes N}$ under the isomorphism $\Hnot(L^{\boxtimes N})\simeq \Hnot(L)^{\otimes N}$. If $(s_i)$ is an orthonormal basis of $\Hnot(L)$ with respect to  $\n_{\mu,\phi}$, then the tensors $s_{i_1}\otimes\dots\otimes s_{i_N}$ form an orthonormal basis of $\Hnot(L)^{\otimes N}$ with respect to $\n_{\mu,\phi}^{\otimes N}$. This implies that the norm of $s_1\wedge\ldots\wedge s_N$ under the anti-symmetrization operator $\det \Hnot(L)\hookrightarrow \Hnot(L)^{\otimes N}$ has squared-norm equal to $N!$, and the result follows. 
\end{proof}

\begin{lem}\label{lem:PsiNA} Assume $K$ is non-Archimedean. Let $\cL$ be a model of $L$, and $\n_{\Hnot(\cL)}$, $\n_{\Hnot\left(\cL^{\boxtimes N}\right)}$ be the induced lattice norms on $\Hnot(L)$ and $\Hnot\left(L^{\boxtimes N}\right)$. Then 
$$
\Psi^\star \n_{\Hnot\left(\cL^{\boxtimes N}\right)}=\det\n_{\Hnot(\cL)}.
$$ 
\end{lem}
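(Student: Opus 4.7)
Since $\det H^0(L)$ is one-dimensional, it suffices to show that $\Psi^* \n_{H^0(\cL^{\boxtimes N})}$ and $\det \n_{H^0(\cL)}$ agree on a single nonzero element. I would choose a $K^\circ$-basis $(e_i)_{1 \le i \le N}$ of the lattice $\cV := H^0(\cX, \cL)$---such a basis exists because $\cV$ is a finitely generated torsion-free module over the valuation ring $K^\circ$, hence free---and evaluate both sides at $\omega := e_1 \wedge \dots \wedge e_N$. By Lemma~\ref{lem:latticenorm}, $(e_i)$ is orthonormal for $\n_\cV$, so Lemma~\ref{lem:detdiag} (or equivalently Corollary~\ref{cor:detmodel}) gives $\det \n_\cV(\omega) = \prod_i \|e_i\|_\cV = 1$.

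The upper bound $\n_{H^0(\cL^{\boxtimes N})}(\Psi(\omega)) \le 1$ is then essentially immediate: writing $p_k : \cX^N \to \cX$ for the $k$-th projection, each external tensor product $e_I := p_1^* e_{i_1} \otimes \cdots \otimes p_N^* e_{i_N}$ is a section of $\cL^{\boxtimes N}$ over $\cX^N$, so it belongs to $H^0(\cX^N, \cL^{\boxtimes N})$. Consequently $\Psi(\omega) = \sum_{\sigma \in \fS_N} (-1)^{\sgn \sigma}\, e_{(\sigma(1),\dots,\sigma(N))}$ lies in the unit ball of $\n_{H^0(\cL^{\boxtimes N})}$.

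For the matching lower bound, the key claim is that the external tensor product induces an isomorphism of $K^\circ$-modules
\begin{equation*}
\cV^{\otimes_{K^\circ} N} = H^0(\cX, \cL)^{\otimes_{K^\circ} N} \xrightarrow{\;\sim\;} H^0\!\left(\cX^N, \cL^{\boxtimes N}\right).
\end{equation*}
Granting this, the $N^N$ tensors $(e_I)_I$ form a $K^\circ$-basis of $H^0(\cX^N, \cL^{\boxtimes N})$ and hence, again by Lemma~\ref{lem:latticenorm}, an orthonormal basis for the lattice norm $\n_{H^0(\cL^{\boxtimes N})}$. The Vandermonde $\Psi(\omega)$ is then a signed sum of $N!$ \emph{distinct} basis vectors, each of unit norm, so the ultrametric character of $\n_{H^0(\cL^{\boxtimes N})}$, combined with the fact that distinct orthonormal basis vectors cannot cancel in a sum, yields $\n_{H^0(\cL^{\boxtimes N})}(\Psi(\omega)) = 1$. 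Equality of the two norms on $\omega$, and hence everywhere on the one-dimensional space $\det H^0(L)$, follows.

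The main obstacle is justifying the displayed Künneth isomorphism over the possibly non-Noetherian valuation ring $K^\circ$. In the discretely or trivially valued (hence Noetherian) case, it follows from the standard flat base-change and Künneth formula for coherent cohomology of proper morphisms, applied inductively over the $N$ factors, using that $\cV$ is a free $K^\circ$-module. In the densely valued case, one instead invokes the Künneth-type statements for finitely presented flat proper $K^\circ$-morphisms developed in Appendix~\ref{sec:Deligne} along the lines of Ducrot's approach; here the flatness of $\cV$ over $K^\circ$ (torsion-free and finitely generated over a valuation ring implies free, hence flat) is precisely what is needed to conclude that the derived tensor product degenerates in degree zero.
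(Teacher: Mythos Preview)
Your argument is correct and follows essentially the same route as the paper: both use the K\"unneth isomorphism $H^0(\cX,\cL)^{\otimes N}\simeq H^0(\cX^N,\cL^{\boxtimes N})$ to see that the pure tensors in a $K^\circ$-basis $(e_i)$ form an orthonormal basis for the lattice norm on $H^0(L^{\boxtimes N})$, and then use the ultrametric inequality to conclude that the antisymmetrization operator is an isometric embedding. Your explicit splitting into an upper and lower bound and the discussion of why K\"unneth holds over the possibly non-Noetherian $K^\circ$ (via flat base change and freeness of $\cV$) add detail the paper leaves implicit, but the strategy is the same.
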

\begin{proof} The isomorphism $\Hnot(\cX^N,\cL^{\boxtimes N})\simeq \Hnot(\cX,\cL)^{\otimes N}$ shows that $\n_{\Hnot\left(\cL^{\boxtimes N}\right)}$ corresponds to the tensor norm $\n_{\Hnot(\cL)}^{\otimes N}$ under the isomorphism $\Hnot\left(L^{\boxtimes N}\right)\simeq \Hnot(L)^{\otimes N}$. On the other hand, if $(s_i)$ is an orthonormal basis of $\Hnot(L)$ with respect to $\n_{\Hnot(\cL)}$, then the tensors $s_{i_1}\otimes\dots\otimes s_{i_N}$ form an orthonormal basis of $\Hnot(L)^{\otimes N}$ with respect to $\n_{\Hnot(\cL)}^{\otimes N}$, which implies this time that the anti-symmetrization operator $\det \Hnot(L)\hookrightarrow \Hnot(L)^{\otimes N}$ is an isometric embedding with respect to $\det\n_{\Hnot(\cL)}$ and $\n_{\Hnot(\cL)}^{\otimes N}$.  
\end{proof}

\begin{proof}[Proof of Lemma~\ref{lem:trans}] Assume first that $K$ is Archimedean, and pick a smooth volume form $\mu$. By the Bernstein-Markov inequality~\cite[Lemma 3.2]{BB}, the supnorm $\n_{m\phi}$ and the $L^2$-norm $\n_{\mu,m\phi}$ on $\Hnot(mL)$ satisfy 
$$
\dGI\left(\n_{m\phi},\n_{\mu,m\phi}\right)=o(m),
$$
and hence 
\begin{equation}\label{equ:transA1}
\dGI\left(\det\n_{m\phi},\det\n_{\mu,m\phi}\right)=\left|\vol\left(\n_{m\phi},\n_{\mu,m\phi}\right)\right|=o(m^{n+1}),
\end{equation}
by Lipschitz continuity of relative volumes. As in~\cite[Step 2, p.378]{BB}, a successive application of the Bernstein--Markov inequality in each variable similarly shows that the induced supnorm $\n_{(m\phi)^{\boxtimes N_m}}$ and $L^2$-norm $\n_{\mu^{N_m},(m\phi)^{\boxtimes N_m}}$ on $\Hnot((mL)^{\boxtimes N_m})$ satisfy 
$$
\dGI\left(\n_{(m\phi)^{\boxtimes N_m}},\n_{\mu^{N_m},(m\phi)^{\boxtimes N_m}}\right)=o(m^{n+1}),
$$
and hence
\begin{equation}\label{equ:transA2}
\dGI\left(\Psi_m^\star \n_{(m\phi)^{\boxtimes N_m}},\Psi_m^\star \n_{\mu^{N_m},(m\phi)^{\boxtimes N_m}}\right)=o(m^{n+1})
\end{equation}
as well. Finally, since $\log\left(N_m!\right)=O(m^n\log m)=o(m^{n+1})$, Lemma~\ref{lem:Psiarch} yields 
$$
\dGI\left(\det\n_{\mu,m\phi},\Psi_m^\star \n_{\mu^{N_m},(m\phi)^{\boxtimes N_m}}\right)=o(m^{n+1}),
$$
which combines with \eqref{equ:transA1} and \eqref{equ:transA2} to yield the desired estimate 
$$
\dGI\left(\det\n_{m\phi},\Psi_m^\star \n_{(m\phi)^{\boxtimes N_m}}\right)=o(m^{n+1}).
$$
Assume now that $K$ is non-Archimedean. Arguing as in \S\ref{sec:Euler}, we may assume after ground field extension that $K$ is nontrivially valued, so that model metrics are dense in $ \cz(L)$. As already noticed, 
$\dGI\left(\Psi_m^\star \n_{(m\phi)^{\boxtimes N_m}},\det\n_{m\phi}\right)$
is a Lipschitz continuous function of $\phi\in  \cz(L)$, with Lipschitz constant $O(m^{n+1})$; by density, it is thus enough to prove the result when $\phi=\phi_\cL$ is a model metric, determined by a $\Q$-line bundle $\cL$ extending $L$ on some projective model $\cX$ of $X$. After replacing $\cX$ with a higher model, we may assume that $L$ also admits a model $\cM$ determined on $\cX$ (Lemma~\ref{lem:modelexist}). As in the proof of Theorem~\ref{thm:supground}, fix $a\ge 1$ such that $a\cL$ is a line bundle, and write $m=qa+r$ with $q,r\in\N$ and $r<a$. Since $a\cL$ and $\cM$ are line bundles on $\cX$, Theorem~\ref{thm:supmodelbis} shows the existence of $C>0$ independent of $m$ such that
$$
\dGI\left(\n_{(qa\phi+r\phi_\cM)^{\boxtimes N_m}},\n_{\Hnot\left((qa\cL+r\cM)^{\boxtimes N_m}\right)}\right)=O(N_m).
$$
As $\phi-\phi_\cM$ and $r$ are bounded, it follows that
$$
\dGI\left(\n_{(m\phi)^{\boxtimes N_m}},\n_{\Hnot\left((qa\cL+r\cM)^{\boxtimes N_m}\right)}\right)=O(N_m),
$$
and hence 
\begin{equation}\label{equ:transNA1}
\dGI\left(\Psi_m^\star \n_{(m\phi)^{\boxtimes N_m}},\Psi_m^\star \n_{\Hnot\left((qa\cL+r\cM)^{\boxtimes N_m}\right)}\right)=O(N_m)
\end{equation}
By Lemma~\ref{lem:PsiNA}, we have 
$$
\Psi_m^\star \n_{\Hnot\left((qa\cL+r\cM)^{\boxtimes N_m}\right)}=\det\n_{\Hnot\left(qa\cL+r\cM\right)}.
$$
On the other hand, Theorem~\ref{thm:supmodel} yields
$$
\dGI\left(\n_{qa\phi+r\phi_\cM},\n_{\Hnot\left(qa\cL+r\cM\right)}\right)=O(N_m),
$$
hence
$$
\dGI\left(\det\n_{m\phi},\det\n_{\Hnot\left(qa\cL+r\cH\right)}\right)=O(N_m)
$$
by boundedness of $\phi-\phi_\cM$, and we conclude that
$$
\dGI\left(\Psi_m^\star \n_{(m\phi)^{\boxtimes N_m}},\det\n_{m\phi}\right)=O(N_m)
$$
when $\phi$ is a model metric. 
\end{proof}

\begin{proof}[Proof of Theorem~\ref{thm:trans}] For any choice of generator $\om\in\det \Hnot(mL)$, we have 
$$
\sup_{\left(X^N\right)^\an} V_{m\phi,\n_{m\p}}=\frac{\|\Psi_m(\om)\|_{(m\phi)^{\boxtimes N_m}}}{\det\|\om\|_{m\p}}
$$
$$
=\left(\frac{\|\Psi_m(\om)\|_{(m\phi)^{\boxtimes N_m}}}{\det\|\om\|_{m\phi}}\right)\left(\frac{\det\|\om\|_{m\phi}}{\det\|\om\|_{m\p}}\right). 
$$
By Lemma~\ref{lem:trans}, we infer
$$
\frac{n!}{m^{n+1}}\log\d(m\phi,\n_m)=\frac{n!}{m^{n+1}}\vol\left(\n_{m\p},\n_{m\phi}\right)+o(1),
$$
and hence 
$$
\frac{n!}{m^{n+1}}\log\d(m\phi,\n_m)\to\vol(\n_\bullet,\n_{\bullet\phi}).
$$
\end{proof}
%
%
\subsection{Equidistribution of Fekete points}

\begin{defi} Let $\phi\in  \cz(L)$ be a continuous metric on a line bundle $L$. A \emph{Fekete configuration} for $\phi$ is a point $P\in(X^N)^\an$ such that 
$$
\|\Psi(\om)\|_{\phi^{\boxtimes N}}=\sup_{(X^N)^\an}|\Psi(\om)|_{\phi^{\boxtimes N}}
$$
is achieved at $P$ for some, hence any, generator $\om\in\det \Hnot(L)$. 
\end{defi}
In terms of the Vandermonde function $V_{\phi,\n}$ relative to any given norm $\n$ on $\Hnot(L)$, $P\in (X^N)^\an$ is a Fekete configuration iff
$$
\sup_{(X^N)^\an}V_{\phi,\n}=V_{\phi,\n}(P).
$$
Our final result is an equidistribution result for Fekete configurations of $m\phi$ as $m\to\infty$, first established in the Archimedean case in~\cite{BBW}. In order to cover various cases in one stroke, we introduce the following terminology. 

\begin{defi} Let $L$ be a semiample line bundle, and $\phi$ be a continuous metric on $L$. For brevity, we shall say that \emph{differentiability holds at $\phi$} if: 
\begin{itemize}
\item[(i)] the psh envelope $\env(\phi)$ is continuous (hence psh);
\item[(ii)] for all $f\in \cz(X^\an)$ we have 
\begin{equation}\label{equ:diffvol}
\frac{d}{dt}\bigg|_{t=0}\vol(L,\phi+t f,\phi)=\int f\,\left(dd^c\env(\phi)\right)^n\wedge\d_X.
\end{equation}
\end{itemize}
\end{defi} 
Differentiability is known to hold at all continuous metrics when $X$ is smooth, $L$ is ample, and one of the following conditions is satisfied: 
\begin{itemize}
\item $K$ is Archimedean~\cite{BB}; 
\item $K$ is non-Archimedean, trivially or discretely valued, of residue characteristic zero~\cite{siminag,trivval};
\item $K$ is discretely valued of characteristic $p$, $(X,L)$ is defined over a function field of
transcendence degree $d$, and resolution of singularities is assumed in dimension $n+d$~\cite{GJKM,BG+}.
\end{itemize}

\begin{thm}\label{thm:equi} Let $L$ be a big and semiample line bundle, of volume $V:=\vol(L)=(L^n)$. Let $\phi$ be a continuous metric on $L$, and assume that differentiability holds at $\phi$. For each $m\gg 1$, pick a Fekete configuration $P_m\in (X^{N_m})^\an$ for $m\phi$. Then $P_m$ equidistributes to the probability measure 
$$
\mu_\phi:=V^{-1}\left(dd^c\env(\phi)\right)^n\wedge\d_X.
$$
\end{thm}
In particular, Fekete configurations for $m\phi$ become asymptotically unique as $m\to\infty$. Equidistribution means that
$$
\int_{X} f\,\d_{P_m}\to\int_{X}f\,\mu_\phi
$$
for all $f\in  \cz(X^\an)$. Here $\d_P$ denotes the averaging measure over $P\in(X^N)^\an$, or rather its image in $(X^\an)^N$ (see Remark~\ref{rmk:projNA}). 

The proof of Theorem~\ref{thm:equi} follows the strategy of~\cite{BBW}, itself inspired by a variational argument due to Szpiro--Ullmo--Zhang~\cite{SUZ}.

\begin{proof} Set for any continuous metric $\p$ 
$$
F_m(\p):=-\frac{n!}{m^{n+1}}\log V_{m\p,\n_{m\phi}}(P_m). 
$$
Then 
$$
F_m(\p)\ge-\frac{n!}{m^{n+1}}\log\d(m\p,\n_{m\phi}),
$$
with equality for $\p=\phi$, since $P_m$ is a Fekete configuration for $m\phi$. By Theorem~\ref{thm:trans}, we infer
$$
\liminf_{m\to\infty} F_m(\p)\ge\vol(L,\p,\phi),\,\,\,\,\,\,\lim_{m\to\infty} F_m(\phi)=\vol(L,\phi,\phi)=0, 
$$
and hence
\begin{equation}\label{equ:liminffm}
\liminf_{m\to\infty}\left(F_m(\p)-F_m(\phi)\right)\ge\vol(L,\p,\phi).
\end{equation}
For each $f\in  \cz(X^\an)$, observe that
$$
F_m(\phi+f)=F_m(\phi)+c_m\int_{X} f\,\d_{P_m}
$$
with 
$$
c_m:=\frac{n!}{m^n} N_m\to\vol(L)=V. 
$$
By~\eqref{equ:liminffm}, we get
$$
\liminf_{m\to\infty}\int f\,\d_{P_m}\ge V^{-1}\vol(L,\phi+f,\phi).
$$
Replacing $f$ with $tf$, $t>0$, and using the differentiability property~\eqref{equ:diffvol}, we infer
$$
\liminf_{m\to\infty}\int f\,\d_{P_m}\ge V^{-1}\lim_{t\to 0_+}t^{-1}\vol(L,\phi+t f,\phi)=\int f\,\mu_\phi.
$$
Applying this to $-f$ in place of $f$, we conclude as desired $\lim_{m\to\infty}\int f\,\d_{P_m}=\int f\,\mu_\phi$. 
\end{proof}

%
%
\subsection{A pullback formula for transfinite diameters}
In this section, we assume that $L$ is an ample line bundle on $X$, and consider a \emph{polarized endomorphism} $f$ of $(X,L)$, \ie a morphism $f:X\to X$ together with the data of an isomorphism $f^\star  L\simeq d L$ for some positive integer $d>1$. Since $f^\star  L$ is ample and $(f^\star  L)^n=d^n(L^n)$, $f$ is finite, of degree $d^n$. By Theorem~\ref{thm:del}, we thus have a canonical isomorphism 
$$
\langle (f^\star  L)^{n+1}\rangle\simeq d^n\langle L^{n+1}\rangle,
$$
which combines with the given isomorphism $f^\star  L\simeq dL$ to yield
$$
d^{n+1}\langle L^{n+1}\rangle\simeq d^n\langle L^{n+1}\rangle.
$$
This defines a canonical section
\begin{equation}\label{def:resultantsec}
R_f\in d^n(d-1)\langle L^{n+1}\rangle,
\end{equation}
which we call the \emph{resultant section} (see Corollary~\ref{cor:ResPolar} below for the choice of terminology). 

On the other hand, the map $ \cz(L)\to \cz(L)$ defined by $\phi\mapsto d^{-1}f^\star \phi$, being $1/d$-Lipschitz continuous, admits a unique fixed point $\phi_f$, the \emph{equilibrium metric} of $f$. For any choice of Fubini--Study metric $\phi$, the metrics $d^{-j}(f^j)^\star \phi$ are Fubini--Study as well, and they converge uniformly to $\phi_f$, which is thus psh. 

\begin{exam} For any $d\ge 2$, the equilibrium metric of the polarized endomorphism $f$ of $(\P^n,\cO(1))$ induced by $(x_0,\ldots,x_n)\mapsto (x_0^d:\dots:x_n^d)$ is $\phi_f=\max_i\log|x_i|$. 
\end{exam}

\begin{lem}\label{lem:resultant} The resultant $R_f\in d^n(d-1)\langle L^{n+1}\rangle$ has norm $1$ with respect to the induced metric $d^n(d-1)\langle\phi_f^{n+1}\rangle$.
\end{lem}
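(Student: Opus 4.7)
The core idea is to establish a \emph{pullback formula} for Deligne metrics under the finite flat morphism $f\colon X\to X$, and then combine it with multilinearity and the fixed-point equation $f^{*}\phi_{f}=d\phi_{f}$.

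First I would show the following pullback compatibility: for continuous psh-regularizable metrics $\phi_{0},\dots,\phi_{n}$ on $L_{0},\dots,L_{n}$ and any finite flat morphism $g\colon X'\to X$ of smooth projective $K$-schemes of dimension $n$, the canonical isomorphism
$$
\langle g^{*}L_{0},\dots,g^{*}L_{n}\rangle_{X'}\;\simeq\;(\deg g)\,\langle L_{0},\dots,L_{n}\rangle_{X}
$$
(which, at the level of $\Q$-line bundles on $\Spec K^{\circ}$, comes from Theorem~\ref{thm:del} / the norm functor compatibility worked out in the appendix and Lemma~\ref{normasymfs}) is an isometry when both sides are equipped with their Deligne metrics. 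This identity is characterized by the change-of-metric formula (ii) in Theorem~\ref{thm:delmetr} together with its value on model metrics, so it suffices to verify it in two pieces: (a) for model metrics $\phi_{i}=\phi_{\cL_{i}}$ it follows from the identification $\langle g^{*}\cL_{0},\dots,g^{*}\cL_{n}\rangle\simeq N_{X'/X}\langle \cL_{0},\dots,\cL_{n}\rangle$ of Appendix~\ref{sec:Deligne} together with the norm-functor description of metrics in \eqref{eq:normmet}; (b) for differences of metrics on $L_{0}$, it reduces to the projection formula for mixed Monge-Amp\`ere measures,
$$
g_{*}\bigl(dd^{c}g^{*}\phi_{1}\wedge\dots\wedge dd^{c}g^{*}\phi_{n}\bigr)=(\deg g)\,dd^{c}\phi_{1}\wedge\dots\wedge dd^{c}\phi_{n},
$$
which in the Archimedean case is classical and in the non-Archimedean case follows from the Chambert-Loir--Ducros framework (after regularizing to smooth psh metrics and passing to the limit via Lemma~\ref{lem:regpsh}).

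Applying this with $g=f$ and $\phi_{0}=\dots=\phi_{n}=\phi_{f}$, and using that $\deg f=d^{n}$, we get an isometry
$$
\langle (f^{*}\phi_{f})^{n+1}\rangle \;\simeq\; d^{n}\,\langle \phi_{f}^{n+1}\rangle.
$$
On the other hand, multilinearity of Deligne pairings and of Deligne metrics (together with $f^{*}\phi_{f}=d\phi_{f}$) yields an isometry
$$
\langle (f^{*}\phi_{f})^{n+1}\rangle = \langle (d\phi_{f})^{n+1}\rangle \;\simeq\; d^{n+1}\,\langle \phi_{f}^{n+1}\rangle.
$$
Comparing these two identifications of $\langle (f^{*}\phi_{f})^{n+1}\rangle$ produces precisely the canonical trivialization of $d^{n}(d-1)\langle L^{n+1}\rangle$ whose underlying section is $R_{f}$ (by definition \eqref{def:resultantsec}). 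Since both identifications are isometries for the respective Deligne metrics, the section $R_{f}$ obtained as the image of the unit has norm $1$ with respect to $d^{n}(d-1)\langle\phi_{f}^{n+1}\rangle$, as claimed.

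The main obstacle is the first step: pinning down the pullback formula as an \emph{isometry} (not just an isomorphism up to a constant). In particular, in the non-Archimedean case one must check compatibility between the Deligne metric defined by a model and the induced metric on the corresponding norm line bundle; modulo this, the rest of the argument is a short manipulation using that $\phi_{f}$ is a fixed point of $\phi\mapsto d^{-1}f^{*}\phi$ and hence asymptotically Fubini-Study, so that all Deligne metrics under consideration are well defined by Theorem~\ref{thm:delmetr}.
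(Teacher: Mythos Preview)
Your proposal is correct and follows essentially the same approach as the paper: the paper's proof also rests on the two facts that (i) $\langle f^*L^{n+1}\rangle\simeq d^n\langle L^{n+1}\rangle$ is an isometry for the Deligne metrics induced by any continuous psh-regularizable $\phi$, and (ii) $f^*L\simeq dL$ is an isometry for $f^*\phi_f$ and $d\phi_f$, and then combines these exactly as you do. The paper simply asserts (i) without justification, whereas you outline a proof via model metrics plus the projection formula for mixed Monge--Amp\`ere measures; this is a reasonable way to fill in that step.
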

\begin{proof} For any continuous psh metric $\phi$ on $L$, the isomorphism $\langle (f^\star  L)^{n+1}\rangle\simeq d^n\langle L^{n+1}\rangle$ is an isometry with respect to $\langle (f^\star \phi)^{n+1}\rangle$ and $d^n\langle\phi^{n+1}\rangle$. By definition of $\phi_f$, the isomorphism $f^\star  L\simeq dL$ is an isometry with respect to $f^\star \phi_f$ and $d\phi_f$. It follows that the induced isomorphism $d^n(d-1)\langle L^{n+1}\rangle\simeq K$ is an isometry with respect to $d^n(d-1)\langle\phi_f^{n+1}\rangle$ and the canonical metric on $K$, hence the result. 
\end{proof}
We now get the following pull-back formula for the transfinite diameter, which generalizes~\cite{DR},~\cite[\S 6.3]{BB} in view of Corollary~\ref{cor:ResPolar} below.

\begin{thm}\label{thm:pullback} For each continuous psh metric $\p$, and let $c(f,\p)$ be the positive constant such that 
$$
\log c(f,\p)=-\frac{1}{(n+1)d^{n+1}} \log |R_f|_{d^n(d-1)\langle \p^{n+1}\rangle}. 
$$
Then 
$$
\d_\infty(d^{-1} f^\star  \phi,\p)=c(f,\p)\,\d_{\infty}(\phi,\p)^{1/d},
$$
all continuous psh metrics $\phi$. 
\end{thm} 

\begin{lem}\label{lem:pullback} For any two continuous psh metrics $\phi,\p$ on $L$ we have 
$$
\en\left(d^{-1}f^\star\phi,d^{-1}f^\star\p\right)=d^{-1}\en(\phi,\p).
$$
\end{lem}
\begin{proof} By~\eqref{equ:en}, 
$$
\en\left(d^{-1}f^\star\phi,d^{-1}f^\star\p\right)
$$
$$
=\frac{1}{n+1}\sum_{i=0}^n\int\left(d^{-1}f^\star\phi-d^{-1}f^\star\p\right)\left(dd^c(d^{-1}f^\star\phi)\right)^i\wedge\left(dd^c (d^{-1}f^\star\p)\right)^{n-i}\wedge\d_X
$$
$$
=\frac{1}{(n+1)d^{n+1}}\sum_{i=0}^n\int f^\star\left((\phi-\p)\,\left(dd^c\phi\right)^i\wedge\left(dd^c\p\right)^{n-i}\right)\wedge\d_X
$$
$$
=\frac{d^n}{(n+1)d^{n+1}}\sum_{i=0}^n\int (\phi-\p)\,\left(dd^c\phi\right)^i\wedge\left(dd^c\p\right)^{n-i}\wedge\d_X=d^{-1}\en(\phi,\p)
$$
since $f_\star \d_X=d^n\d_X$. 
\end{proof}

\begin{proof}[Proof of Theorem~\ref{thm:pullback}] Since $d^{-1}f^\star \phi_f=\phi_f$, Corollary~\ref{cor:trans} and the cocycle formula for $\en$ yield
$$
\log\d_{\infty}(d^{-1} f^\star \phi,\p)=\en\left(\p,d^{-1} f^\star\phi\right)=\en(\p,\phi_f)+\en\left(d^{-1} f^\star\phi_f,d^{-1}f^\star\phi\right)
$$
$$
=\en(\p,\phi_f)+d^{-1}\en(\phi_f,\phi)=(1-d^{-1})\en(\phi_f,\p)+d^{-1}\log\d_{\infty}(\phi,\p). 
$$
On the other hand, Lemma~\ref{lem:resultant} yields 
$$
\log|R_f|_{d^n(d-1)\langle\p^{n+1}\rangle}=d^n(d-1)\left(\langle\phi_f^{n+1}\rangle-\langle\p^{n+1}\rangle\right)=d^n(d-1)(n+1)\en(\phi_f,\p), 
$$
and hence 
$$
\frac{1}{(n+1)d^{n+1}} \log |R_f|_{d^n(d-1)\langle\p^{n+1}\rangle}=(1-d^{-1})\en(\phi_f,\p),
$$
and we are done. 
\end{proof}

\subsection{The case toric varieties}
We now illustrate the previous pull-back formula in the toric case. We assume that $(X,L)$ is a smooth projective polarized toric variety with respect to a split torus $T\simeq\G_m^n$ with character lattice $M=\Hom(T, \G_m)$, which thus corresponds to a Delzant polytope $\D\subset M_\R$. 

For each integer $d\ge 2$, multiplication by $d$ on the dual of $M$ induces a polarized endomorphism $m_d$ of $(X,L)$, defining an equilibrium metric $\phi_d$ on $L$ and a resultant section $R_d\in d^n(d-1)\langle L^{n+1}\rangle$. 
The next lemma describes the moduli space of polarized endomorphisms of degree $d$ of $(X, L)$. 

\begin{lem} Set $N=\dim \Hnot(L)=\#(M\cap\D)$. The space of polarized morphisms of degree $d$ of $(X,L)$ is parametrized by a Zariski open subset of 
$$
\P\left(\Hom\left(\Hnot(L),\Hnot(dL)\right)\right)\simeq\P\left(\Hnot(dL)^N\right)
$$
whose complement $Z$ has codimension $N-n$. In particular, $Z$ has codimension greater than $1$ unless $(X,L)\simeq(\P^n,\cO(1))$. 
\end{lem}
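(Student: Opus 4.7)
The plan is to send a polarized morphism $f:X\to X$ of degree $d$ to the class of its pullback $\varphi:=f^*\in\Hom(H^0(X,L),H^0(X,dL))$, where the ambiguity of the isomorphism $f^*L\cong dL$ by a scalar corresponds exactly to the projectivization, yielding a well-defined injection from the set of polarized morphisms of degree $d$ into $\P(\Hom(H^0(X,L),H^0(X,dL)))$. Fixing a basis $(s_i)$ of $H^0(X,L)$, a class $[\varphi]$ corresponds to an $N$-tuple $(\sigma_1,\dots,\sigma_N)$ with $\sigma_i:=\varphi(s_i)\in H^0(X,dL)$, modulo simultaneous scaling. Such a class arises from a polarized morphism if and only if the sections $\sigma_i$ have no common zero on $X$ (so that they determine a morphism $g_\varphi:X\to\P^{N-1}=\P(H^0(X,L)^\vee)$ satisfying $g_\varphi^*\cO(1)\cong dL$) and the image $g_\varphi(X)$ lies in $X\hookrightarrow\P^{N-1}$. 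The complement $Z$ thus decomposes as $Z_1\cup Z_2$, with $Z_1$ the base-locus and $Z_2$ the locus where the image does not land in $X$.

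The core computation is that of $\codim Z_1$, which I would perform via the classical incidence-variety argument. Consider
$$
I:=\{(x,[\varphi])\in X\times\P(\Hom(H^0(X,L),H^0(X,dL)))\mid\sigma_i(x)=0\text{ for all }i=1,\dots,N\}.
$$
Since $dL$ is ample, hence base-point free, on the smooth projective variety $X$, the evaluation map $\ev_x:H^0(X,dL)\to K$ is surjective at every $x\in X$. Consequently the $N$ equations $\sigma_i(x)=\ev_x(\varphi(s_i))=0$ are independent linear conditions on $\varphi$, cutting out a projective linear subspace of codimension $N$ in $\P(\Hom)$. It follows that $\dim I=n+\dim\P(\Hom)-N$. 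The second projection $I\to\P(\Hom)$ has image $Z_1$, and since $N>n=\dim X$ a generic $N$-tuple of sections of $dL$ has empty common-zero locus; thus for a generic point $[\varphi]\in Z_1$, the zero set $V(\sigma_1,\dots,\sigma_N)\subset X$ is zero-dimensional, so the projection $I\to Z_1$ is generically finite. We conclude that $\codim Z_1=N-n$.

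The hard part is controlling the image-in-$X$ locus $Z_2$. When $(X,L)\simeq(\P^n,\cO(1))$ we have $X=\P^{N-1}$, so $Z_2=\emptyset$ and $Z=Z_1$ has codimension exactly $N-n=1$, matching the claim. In the general toric case one must show $\codim Z_2\ge N-n$, so that $Z=Z_1\cup Z_2$ retains codimension $N-n$. Here the idea is to exploit the toric description of the homogeneous ideal of $X\hookrightarrow\P^{N-1}$, generated by the binomial relations $\chi^{m_i}\chi^{m_j}=\chi^{m_k}\chi^{m_l}$ whenever $m_i+m_j=m_k+m_l$ among the lattice points of $\D$, combined with a deformation argument based at the canonical polarized endomorphism $m_d$ (which is the point $[\chi^{dm_i}]\in\P(\Hom)\setminus Z$), in order to produce an irreducible family of polarized morphisms of the expected codimension. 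The final assertion that $\codim Z>1$ unless $(X,L)\simeq(\P^n,\cO(1))$ is then immediate, since $N=\#(M\cap\D)=n+1$ forces the Delzant polytope $\D$ to be (unimodularly equivalent to) the standard $n$-simplex.
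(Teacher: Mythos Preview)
Your computation of $\codim Z_1=N-n$ via the incidence variety is essentially the paper's entire argument: the paper sets up the same $I$, obtains $\dim I=n-1+N(N_d-1)$, exhibits an explicit point of $Z$ with finite fibre (the tuple $(s_1,\dots,s_1,s_2,\dots,s_n)$ built from a regular sequence in $H^0(X,dL)$) to get generic finiteness of $I\to Z$, and reads off $\codim Z=N-n$. The difference is that the paper simply takes $Z=Z_1$: invoking that $R(X,L)$ is generated in degree one (true for smooth polarized toric varieties), it asserts that the datum of a polarized endomorphism is equivalent to a linear map $H^0(X,L)\to H^0(X,dL)$ with basepoint-free image, and never introduces any ``image lands in $X$'' condition.

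Your $Z_2$ reflects a genuine subtlety, but the plan to bound its codimension cannot work as written. The condition $g_\varphi(X)\subset X$ is a \emph{closed} condition on $[\varphi]$: it is the simultaneous vanishing on $X$ of the sections $F(\sigma_1,\dots,\sigma_N)\in H^0(X,mdL)$ for every degree-$m$ defining relation $F$ of $X\hookrightarrow\P^{N-1}$. Hence the failure locus $Z_2$ is \emph{open}, and for $X\ne\P^n$ it is dense open---e.g.\ for $X=\P^1\times\P^1$, $L=\cO(1,1)$, the single relation $z_0z_3=z_1z_2$ becomes the condition $\sigma_0\sigma_3=\sigma_1\sigma_2$, which cuts out a proper closed subvariety of $\P(\Hom)$. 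Thus $Z=Z_1\cup Z_2$ would have codimension $0$, the polarized endomorphisms form a locally closed (not open) subset of $\P(\Hom)$, and no deformation argument based at $m_d$ will produce the open complement the statement asks for. So the gap you have identified is real, but your proposed resolution does not close it; the paper's own proof sidesteps the issue by the unargued identification $Z=Z_1$.
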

\begin{proof} Since $X$ is smooth, $R(X,L)$ is generated in degree one, the data of a polarized endomorphism of $(X,L)$ of degree $d$ is equivalent to that of a linear map $\Hnot(L)\to \Hnot(dL)$ whose image is basepoint free. As a result, the space of polarized endomorphisms of degree $d$ is isomorphic to the complement in $\P\left(\Hnot(dL)^N\right)$ of the projection $Z$ of the incidence variety
$$
I =\left\{ \left([s_1:\dots:s_N],x\right) \in\P\left(\Hnot(dL)^N\right)\times X, s_1(x)=\dots=s_N(x)=0\right\}
$$
Since $dL$ is basepoint free, the elements of $\Hnot(dL)$ vanishing at a given closed point $x\in X$ is a hyperplane, and it follows that $\dim I=n-1+N(N_d-1)$. We claim that the restriction to $I$ of the first projection $\Hnot(dL)^N\times X\to \Hnot(dL)^N$ is generically finite, which will imply $\codim Z=N N_d-1-\dim I=N-n$. Indeed, if $s_1,\ldots,s_n\in \Hnot(dL)$ is a regular sequence of sections, the fiber of $I$ over the $N$-tuple $(s_1, \ldots, s_1, s_2,\ldots, s_{n}) \in \Hnot(dL)^N$ is finite. 

The last point of the lemma follows from the embedding $X\hookrightarrow\P \Hnot(L)$, which is an isomorphism iff $N-n=1$. 
\end{proof}

In the case $N=n-1$, i.e.  $(X,L)\simeq(\P^n,\cO(1))$, any polarized morphism of degree $d$ is given by $z \mapsto [f_0(z):\ldots:f_n(z)]$ for homogenous polynomials $f_0, \ldots, f_n$, of degree $d$, without common zeros, and the locus $Z\subset\P\left(\Hnot(L)^{n+1}\right)$ is an irreducible divisor of degree $(n+1)d^n$. As a result, it is defined by a unique polynomial $\Op{Res}(f_0, \ldots, f_n)$ of degree $(n+1)d^n$ in the coefficients of the $f_i$ and normalized by $\Res(x_0^d, \ldots, x_n^d)=1$, cf.~\cite[Ch. 13]{GKZ}.

\begin{cor}\label{cor:ResPolar} Let $f$ be a polarized morphism of degree $d$ of the smooth polarized toric variety $(X,L)$. 
\begin{itemize} 
\item If $(X,L)\simeq (\P^n,\cO(1))$, then $R_f=\Res(f) R_d$; 
\item if not, then $R_f=R_d$. 
\end{itemize}
\end{cor}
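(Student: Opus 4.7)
The plan is to view $R_f$ as an algebraic object on the parameter space $V := \Hom(H^0(X,L), H^0(X,dL))$ and exploit the codimension of the degenerate locus $\tilde Z \subset V$ (the affine cone over $Z \subset \P$). Fixing a trivialization $R_d$ of the line $d^n(d-1)\langle L^{n+1}\rangle$, we consider the $K$-valued function $\rho(A) := R_A/R_d$, which is regular on $V \setminus \tilde Z$ by algebraicity of the Deligne pairing construction in families (Appendix~A) applied to the universal polarized endomorphism over that open set.

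In the first case $(X,L) \simeq (\P^n, \cO(1))$, the set $\tilde Z = \{\Res = 0\}$ is the irreducible hypersurface defined by the classical multivariate resultant of degree $(n+1)d^n$, normalized by $\Res(x_0^d,\ldots,x_n^d)=1$. The function $\rho$ extends to a polynomial on $V$ that vanishes on $\tilde Z$: as $A$ approaches a point of $\tilde Z$, the morphism $f$ fails to be defined at some common zero of the sections, the isomorphism $\phi : f^*L \simeq dL$ degenerates there, and $R_A \to 0$. Homogeneity of $\rho$ under $A \mapsto \lambda A$, combined with irreducibility of $\Res$, forces $\rho = c \cdot \Res$ for some $c \in K^*$ (the degree match reducing to the classical computation, cf.\ \cite{DR,BB}). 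Evaluating at $A = m_d$ gives $\rho(m_d) = R_d/R_d = 1 = \Res(x_0^d,\ldots,x_n^d)$, hence $c=1$ and $R_f = \Res(f)\, R_d$.

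In the second case $(X,L) \not\simeq (\P^n, \cO(1))$, $\tilde Z$ has codimension $\ge 2$ in the smooth (hence normal) affine space $V$. The regular function $\rho$ on $V \setminus \tilde Z$ therefore extends uniquely to a regular function, i.e., a polynomial, on all of $V$. Any nonzero polynomial on $V$ has zero locus of pure codimension $1$, but $\rho$'s zero locus is contained in $\tilde Z$ of codimension $\ge 2$; hence the zero locus is empty and $\rho$ is nowhere vanishing on $V$, so a nonzero constant. Evaluating at $A = m_d$ yields $\rho \equiv 1$, so $R_f = R_d$.

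The main obstacle is to rigorously run the Deligne pairing construction in the family over $V \setminus \tilde Z$ so as to conclude that $\rho$ is algebraic, and to pin down its exact degree in the $\P^n$ case. The first point requires adapting the Appendix~A framework (written for a single scheme) to the universal polarized endomorphism over $V \setminus \tilde Z$; the degree identification in the $\P^n$ case may proceed either via $K^*$-equivariance analysis of the Deligne pairing or by appeal to the explicit classical formulas already present in the references cited.
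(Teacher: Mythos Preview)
Your treatment of the non-$\P^n$ case is essentially the paper's argument, recast on the affine cone $V$ rather than on $\P(V)$: the paper observes that $f\mapsto R_f/R_d$ defines a morphism $U\to\G_m$ from the complement of a codimension-$\ge 2$ locus in the normal proper variety $\P\left(H^0(X,dL)^N\right)$, which therefore extends and is constant. Your Hartogs-plus-empty-zero-locus argument on $V$ is an equivalent packaging of the same idea.

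For $(\P^n,\cO(1))$ the approaches genuinely differ, and yours has a gap you do not flag. Here $\tilde Z=\{\Res=0\}$ has codimension one, so algebraic Hartogs does \emph{not} hand you the extension of $\rho$ across $\tilde Z$: a priori $\rho\in K[V][\Res^{-1}]$ could have a pole there. Your claim that ``$R_A\to 0$ as $A\to\tilde Z$'' is only a heuristic; making it rigorous would mean controlling how the isomorphism $\langle (f^*L)^{n+1}\rangle\simeq d^n\langle L^{n+1}\rangle$ of Theorem~\ref{thm:del}(iii) degenerates when $f$ stops being finite, and that is precisely the substance of the problem. The obstacles you list at the end --- algebraicity of $\rho$ on $V\setminus\tilde Z$ and the degree match --- are real but secondary; even granting both, you still need the extension across the hypersurface. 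The paper sidesteps all of this: it restricts to the hyperplane $\{x_n=0\}$, invokes the restriction isomorphism for Deligne pairings (Theorem~\ref{thm:del}(ii)), and compares with the Poisson formula for the classical resultant \cite[Ch.~13, Theorem 1.2]{GKZ} to show inductively that $R_f$ and $\Res(f)\,R_d$ satisfy the same recursion, hence coincide up to a constant that is then fixed by evaluating at $m_d$.
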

\begin{proof} If $(X,L)\simeq(\P^n,\cO(1))$, we can restrict along the hyperplane determined by $x_n=0$ and inductively compare how the Deligne products and the resultants change. In the case of the resultant, the transformation is described by the Poisson formula~\cite[Ch. 13, Theorem 1.2]{GKZ}, and the Deligne products transform accordingly. This shows they are equal up to some constant, and the constant is equal to 1 by evaluating at the polarized endomorphism $m_d$.

If $(X,L)$ is not isomorphic to $(\P^n,\cO(1))$, the previous lemma shows that the space of polarized degree $d$ endomorphisms of $(X,L)$ is isomorphic to an open subset $U\subset\P\left(\Hnot(dL)^N\right)$ whose complement $Z$ has codimension at least $2$. The map $f\mapsto R_f/R_d$ defines a morphism $U\to\G_m$, which is thus constant by normality and properness of $\P\left(\Hnot(dL)^N\right)$, and hence equal to $1$ by evaluating at $f=m_d$. 
\end{proof}

%

\appendix
%
%
\section{Determinant of cohomology and Deligne pairings}\label{sec:Deligne}
%
%
The goal of this Appendix is to discuss a generalization to arbitrary schemes of results of Knudsen--Mumford~\cite{KM}, Deligne~\cite{Del}, Elkik~\cite{Elkik}, Munoz-Garcia~\cite{MG} and Ducrot~\cite{Duc}, which provide a rough Riemann--Roch theorem for the determinant of cohomology. 
%
%
\subsection{Discussion of the results}\label{sec:discussion}
For a projective scheme $X$ over a field $K$, the \emph{determinant of cohomology} of a line bundle $L$ is the line (\ie one-dimensional $K$-vector space)
$$
\la(L):=\sum_{i=0}^n(-1)^i\det H^i(X,L),
$$
where we use additive notation for tensor products of lines. If $\pi:X\to Y$ is now a flat projective morphism of locally Noetherian schemes, it was shown by Knudsen and Mumford in~\cite{KM} that the fiberwise determinant of cohomology of a line bundle $L$ on $X$ glues together to define a line bundle $\la_{X/Y}(L)$ on $Y$. Indeed, the derived direct image $R\pi_\star  L$ is a perfect complex, \ie locally on $Y$ there exists a bounded complex $E^\bullet$ of vector bundles with $R^q\pi_\star  L$ as its $q$-th cohomology sheaf, and the determinant of cohomology of $L$ can then be locally described as 
$$
\la_{X/Y}(L)=\sum_i(-1)^i\det E_i. 
$$
Denoting by $n$ the relative dimension of $\pi$, the main result in F.~Ducrot's paper~\cite{Duc} implies that the functor $\la_{X/Y}:\cP(X)\to\cP(Y)$ so defined between the Picard categories of line bundles on $X$ and $Y$ admits a unique polynomial structure of degree $n+1$ compatible with base change and restriction to a relative Cartier divisor (see \S\ref{sec:KM} for a precise statement). This result recovers in one stroke the construction of Deligne pairings~\cite{Elkik1,MG} and the Knudsen--Mumford expansion~\cite{KM}. Indeed, it implies that the $(n+1)$-st iterated difference
$$
\langle L_0,\ldots,L_n\rangle_{X/Y}:=\sum_{I\subset\{0,\ldots,n\}}(-1)^{n+1-|I|}\la_{X/Y}\left(\sum_{i\in I} L_i\right)
$$
defines a multi-additive symmetric functor $\cP(X)^{n+1}\to\cP(Y)$, the \emph{Deligne pairing}, and that we have for each $L\in\cP(X)$ an expansion 
$$
\la_{X/Y}(mL)=\sum_{i=0}^{n+1}{m+i \choose i} M_i
$$
as a function of $m\in\Z$, the coefficients being line bundles $M_i$ on $Y$, and $M_{n+1}=\langle L^{n+1}\rangle_{X/Y}$. Whenever a Grothendieck--Riemann--Roch theorem is available, we infer
$$
c_1\left(\langle L_0,\ldots,L_n\rangle_{X/Y}\right)=\pi_\star (c_1(L_0)\cdot\ldots\cdot c_1(L_n)\rangle,
$$
so that Deligne pairings lift the natural push-forward operation on the right-hand side to the level of line bundles. 

\medskip

In the main body of the present paper, a version of these results in the possibly non-Noetherian setting of models over the valuation ring of a complete non-Archimedean field is required. The purpose of this appendix is to summarize the results leading to a generalization of the above results for arbitrary schemes. 
%
%
\subsection{Polynomial maps}\label{sec:diff}
In order to motivate the definitions in \S\ref{sec:polyfunc}, we briefly recall some background on polynomial maps and difference calculus. It is well-known that a map $f:\Z\to\Z$ is polynomial of degree (at most) $n$ if and only if it admits an expansion
$$
f(m)=\sum_{i=0}^n{m+i\choose i}b_i
$$
with coefficients $b_i\in\Z$. More generally, a map $f:A\to B$ between commutative groups is said to be \emph{polynomial of degree $n$} if for any given $x_1,\ldots,x_r\in A$ we have an expansion
$$
f(m_1x_1+\dots+m_r x_r)=\sum_{0\le i_1,\ldots,i_r\le n}{m_1+i_1\choose i_1}\dots{m_r+i_r\choose i_r}b_{i_1\dots i_r}
$$
for all $m_i\in\Z$, with coefficients $b_{i_1\dots i_r}\in B$. 

Polynomiality can be characterized in terms of difference calculus. For a map $f:\Z\to B$, define the difference $\D f:\Z\to B$ by 
$$
(\D f)(m):=f(m+1)-f(m). 
$$
Then
$$
\D{m+i\choose i}={m+i-1\choose i-1},
$$
which can be used to show by induction on $n$ that $f:\Z\to B$ is a polynomial map of degree $n$ if and only if $\D^{n+1} f=0$. For a map $f:\Z^r\to B$, one can introduce partial difference operators $\D_i$, and $f$ is polynomial of degree $n$ if and only if $\D^\a f=0$ for all multi-indices $\a\in\N^r$ of length $|\a|:=\sum_i\a_i=n+1$, where we have set $\D^\a =\D_1^{\a_1}\dots\D_r^{\a_r}$. 

Consider now a map $f:A\to B$ between commutative groups. Mimicking differential calculus, one defines the \emph{difference} $\d_x f:A\to B$ of $f$ at $x\in A$ by setting 
$$
(\d_x f)(y)=f(x+y)-f(x),
$$
and the \emph{$k$-th iterated difference} $\d^k_x f:A^k\to B$ by 
$$
(\d^k_x f)(x_1,\ldots,x_k):=\d_x\left(y\mapsto (\d^{k-1}_y f)(x_2,\ldots,x_k)\right)(x_1)
$$
$$
=(\d^{k-1}_{x+x_1}f)(x_2,\ldots,x_k)-(\d^{k-1}_x f)(x_2,\ldots,x_k).
$$
The map $\d^k_x f:A^k\to B$ so defined is symmetric, as follows from the explicit expression
\begin{equation}\label{equ:itdiff2}
(\d^k_x f)(x_1,\ldots,x_k)=\sum_{I\subset\{1,\ldots,k\}}(-1)^{k-|I|}f\left(x+\sum_{i\in I} x_i\right). 
\end{equation}
Given $x_1,\ldots,x_r\in A$, the map $g:\Z^r\to B$ defined by $g(m_1,\ldots,m_r)=f(\sum_i m_ix_i)$ satisfies 
\begin{equation}\label{equ:partialdiff}
(\D^\a g)(m_1,\ldots,m_r)=(\d^{|\a|}_{\sum_i m_i x_i} f)(x_1^{ a_1},\ldots,x_r^{ a_r})
\end{equation}
for all $\a\in\N^r$. 

Using this, we conclude that $f:A\to B$ is polynomial of degree $n$ if and only if $\d^{n+1}_x f=0$ for all $x\in A$. It is in fact enough to check this condition for $x=0$. Indeed, the operator $\d^k:=\d^k_0$ determines all $\d^k_x$ by
\begin{equation}\label{equ:dn} 
(\d^k_x f)(x_1,\dots)=(\d^k f)(x+x_1,\dots)-(\d^k f)(x,\dots). 
\end{equation}
It further satisfies
$$
(\d^{k+1} f)(x_1,y_1,\dots)=(\d^k  f)(x_1+y_1,\dots)-(\d^k f)(x_1,\dots)-(\d^k f)(y_1,\dots),
$$
and we thus see that $f$ is polynomial of degree $n$ if and only if $\d^n f$ is multi-additive. Note also that $\d^k$ can be understood as a polarization operator, in the sense that $\d^k(L(x^k))=k! L$ for any symmetric multi-additive map $L:A^k\to B$; we can thus view the multi-additive map $\d^n f:A^n\to B$ associated to a polynomial map $f:A\to B$ of degree $n$ as the polarization of its degree $n$ part. 

\begin{exam} Let $X$ be an $n$-dimensional projective scheme over a field $K$, with structure morphism $\pi:X\to\spec K$, and pick a line bundle $L$ on $X$. The \emph{Euler characteristic} 
$$
\chi(L):=\sum_{i=0}^n(-1)^i\dim_K H^i(X,L)
$$
only depends on the class of $L$ in the Picard group $\Pic(X)$, and Snapper's theorem implies that $\chi:\Pic(X)\to\Z$ is a polynomial map of degree $n$, with degree $n$ polarization given by the intersection pairing, \ie 
$$
(\d^n\chi)(L_1,\ldots,L_n)=(L_1\cdot\ldots\cdot L_n):=\deg\pi_\star \left(c_1(L_1)\cdot\ldots\cdot c_1(L_n)\cdot[X]\right).  
$$
\end{exam} 
For later use, we note: 
\begin{lem}\label{lem:polyexp} Let $f:A\to B$ is a polynomial map of degree $n$, and set for all $0\le i\le n$
$$
f_{n,i}(x):=\sum_{k=i}^n (-1)^{k-i}{k \choose i}(\d^k f)(x^k).
$$
Then $f(mx)=\sum_{i=0}^n {m+i\choose i}f_{n,i}(x)$ for all $x\in A$ and $m\in\Z$. 
\end{lem}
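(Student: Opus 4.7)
The plan is to reduce this multi-variable statement to a one-variable polynomial identity by fixing $x\in A$ and analyzing the auxiliary map $g\colon\Z\to B$, $g(m):=f(mx)$. Since $f$ is polynomial of degree at most $n$, so is $g$ (this is the $r=1$ case of the definition recalled at the start of \S\ref{sec:diff}). Applying (\ref{equ:partialdiff}) with $r=1$ and evaluating at $m=0$ then gives the basic input
$$
a_k \;:=\; (\D^k g)(0) \;=\; (\d^k f)(x^k),\qquad 0\le k\le n.
$$

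Next I would invoke the fact that the polynomials $\bigl\{\binom{m+i}{i}\bigr\}_{i\ge0}$ form a $\Z$-basis of the module of $\Z$-valued polynomial functions on $\Z$: indeed, $\binom{m+i}{i}$ is a polynomial in $m$ of exact degree $i$, and a direct upper-triangular change of basis against the standard basis $\bigl\{\binom{m}{i}\bigr\}_{i\ge 0}$ (via Vandermonde) identifies the two families. Tensoring with $B$, any polynomial map $\Z\to B$ of degree at most $n$ thus admits a unique expansion
$$
g(m) \;=\; \sum_{i=0}^n \binom{m+i}{i}\,c_i,\qquad c_i\in B,
$$
and the task reduces to identifying the coefficients $c_i$ with $f_{n,i}(x)$.

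The main computational step is the inversion of the linear relation between the $c_i$ and the $a_k$. Pascal's identity gives $\D\binom{m+i}{i}=\binom{m+i}{i-1}$ (with the convention that this is $0$ when $i=0$), and by induction $\D^k\binom{m+i}{i}\big|_{m=0}=\binom{i}{k}$ for $0\le k\le i$. Applying $\D^k$ to the expansion of $g$ and evaluating at $m=0$ therefore yields the upper-triangular linear system
$$
a_k \;=\; \sum_{i=k}^n \binom{i}{k}\,c_i,\qquad 0\le k\le n,
$$
which can be inverted from the top downwards, starting with $c_n=a_n$, to produce an explicit alternating sum of the $a_j$ with $j\ge i$. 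Matching this alternating sum with the stated definition of $f_{n,i}(x)$ identifies $c_i=f_{n,i}(x)$, and substituting back into the expansion of $g(m)$ gives the claimed formula for $f(mx)$.

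The step I expect to be the main obstacle is purely combinatorial bookkeeping: carrying out the triangular inversion and matching the resulting binomial coefficients and alternating signs with the definition of $f_{n,i}(x)$. Everything preceding that identification, namely the reduction to a one-variable map, the interpretation of $(\D^k g)(0)$ as $(\d^k f)(x^k)$, and the basis property of $\bigl\{\binom{m+i}{i}\bigr\}$, is a formal consequence of the definitions and identities already recorded in \S\ref{sec:diff}.
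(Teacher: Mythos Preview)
Your reduction to the one-variable map $g(m)=f(mx)$, the identification $(\D^k g)(0)=(\d^k f)(x^k)$ via (\ref{equ:partialdiff}), and the expansion $g(m)=\sum_i\binom{m+i}{i}c_i$ are exactly the paper's route. Your computation $\D^k\binom{m+i}{i}\big|_{m=0}=\binom{i}{k}$ is also correct (the paper asserts this value is $1$ for $k\le i$, which is the \emph{backward}-difference identity $\nabla\binom{m+i}{i}=\binom{m+i-1}{i-1}$, not the forward one defined here).

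The genuine obstacle is therefore not bookkeeping but an actual mismatch: inverting your upper-triangular system $a_k=\sum_{i\ge k}\binom{i}{k}c_i$ gives
\[
c_i=\sum_{k=i}^n(-1)^{k-i}\binom{k}{i}(\d^k f)(x^k),
\]
which agrees with the stated $f_{n,i}(x)=\sum_{k=i}^n(-1)^{k-i}(\d^k f)(x^k)$ only for $i\in\{n-1,n\}$. The lemma as written is in fact false: take $A=B=\Z$, $f(x)=x^2$, $n=2$; then $(\d^0 f,\d^1 f(x),\d^2 f(x,x))=(0,x^2,2x^2)$, so $f_{2,0}=x^2$, $f_{2,1}=-x^2$, $f_{2,2}=2x^2$, and at $m=0$ the right-hand side equals $x^2-x^2+2x^2=2x^2\neq 0=f(0)$. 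With the corrected coefficients $c_i$ above (so $c_1=-3x^2$ instead of $-x^2$) the identity holds. Since the only use of this lemma downstream is the leading term $c_n=(\d^n f)(x^n)$ in the Knudsen--Mumford expansion, and that coefficient is unaffected, the error is harmless for the paper's applications; but your matching step, as you suspected, cannot be completed as stated.
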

\begin{proof} Since $f$ is polynomial of degree $n$, we have 
$$
g(m):=f(mx)=\sum_{i=0}^n{m+i\choose i} b_i
$$
for some $b_i\in B$. Since $\D_0^k{m+i\choose i}=1$ for $k\le i$ and $0$ otherwise, (\ref{equ:partialdiff}) yields
$$
(\d^k f)(x^k)=\D_0^k g=\sum_{i\ge k} b_i,
$$
and hence $b_i=f_{n,i}(x)$. 
\end{proof}
%
%
\subsection{Polynomial functors}\label{sec:polyfunc}
A \emph{commutative Picard category} is a 'category version' of a commutative group, or more precisely a symmetric monoidal groupoid where every object is invertible with respect to the monoidal structure. In other words, it is a category $\cA$ in which all arrows are isomorphisms, together with an additivity functor and functorial associativity and commutativity isomorphisms satisfying the expected compatibility conditions, and such that for any object $x$ the endofunctors $y \mapsto x + y$ and $y \mapsto y + x$ are autoequivalences. 

These axioms imply the existence of a neutral object $0$ and of an inverse $-x$ for each object $x$, both unique up to unique isomorphism. The sum $\sum_{i\in I} x_i$ of a finite family $(x_i)_{i\in I}$ of objects in $\cA$ is well-defined up to unique isomorphism, and satisfies the expected associativity rules.  A commutative Picard category $\cA$ is \emph{strictly commutative} if the commutativity isomorphism induces the identity on $x+x$ for each $x$, in which case $x$ and $-x$ can be contracted within a sum without raising any sign issue. 

In practice for us, $\cA$ will be the category of line bundles or $\Q$-line bundles on a given scheme, and isomorphisms between them, both of which are strictly commutative Picard categories. Note that a commutative group can also be viewed as a strictly commutative Picard category. 


In what follows, $\cA$ and $\cB$ are strictly commutative Picard categories. An \emph{additive functor} $F:\cA\to\cB$ is a functor equipped with a functorial additivity isomorphism $F(x+y)\simeq F(x)+F(y)$ which is \emph{commutative}, expressed by the commutativity of the induced diagram 
$$
\xymatrix{
F(x+y) \ar[r] \ar[d] & F(x)+F(y) \ar[d] \\ 
F(y+x) \ar[r] &  F(y)+F(x) 
},
$$
and \emph{associative}, \ie the commutativity of the diagram
$$
\xymatrix{
F((x+y)+z) \ar[r] \ar[d] &  F(x+y)+F(z) \ar[r]   & \left(F(x)+F(y)\right)+F(z) \ar[d] \\
F(x+(y+z)) \ar[r] & \ar[r] F(x)+F(y+z) & F(x)+\left(F(y)+F(z)\right).
}$$
These conditions then yield a consistent system of functorial additivity isomorphisms $F(\sum_{i\in I} x_i)\simeq \sum_{i\in I} F(x_i)$ for all finite families $(x_i)_{i\in I}$ in $\cA$. If $\cA$ and $\cB$ are small, the associated sets of isomorphism classes $A$ and $B$ are commutative groups, and $F$ induces a homomorphism $F:A\to B$. 

A \emph{multi-additive functor} $F:\cA^n\to\cB$ is defined as a functor equipped with functorial commutative and associative additivity data in each variable, such that expanding out sums in the variables does not depend on the order the operation is performed. A \emph{symmetric functor} $F:\cA^n\to\cB$ is a functor equipped with symmetry isomorphisms
$$
F(x_{\sigma(1)},\ldots,x_{\sigma(n)})\simeq F(x_1,\ldots,x_n)
$$
for each permutation $\sigma\in\fS_r$, compatible with the group law on $\fS_r$, and a \emph{symmetric, multi-additive} functor has both structures, with the expected compatibility condition.

Define the \emph{$k$-th iterated difference} at an object $x$ in $\cA$ of a functor $F:\cA\to\cB$ as the symmetric functor $\d^k_x F:\cA^k\to\cB$ defined by setting 
$$
(\d^k_x F)(x_1,\ldots,x_k):=\sum_{I\subset\{1,\ldots,k\}}(-1)^{k-|I|}F\left(x+\sum_{j\in I} x_j\right),
$$
For $x=0$, we simply set $\d^k:=\d^k_0$. Recalling from \S\ref{sec:diff} that a map $f$ between commutative groups is polynomial of degree $n$ if and only if $\d^n f$ is multi-additive, we introduce:  

\begin{defi}\label{defi:poly} A \emph{polynomial structure of degree $n$} on $F$ is defined as a structure of a multi-additive functor on $\d^n F$, compatible with its canonical symmetry. 
\end{defi}
Ducrot introduces in~\cite[Definition 1.6.1]{Duc} the notion of \emph{$k$-cube structure} on $F$. By~\cite[Proposition 1.9]{Duc}, an $(n+1)$-cube structure on $F$ induces a polynomial structure of degree $n$ on $F$ (and the converse is probably true as well, by~\cite[1.5.1, (d)]{Duc}). Ducrot's terminology comes from the following well-known result. 

\begin{exam} If $L$ is a line bundle on an abelian variety $A$, the \emph{theorem of the cube} asserts that for any variety $S$, the functor $F_L:A(S)\to\cP(S)$ defined by $F_L(x):=x^\star  L$ admits a $3$-cube structure. It is thus quadratic in our sense, \ie $(x,y)\mapsto (x+y)^\star  L-x^\star  L-y^\star  L+0^\star  L$ is biadditive. Further, the whole structure is compatible with base change. 
\end{exam}

In analogy with Lemma~\ref{lem:polyexp}, we have: 

\begin{lem}\label{lem:poly} Suppose that $F:\cA\to\cB$ admits a polynomial structure of degree $n$, and define for $0\le i\le n$ a functor $F_{n,i}:\cA^i\to\cB$ by setting
$$
F_{n,i}(x):=\sum_{k=i} (-1)^{k-i} {k\choose i}(\d^k f)(x^k).
$$ 
For all $x\in\cA$ and $m\in\Z$, we then have canonical functorial isomorphisms
$$
F(mx)\simeq\sum_{i=0}^n{m+i \choose i}F_{n,i}(x). 
$$
\end{lem}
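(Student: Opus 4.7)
The strategy is to upgrade the purely algebraic proof of Lemma~\ref{lem:polyexp} to the Picard-category setting by tracking canonical isomorphisms throughout, proceeding by induction on the degree $n$ together with an induction on $m \ge 0$ (with $m<0$ handled symmetrically). The key preliminary observation is that the polynomial structure of degree $n$ on $F$, which by Definition~\ref{defi:poly} amounts to a symmetric multi-additive structure on $\d^n F$, propagates to the lower-order iterated differences and in particular endows the ``derivative'' functor $G_x(y) := F(x+y) - F(y)$ with a canonical polynomial structure of degree $n-1$. Using the iterated expression~\eqref{equ:itdiff2} and the symmetry of $\d^{k+1} F$, a direct computation produces natural isomorphisms $(\d^k G_x)(y_1,\dots,y_k) \simeq (\d^{k+1} F)(x, y_1,\dots,y_k)$, from which it follows by inspection of the alternating sums that $(G_x)_{n-1, j}(x) \simeq F_{n, j+1}(x)$ canonically.

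The base case $m=0$ reduces to an identification $F(0) \simeq \sum_{i=0}^n F_{n,i}(x)$. Expanding each $F_{n,i}(x)$ as an alternating sum of iterated differences $(\d^k F)(x^k)$, the contributions of $(\d^k F)(x^k)$ for $k \ge 1$ cancel pairwise up to the commutativity/associativity data of addition in $\cQ$, leaving only $(\d^0 F)(x^0) = F(0)$, which provides the required canonical isomorphism. For the inductive step, the additivity isomorphism of $F$ applied to the decomposition $(m+1)x = mx + x$ yields $F((m+1)x) \simeq F(mx) + G_x(mx)$. Applying the inductive hypothesis in degree $n$ to $F(mx)$ and in degree $n-1$ to $G_x(mx)$ and combining via the Pascal-type recursion satisfied by the binomial coefficients $\binom{m+i}{i}$, one obtains the desired expansion at $m+1$; the case $m<0$ follows by inverting this inductive step.

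The main obstacle is verifying that the canonical isomorphisms so constructed are genuinely independent of the order in which the additivity, symmetry and multi-additivity data of $F$ are combined, i.e. a coherence theorem in the spirit of MacLane for the polynomial structure. This compatibility is ensured by the axioms of a strictly commutative Picard category together with the coherence hypotheses built into the notion of polynomial structure, but writing it out in full is a lengthy bookkeeping exercise rather than a conceptual difficulty. A more structural alternative is to invoke \cite[Proposition 1.9]{Duc} to pass to an $(n+1)$-cube structure on $F$, within which Ducrot's framework supplies the required coherence from the outset and essentially reduces the statement to its analogue \cite[Proposition 1.9]{Duc} in the setting of cube structures.
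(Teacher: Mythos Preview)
Your route is genuinely different from the paper's. The paper does not induct on $n$ or on $m$: it sets $g(m):=F(mx)-\sum_i\binom{m+i}{i}F_{n,i}(x)$, first argues that $(\Delta^k g)(0)\simeq 0$ for $0\le k\le n$, then uses the multi-additivity of $\delta^n F$ to show $(\Delta^n g)(m)\simeq 0$ for every $m$, and finally ``integrates'' these vanishing top differences down to $g(m)\simeq 0$. By contrast you run a double induction, reducing to degree $n-1$ via $G_x(y)=F(x+y)-F(y)$. Your key identification $(\delta^k G_x)(y_1,\dots,y_k)\simeq(\delta^{k+1}F)(x,y_1,\dots,y_k)$ and its consequence $(G_x)_{n-1,j}(x)\simeq F_{n,j+1}(x)$ are both correct and are the right way to equip $G_x$ with a polynomial structure of degree $n-1$.

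The problem is that neither of your two numerical steps closes for the formula as stated. For the base case $m=0$ one has $\binom{i}{i}=1$, so you need $\sum_{i=0}^n F_{n,i}(x)\simeq F(0)$; but the coefficient of $(\delta^k F)(x^k)$ in that sum is $\sum_{i=0}^k(-1)^{k-i}$, which equals $1$ for $k$ even and $0$ for $k$ odd. Thus for $n\ge 2$ the even-order terms do not ``cancel pairwise'' and the sum is not $F(0)$. For the inductive step $m\mapsto m+1$, collecting coefficients of $F_{n,i}(x)$ in $F(mx)+G_x(mx)$ gives $\binom{m+i}{i}+\binom{m+i-1}{i-1}$, and you need this to equal $\binom{m+1+i}{i}$; it does not (for $m=1$, $i=2$ one gets $3+2=5\neq 6$). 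So the Pascal recursion fails. Incidentally, the paper's own proof invokes $\Delta_0^k\binom{m+i}{i}=1$ for $k\le i$, which is equally false (the value is $\binom{i}{k}$), so the mismatch already lives in the statement: with the standard expansion $F(mx)\simeq\sum_i\binom{m}{i}(\delta^i F)(x^i)$ your $G_x$-induction closes immediately via $\binom{m}{i}+\binom{m}{i-1}=\binom{m+1}{i}$, and the base case is just $\binom{0}{i}=\delta_{i0}$. If you keep the lemma as written, you should drop the telescoping/Pascal argument and instead imitate the paper's top-down scheme.
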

\begin{proof} Define $g:\Z\to\cB$ by $g(m):=F(mx)-\sum_{i=0}^n{m+i\choose i} F_{n,i}(x)$. Since
$$
\D_m^k F(mx)=\sum_{i=0}^k(-1)^{k-i}{k\choose i} F((m+i x)=(\d^k_{mx} F)(x^k), 
$$
and $\D^k_0{m+i\choose i}=1$ for $k\le i$ and $0$ otherwise, we have canonical isomorphisms
$(\D^k g)(0)\simeq 0$ for $k=0,\ldots,n$. Further, 
$$
(\D^{n-1} g)(m+1)-(\D^{n-1} g)(m)=(\D^n g)(m)=(\d^n_{mx} F)(x^n)-(\d^n F)(x^n)
$$
$$
\simeq(\d^n F)((m+1)x,x,\ldots,x)-(\d^n F)(mx,x,\ldots,x)-(\d^n F)(x,\ldots,x)\simeq 0,
$$
for all $m\in\Z$, by multiadditivity of $\d^n F$. Summing up these relations, we get 
$$
(\D^{n-1} g)(m)\simeq(\D^{n-1} g)(0)\simeq 0,
$$
and iterating the argument finally yields $g(m)\simeq 0$ for all $m$. 
\end{proof}

%
%
\subsection{Coherence of direct images}\label{sec:cohdir}
The following discussion is inspired in part by~\cite{Ull}. Let $X$ be a scheme. A \emph{vector bundle} on $X$ is a finite, locally free $\cO_X$-module. A complex $F^\bullet$ of $\cO_X$-modules is \emph{pseudo-coherent} (resp.~\emph{perfect}) if $F^\bullet$ is locally quasi-isomorphic to a bounded above (resp.~bounded) complex of vector bundles. In particular, an $\cO_X$-module $F$ is pseudo-coherent (resp.~perfect) if it locally admits a resolution (resp.~a finite resolution) by vector bundles. 

A morphism of schemes $f:X\to Y$ is \emph{pseudo-coherent} if $X$ is locally realized as a closed subscheme of a smooth $Y$-scheme $Z$ such that $\cO_X$ is pseudo-coherent as an $\cO_Z$-module. In particular, $f$ is locally finitely presented; conversely, every flat, locally finitely presented morphism is pseudo-coherent~\cite[Tag 0695]{stacks-project}.  

The following general results hold for arbitrary schemes $X,Y$, and are respectively proved in~\cite[Theorem 2.9]{Kie}, \cite[III, Corollaire 2.3]{SGA6}, and~\cite[Tag 0B91]{stacks-project}. 

\begin{thm}\label{thm:kiehl} Let $f:X\to Y$ be a proper, pseudo-coherent morphism, and let $F$ be a pseudo-coherent $\cO_X$-module. Then: 
\begin{itemize}
\item[(i)] the derived direct image $Rf_\star F$ is pseudo-coherent;
\item[(ii)] if $Y$ is quasi-compact and $L$ is an $f$-ample line bundle on $X$, then $R^q f_\star F(mL)=0$ for all $q\ge 1$ and $m\gg 1$. 
\end{itemize}
\end{thm}

\begin{thm}\label{thm:perfect} Let $f:X\to Y$ be proper, flat, finitely presented (and hence pseudo-coherent) morphism. If $F^\bullet$ is a perfect complex on $X$, then $Rf_\star F^\bullet$ is perfect on $Y$, and the construction is further compatible with arbitrary base change. 
\end{thm}

We now discuss the relation between Theorem~\ref{thm:kiehl} and their better known versions in the Noetherian case (coherence of direct images and Serre vanishing). 

Recall that a ring $A$ is \emph{coherent} if every finitely generated ideal is finitely presented; the ring $A$ is \emph{stably coherent} if every polynomial ring $A[t_1,\ldots,t_r]$ is coherent. We shall say that a scheme $X$ is coherent (resp.~stably coherent) if it is locally the spectrum of a coherent (resp.~stably coherent) ring. A scheme $X$ is coherent iff its structure sheaf $\cO_X$ is coherent. 

\begin{exam} Every locally Noetherian scheme is stably coherent. 
\end{exam}

By~\cite[I, Corollaire 3.5]{SGA6}, we have: 

\begin{lem}\label{lem:schcoh} On a coherent scheme $X$, the following conditions for an $\cO_X$-module $F$ are equivalent:
\begin{itemize}
\item[(i)] $F$ is coherent; 
\item[(ii)] $F$ is locally finitely presented; 
\item[(iii)] $F$ is pseudo-coherent. 
\end{itemize}
More generally, a complex of $\cO_X$-modules $F^\bullet$ is pseudo-coherent iff its cohomology sheaves $H^q(F^\bullet)$ are coherent for all $q$, and zero for $q$ sufficiently large, locally uniformly on $X$.
\end{lem}

A \emph{Pr\"ufer domain} is an integral domain $A$ which satisfies one of the following equivalent conditions: 
\begin{itemize}
\item[(i)] every localization of $A$ at a prime ideal is a valuation ring;
\item[(ii)] every finitely generated ideal of $A$ is invertible;
\item[(iii)] every torsion-free $A$-module is flat.
\end{itemize}
The main example for us is the valuation ring $K^\circ$ of a non-Archimedean field $K$. 

\begin{exam} Every Pr\"ufer domain $A$ is stably coherent. Indeed, every finitely generated ideal $I$ of $A[t_1,\ldots,t_r]$ is torsion free, and hence flat over $A$.  By~\cite[Th\'eor\`eme 3.4.6]{RG}, $I$ is thus a finitely presented $A[t_1,\ldots,t_r]$-module. 
\end{exam}

\begin{lem}\label{lem:stabcoh} Let $f:X\to Y$ be a locally finitely presented morphism of schemes, and assume that $Y$ is stably coherent. Then:
\begin{itemize}
\item[(i)] $f$ is pseudo-coherent; 
\item[(ii)] $X$ is (stably) coherent;
\end{itemize}
\end{lem}
\begin{proof} After passing to affine open subschemes, we may assume that $X$ and $Y$ are spectra of rings $A,B$ such that $B=A[t]/I$ with $I\subset A[t]=A[t_1,\ldots,t_r]$ a finitely generated ideal. By assumption, $A$ is stably coherent, and hence $A[t]$ is coherent. By Lemma~\ref{lem:schcoh}, the finitely presented $A[t]$-module $B$ is thus pseudo-coherent, which proves (i). 

Let now $J\subset B$ be a finitely generated ideal. As modules over $A[t]$, $B$ is finitely presented, and $J$ is a finitely generated submodule. By coherence of $A[t]$, $J$ is a finitely presented as an $A[t]$-module, and hence also as a $B$-module, which proves (ii). 
\end{proof}

We can now state a version of Theorem~\ref{thm:kiehl} that recovers in particular the usual statement for locally Noetherian schemes.

\begin{cor}\label{cor:kiehl} Let $f:X\to Y$ be a proper, locally finitely presented morphism of schemes. Assume that $Y$ is stably coherent (\eg locally Noetherian, or locally finitely presented over a Pr\"ufer domain), and let $F$ be a coherent $\cO_X$-module. Then: 
\begin{itemize} 
\item[(i)] $R^q f_\star F$ is coherent for all $q$; 
\item[(ii)] if $Y$ is quasi-compact and $L$ is an $f$-ample line bundle on $X$, then $R^q f_\star F(mL)=0$ for all $q\ge 1$ and $m\gg 1$.
\end{itemize}
\end{cor}
\begin{proof} By Lemma~\ref{lem:stabcoh}, $f$ is pseudo-coherent, and $X$ is coherent. Thus $F$ is pseudo-coherent. Theorem~\ref{thm:kiehl} directly yields (ii), and shows that $R f_\star F$ is pseudo-coherent. Since $Y$ is coherent, this amounts to (i), by Lemma~\ref{lem:schcoh}. 
\end{proof}

%
%
\subsection{The determinant of a perfect complex}

Let $X$ be a scheme. The \emph{determinant} of a vector bundle $E$ on $X$ is the line bundle $\det E:= \bigwedge^{\rk E} E$. If 
\begin{equation}\label{exseq}
0\to E'\to E\to E''\to 0
\end{equation} 
is an exact sequence of vector bundles, then there is a canonical isomorphism  \begin{equation}\label{additive}
\det E\simeq\det E'+\det E'',
\end{equation} where, in additive notation, $+$ denotes the tensor product of line bundles, . However, given two vector bundles $E,F$, the isomorphism 
$$
\det E+\det F\simeq\det(E\oplus F)\simeq\det(F\oplus E)\simeq\det F+\det E
$$ 
induced by the canonical isomorphism $E\oplus F\simeq F\oplus E$ coincides with the canonical commutativity isomorphism only up to a factor $(-1)^{(\rk E)(\rk F)}$. 

To deal with this sign issue, one introduces the graded determinant functor $E\mapsto(\det E,\rk E)$ with values in the (non-strictly) commutative Picard category $\cP(X)\times\Z$ of graded line bundles, in which the commutativity isomorphism is modified according to the Koszul rule of signs as above. For the purpose of the present paper, it will however be enough to view $\det E$ as an object in the strictly commutative Picard category $\cP(X)_\Q$ of $\Q$-line bundles on $X$, and we can thus ignore the previous sign issue. 

In~\cite[Theorem 2]{KM}, Knudsen and Mumford showed that setting 
$$
\det E^\bullet:=\sum_i(-1)^i\det E^i
$$
for each bounded complex of vector bundles $E^\bullet$ gives rise to a functor $F^\bullet\mapsto\det F^\bullet$ from the category of perfect complexes on $X$ and quasi-isomorphims between them to $\cP(X)_\Q$. This functor commutes with base change, it is additive with respect to short exact sequences of complexes in the sense of~\eqref{additive}. It can be uniquely characterized up to unique isomorphism by imposing further properties. By~\cite[p.43, (b)]{KM}, we have:

\begin{lem}\label{lem:det free} If the cohomology sheaves $H^q(F^\bullet)$ of a perfect complex $F^\bullet$ are perfect (\eg locally free), then 
$$
\det F^\bullet=\sum_q (-1)^q\det H^q(F^\bullet).
$$
\end{lem}
%
%
\subsection{Regular sections}\label{sec:regsec}
Let $f:X\to Y$ be a flat, locally finitely presented morphism of schemes, and let $s$ be a global section of a line bundle $L$ on $X$, defining a closed subscheme $Z\subset X$. Recall that $s$ is \emph{$f$-regular at $x\in X$} if $Z$ is a relative Cartier divisor at $x$, \ie $s$ is a nonzerodivisor in $\cO_{X,x}$, and $Z$ is $f$-flat at $x$. If this holds for all $x\in X$, then $s$ is simply called $f$-regular. By~\cite[IV.11.3.7]{EGA}, $s$ is $f$-regular at $x$ if and only the restriction of $s$ to the fiber through $x$ is not a zerodivisor at $x$, and the set of $x\in X$ at which this holds is open. 


\begin{lem}\label{lem:regsec} Let $f:X\to Y$ be flat, proper, finitely presented morphism of schemes, and $s$ be a global section of a line bundle $L$ on $X$. Pick $y\in Y$, and assume that $s$ is nonzero at each associated point of $X_y$. Then $s$ is relatively regular over an open neighborhood of $y$. 
\end{lem}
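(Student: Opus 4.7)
The plan is to derive this from the fiberwise criterion for relative regularity already recalled just before the statement. Namely, the paragraph preceding the lemma records two facts from \cite[IV.11.3.7]{EGA}: (a) a section $s$ of a line bundle is $\pi$-regular at $x\in X$ if and only if its restriction to the fiber $X_{\pi(x)}$ is a nonzerodivisor at $x$; (b) the locus in $X$ where this holds is open. Granting these, the lemma follows by a short topological argument using properness of $\pi$.

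First I would check that $s|_{X_y}$ is a nonzerodivisor at every point $x\in X_y$. Since $\pi$ is of finite presentation, the fiber $X_y$ is locally of finite type over the residue field $\kappa(y)$, hence locally noetherian; the associated primes of the local ring $\cO_{X_y,x}$ are finite in number and correspond to those associated points of $X_y$ that specialize to $x$. By the standard characterization, an element of $\cO_{X_y,x}$ is a nonzerodivisor precisely when it avoids all these associated primes. By hypothesis $s$ is nonzero at every associated point of $X_y$, and so in particular at those specializing to $x$; hence $s|_{X_y}$ is a nonzerodivisor at $x$.

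Combining with fact (a), $s$ is $\pi$-regular at every $x\in X_y$, i.e.\ the open locus $U\subset X$ provided by (b) contains the entire fiber $X_y$. Since $\pi$ is proper, $\pi(X\setminus U)$ is closed in $Y$; and since $X_y\subset U$, this closed set does not contain $y$. Therefore
\[
V:=Y\setminus \pi(X\setminus U)
\]
is an open neighborhood of $y$ in $Y$ with $\pi^{-1}(V)\subset U$, which says exactly that $s$ is $\pi$-regular over $V$.

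There is essentially no serious obstacle here: the argument is a direct application of the openness and fiberwise criteria together with properness. The only mild point to verify carefully is the translation of the associated-point hypothesis on $X_y$ into the pointwise nonzerodivisor condition on the stalks of $\cO_{X_y}$, which is why one wants to note that $X_y$ is locally noetherian (true in both the noetherian and valuative cases, since $X_y$ is of finite type over the field $\kappa(y)$).
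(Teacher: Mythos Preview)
Your proof is correct and follows essentially the same approach as the paper's own proof: both use the fiberwise criterion and openness from \cite[IV.11.3.7]{EGA} to show the fiber $X_y$ is contained in the regular locus $U$, then invoke properness (closedness) of $\pi$ to find an open neighborhood $V$ of $y$ with $\pi^{-1}(V)\subset U$. The paper's version is slightly terser, simply noting that $X_y$ is noetherian and that the assumption implies $s|_{X_y}$ is a nonzerodivisor at each point, whereas you spell out the associated-prime argument more explicitly.
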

\begin{proof} Denote by $U\subset X$ the open set of points at which $s$ is relatively regular. Since $X_y$ is Noetherian (being of finite type over a field), the assumption implies that $s|_{X_y}$ is a nonzerodivisor at each $x\in X_y$, and hence that $X_y\subset U$ by the above results. As $f$ is closed, it follows that $f^{-1}(V)\subset U$ for some open neighborhood $V$ of $y$. 
\end{proof}

As a consequence, we then have the following useful existence result for relatively regular sections. 

\begin{prop}\label{prop:reg} Let $f:X\to Y$ be a flat, projective, finitely presented morphism of schemes, and let $L$ be a $f$-ample line bundle on $X$. Then $mL$ admits a relatively regular section locally over $Y$ for $m\gg 1$. 
\end{prop}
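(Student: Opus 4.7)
The statement is local on $Y$, so I will fix a point $y\in Y$ and produce, for every $m\gg 1$, an open neighborhood of $y$ on which $mL$ admits a relatively regular section. Shrinking $Y$, I may assume $Y=\Spec A$ is affine and quasi-compact. The strategy has three layers: (1) use Serre vanishing and cohomology-and-base-change to identify sections on the fiber $X_y$ with restrictions of sections defined in a neighborhood of $y$; (2) construct a section on the fiber that avoids the finitely many associated points of $X_y$; (3) invoke Lemma~\ref{lem:regsec} to convert fiberwise non-vanishing at associated points into relative regularity on a neighborhood.

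For the first layer, Theorem~\ref{thm:kiehl} applied to the $\pi$-ample line bundle $L$ yields an integer $m_0$ such that for all $m\ge m_0$ the sheaf $mL$ is $\pi$-globally generated, $\pi_*(mL)$ is coherent, and $R^q\pi_*(mL)=0$ for $q\ge 1$. By Corollary~\ref{cor:perfect} the complex $R\pi_*(mL)$ is perfect, so that standard cohomology-and-base-change for perfect complexes (which is valid in the admissible setting, after possibly reducing to a noetherian base via the finite presentation hypothesis) forces $\pi_*(mL)\otimes\kappa(y)\to H^0(X_y,mL|_{X_y})$ to be an isomorphism for $m\ge m_0$. Thus any section on the fiber lifts to a section of $mL$ on some affine open neighborhood of $y$ in $Y$.

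For the second layer, the fiber $X_y$ is a projective $\kappa(y)$-scheme, hence noetherian, and therefore has finitely many associated points $x_1,\dots,x_r$. The section ring $R=\bigoplus_{k\ge 0} H^0(X_y,kL|_{X_y})$ is a finitely generated graded $\kappa(y)$-algebra (since $L|_{X_y}$ is ample), and the $x_i$ correspond to homogeneous primes $\fp_i\subset R$, each failing to contain $R_+$. I plan to invoke homogeneous prime avoidance: for each $i$ choose $g_i\in\bigcap_{j\ne i}\fp_j\setminus\fp_i$ of some degree $d_i$, set $d=\mathrm{lcm}(d_i)$, and form $\bar s_0=\sum_i g_i^{d/d_i}\in R_d$; then $\bar s_0\not\equiv 0\pmod{\fp_i}$ for every $i$, so $\bar s_0$ avoids all associated points. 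To upgrade this from ``some sufficiently large $m$'' to ``every $m\gg 1$'', I multiply $\bar s_0^{\lfloor m/d\rfloor}$ by a homogeneous element of the appropriate residual degree that still lies outside every $\fp_i$; such an element exists for every degree large enough, by the same avoidance argument applied fiberwise, and the resulting section lies in $R_m\setminus\bigcup_i\fp_i$ for all $m$ sufficiently large.

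For the third layer, lift the resulting $\bar s\in H^0(X_y,mL|_{X_y})$ to $s\in H^0(\pi^{-1}(V),mL)$ for some affine open neighborhood $V$ of $y$, via the surjectivity from layer one. Because $\bar s=s|_{X_y}$ does not vanish at any associated point of $X_y$, Lemma~\ref{lem:regsec} guarantees that $s$ is $\pi$-regular over a possibly smaller open neighborhood of $y$, completing the proof. The main obstacle I anticipate is not the geometric idea---which is the classical combination of Serre vanishing, cohomology-and-base-change, and prime avoidance---but the technical need to justify cohomology-and-base-change in the possibly non-noetherian valuative setting; this is what forces the explicit appeal to perfectness of $R\pi_*(mL)$ (Corollary~\ref{cor:perfect}) and the finite presentation hypothesis on $\pi$. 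A secondary delicate point is the promotion of prime avoidance from ``some large degree'' to ``every large degree'', which requires care when $\kappa(y)$ is finite so that one cannot rely on the infinite-field argument that a finite union of proper linear subspaces is proper.
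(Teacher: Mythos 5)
Your architecture is the same as the paper's: reduce via Lemma~\ref{lem:regsec} to producing, for all $m\gg1$, a section over the fiber $X_y$ that is nonzero at the finitely many associated points of $X_y$, and then lift it to a neighborhood of $y$ using a vanishing statement. For the lifting you use base change for the perfect complex $R\pi_*(mL)$; this does work, but the parenthetical ``reduce to a noetherian base via finite presentation'' is vaguer than necessary: since $mL$ is flat over $Y$, the derived base-change isomorphism is exactly what is invoked in Proposition~\ref{prop:basechange}(i), and together with $R^q\pi_*(mL)=0$ for $q\ge1$ (Theorem~\ref{thm:kiehl}) it gives $\pi_*(mL)\otimes\kappa(y)\simeq H^0(X_y,mL_y)$. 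The paper's own lifting is even more economical: it applies Theorem~\ref{thm:kiehl} to $\fa(mL)$, where $\fa$ is the ideal sheaf of $X_y$, so that $R^1\pi_*(\fa(mL))=0$ for $m\gg1$ makes $\pi_*(mL)_y\to H^0(X_y,mL_y)$ surjective, with no derived machinery.

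The genuine gap is in the fiber step, precisely at the point you flag yourself. Your prime-avoidance construction $\bar s_0=\sum_i g_i^{d/d_i}$ produces avoidance elements only in degree $d$ (hence in the additive semigroup of achievable degrees), and your promotion to \emph{every} large $m$ is circular: writing $m=qd+r'$, the ``residual degree'' $r'$ is bounded by $d-1$, so the phrase ``such an element exists for every degree large enough, by the same avoidance argument'' does not apply --- the existence of avoidance elements in a \emph{prescribed} degree is exactly what remains unproved, and over a finite residue field you cannot fall back on the union-of-proper-subspaces argument either. The statement you need is true, and the standard repair works uniformly in $m$ and over any field: for each associated point $x_i$ choose a closed specialization $z_i\in\overline{\{x_i\}}$; since the zero scheme of a section is closed, nonvanishing at $z_i$ forces nonvanishing at $x_i$. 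Let $Z\subset X_y$ be the reduced finite closed subscheme supported on the $z_i$; its ideal sheaf is coherent on the noetherian scheme $X_y$, so Serre vanishing on the fiber gives $H^1(X_y,I_Z\otimes mL_y)=0$, hence $H^0(X_y,mL_y)\to H^0(Z,mL_y|_Z)$ is surjective, for all $m\ge m_0$. Lifting a nowhere-vanishing section of the line bundle $mL_y|_Z$ on the finite reduced scheme $Z$ yields the required fiber section for every $m\ge m_0$, which is what the paper's one-line appeal to ampleness of $L|_{X_y}$ amounts to. With that substitution (or any correct proof of existence in every sufficiently large degree), your argument is complete and essentially coincides with the paper's.
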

\begin{proof} Pick $y\in Y$. After replacing $Y$ with an affine neighborhood of $y$ and  $L$ with a large enough multiple, we may assume that there exists a closed embedding $X\hookrightarrow\P^N_Y$ over $Y$ with $L=\cO(1)|_X$. The set of associated points $S$ of $X_y$ being finite, prime avoidance yields for $m\gg 1$ a section $s\in \Hnot(\P_Y^N,\cO(m))$ that does not vanish at any point of $S$, and the restriction of $s$ to $X$ is thus relatively regular over a neighborhood of $y$, by Lemma~\ref{lem:regsec}. 
\end{proof}

%
%
%
\subsection{The determinant of cohomology} 
Let $f:X\to Y$ be a flat, proper, finitely presented morphism between arbitrary schemes $X,Y$, with Picard categories $\cP(X)$, $\cP(Y)$. By Theorem~\ref{thm:perfect}, $Rf_\star$ takes a perfect complex to a perfect complex, and we can thus consider its determinant. We will only be concerned with the case when the source perfect complex is a line bundle, so we wil be content with the following definition, which could be stated more generally:

\begin{defi}\label{defi:detcoh} The \emph{determinant of cohomology} is the functor $\la_{X/Y}:\cP(X)\to\cP(Y)_\Q$ that takes a line bundle $L$ on $X$ to the $\Q$-line bundle 
$$
\la_{X/Y}(L):=\det Rf_\star L. 
$$
\end{defi}

By Lemma~\ref{lem:det free}, we have:

\begin{lem}\label{lem:detcohfree} Let $L$ be a line bundle on $X$ is such that $R^qf_\star L$ is a vector bundle, for all $q$. Then 
$$
\la_{X/Y}(L)=\sum_{q\in\N} (-1)^q\det R^q f_\star L.
$$
\end{lem}
This holds in particular when $f$ has relative dimension $0$, \ie a finite flat morphism. Indeed, any finite morphism $f:X\to Y$ satisfies $R^q f_\star F=0$ for any $\cO_X$-module $F$ and $q>0$~\cite[Tag 03QP]{stacks-project}. By~\cite[Tag 02KB]{stacks-project}, $f$ is flat iff $f_\star \cO_X$ is a vector bundle, and we then have $\la_{X/Y}(L)=\det f_\star\cO_X$.

\begin{prop}\label{prop:basechange} The determinant of cohomology satisfies the following two compatibility properties. 
\begin{itemize}
\item[(i)] It commutes with arbitrary base change: for any morphism $g:Y'\to Y$, let 
$$\xymatrix{X' \ar[r]^{h} \ar[d]^{f'} & X \ar[d]^{f} \\
Y' \ar[r]^{g} & Y}
$$ 
be the corresponding Cartesian square. Then we have a canonical functorial isomorphism 
$$
\la_{X'/Y'}(h^\star  L)\simeq g^\star \la_{X/Y}(L).
$$
\item[(ii)] If $Z$ is an effective relative Cartier on $X$, then we have a canonical functorial isomorphism 
\begin{equation}\label{equ:detrestr}
\la_{X/Y}(L)-\la_{X/Y}(L-Z)\simeq\la_{Z/Y}(L|_Z). 
\end{equation}
\end{itemize}
\end{prop}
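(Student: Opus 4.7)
The plan is to reduce both parts to the formal machinery of Knudsen-Mumford determinants of perfect complexes, combined with the cohomology/base-change results available in the admissible setting.

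For part (i), I would proceed as follows. Since $\pi$ is flat, $X$ and $Y'$ are Tor-independent over $Y$, so the derived base change formula gives a canonical quasi-isomorphism
$$Lf^*R\pi_*L\simeq R\pi'_*g^*L$$
of complexes on $Y'$. Both sides are perfect (on $Y$ and $Y'$ respectively) by Corollary~\ref{cor:perfect}, and by construction the Knudsen-Mumford determinant functor commutes with arbitrary derived pullback between perfect complexes, so
$$\la_{X'/Y'}(g^*L)=\det R\pi'_*g^*L\simeq\det Lf^*R\pi_*L\simeq f^*\det R\pi_*L=f^*\la_{X/Y}(L).$$
The only delicate point in the valuative case is the derived base change formula. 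I would establish it by working locally, covering $X$ by affine opens $\cU_i=\Spec\cA_i$ over an affine open $\Spec\cB\subset Y$ on which $L$ is trivial. The \v{C}ech complex of $L$ with respect to such a cover represents $R\pi_*L$ by a bounded complex of flat (though in general not finitely generated) $\cB$-modules, and its formation commutes with $-\otimes_\cB\cB'$ for any $\cB\to\cB'$; combined with pseudo-coherence and finite Tor-dimension from Corollary~\ref{cor:perfect}, this yields the required quasi-isomorphism.

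For part (ii), I would start from the short exact sequence of coherent $\cO_X$-modules
$$0\to L(-D)\to L\to i_*(L|_D)\to 0,$$
where $i:D\hookrightarrow X$ is the closed immersion and $L(-D)$ is the twist by $-D$ (in additive notation, $L-D$). Applying $R\pi_*$ yields a distinguished triangle
$$R\pi_*(L-D)\to R\pi_*L\to R(\pi|_D)_*(L|_D)\to[1]$$
of perfect complexes on $Y$. The third term is indeed perfect by Corollary~\ref{cor:perfect} applied to $\pi|_D\colon D\to Y$, which is flat (by definition of an effective relative Cartier divisor), proper and finitely presented. The additivity of the Knudsen-Mumford determinant on distinguished triangles of perfect complexes then gives the canonical functorial isomorphism
$$\la_{X/Y}(L)\simeq\la_{X/Y}(L-D)+\la_{D/Y}(L|_D),$$
which is exactly \eqref{equ:detrestr}.

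The main obstacle is really just the bookkeeping of the non-noetherian input for (i): checking that the flat base change and derived pushforward formalism extend from the standard noetherian setting to the valuative one. Once one grants the perfectness statement of Corollary~\ref{cor:perfect} (which rests on Kiehl's direct image theorem and Ullrich's coherence results, summarized in Theorem~\ref{thm:kiehl}), both (i) and (ii) follow formally from the universal properties of the Knudsen-Mumford determinant functor established in \cite{KM}.
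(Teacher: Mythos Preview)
Your proposal is correct and follows essentially the same approach as the paper: derived base change plus $\det Lf^*\simeq f^*\det$ for (i), and the restriction exact sequence plus additivity of $\det$ for (ii). The only difference is that where you sketch a \v{C}ech-complex argument for derived base change in the valuative setting, the paper simply cites \cite[Tag 0A1D]{stacks-project}, which already applies in the required generality.
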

\begin{proof} Since $L$ is flat over $Y$, we have $Rf'_\star h^\star L =Lg^\star  Rf_\star  L$, cf.~~\cite[Tag 0A1D]{stacks-project}. On the other hand, the determinant functor satisfies $\det Lg^\star  = g^\star  \det$, hence (i). If $Z$ is an effective relative Cartier divisor on $X$, then the natural exact sequence $0\to L(-Z)\to L\to L|_Z\to 0$ induces an exact sequence of perfect complexes $0\to Rf_\star L(-Z)\to Rf_\star L\to R(f|_Z)_\star L|_Z\to 0$, and (ii) follows by additivity of $\det$ in exact sequences. 
\end{proof}

%
%
\subsection{Deligne pairings and Knudsen--Mumford expansion} \label{sec:KM} 
In this section, we fix a flat, projective, finitely presented morphism $f:X\to Y$ of constant relative dimension $n$. 

When $n=0$, $f:X\to Y$ is a finite flat morphism, and the determinant of cohomology $\la_{X/Y}$ provides a canonical and functorial construction of the norm of a line bundle~\cite[II.6.5]{EGA} (compare for instance~\cite[Proposition 3.3]{Fer}). To see this, recall first that the norm $N_{X/Y}(h)\in\cO_Y$ of $h\in f_\star \cO_X$ is defined as the determinant of the endomorphism of the vector bundle $f_\star \cO_X$ defined by multiplication by $h$, yielding a multiplicative map 
\begin{equation}\label{equ:norm}
N_{X/Y}:f_\star \cO_X\to\cO_Y. 
\end{equation}
Now define a functor $N_{X/Y}:\cP(X)\to\cP(Y)$ by setting
$$
N_{X/Y}(L)=(\det f_\star L)-(\det f_\star \cO_X)
$$
$$
=\la_{X/Y}(L)-\la_{X/Y}(\cO_X)=(\d\la_{X/Y})(L),
$$
\begin{lem}\label{lem:norm} For each line bundle $L$ on $X$, $N_{X/Y}(L)$ coincides with the norm of $L$ as defined in~\cite[II.6.5]{EGA}.
\end{lem}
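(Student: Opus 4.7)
The plan is to show both functors $\cP(X) \to \cP(Y)$ satisfy the same local description, then glue. Both the EGA construction and our $N_{X/Y}(L) := \det \pi_*L - \det \pi_*\cO_X$ are functors which commute with base change (for our construction, this follows from Proposition~\ref{prop:basechange}(i); for the EGA version, from its definition via transition functions), both canonically take $\cO_X$ to $\cO_Y$, and both take an isomorphism $L \simeq L'$ to the corresponding isomorphism between norms. So it suffices to work locally on $Y$.

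I would first argue that, locally on $Y$, every line bundle $L$ on $X$ is trivial. Indeed, for any $y \in Y$, the fiber $X_y$ is a finite disjoint union of spectra of Artinian local rings, on which $L|_{X_y}$ is trivial. A trivializing section of $L|_{X_y}$ lifts to a section $\tau$ of $L$ on some open neighborhood of $X_y$; since $\pi$ is finite (hence closed), after shrinking $Y$ to an open $V \ni y$ we may assume $\tau$ is defined on $\pi^{-1}(V)$, and Nakayama shows (after further shrinking) that $\tau$ is nowhere vanishing, hence a trivialization of $L$ on $\pi^{-1}(V)$.

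Having reduced to $L = \cO_X$ via a trivialization $\tau$, both norm functors identify canonically with $\cO_V$. The only thing to check is that changes of trivialization act the same way in both constructions: given another trivialization $\tau' = u\tau$ for a unit $u \in \cO_X(\pi^{-1}(V))^*$, on one side the EGA norm of $L$ is defined precisely so that the change-of-trivialization automorphism of $\cO_V$ is multiplication by $N_{X/Y}(u)$ in the sense of \eqref{equ:norm}. On the other side, multiplication by $u$ on $L$ induces multiplication by $u$ on $\pi_*L = \pi_*\cO_X$, and the determinant functor sends this to multiplication by $\det(\cdot u) = N_{X/Y}(u) \in \cO_V^*$, again by the very definition \eqref{equ:norm}. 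So both change-of-trivialization rules agree.

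The identifications thus agree on the level of transition cocycles, and hence glue to a canonical isomorphism between the two line bundles on $Y$, natural in $L$. The main conceptual point (rather than a serious obstacle) is simply the observation that the determinant of the multiplication-by-$u$ endomorphism of $\pi_*\cO_X$ is literally the EGA norm $N_{X/Y}(u)$; once this is recorded, the comparison is forced.
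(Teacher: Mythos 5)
Your proposal is correct and takes essentially the same route as the paper's proof: trivialize $L$ on the preimages of an open cover of $Y$ (the paper cites \cite[II.6.1.12.1]{EGA} for this local triviality, where you argue it directly via Nakayama and closedness of $\pi$) and then compare transition cocycles, using that the automorphism of $\det\pi_*L$ induced by multiplication by a unit $u$ is multiplication by $N_{X/Y}(u)$ as in \eqref{equ:norm}.
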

\begin{proof} Observe that if $u\in \Hnot(X,\cO_X^\star )$ is a unit and $L$ is a line bundle on $X$, multiplication by $u$ defines an isomorphism $L\simeq L$, whose induced isomorphism $\det f_\star L\simeq\det f_\star L$, and hence also $N_{X/Y}(L)\simeq N_{X/Y}(L)$, are both given by multiplication by $N_{X/Y}(u)$. 

By~\cite[Tag 0BUT]{stacks-project}, $L$ is trivial in a neighborhood of each fiber of $ f$, and $Y$ therefore admits an open cover $(Y_i)$ with $L|_{X_i}\simeq\cO_{X_i}$ on $X_i:= f^{-1}(Y_i)$. Set $Y_{ij}=Y_i\cap Y_j$, $X_{ij}=X_i\cap X_j= f^{-1}(Y_{ij})$, and denote by $u_{ij}\in \Hnot(X_{ij},\cO_{X_{ij}}^\star )$ the corresponding cocycle. The transition isomorphism $\cO_{X_{ij}}\simeq L|_{X_{ij}}\simeq \cO_{X_{ij}}$ is given by multiplication by $u_{ij}$. By the above observation, applying the functor $N_{X/Y}$ yields an isomorphism $\cO_{Y_{ij}}\simeq N_{X/Y}(L)|_{Y_{ij}}\simeq\cO_{Y_{ij}}$ given by multiplication by $N_{X_{ij}/Y_{ij}}(u_{ij})$, which precisely means that $N_{X/Y}(L)$ coincides with the norm of $L$ as defined in~\cite[II.6.5]{EGA}.
\end{proof}

When $Y$ is the spectrum of a field $K$ and $X$ is the spectrum of a finite flat $K$-algebra $A$, the norm functor admits the following concrete description (see \eg~\cite[Lemma 1.13]{Mor}). For $a \in A$, we have 
\begin{equation}\label{eq:normmet} N_{A/K}(a)= \prod_{n_i \in \Spec(A)}N_{(A/n_i)/K} (a_i)^{m_i}.
\end{equation} 
Here $a_i$ is the image of $a$ in $A/n_i$, $m_i = \operatorname{length}_{B_{n_i}} A_{n_i}$ so that $\sum m_i [a_i]$ is the fundamental cycle of $[A].$

Arguing as in~\cite[4.1.1]{Duc}, we next prove: 

\begin{prop}\label{prop:norm} There is a unique way to assign to each finite flat morphism $ f:X\to Y$ an additivity structure on the norm functor $N_{X/Y}:\cP(X)\to\cP(Y)$ that is compatible with base change and such that the following diagram commutes
$$
\xymatrix{
N_{X/Y}(L) \ar[r] \ar[d] & N_{X/Y}(L) \ar[d] \\ 
N_{X/Y}(L+\cO_X) \ar[r] &  N_{X/Y}(L)+N_{X/Y}(\cO_X)  
}.
$$
Here the upper row is the identity, the lower row is the additivity isomorphism, and the vertical rows are deduced from the natural isomorphism of the type $L \simeq L + \cO_X$ and the identification $N_{X/Y}(\cO_X)=\cO_Y$. 
\end{prop}
\begin{proof} Pick two line bundles $L,L'$ on $X$, and choose an open cover $(Y_i)$ of $Y$ with trivializations $L|_{X_i}\simeq\cO_{X_i}$, $L'|_{X_i}\simeq\cO_{X_i}$ on $X_i:=f^{-1}(Y_i)$. Given an additivity isomorphism 
$$
N_{X/Y}(L+L')\simeq N_{X/Y}(L)+N_{X/Y}(L')
$$ 
with the desired properties, the induced isomorphisms
$$
N_{X_i/Y_i}(\cO_{X_i}+\cO_{X_i})\simeq N_{X_i/Y_i}(\cO_{X_i})+N_{X_i/Y_i}(\cO_{X_i})
$$
are necessarily equal to the canonical ones obtained by multiplicativity of (\ref{equ:norm}), which proves uniqueness. To establish existence, it is then enough to argue locally on $Y$, since compatibility on overlaps will follow from uniqueness, and the result is then straightforward, using again that any line bundle on $X$ is trivial locally over $Y$. 
\end{proof}

In the terminology of \S\ref{sec:polyfunc}, Proposition~\ref{prop:norm} says that the functor $\la_{X/Y}$ is polynomial of degree $1$ when $n=0$. This is generalized by the next result, due to F.~Ducrot. 

\begin{thm}\label{thm:duc}\cite[Theorem 4.2]{Duc} There exists a unique way to assign to each flat, projective, finitely presented morphism $ f:X\to Y$ of relative dimension $n$ a polynomial structure of degree $n+1$ on $\la_{X/Y}:\cP(X)\to\cP(Y)_\Q$ with the following properties: 
\begin{itemize}
\item[(i)] it commutes with base change; 
\item[(ii)] it coincides with the above one when $n=0$; 
\item[(iii)] for any relative effective divisor $Z$ on $X$, the polynomial structures on $\la_{X/Y}$ and $\la_{Z/Y}$ are compatible with the canonical restriction isomorphism 
$$
\la_{X/Y}(L)-\la_{X/Y}(L-Z)\simeq\la_{Z/Y}(L|_Z). 
$$ 
\end{itemize}
\end{thm}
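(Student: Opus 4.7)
The plan is to follow Ducrot's inductive argument in \cite{Duc}, the only additional input needed in the valuative setting being Proposition~\ref{prop:reg} and Theorem~\ref{thm:kiehl}, both established above. I would induct on the relative dimension $n$, with base case $n=0$ supplied by Proposition~\ref{prop:norm}. The construction is performed locally on $Y$, which one may therefore assume affine and quasi-compact; local uniqueness will glue the local constructions canonically, and I fix throughout a $\pi$-ample line bundle $H$ on $X$.

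\emph{Uniqueness.} Since a polynomial structure of degree $n+1$ on $\lambda_{X/Y}$ is by definition a multi-additive symmetric structure on the functor $\d^{n+1}\lambda_{X/Y}:\cP(X)^{n+1}\to\cP(Y)_\Q$, uniqueness of the polynomial structure reduces to uniqueness of this multi-additive data. To pin it down on a given tuple $(L_0,\ldots,L_n)$, for $m\gg 1$ I choose relatively regular sections $s_i$ of $L_i+mH$ whose zero-schemes form a regular sequence (Proposition~\ref{prop:reg}); using additivity in each variable this reduces the problem to the case in which each $L_i$ carries a relatively regular section $s_i$ with divisor $D_i$. Iteratively applying the restriction isomorphism (iii) to peel off one variable at a time identifies $\d^{n+1}\lambda_{X/Y}(L_0,\ldots,L_n)$ canonically with $\d^{n}\lambda_{D_0/Y}(L_1|_{D_0},\ldots,L_n|_{D_0})$, which is uniquely determined by the inductive hypothesis applied to the relative dimension $n-1$ morphism $D_0\to Y$.

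\emph{Existence.} Reversing the argument, I would define $\d^{n+1}\lambda_{X/Y}(L_0,\ldots,L_n)$, for tuples admitting a regular sequence of sections, by iterated application of (iii) combined with the polynomial structure on $\lambda_{D/Y}$ furnished by the inductive hypothesis; then extend to arbitrary tuples by multi-additivity, expressing each $L_i$ as the formal difference $(L_i+m_iH)-m_iH$ of two line bundles both admitting relatively regular sections. The polynomial structure on $\lambda_{X/Y}$ is recovered from its iterated differences via Lemma~\ref{lem:poly}. Compatibility with base change is built in at each inductive step, because the restriction isomorphism of Proposition~\ref{prop:basechange}(ii), coherence of direct images (Theorem~\ref{thm:kiehl}), and the production of regular sections (Proposition~\ref{prop:reg}) are all compatible with flat base change.

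\emph{Main obstacle.} The technical heart is to verify that the iterated-restriction definition of $\d^{n+1}\lambda_{X/Y}(L_0,\ldots,L_n)$ is independent of the choices of regular sections $s_i$, and that the resulting system of additivity isomorphisms is coherently symmetric and associative. Two competing regular sequences $(s_i)$ and $(s_i')$ are compared by passing, via multiplication inside a common higher tensor power, to a regular sequence $(s_is_i')$ that simultaneously refines both; the inductive hypothesis on Deligne pairings over codimension-$(n+1)$ strata then supplies the canonical identification of the two resulting $\Q$-line bundles. Checking associativity, symmetry, and compatibility with (iii) of the resulting multi-additive structure reduces to coherence among canonical isomorphisms of Deligne pairings over lower-dimensional strata, handled inductively. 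This is precisely the content of \cite[\S 4]{Duc} in the noetherian case, and the adaptation to admissible bases consists in substituting Theorem~\ref{thm:kiehl} and Proposition~\ref{prop:reg} for their noetherian analogues at every invocation.
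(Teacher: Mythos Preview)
Your proposal is essentially correct and matches the paper's own treatment, which does not give an independent proof but simply cites \cite[Theorem 4.2]{Duc} and observes that Ducrot's argument goes through verbatim in the valuative case once the perfectness of $R\pi_*L$ (Corollary~\ref{cor:perfect}) is available. Your sketch correctly identifies the extra non-noetherian inputs (Theorem~\ref{thm:kiehl} and Proposition~\ref{prop:reg}) and the inductive structure of Ducrot's argument via relatively regular sections.

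One minor point of comparison: Ducrot's actual formalism constructs an $(n+2)$-\emph{cube structure} on $\la_{X/Y}$, from which the polynomial structure of degree $n+1$ is then deduced (cf.\ \cite[Proposition 1.9]{Duc}), whereas you work directly with the multi-additive structure on $\d^{n+1}\la_{X/Y}$. The two are equivalent for the purposes of this paper, and your phrasing is closer in spirit to the Elkik/Mu\~noz-Garcia construction of Deligne pairings via regular sequences. Also, your sentence ``The polynomial structure on $\la_{X/Y}$ is recovered from its iterated differences via Lemma~\ref{lem:poly}'' is slightly off: by Definition~\ref{defi:poly}, the polynomial structure \emph{is} the multi-additive structure on $\d^{n+1}\la_{X/Y}$, so there is nothing further to recover; Lemma~\ref{lem:poly} is a consequence, not an ingredient.
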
 
More precisely,~\cite[Theorem 4.2]{Duc} proves the existence of a canonical $(n+2)$-cube structure on $\la_{X/Y}$, which yields (and is probably equivalent to) a polynomial structure of degree $n+1$ on $\la_{X/Y}$, as discussed in \S\ref{sec:polyfunc}. Stricty speaking, the proof of Theorem~\ref{thm:duc} assumes $X$ and $Y$ to be locally Noetherian, but all the arguments apply in the general case, once Theorem~\ref{thm:perfect} and Proposition~\ref{prop:reg} are available. Since a polynomial structure of degree $n+1$ on $\la_{X/Y}$ is by definition a multi-additive structure on the $(n+1)$-st difference 
$$
(\d^{n+1}\la_{X/Y})(L_0,\ldots,L_n)=\sum_{I\subset\{0,\ldots,n\}}(-1)^{n+1-|I|}\la_{X/Y}\left(\sum_{i\in I} L_i\right),
$$
and we can thus define the \emph{Deligne pairing} as the functor $\cP(X)^{n+1}\to\cP(Y)_\Q$ that takes line bundles $L_0,\ldots,L_n$ on $X$ to the $\Q$-line bundle
\begin{equation}\label{defi:Delignepairing}
\langle L_0,\ldots,L_n\rangle_{X/Y}:=(\d^{n+1}\la_{X/Y})(L_0,\ldots,L_n). 
\end{equation}

\begin{thm}\label{thm:del} The Deligne pairing satisfies the following properties. 
\begin{itemize}
\item[(i)] it is multi-additive, symmetric, and commutes with base change; 
\item[(ii)] for each relative effective Cartier divisor $Z$ on $X$, we have canonical multi-additive functorial isomorphisms
$$
\langle\cO_X(Z),L_1,\ldots,L_n\rangle_{X/Y}\simeq\langle L_1|_Z,\ldots,L_n|_Z\rangle_{Z/Y}; 
$$
\item[(iii)] if $g:X'\to X$ is a finite flat of degree $e$, then we have canonical functorial isomorphisms
$$
\langle g^\star  L_0, \ldots, g^\star  L_n \rangle_{X'/Y} \simeq e \langle L_0, \ldots, L_n \rangle_{X/Y}.
$$
\end{itemize}
\end{thm}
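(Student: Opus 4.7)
\textbf{Part (i)} is immediate from the construction. Theorem~\ref{thm:duc} endows $\la_{X/Y}$ with a polynomial structure of degree $n+1$, which by Definition~\ref{defi:poly} is exactly a multi-additive structure on $\d^{n+1}\la_{X/Y}$ compatible with its canonical symmetry; since $\langle\cdot\rangle_{X/Y} := \d^{n+1}\la_{X/Y}$ by (\ref{defi:Delignepairing}), multi-additivity and symmetry follow directly. Base-change compatibility is inherited from Theorem~\ref{thm:duc}(i), since iterated differences are formal alternating sums preserved by any additive pullback functor.

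\textbf{Part (ii).} The plan is to split the sum (\ref{equ:itdiff2}) computing $\langle\cO_X(D),L_1,\ldots,L_n\rangle_{X/Y}$ according to whether $0$ lies in the indexing set. Pairing subsets $J\cup\{0\}$ with $J\subset\{1,\ldots,n\}$ yields
$$
\langle\cO_X(D),L_1,\ldots,L_n\rangle_{X/Y}\simeq(\d^n g)(L_1,\ldots,L_n),\qquad g(M):=\la_{X/Y}(M+\cO_X(D))-\la_{X/Y}(M).
$$
Proposition~\ref{prop:basechange}(ii) with $L=M+\cO_X(D)$ gives $g(M)\simeq\la_{D/Y}((M+\cO_X(D))|_D)$, so $g$ factors as $h\circ\rho$, where $\rho:\cP(X)\to\cP(D)$ is the (additive) restriction functor and $h(N):=\la_{D/Y}(N+C)$ with $C:=\cO_X(D)|_D$. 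Additivity of $\rho$ produces $(\d^n g)(L_1,\ldots,L_n)\simeq(\d^n h)(L_1|_D,\ldots,L_n|_D)$, and unwinding the definition of the iterated difference shifted at $C$ identifies this with $(\d^n_C\la_{D/Y})(L_1|_D,\ldots,L_n|_D)$. Now $D\to Y$ has relative dimension $n-1$, so Theorem~\ref{thm:duc} equips $\la_{D/Y}$ with a polynomial structure of degree $n$, canonically trivializing $\d^{n+1}\la_{D/Y}$. Using
$$
\d^{n+1}\la_{D/Y}(C,N_1,\ldots,N_n)=(\d^n_C\la_{D/Y})(N_1,\ldots,N_n)-(\d^n\la_{D/Y})(N_1,\ldots,N_n),
$$
this trivialization produces a canonical, multi-additive isomorphism $\d^n_C\la_{D/Y}\simeq\d^n\la_{D/Y}=\langle\cdot,\ldots,\cdot\rangle_{D/Y}$, and chaining the canonical isomorphisms yields (ii).

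\textbf{Part (iii)} proceeds by induction on $n$. For $n=0$, $\pi$ and $\pi\circ f$ are both finite flat, $\langle L_0\rangle_{X/Y}=N_{X/Y}(L_0)$ by Lemma~\ref{lem:norm}, and the claim reduces to $N_{X'/Y}(f^*L_0)\simeq e\,N_{X/Y}(L_0)$, which follows from transitivity $N_{X'/Y}=N_{X/Y}\circ N_{X'/X}$, the formula $N_{X'/X}(f^*L_0)\simeq L_0^{\otimes e}$ (for each transition unit $u$ of $L_0$, $N_{X'/X}(f^*u)=u^e$ since $f$ has degree $e$), and multi-additivity of $N_{X/Y}$ from (i). For $n\ge 1$, by multi-additivity we may write $L_0=(L_0+mB)-mB$ with $B$ a $\pi$-ample line bundle and $m$ large enough that $L_0+mB$ is also $\pi$-ample, reducing to the case $L_0$ $\pi$-ample. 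Working locally on $Y$, Proposition~\ref{prop:reg} provides, for $m\gg 1$, a $\pi$-regular section $s$ of $L_0^{\otimes m}$ cutting out a relative Cartier divisor $D$; flatness of $f$ ensures $f^*s$ is $(\pi\circ f)$-regular, defining $D':=f^{-1}(D)$ with $f|_{D'}:D'\to D$ finite flat of degree $e$. Applying part (ii) to both sides reduces the assertion, after cancelling $m$ in $\cP(Y)_\Q$, to
$$
\langle f^*L_1|_{D'},\ldots,f^*L_n|_{D'}\rangle_{D'/Y}\simeq e\,\langle L_1|_D,\ldots,L_n|_D\rangle_{D/Y},
$$
which is precisely the inductive hypothesis applied to the finite flat degree-$e$ map $D'\to D$ over $Y$, now of relative dimension $n-1$. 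Gluing the locally-constructed isomorphisms is permitted by the canonicity of each step.

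The main obstacle is tracking canonicity and multi-additivity through part (ii); specifically, one must verify that the identification $\d^n_C\la_{D/Y}\simeq\d^n\la_{D/Y}$ arising from the canonical trivialization of $\d^{n+1}\la_{D/Y}$ is multi-additive in the $n$ variables and independent of the shift $C$ up to unique isomorphism. Once this formal unwinding of the polynomial-structure axioms is carried out, the inductive step in (iii) is straightforward.
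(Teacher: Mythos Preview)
Your proof of (i) matches the paper's. For (ii), your argument is correct and essentially dual to the paper's: you expand $\langle \cO_X(D), L_1, \dots, L_n\rangle_{X/Y}$ from the $X$ side, land on $(\d^n_C\la_{D/Y})(L_1|_D,\dots,L_n|_D)$ with $C=\cO_X(D)|_D$, and then invoke the degree-$n$ polynomial structure on $\la_{D/Y}$ to remove the shift $C$. The paper instead takes the $n$-th iterated difference of the restriction isomorphism $\la_{D/Y}(L|_D)\simeq\la_{X/Y}(L)-\la_{X/Y}(L-D)$ directly, getting $\langle L_1|_D,\dots,L_n|_D\rangle_{D/Y}$ on the left with no shift, and $(\d^n\la_{X/Y}-\d^n_{-D}\la_{X/Y})(L_1,\dots,L_n)\simeq -(\d^{n+1}\la_{X/Y})(-D,L_1,\dots,L_n)$ on the right, then flips the sign using multi-additivity of $\d^{n+1}\la_{X/Y}$. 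So the paper uses the polynomial structure on the $X$ side rather than the $D$ side; this is marginally cleaner since no shift enters.

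For (iii) your route is genuinely different. The paper argues in one line via the projection formula $R(\pi\circ f)_*(f^*L)\simeq R\pi_*(L\otimes E)$ with $E=f_*\cO_{X'}$ a rank-$e$ vector bundle, and then quotes \cite[Proposition~4.7.1]{Duc}, which supplies a canonical isomorphism between the $(n+1)$-st difference of $L\mapsto\det R\pi_*(L\otimes E)$ and $e\cdot\d^{n+1}\la_{X/Y}$. Your induction on $n$ via regular sections and part (ii) is more self-contained and avoids the external citation, but the phrase ``canonicity of each step'' hides the real work: you must verify that the isomorphism you construct is independent of the choice of ample twist $B$, integer $m$, and regular section $s$, since otherwise the local isomorphisms over $Y$ have no reason to glue, nor would the result be functorial in the $L_i$. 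This independence can be established (it is how the Elkik-style constructions proceed), but it is the substantive point of the argument and should be spelled out rather than asserted.
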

\begin{proof} (i) follows directly from Theorem~\ref{thm:duc}. Given $Z$ as in (ii), taking the $n$-th iterated difference of the restriction isomorphism 
$$
\la_{Z/Y}(L|_Z)\simeq\la_{X/Y}(L)-\la_{X/Y}(L-Z)
$$
yields 
$$
\langle L_1|_Z,\ldots,L_n|_Z\rangle_{Z/Y}=(\d^n\la_{Z/Y})(L_1|_Z,\ldots,L_n|_Z)
$$
$$
\simeq(\d^n\la_{X/Y})(L_1,\ldots,L_n)-(\d^n_{-Z}\la_{X/Y})(L_1,\ldots,L_n)\simeq-(\d^{n+1}\la_{X/Y})(-Z,L_1,\ldots,L_n),
$$
which is isomorphic to
$$
(\d^{n+1}\la_{X/Y})(Z,L_1,\ldots,L_n)=\langle Z,L_1,\ldots,L_n\rangle_{X/Y},
$$
by multiadditivity of $\d^{n+1}\la_{X/Y}$. Finally, let $g:X'\to X$ be finite and flat of degree $e$, so that $E:=g_\star \cO_{X'}$ is a rank $e$ vector bundle. By the projection formula we have 
$$
R( f\circ g)_\star  (g^\star  L) = R f_\star  Rg_\star  (g^\star  L) = R f_\star  (L \otimes E). 
$$ 
Thus (iii) follows from~\cite[Proposition 4.7.1]{Duc}, as the latter yields a canonical isomorphism between the $(n+1)$-st difference of $L\mapsto\det R f_\star (L\otimes E)$ and $e\,\d^{n+1}\la_{X/Y}$. 
\end{proof}

By Lemma~\ref{lem:poly}, we finally get the following generalization of~\cite[Theorem 4]{KM}. 

\begin{cor}\label{cor:KM} For each $0\le i\le n+1$, define a functor $F_{n+1,i}:\cP(X)^i\to\cP(Y)_\Q$ by 
$$
F_{n+1,i}(L):=\sum_{k=i}^{n+1}(-1)^{k-i}{k\choose i}\left(\d^k\la_{X/Y}\right)(L^k)$$
For each line bundle $L$ on $X$ and $m\in\Z$, we then have functorial isomorphisms
$$
\la_{X/Y}(mL)\simeq\sum_{i=0}^{n+1}{m+i\choose i}F_{n+1,i}(L)=\frac{m^{n+1}}{(n+1)!}\langle L^{n+1}\rangle_{X/Y}+O(m^n),
$$
compatible with base change. 
\end{cor}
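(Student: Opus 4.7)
The plan is to derive Corollary~\ref{cor:KM} as a direct instantiation of the abstract polarization formula Lemma~\ref{lem:poly} applied to the functor $\la_{X/Y}$, whose polynomial structure of degree $n+1$ is furnished by Theorem~\ref{thm:duc}.

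First I would simply specialize Lemma~\ref{lem:poly}, with $F = \la_{X/Y}$, $n$ there replaced by $n+1$, and $x = L$. Since Theorem~\ref{thm:duc} equips $\la_{X/Y}$ with a polynomial structure of degree $n+1$, the hypotheses of Lemma~\ref{lem:poly} are satisfied, and the conclusion yields functorial isomorphisms
$$
\la_{X/Y}(mL)\simeq\sum_{i=0}^{n+1}\binom{m+i}{i}F_{n+1,i}(L),
$$
where $F_{n+1,i}(L)$ is exactly the alternating sum defined in the statement.

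Next I would identify the top-degree term. By definition of the Deligne pairing (cf.~\eqref{defi:Delignepairing}) we have $(\d^{n+1}\la_{X/Y})(L^{n+1})=\langle L^{n+1}\rangle_{X/Y}$, so that $F_{n+1,n+1}(L)=\langle L^{n+1}\rangle_{X/Y}$. Since $\binom{m+n+1}{n+1}=\tfrac{m^{n+1}}{(n+1)!}+O(m^n)$, while the remaining summands $\binom{m+i}{i}F_{n+1,i}(L)$ with $i\le n$ contribute $O(m^n)$ (interpreted, as in the statement, as $\Q$-line bundles of the form $(\text{poly in }m\text{ of degree }\le n)\cdot M_i$), one obtains the asymptotic identification
$$
\la_{X/Y}(mL)\simeq\frac{m^{n+1}}{(n+1)!}\langle L^{n+1}\rangle_{X/Y}+O(m^n).
$$

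Finally, I would verify compatibility with base change. The isomorphisms produced by Lemma~\ref{lem:poly} are constructed entirely from the additivity and symmetry data of the polynomial structure, so they inherit base-change compatibility from the corresponding property asserted in Theorem~\ref{thm:duc}(i); the leading term is base-change compatible by Theorem~\ref{thm:del}(i). There is no real obstacle here: the only point that requires some care is checking that the formula produced by Lemma~\ref{lem:poly}, which was stated at the level of commutative groups in \S\ref{sec:diff}, genuinely transports to the Picard-categorical setting of \S\ref{sec:polyfunc}; but this is precisely what the proof of Lemma~\ref{lem:poly} accomplishes, using only the multi-additivity of $\d^{n+1}\la_{X/Y}$, so no further work is needed.
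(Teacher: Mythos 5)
Your proposal is correct and follows essentially the same route as the paper, which obtains Corollary~\ref{cor:KM} directly as an instance of the abstract expansion in Lemma~\ref{lem:poly} applied to $\la_{X/Y}$ with the degree-$(n+1)$ polynomial structure of Theorem~\ref{thm:duc}, identifying the top coefficient $F_{n+1,n+1}(L)=(\d^{n+1}\la_{X/Y})(L^{n+1})=\langle L^{n+1}\rangle_{X/Y}$ via the definition of the Deligne pairing. Your added remarks on the binomial asymptotics and on base-change compatibility (inherited from Theorem~\ref{thm:duc}(i) since the isomorphisms of Lemma~\ref{lem:poly} are built from the polynomial structure data) are exactly the points the paper leaves implicit.
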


%
%
%
%

\end{document}